\numberwithin{equation}{section}
\newtheorem{thm}{Theorem}[section]
\newtheorem{prop}[thm]{Proposition}
\newtheorem{lemme}[thm]{Lemma}
\newtheorem{cor}[thm]{Corollary}
\newtheorem{conj}[thm]{Conjecture}
\theoremstyle{definition}
\newtheorem{defi}[thm]{Definition}
\newtheorem{nota}[thm]{Notation}
\theoremstyle{remark}
\newtheorem{rmk}[thm]{Remark}
\newtheorem{ex}[thm]{Example}
\newtheorem{war}[thm]{Warning}
\DeclareMathOperator{\Z}{\mathbb{Z}}
\DeclareMathOperator{\R}{\mathbb{R}}
\DeclareMathOperator{\N}{\mathbb{N}}
\DeclareMathOperator{\codim}{Codim}
\DeclareMathOperator{\Hom}{Hom}
\DeclareMathOperator{\rk}{rk}
\DeclareMathOperator{\Ext}{Ext}
\DeclareMathOperator{\Image}{Im}
\DeclareMathOperator{\discr}{discr}
\DeclareMathOperator{\Fix}{Fix}
\DeclareMathOperator{\Supp}{Supp}
\DeclareMathOperator{\Vect}{Vect}
\DeclareMathOperator{\tors}{tors}
\DeclareMathOperator{\id}{id}
\DeclareMathOperator{\Sym}{Sym}
\DeclareMathOperator{\Ker}{Ker}
\DeclareMathOperator{\diag}{diag}
\DeclareMathOperator{\Sing}{Sing}
\DeclareMathOperator{\Spec}{Spec}
\DeclareMathOperator{\GL}{GL}
\DeclareMathOperator{\Q}{\mathbb{Q}}
\DeclareMathOperator{\C}{\mathbb{C}}
\DeclareMathOperator{\Pj}{\mathbb{P}}
\DeclareMathOperator{\tr}{tr}
\newcommand{\RR}{{\bf R}}
\newcommand{\F}{\mathbb{F}_{p}}
\newcommand{\eq}[1][r]
{\ar@<-3pt>@{-}[#1]
\ar@<-1pt>@{}[#1]|<{}="gauche"
\ar@<+0pt>@{}[#1]|-{}="milieu"
\ar@<+1pt>@{}[#1]|>{}="droite"
\ar@/^2pt/@{-}"gauche";"milieu"
\ar@/_2pt/@{-}"milieu";"droite"}
\title{Integral cohomology of quotients via toric geometry}
\author{Gr\'egoire Menet}
\address{Laboratoire Paul Painlev\'e, B\^atiment M2, Cit\'e Scientifique, 59655 Villeneuve-d'Ascq}
\email{gregoire.menet@univ-lille.fr}
\begin{document}

%%%%%%%%%%%%%%%%%%%%%%%%%%%%%%%
% Title page
%%%%%%%%%%%%%%%%%%%%%%%%%%%%%%%

%\removeabove{}
%\removebetween{}
%\removebelow{}

\maketitle

\begin{prelims}

\DisplayAbstractInEnglish

\bigskip

\DisplayKeyWords

\medskip

\DisplayMSCclass

\end{prelims}

%%%%%%%%%%%%%%%%%%%%%
% Table of Contents
%%%%%%%%%%%%%%%%%%%%%

\newpage

\setcounter{tocdepth}{1}

\tableofcontents

%%%%%%%%%%%%%%%%%%%%%
% Content begins here
%%%%%%%%%%%%%%%%%%%%%

\section{Introduction}
\subsection{Background and motivation}
Let $X$ be a topological space
endowed with the action of a finite automorphism group $G$. 
We consider $\pi:X\rightarrow X/G$ the quotient map.
It is quite easy to compute the cohomology of $X/G$ with rational coefficients since it is isomorphic to the invariant cohomology $H^*(X/G,\Q)\simeq H^*(X,\Q)^G$. However, switching to integral coefficients, several complications appear. Let $Y$ be a topological space. We denote by $H^*_f(Y,\Z)$ the torsion-free part of the cohomology (see Section \ref{nota1} (vi) for the notation).
\begin{itemize}
\item[(i)]
The first problem is to determine the \emph{torsion} of $H^*(X/G,\Z)$.
\item[(ii)]
The second problem is to find a \emph{basis} of $H^*_f(X/G,\Z)$. We are also interested in the \emph{ring structure} of $H^*_f(X/G,\Z)$
which differs from the one of $H^*_f(X,\Z)^G$.
\end{itemize}

No existing theory solves these problems; nevertheless, we mention a result of Smith which is an important tool for our purpose. 
Smith in \cite{Smith} (see also \cite[Section III.7]{Bredon} 
and \cite{Transfers}) has constructed a push-forward map $\pi_*:H^{*}(X,\Z)\rightarrow H^{*}(X/G,\Z)$ with the following properties:
\begin{equation}
\pi_{*}\circ\pi^{*}=d\id_{H^{*}(X/G,\Z)}, \ \ \ \ \ \ \pi^{*}\circ\pi_{*}=\sum_{g\in G}{g^{*}},
\label{SmithIntro}
\end{equation}
where $d$ is the order of the group. We approach the problem by considering groups of prime order $p$ as a fundamental case before investigating more complicated frameworks.
Then, much information can be obtained from these equations in order to simplify problems (i) and (ii). 
We denote by $H^*_{o-t}(Y,\Z)$ the torsion part of the cohomology without the $p$-torsion part.
The first property that we can deduce is:
$$H^*_{o-t}(X,\Z)^G\simeq H^*_{o-t}(X/G,\Z).$$
Moreover, we can understand the effect of $\pi_*$ on the cup-product in $H^*_f(X/G,\Z)$ (\cite[Lemma 3.6]{Lol3}):
$$\pi_{*}(x_{1})\cdot\,\cdots\,\cdot \pi_{*}(x_{q})=p^{q-1}\pi_{*}(x_{1}\cdot\,\cdots\,\cdot x_{q}),$$
with $(x_{i})_{1\leq i \leq q}$ being elements of $H^{*}_f(X,\Z)^{G}$. 
We can rewrite the problems (i) and (ii) as follows.
\begin{itemize}
\item[(i)]
First of all, we wonder what the \emph{$p$-torsion} of $H^*(X/G,\Z)$ is.
\item[(ii)]
Let $0\leq k$. From (\ref{SmithIntro}), there exists $\alpha_{k}(X)\in\N$ such that:
\begin{equation}
\xymatrix{ 0\ar[r]&\pi_{*}(H^{k}(X,\Z))_f\ar[r] & H^{k}_f(X/G,\Z)\ar[r] & (\Z/p\Z)^{\alpha_{k}(X)}\ar[r]& 0,}
\label{MainExactSequence}
\end{equation}
with $\pi_{*}(H^{k}(X,\Z))_f:=\pi_{*}(H^{k}(X,\Z))/\tors$ (see Section \ref{nota1} (iv) for the notation).
The second problem concerns the computation of the integers $\alpha_{k}(X)$ called \emph{coefficients of surjectivity} which were introduced for the first time in \cite{Lol3}. 
Assume that $X$ is connected, then we can already notice by a direct computation that $\alpha_0(X)=1$.
\end{itemize}
The main purpose of this paper is to solve the mentioned problems when $X$ is a compact complex manifold and $G$ is an automorphism group of prime order $p$ having only isolated fixed points (Theorem \ref{main0} gives an almost complete picture apart that the $p$-torsion of odd degree cohomology groups is only provided for the combinations of two complementary degrees). First of all the provided result represents a first step before a more general theory since cyclic groups can be used as elementary bricks to construct more complicated groups. Secondly, it can be seen as a generalization of the equivariant spectral sequence technique which provides the cohomology of a quotient by a free group action.

The original motivation for studying this problem is the computation of \emph{Beauville--Bogomolov forms} for primitively symplectic orbifolds (see \cite{Lol2}, \cite{Kapfer} and \cite{Lol3}). Many aspects of the theory of irreducible symplectic manifolds have been extended for spaces admitting some singularities. In particular, the second cohomology group can still be endowed with a lattice structure given by the Beauville--Bogomolov form (see \cite{Namikawa} and \cite{Kirshner}). Moreover, the \emph{global Torelli theorem}, which allows to recover some of the geometry of the manifold from the Hodge structure and the lattice structure of the second cohomology group, has also been generalized (see \cite{Lol} and \cite{Bakker}). 
It is one of the reasons to compute the Beauville--Bogomolov forms of the known primitively symplectic complex spaces. 
Let $X$ be an irreducible symplectic manifold endowed with a symplectic automorphism group $G$ of prime order. The space $X/G$ is a primitively symplectic orbifold and the coefficient $\alpha_2(X)$ is required in order to compute its Beauville--Bogomolov form. 
Theorem \ref{main0} gives a criterion for having $\alpha_2(X)=0$.
From this result, we compute the first Beauville--Bogomolov forms of singular primitively symplectic complex spaces of dimension greater than 4. To do so, we consider the quotient of a Hilbert scheme of $n \leq p-1$ points on a K3 surface by an automorphism of order $p=5$ or $7$ (Theorem \ref{M5} and \ref{M7}).

The method to determine the integral cohomology of the quotient $X/G$ is based on the information provided by the \emph{Poincar\'e duality} applied to a resolution $\widetilde{X/G}$.
Indeed, 
the Poincar\'e duality says that $H^*_f\left(\widetilde{X/G},\Z\right)$ is an unimodular lattice (see Section \ref{remindersL} for the definition of unimodular). This information allows us to compute the coefficients of surjectivity when we are able to express $H^*_f\left(\widetilde{X/G},\Z\right)$ as function of $H^*_f(X/G,\Z)$.
In \cite{Lol3}, this technique was used when $\widetilde{X/G}$ is a blow-up. However, not all singularities can be resolved with a single blow-up. To get around this problem, we consider \emph{toric blow-ups}.
If $G$ is a linear abelian group acting on $\C^n$, the quotient $\C^n/G$ is a toric variety and we can resolve its singularities via a toric blow-up (see for instance \cite[Section 8.2]{Danilov}). When the action of $G$ on $X$ has only isolated fixed points the notion of toric blow-up can easily be extended to the quotient $X/G$ (see Section \ref{toricblowblow}). The first main result of this paper is a partial description of the integral cohomology of the toric blow-up $\widetilde{\C^n/G}$ of $\C^n/G$ (see Section \ref{CohomologyToric}). Afterwards, we apply this result to obtain a better understanding of $H^*\left(\widetilde{X/G},\Z\right)$ (see Section \ref{cortoriblowblow}).

The toric blow-ups are used in order to prove Theorem \ref{main0}
which provides $H^*(X/G,\Z)$ 
when the spectral sequence of \emph{equivariant cohomology} degenerates at the second page. This hypothesis is studied in the second important result of this paper which gives necessary and sufficient conditions, in term of the $\Z[G]$-module structure of $H^*(X,\Z)$, for having this spectral sequence which degenerates at the second page. It can also be of independent interest for readers that are concerned with spectral sequences.

\subsection{The main results}\label{intromain}
The main results are the following.
\subsubsection*{$\F[G]$-module structure}
Theorem \ref{gene19} shows the specificity of the $\F[G]$-module structure of $T\otimes\F$ for 
$T$ a free $\Z$-module endowed with the action of an automorphism group $G$ of prime order $p$. 
It is a generalization of \cite[Proposition 5.1]{SmithTh} in the case $p>19$. In particular, it allows to state all the results of \cite{Lol3} without any restriction on the prime number $p$.
\subsubsection*{Cohomology of toric blow-ups}
Proposition \ref{exactU} shows that the cohomology $H^*\left(\widetilde{\C^n/G},\Z\right)$ of a toric blow-up of $\C^n/G$ is torsion-free and concentrated in even degrees. Among others, Theorem \ref{MainCnG} computes the discriminant of the lattice generated by the exceptional cycles.  
\subsubsection*{Integral cohomology of the quotient $X/G$}
\begin{thm}\label{main0}
Let $X$ be a compact complex manifold endowed with the action of an automorphism group $G$ of prime order $p$ with only a finite number of fixed points $\eta(G)$. We assume that $H^*(X,\Z)$ is $p$-torsion-free and
that the spectral sequence of equivariant cohomology with coefficients in $\F$ degenerates at the second page. Then:
\begin{itemize}
\item[(i)]
$\alpha_k(X)=0$ for all $1\leq k \leq 2n$,
\item[(ii)]
$H^{2k}(X/G,\Z)$ is $p$-torsion-free for all $0\leq k \leq n$,
\item[(iii)]
$\tors_p \left(H^{2k+1}(X/G,\Z)\oplus H^{2n-2k+1}(X/G,\Z)\right)=(\Z/p\Z)^{\eta(G)-\ell_+^{2k}(X)}$, for all $1\leq k \leq n-1$.
 \end{itemize}
\end{thm}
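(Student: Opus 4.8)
The plan is to compare $X/G$ with its toric resolution and feed the resulting cohomology comparison into Poincaré duality. Write $F$ for the fixed locus ($\eta(G)$ points), $\Sigma=\pi(F)\subset X/G$ for the singular locus, and let $f\colon\widetilde{X/G}\to X/G$ be the toric blow-up of Section \ref{toricblowblow}, an isomorphism over $(X/G)\setminus\Sigma\cong(X\setminus F)/G$ with exceptional fibre $E=\bigsqcup_i E_i$ over $\Sigma$. By Proposition \ref{exactU} each $H^*(E_i,\Z)$ is torsion free and concentrated in even degrees, so in the Leray spectral sequence of $f$ one has $R^0f_*\Z=\Z_{X/G}$ while $R^bf_*\Z$ is a skyscraper on $\Sigma$ with torsion-free even stalks $\bigoplus_iH^b(E_i,\Z)$. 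Hence the only possibly non-zero differentials are
$$d_{2k+1}\colon\bigoplus_iH^{2k}(E_i,\Z)\longrightarrow H^{2k+1}(X/G,\Z),$$
and reading off $E_\infty$ gives, for $m$ even, an extension $0\to H^m(X/G,\Z)\to H^m(\widetilde{X/G},\Z)\to\Ker d_{m+1}\to0$ with torsion-free quotient, and $H^{2k+1}(\widetilde{X/G},\Z)=\coker d_{2k+1}$ for the odd degrees. Since rationally the comparison splits and $H^{\mathrm{odd}}(E_i,\Q)=0$, each $d_{2k+1}$ vanishes over $\Q$; therefore $\Image d_{2k+1}\subseteq\tors H^{2k+1}(X/G,\Z)$, and $d_{2k+1}$ surjects onto this torsion subgroup exactly when $\coker d_{2k+1}$ is $p$-torsion free.

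For (i) and (ii) I would run the discriminant argument of \cite{Lol3} on the toric model. As $\widetilde{X/G}$ is a compact complex manifold, $H^*_f(\widetilde{X/G},\Z)$ is unimodular by Poincaré duality (Section \ref{remindersL}). The even extension above realises $H^{2k}_f(\widetilde{X/G},\Z)$ as built from $H^{2k}_f(X/G,\Z)$ and the exceptional lattice generated by the $E_i$, whose discriminant is computed in Theorem \ref{MainCnG}; on the other hand the push-forward identity $\pi_*(x_1)\cdots\pi_*(x_q)=p^{q-1}\pi_*(x_1\cdots x_q)$ together with (\ref{MainExactSequence}) expresses $\discr\big(\pi_*(H^{2k}(X,\Z))_f\big)$ as an explicit power of $p$ times $\discr\big(H^{2k}(X,\Z)^G\big)$. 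Balancing these two evaluations of the unimodular discriminant forces the index $p^{\alpha_k(X)}$ to be trivial, giving $\alpha_k(X)=0$, and simultaneously shows no $p$-torsion can be carried by the even cohomology of $X/G$ (recall $H^*_{o-t}(X/G,\Z)\cong H^*_{o-t}(X,\Z)^G$, so only $p$-torsion is at issue); this is (i) and (ii).

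Granting (ii), Poincaré duality on the closed manifold $\widetilde{X/G}$ links $\tors_pH^{2k+1}(\widetilde{X/G},\Z)$ with $\tors_pH^{2n-2k}(\widetilde{X/G},\Z)$, and the latter equals $\tors_pH^{2n-2k}(X/G,\Z)=0$ by (ii). Thus $\coker d_{2k+1}$ is $p$-torsion free, so $\tors_pH^{2k+1}(X/G,\Z)=\Image_p d_{2k+1}$. Geometrically this torsion originates from the lens-space links $L_i=S^{2n-1}/G$ of the isolated singularities, each contributing $H^{2k}(L_i,\Z)=\Z/p\Z$, whence the global count is controlled by $\eta(G)$; concretely, via the Mayer--Vietoris sequence capping the links, $\tors_pH^{2k+1}(X/G,\Z)$ is governed by $\coker\!\big(H^{2k}((X\setminus F)/G,\Z)\to(\Z/p\Z)^{\eta(G)}\big)$. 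To evaluate the rank of this restriction I would use $H^*\big((X\setminus F)/G,\F\big)\cong H^*_G(X\setminus F,\F)$ and the hypothesis that the equivariant spectral sequence degenerates at $E_2$: degeneration pins down $H^*_G(X,\F)$ in terms of the $\F[G]$-module structure of $H^*(X,\F)$, and the image of the link restriction is then the module invariant $\ell_+^{2k}(X)$. Finally a Poincaré--Lefschetz (half-lives-half-dies) pairing on $(X\setminus F)/G$ relates the data in degrees $2k$ and $2n-2k$; packaging the two dual restrictions together is precisely what produces the clean combined statement $\tors_p\big(H^{2k+1}(X/G,\Z)\oplus H^{2n-2k+1}(X/G,\Z)\big)=(\Z/p\Z)^{\eta(G)-\ell_+^{2k}(X)}$.

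The main obstacle is the exact determination of $\Image_p d_{2k+1}$, equivalently the rank of the link restriction. Surjectivity onto the $p$-torsion and the upper bound $\sum_ib_{2k}(E_i)$ are formal, but computing the precise dimension requires ruling out all cancellation in the passage from the $E_2$-page to the actual $\F[G]$-module invariant, which is exactly where the degeneration hypothesis is indispensable; and it is only after coupling the Poincaré-dual degrees $2k$ and $2n-2k$ that the auxiliary torsion of $(X\setminus F)/G$ drops out and the count collapses to $\eta(G)-\ell_+^{2k}(X)$. I expect the bookkeeping of this cancellation, rather than any single step, to be the delicate part.
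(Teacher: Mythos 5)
Your overall strategy (toric resolution, Poincar\'e duality on $\widetilde{X/G}$, discriminant balancing, lens-space links for the odd torsion) is the same as the paper's, but there is a genuine gap at the central step. You assert that ``balancing the two evaluations of the unimodular discriminant forces the index $p^{\alpha_k(X)}$ to be trivial.'' It does not, by itself. The balance $\discr N_{2k,r}=\discr N_{2k,r}^{\bot}$ produces an identity in which $\alpha_{2k}(X)+\alpha_{2n-2k}(X)$ appears \emph{together with} several other non-negative unknowns: the coefficients of resolution $\beta_{2k}(X)$ (measuring classes of $H^{2k}_f(X/G,\Z)$ that become divisible by $p$ in $H^{2k}_f(\widetilde{X/G},\Z)$ -- your claim that the even extension realises $H^{2k}_f(\widetilde{X/G},\Z)$ as built from $H^{2k}_f(X/G,\Z)$ and the exceptional lattice silently assumes $\beta_{2k}=0$ and that the sum is primitive, neither of which is free), the possible $p$-torsion of $H^{2k}(\widetilde{X/G},\Z)$, and the degeneration defects $u_{2k}$ hidden in $t_p^{2k}(U)$. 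The paper's Lemma \ref{poincareequalmain2} shows the sum of all these terms equals $\ell_+^{2*+1}(X)+\ell_-^{2*}(X)$, and it is only here that the hypothesis enters: the exceptional lattice contributes $\discr=p^{2\eta(G)}$, the Lefschetz fixed point theorem (Proposition \ref{mainlefsch}) converts $\eta(G)-\ell_+^{2*}(X)-\ell_-^{2*+1}(X)$ into $-\bigl(\ell_+^{2*+1}(X)+\ell_-^{2*}(X)\bigr)$, and Corollary \ref{lefschetz2} (degeneration) forces that quantity to vanish, killing every term at once. Your sketch never invokes the Lefschetz count nor explains how $\eta(G)$ cancels against the module invariants, so the conclusion $\alpha_{2k}=0$ does not follow from what you wrote. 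You also do not treat the odd coefficients $\alpha_{2k+1}$ at all; these need the separate identity $\alpha_{2k+1}(X)+\alpha_{2n-2k-1}(X)=\ell_+^{2k+1}(X)$ (Theorem \ref{oddmain}), which again vanishes only because degeneration kills $\ell_+^{2*+1}(X)$.

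For (iii) your picture is correct in outline, but the quantity you need is not ``the image of the link restriction is $\ell_+^{2k}(X)$'': the restriction $H^{2k}(U,\Z)\to(\Z/p\Z)^{\eta(G)}$ has rank governed by $t_p^{2k}(U)=\sum_{i=0}^{k-1}\ell_+^{2i}(X)+\sum_{i=0}^{k-1}\ell_-^{2i+1}(X)$ (a partial sum, computed from the equivariant cohomology of the free action on $X\smallsetminus\Fix G$ once $u_{2k}=0$), and only after pairing degrees $2k$ and $2n-2k$ and using the duality $\ell_+^{2i}(X)=\ell_+^{2n-2i}(X)$ does the count collapse to $\eta(G)-\ell_+^{2k}(X)$. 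You correctly flag this bookkeeping as the delicate part, but it is precisely the content of the proof rather than a detail to be deferred.
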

The integers $\ell_+^{2k}(X)$ are among what we call the \emph{Boissi\`ere--Nieper-Wisskirchen--Sarti invariants} which characterize the $\F[G]$-module structure of $H^*_f(X,\Z)\otimes\F$. They were first introduced, in this context, by \cite{SmithTh}. We recall their definition in Section \ref{CRinvariant}. The invariant $\ell_+^{2k}(X)$ can be shortly expressed as the number of trivial irreducible representations that compose the representation given by the action of $G$ on $H^{2k}(X,\Z)$.  

Theorem \ref{main0} is a simplification of Theorem \ref{main2} where the condition on the spectral sequence can be replaced by conditions on the Boissi\`ere--Nieper-Wisskirchen--Sarti invariants. Moreover, we do not need a complex structure on all $X$, but only around the fixed points of $G$ (\emph{cf.} Remark \ref{hypopo}).

This theorem can be applied to a large class of examples as complete intersections (see Section \ref{QPM}).
\subsubsection*{Degeneration of the spectral sequence of equivariant cohomology}
Theorem \ref{degenemain} provides necessary and sufficient conditions for the degeneration of the spectral sequence of equivariant cohomology at the second page in terms of the Boissi\`ere--Nieper-Wisskirchen--Sarti invariants and the number of fixed points of $G$. It is in the same spirit as \cite[Corollary 4.3]{SmithTh}.
\subsubsection*{New Beauville--Bogomolov forms of primitively symplectic orbifolds}
A hyperk\"ahler manifold is said of \emph{$K3^{[m]}$-type} when it is equivalent by deformation to a Hilbert scheme of $m$ points on a K3 surface.
\begin{thm}\label{M5}
Let $X$ be a hyperk\"ahler manifold of $K3^{[m]}$-type and $G$ a symplectic automorphism group of order 5.
We denote $M_5^m:=X/G$. Assume that $2\leq m\leq 4$, then the Beauville--Bogomolov lattice $H^2(M_5^m,\Z)$ is isomorphic to $U(5)\oplus U^2\oplus (-10(m-1))$ and the Fujiki constant of $M_5^m$ is $\frac{5^{m-1}(2m)!}{m!2^m}$.
\end{thm}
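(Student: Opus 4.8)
The plan is to realize the pair $(X,G)$ by a natural automorphism and then combine Theorem \ref{main0} with a lattice-theoretic identification. Since the isometry class of the Beauville--Bogomolov lattice of $X/G$, the vanishing of $\alpha_2$, and the degeneration of the equivariant spectral sequence are all invariant under deformations of the pair $(X,G)$, I would first reduce to the case where $X=S^{[m]}$ is the Hilbert scheme of $m$ points on a K3 surface $S$ and $G=\langle\sigma^{[m]}\rangle$ is generated by the natural automorphism induced by a symplectic automorphism $\sigma$ of order $5$ on $S$; that a symplectic action of order $5$ on a $K3^{[m]}$-type manifold is, up to deformation, of this form follows from the classification of such actions through their coinvariant lattices. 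The next point is to check the hypotheses of Theorem \ref{main0}, namely that $\Fix(\sigma^{[m]})$ is finite. Here the condition $m\leq p-1=4$ is essential: a $\sigma$-invariant length-$m$ subscheme is supported on $\sigma$-orbits, and since every free orbit has $5>m$ points, its support can only meet the four isolated fixed points of $\sigma$ on $S$. At each such point $\sigma$ acts as $\diag(\zeta_5,\zeta_5^{-1})$, so the invariant subschemes correspond to the finitely many invariant monomial ideals, which are isolated fixed points of $\sigma^{[m]}$; this count also yields $\eta(G)$.

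With finiteness of the fixed locus established, I would verify that the spectral sequence of equivariant cohomology degenerates at the second page by computing the Boissi\`ere--Nieper-Wisskirchen--Sarti invariants of $(X,G)$ and applying Theorem \ref{degenemain}. The $\Z[G]$-module structure of $H^*(S^{[m]},\Z)$ is recovered from that of $H^*(S,\Z)$ via the G\"ottsche--Nakajima description of the cohomology of Hilbert schemes, which reduces these invariants to the known action of $\sigma$ on $H^*(S,\Z)$. Once the degeneration is confirmed, Theorem \ref{main0} gives $\alpha_2(X)=0$ and that $H^2(M_5^m,\Z)$ is $p$-torsion free; combined with the torsion-freeness of $H^*(S^{[m]},\Z)$, it follows that $H^2(M_5^m,\Z)$ is torsion free, that $\pi_*$ surjects onto it by the exact sequence (\ref{MainExactSequence}), and that $\pi^*$ realizes it as a finite-index sublattice of the invariant lattice $L:=H^2(X,\Z)^G$.

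It remains to compute this invariant lattice and transport the form. Writing $H^2(X,\Z)=H^2(S,\Z)\oplus\Z\delta$ with $q(\delta)=-2(m-1)$ and $\sigma^{[m]}$ fixing $\delta$, we get $L=H^2(S,\Z)^\sigma\oplus\Z\delta$, where $H^2(S,\Z)^\sigma$ is the rank-$6$, signature $(3,3)$ invariant lattice of the order-$5$ symplectic action, isometric to $U\oplus U(5)^2$ with discriminant group $(\Z/5\Z)^4$. The Smith maps satisfy $\pi^*\pi_*=\sum_{g}g^*$ and $\pi_*\pi^*=5\,\id$, so $q_X\circ\pi^*$ equals $\lambda\, q_{M_5^m}$ for some content $\lambda$ on the index-$5^3$ sublattice $\pi^*H^2(M_5^m,\Z)\subset L$. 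I would identify the resulting lattice by computing the signature, which is $(3,4)$, together with the discriminant form of $\pi_*(L)$, and then invoke Nikulin's uniqueness theorem for indefinite lattices to match it with $U(5)\oplus U^2\oplus\langle-10(m-1)\rangle$; the delicate part is tracking how the factor $5$ distributes, so that $\lambda=5$ while the hyperbolic part $H^2(S,\Z)^\sigma$ and the class $\delta$ contribute $U(5)\oplus U^2$ and $\langle-10(m-1)\rangle$ respectively. This is the main obstacle, since the $5$-rank of the discriminant group drops from $4$ to $3$ under the quotient and the $\delta$-class must be treated separately from the part coming from $H^2(S,\Z)^\sigma$.

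Finally, the Fujiki constant follows from the degree-$5$ relation $\int_{M_5^m}\beta^{2m}=\tfrac15\int_X(\pi^*\beta)^{2m}$ combined with $q_X(\pi^*\beta)=5\,q_{M_5^m}(\beta)$ and the Fujiki constant $\frac{(2m)!}{m!2^m}$ of $K3^{[m]}$-type, which gives $c_{M_5^m}=\tfrac15\cdot 5^m\cdot\frac{(2m)!}{m!2^m}=5^{m-1}\frac{(2m)!}{m!2^m}$, as claimed.
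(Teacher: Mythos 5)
Your outline tracks the paper's strategy closely up to the last step: deformation to a natural pair $(S^{[m]},\sigma^{[m]})$ (the paper cites Mongardi's standardness results for orders $5$ and $7$ with $m\leq 6$), finiteness of the fixed locus for $m\leq p-1$, degeneration of the equivariant spectral sequence via the Boissi\`ere--Nieper-Wisskirchen--Sarti invariants computed from an equivariant basis of $H^*(S^{[m]},\Z)$ (Corollary \ref{degecoro}, which uses the Qin--Wang \emph{integral} basis rather than just the G\"ottsche--Nakajima description -- you need integrality to control the $\F$-module structure), and then $\alpha_2(X)=0$ with $H^2(X/G,\Z)$ torsion free (Corollary \ref{p3}). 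All of that is fine.

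The genuine gap is in the lattice identification, which is the actual content of the theorem and which you explicitly defer as ``the main obstacle.'' Knowing that $\pi^*H^2(M_5^m,\Z)$ sits with finite index in $L=H^2(S,\Z)^\sigma\oplus\Z\delta$ does not determine the lattice: you must say \emph{exactly which} invariant classes become divisible by $5$ under $\pi_*$ (equivalently, compute the discriminant form, not just the signature and the $5$-rank, before Nikulin's theorem can be invoked), and you must justify the scaling $\lambda=5$ in $q_X\circ\pi^*=\lambda\,q_{M_5^m}$, which is not a priori given since the Beauville--Bogomolov form of the quotient is only pinned down by integrality and indivisibility. The paper's resolution is Lemma \ref{lemfin}: part (iv) gives the concrete criterion that $\tfrac{x}{p}\in (H^2(S^{[m]},\Z)^G)^\vee$ if and only if $x$ lies in the image of the trace $\rho=\id+\phi^*+\dots+(\phi^*)^{p-1}$, and parts (ii), (iii), (v) (using unimodularity of $H^2(S,\Z)$ and $\ell_p^2(S)=4$) count precisely how many independent classes become divisible and show $\delta$ is not among them. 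Applied to the explicit Garbagnati--Sarti form of $H^2(S,\Z)^\sigma$, this identifies $H^2(M_5^m,\Z)$ as $\pi_*\bigl(\tfrac15 U(5)\oplus\tfrac15 U(5)^2\oplus(-2(m-1))\bigr)$ up to the overall normalization, after which \cite[Proposition 7.10]{Lol3} together with indivisibility of the form simultaneously fixes the Fujiki constant and yields $U(5)\oplus U^2\oplus(-10(m-1))$. Without an argument of this type your genus-theoretic route cannot be completed, because the input data for Nikulin's uniqueness theorem (the discriminant quadratic form of the quotient lattice) is exactly what remains to be computed.
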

\begin{thm}\label{M7}
Let $X$ be a hyperk\"ahler manifold of $K3^{[m]}$-type and $G$ a symplectic automorphism group of order 7.
We denote $M_7^m:=X/G$. Assume that $2\leq m\leq 6$, then the Beauville--Bogomolov lattice $H^2(M_7^m,\Z)$ is isomorphic to $U\oplus\begin{pmatrix} 4 & -3 \\ -3 & 4\end{pmatrix} \oplus (-14(m-1))$ and the Fujiki constant of $M_7^m$ is $\frac{7^{m-1}(2m)!}{m!2^m}$.
\end{thm}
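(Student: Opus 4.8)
The plan is to adapt the proof of Theorem \ref{M5} to the prime $p=7$. By deformation invariance of the Beauville--Bogomolov lattice and of the Fujiki constant, together with the classification of symplectic actions of order $7$ on $K3^{[m]}$-type manifolds, I would first reduce to $X=S^{[m]}$ with $G=\langle\sigma^{[m]}\rangle$ for a K3 surface $S$ carrying a symplectic automorphism $\sigma$ of order $7$ (which has three isolated fixed points). The hypotheses of Theorem \ref{main0} must then be checked. Since $m\le p-1=6$, any $\sigma$-orbit outside $\Fix(\sigma)$ has length $7>m$, so the support of every $\sigma^{[m]}$-invariant length-$m$ subscheme lies in the three fixed points of $\sigma$; hence $\Fix(\sigma^{[m]})$ is finite and I can count $\eta(G)$. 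To obtain the degeneration of the equivariant spectral sequence I would compute the Boissi\`ere--Nieper-Wisskirchen--Sarti invariants $\ell_+^{2k}(X)$ from the $\Z[G]$-module structure of $H^*(S^{[m]},\Z)$---determined by the $\sigma$-action on $H^*(S,\Z)$ through the G\"ottsche--Nakajima description---and feed them into Theorem \ref{degenemain} (or invoke the criterion behind Theorem \ref{main2}).

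With the hypotheses in force, Theorem \ref{main0}(i)--(ii) gives $\alpha_2(X)=0$ and that $H^2(M_7^m,\Z)$ is $7$-torsion free, so the sequence (\ref{MainExactSequence}) yields $H^2_f(M_7^m,\Z)=\pi_*(H^2(X,\Z))_f$. The $G$-invariant lattice is $H^2(X,\Z)^G=T\oplus\Z\delta$, where $\delta$ is the $\sigma^{[m]}$-invariant half of the Hilbert--Chow exceptional divisor with $q_X(\delta)=-2(m-1)$, and $T:=H^2(S,\Z)^\sigma$ is the rank-$4$ invariant lattice of an order-$7$ symplectic automorphism; by the results of Nikulin and Garbagnati--Sarti it has signature $(3,1)$ and discriminant group $(\Z/7\Z)^3$.

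The heart of the argument is the identification of the lattice $(H^2_f(M_7^m,\Z),q_{M})$. Combining the push-forward formula $\pi_*(x)\cdot\pi_*(y)=p\,\pi_*(x\cdot y)$ with $\int_{M_7^m}\pi_*\gamma=\int_X\gamma$ shows that $q_M(\pi_*\alpha)=p\,q_X(\alpha)$ on the invariant part; thus $(H^2_f(M_7^m,\Z),q_M)$ is an overlattice of $(H^2(X,\Z)^G,\,p\,q_X)$ of $7$-power index. I would pin this index down with the earlier results: Proposition \ref{exactU} guarantees that the toric resolution $\widetilde{M_7^m}$ has torsion-free $H^2$, and Theorem \ref{MainCnG} computes the discriminant of the exceptional cycles at the $\eta(G)$ singular points (of $A_6$-type along the surface factor), so Poincar\'e duality on $\widetilde{M_7^m}$ forces the index inside the rank-$4$ block to be $7^3$. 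A discriminant count then gives an even indefinite lattice of signature $(3,1)$ with discriminant group $\Z/7\Z$ and discriminant $-7$, which is unique in its genus by Nikulin's theory and equals $U\oplus\left(\begin{smallmatrix}4&-3\\-3&4\end{smallmatrix}\right)$. Because $2\le m\le 6$ gives $7\nmid(m-1)$, the summand $\langle-14(m-1)\rangle=\langle p\,q_X(\delta)\rangle$ carries no $7$-torsion in its discriminant group and splits off, producing the claimed lattice $U\oplus\left(\begin{smallmatrix}4&-3\\-3&4\end{smallmatrix}\right)\oplus\langle-14(m-1)\rangle$.

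Finally, the Fujiki constant follows by evaluating $\int_{M_7^m}(\pi_*\alpha)^{2m}=p^{2m-1}\int_X\alpha^{2m}=p^{2m-1}\frac{(2m)!}{m!2^m}q_X(\alpha)^m$ and rewriting it through $q_M=p\,q_X$, giving $\frac{7^{m-1}(2m)!}{m!2^m}$. I expect the main obstacle to be the precise overlattice computation in the third step: the scaling $q_M=p\,q_X$ and the invariant lattice only determine $H^2_f(M_7^m,\Z)$ up to a $7$-power index, and fixing that index---and checking that the $\delta$-summand splits off---genuinely requires the integral input of Proposition \ref{exactU}, Theorem \ref{MainCnG} and the unimodularity of the resolution; the other delicate point is the explicit computation of the Boissi\`ere--Nieper-Wisskirchen--Sarti invariants of $S^{[m]}$ needed to certify the spectral-sequence hypothesis.
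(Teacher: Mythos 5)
Your overall architecture matches the paper's: reduce to $(S^{[m]},\sigma^{[m]})$ natural, use the spectral-sequence and toric machinery to get $\alpha_2(X)=0$ and torsion-freeness of $H^2(M_7^m,\Z)$ (Corollary \ref{p3}), view $H^2(M_7^m,\Z)=\pi_*(H^2(X,\Z))$ as an overlattice of $\pi_*(H^2(X,\Z)^G)\simeq H^2(X,\Z)^G(7)$, and extract the Fujiki constant from the top self-intersection together with the integrality and indivisibility of the Beauville--Bogomolov form. The gap sits exactly in the step you flag as the obstacle, and the tools you propose for it are not the right ones. The index is not pinned down by Poincar\'e duality on $\widetilde{M_7^m}$ or by Theorem \ref{MainCnG}: those inputs only deliver $\alpha_2(X)=0$, i.e.\ that $\pi_*(H^2(X,\Z))$ is already all of $H^2_f(M_7^m,\Z)$. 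The index $7^3$ of $\pi_*(H^2(X,\Z)^G)$ inside $\pi_*(H^2(X,\Z))$ comes from a different source, namely the unimodularity of the K3 lattice through Lemma \ref{lemfin} (ii) and (v), since $\ell_p^2(S^{[m]})=\ell_p^2(S)=3$ for $p=7$. (Also, the singular points are isolated quotient singularities of $\C^{2m}$ by $\Z/7\Z$, not $A_6$-singularities along a surface factor.)

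More seriously, ``signature $(3,1)$, discriminant group $\Z/7\Z$, hence unique in its genus'' does not determine the lattice: there are two non-isomorphic quadratic forms on $\Z/7\Z$, and Nikulin's uniqueness requires the discriminant \emph{form}, not just the group; you also need to know that the overlattice is even and that the $(-14(m-1))$ summand genuinely splits off, i.e.\ that no glue vector mixes $\delta$ with the rank-$4$ block. All of these reduce to knowing precisely which invariant classes become divisible by $7$, and that is what the paper computes explicitly via Lemma \ref{lemfin} (iv): $x/7$ lies in the dual if and only if $x=z+\phi^*z+\dots+(\phi^*)^{6}z$. Applied to the Garbagnati--Sarti invariant lattice $U(7)\oplus\begin{pmatrix}4&1\\1&2\end{pmatrix}$ this yields the explicit classes $\frac{1}{7}\pi_*(U(7))$ and $\frac{1}{7}\pi_*(a+3b)$; the change of basis $(a+3b,\,a-4b)$ exhibits $U\oplus\begin{pmatrix}4&-3\\-3&4\end{pmatrix}$ directly, and a discriminant count using Lemma \ref{lemfin} (ii) and (v) shows nothing further can be adjoined. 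Your genus route can be repaired --- the signature condition $\sign(q)\equiv 2 \pmod 8$ happens to force the correct form on $\Z/7\Z$ --- but evenness and the splitting of the $\delta$-summand still require the divisibility criterion, so you do not actually bypass Lemma \ref{lemfin} (iv).
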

We underline that these theorems give the first examples of Beauville--Bogomolov forms of singular primitively symplectic varieties of dimension strictly bigger than $4$ (see \cite{Kapfer}, \cite{Lol2} and \cite{Lol3} for other examples in dimension 4).

\subsection{Organization of the paper}
In Section \ref{ZGmodo}, we define and study the basic properties of the Boissi\`ere--Nieper-Wisskirchen--Sarti invariants which characterize the $\Z[G]$-modules and the $\F[G]$-modules when $G$ is a group of prime order. These invariants are one of the main tools of this paper. In particular, they allow to describe the second page of the spectral sequence of equivariant cohomology with coefficients in $\Z$ (\emph{cf.} Proposition \ref{equivarcoho}) and in $\F$ (\emph{cf.} Proposition \ref{Lemma3.1}). In Section \ref{bigger19}, we also remark that all the results of \cite{Lol3} can be generalized to the case when $p>19$.

In Section \ref{toritor}, we examine the integral cohomology of toric blow-ups of $\C^n/G$ and $\Pj^n/G$, for $G$ a linear action of prime order (\emph{cf.} Proposition \ref{exactU} and Theorem \ref{MainCnG}). As an application we show how toric blow-ups of isolated quotient singularities in complex spaces modify the integral cohomology (\emph{cf.} Corollaries~\ref{CorCohomology1bis}, \ref{CorCohomology2} and~\ref{CorCohomology1ter}).

In Section \ref{mainresult}, we apply the previous results to compute the integral cohomology of quotients. In Section \ref{oddmainsc}, we compute explicitly the odd coefficients of surjectivity. In Section \ref{coeffeven}, we provide a general expression of the even coefficients of surjectivity and provide an upper bound in terms of the Boissi\`ere--Nieper-Wisskirchen--Sarti invariants (\emph{cf.} Proposition \ref{ineforte2}). Section \ref{degespec} is devoted to the proof of Theorem \ref{degenemain} and Section \ref{MainThSec} to the proof of Theorem \ref{main0}.

In Section \ref{ExSe}, we give examples of applications. In Section \ref{K3Sec}, we describe the integral cohomology of $X/G$ for $X$ a K3 surface and $G$ an automorphism group of prime order with only isolated fixed points. In Section \ref{K32dege}, we give examples of spectral sequences which degenerate at the second page and Section \ref{ProofBB} is devoted to the proof of Theorems~\ref{M5} and~\ref{M7}. Finally, in Section \ref{QPM}, we show that Theorem \ref{main0} can be applied to the quotients of complete intersections.

\subsection{Notation about $\Z$-modules and integral cohomology}\label{nota1}
Let $T$ be a $\Z$-module and $G$ a finite group of prime order $p$ acting on $T$ linearly.
\begin{itemize}
 \item[(i)]
We denote by $T^G$ the invariant submodule of $T$,
\item[(ii)]
by $\tors T$ the submodule of $T$ generated by the torsion elements,
\item[(iii)]
by $\tors_p T$ the submodule of $T$ generated by the elements of $p$-torsion,
\item[(iv)]
by $T_f$ the torsion-free part of $T$, that is $T_f:=\frac{T}{\tors T}$.
\item[(v)]
We define the rank of $T$ as the rank of $T_f$ and we denote $\rk T:=\rk T_f$.
\end{itemize}
Let $Y$ be a topological space and $G$ an automorphism group. Let $k\in\N$.
\begin{itemize}
\item[(vi)]
We denote by $H^k_f(Y,\Z)$ the torsion-free part of $H^k(Y,\Z)$.
\item[(vii)] $H^*(Y,\Z)$ denotes the direct sum of all the cohomology groups, $H^{2*}(Y,\Z)$ the direct sum of all the cohomology groups of even degrees and $H^{2*+1}(Y,\Z)$ the direct sum of all the cohomology groups of odd degrees. We adopt the same notation for coefficients in $\F$.
\item[(viii)]
$h^*(Y,\Z):=\rk H^*(Y,\Z)$, $h^{2*}(Y,\Z):=\rk H^{2*}(Y,\Z)$ and $h^{2*+1}(Y,\Z):=\rk H^{2*+1}(Y,\Z)$. We adopt the same notation for coefficients in $\F$ replacing $\rk$ by $\dim_{\F}$.
\item[(ix)]
When $\tors_p H^k(Y,\Z)$ is a $\F$-vector space of finite dimension, we denote: $$t_p^k(Y):=\dim_{\F} \tors_p H^k(Y,\Z).$$
\end{itemize}
\subsection{Convention on automorphisms}
%Let $\mathcal{C}$ be a subcategory of $\Set$. 
Let $\mathcal{C}$ be a category. 
Let $X$ be an object of $\mathcal{C}$.
In this paper, an \emph{automorphism} $\phi$ on $X$ refers 
to an invertible morphism of $\mathcal{C}$ from $X$ to itself.
For instance, if $X$ is a $C^{\infty}$-manifold, an automorphism $\phi$ on $X$ is a $C^{\infty}$-diffeomorphism.
\subsection{Small reminder on lattices}\label{remindersL}
We recall some basic notions regarding lattices which are used in this paper (see for example \cite[Chapter 8.2.1]{Dolgachev} for more details). 
A lattice $T$ is a free $\Z$-module endowed with a non-degenerate bilinear form $T\times T\rightarrow\Z$ (symmetric or skew-symmetric). 
Without an explicit mention, the bilinear form of the lattice will be denoted by "$\cdot$".
We denote by $\discr T$ the \emph{discriminant} of $T$,
which is the absolute value of the determinant of the bilinear form of $T$. We say that $T$ is \emph{unimodular} if its discriminant is 1.
A sublattice $N$ of $T$ is said to be \emph{primitive} if $T/N$ is torsion-free. 

Let $N^\vee:=\Hom(N,\Z)$ be the \emph{dual lattice} of $N$; it can be seen as a sub-module of $N\otimes \Q$ via the natural isomorphism $N\otimes \Q\rightarrow \Hom(N,\Q):\ x\mapsto\ (y\mapsto x\cdot y)$.
We also denote by $A_{N}:=N^\vee/N$ the \emph{discriminant group} of $N$. Let $N$ be a sublattice of a lattice $T$; the \emph{saturation} of $N$ in $T$ is the primitive sublattice $\overline{N}$ of $T$ containing $N$ and such that $\overline{N}/N$ is finite.

If $N$ is a sublattice of a lattice $T$ of the same rank, we have the basic formula:
\begin{equation}
\#\frac{T}{N}=\sqrt{\frac{\discr N}{\discr T}}.
\label{BasicLatticeTheory}
\end{equation} 
If $T$ is an unimodular lattice and $L$ is a primitive sublattice, then 
\begin{equation}
\discr L=\discr L^\bot.
\label{BasicLatticeTheory2}
\end{equation}
We still assume that $T$ is unimodular.
Then, the map defined by the composition $T\subset (L\oplus L^\bot)\otimes\Q\rightarrow L\otimes\Q$ provides the following isomorphism:
\begin{equation}
\frac{T}{L\oplus L^\bot}\rightarrow A_{L}.
\label{DiscrUni}
\end{equation}
\subsection{Main idea and sketch of the proof of Theorem \ref{main0}}\label{sketch}
Let $X$ be a complex manifold and $G$ an automorphism group of prime order. Let $M:=X/G$. Let $\pi:X\rightarrow M$ be the quotient map. Let $r:\widetilde{M} \rightarrow M$ be a resolution of singularities of $M$. To simplify the exposition, we assume in this section that $X$ is a surface.
The main idea is to deduce the integral cohomology of $M$ from the integral cohomology of $\widetilde{M}$ via the \emph{Poincar\'e duality}. 
\subsubsection{The exceptional lattice}
By Poincar\'e duality, the group $H^2_f\left(\widetilde{M},\Z\right)$ endowed with the cup-product is a unimodular lattice.  
From this fundamental information, the objective will be to obtain information on $H^2(M,\Z)$.
We define the second \emph{exceptional lattice} of $r$ by: $$N_{2,r}:=r^*\left[\pi_*(H^2(X,\Z))\right]^{\bot},$$
see \cite[Section 5.1]{Lol3}. The lattice $N_{2,r}$ is the primitive lattice generated by the exceptional divisors of $r$ and the lattice $N_{2,r}^{\bot}$ is the saturation of $r^*\left[\pi_*(H^2(X,\Z))\right]$.
The unimodularity of $H^2_f\left(\widetilde{M},\Z\right)$ connects $N_{2,r}$ and $N_{2,r}^{\bot}$ by the following equation (see (\ref{BasicLatticeTheory2})):
\begin{equation}
\discr N_{2,r}=\discr N_{2,r}^{\bot}.
\label{rapporteur1}
\end{equation}
The objective will be to compute $\discr N_{2,r}$ and $\discr N_{2,r}^{\bot}$ separately to obtain information on the coefficient of degeneration $\alpha_2(X)$ (see (\ref{MainExactSequence})).
\subsubsection{Toric blow-up}
Now, we explain how to compute $\discr N_{2,r}$. We assume that $G$ has only isolated fixed points.
Let $V:=X\smallsetminus \Fix G$ and $U:=V/G$. We have the following exact sequence:
$$\xymatrix{H^{2}\left(\widetilde{M},U,\Z\right)\ar[r]^{g}&H^{2}\left(\widetilde{M},\Z\right)\ar[r]^{f}&H^{2}(U,\Z).}$$
We are going to connect $N_{2,r}$ with $\Image g$.
To do so, we want a good understanding of $g$ and $f$. We remark that this is possible if we choose for $r$ a \emph{toric blow-up} (see Section \ref{toricblowblow}).
The main objective of Section \ref{toritor} is to understand $\Image g$ and $\Image f$. 
Corollary \ref{CorCohomology1bis} shows that $g$ is injective and $f$ is surjective. 
Moreover, Corollary~\ref{CorCohomology2} leads to the following exact sequence:
$$\xymatrix{0\ar[r]&\Image g\ar[r]&N_{2,r}\ar[r]^{f\ \ \ \ }&(\Z/p\Z)^{N}\ar[r]&0.}$$
By (\ref{BasicLatticeTheory}) it follows that:
$$\discr N_{2,r}=\frac{\discr \Image g}{p^{2N}}.$$
By Corollary \ref{CorCohomology1ter}, we have that:
$$\discr\Image g=p^{\eta(G)},$$
where $\eta(G)$ is the number of fixed points by $G$.
Moreover from Corollary \ref{CorCohomology2}, we can expressed $N$ in terms of the torsion of $H^{2}\left(\widetilde{M},\Z\right)$ and $H^{2}(U,\Z)$.
We have:
$$N=t^2_p(U)-t^2_p\left(\widetilde{M}\right),$$
(see Section \ref{nota1} (ix) for the notation).
Hence:
$$\log_p\discr N_{2,r}=\eta(G)-2t^2_p(U)+2t^2_p\left(\widetilde{M}\right).$$ 

\subsubsection{Spectral sequence of equivariant cohomology}
The torsion of $H^{2}(U,\Z)$ can be computed using the spectral sequence of the equivariant cohomology.
This is one of the objective of Section \ref{ZGmodo}. 
The second page of the spectral sequence of the equivariant cohomology can be expressed in terms of the Boissi\`ere--Nieper-Wisskirchen--Sarti invariants (see Proposition \ref{Lemma3.1} and \ref{equivarcoho}).
\subsubsection{Conclusion}
Finally, $\discr \pi_*(H^2(X,\Z))$ can also be expressed in terms of the Boissi\`ere--Nieper-Wisskirchen--Sarti invariants by Proposition \ref{vraisans19}:
$$\discr \pi_*(H^2(X,\Z))=p^{\ell_+^2(X)}.$$
In Section \ref{coeffofresolution}, we define the \emph{coefficient of resolution} $\beta_{2}(X)$ which allows to complete the computation:
$$\beta_{2}(X):=\dim_{\F}\tors\frac{H^{2}_f\left(\widetilde{M},\Z\right)}{r^*(H^{2}_f(M,\Z))}.$$
Then by (\ref{BasicLatticeTheory}):
$$\log_p\discr N_{2,r}^{\bot}=\ell_+(X)-2(\beta_{2}(X)+\alpha_{2}(X)).$$
Considering all the ingredients mentioned previously, we are able to extract from (\ref{rapporteur1}) the results of Theorem~\ref{main0}. This is done in Section \ref{mainresult}.

\subsection*{Acknowledgments} 
I want to thank my former advisor Dimitri Markouchevitch who gave me the idea of toric blow-ups for resolving quotient singularities, Thomas Megarbane for a very useful discussion about cyclotomic rings and Alessandro Duca for several English writing advises. I am also grateful to Jean-Pierre Demailly and Simon Brandhorst for very pertinent remarks during my talks, respectively in the ALKAGE workshop and in the Japanese-European symposium on symplectic varieties and moduli spaces (see respectively Remark \ref{hypopo} and Proposition \ref{mainlefsch}). I also want to thank the referees for their help to improve the writing of this paper. 

\section{Invariants of a $\Z[G]$-module}\label{ZGmodo}
\subsection{Definition of the Boissi\`ere--Nieper-Wisskirchen--Sarti invariants}\label{defiinvar}
 We recall here the definition of invariants introduced by Boissi\`ere, Nieper-Wisskirchen and Sarti in \cite{SmithTh}.
 
Let $p$ be a prime number, $T$ a $\F$-vector space of finite dimension and $G=\left\langle \phi\right\rangle$ an automorphism group of prime order $p$.
The minimal polynomial of $\phi$, as an endomorphism of $T$, divides $X^{p}-1=(X-1)^{p}\in\F[X]$, hence $\phi$ admits a Jordan normal form. We can decompose $T$ as a direct sum of some $\F[G]$-modules $N_{q}$ of dimension $q$ for $1\leq q\leq p$, where $\phi$ acts on $N_{q}$ in a suitable basis by a matrix of the following form:
$$\begin{pmatrix}
1 & 1 & & &  \\
 & \ddots & \ddots &{\fontsize{0.8cm}{0.5cm}\selectfont\text{0}} & \\
 & & \ddots & \ddots & \\
 & {\fontsize{0.8cm}{0.5cm}\selectfont\text{0}}& & \ddots & 1\\
 & & & & 1
\end{pmatrix}.$$
\begin{defi}
We define the integer $\ell_{q}(T)$ as the number of blocks of size $q$ in the Jordan decomposition of the $\F[G]$-module $T$, so that $T\simeq \oplus_{q=1}^{p} N_{q}^{\oplus \ell_{q}(T)}$. 

If $T$ is a finitely generated $\Z$-module endowed with the action of an automorphism group of prime order, we define $\ell_{q}(T):=\ell_q(T\otimes\F)$. We call the $\ell_{q}(T)$ the Boissi\`ere--Nieper-Wisskirchen--Sarti invariants.
\end{defi}
One of the uses of these invariants is the computation of the cohomology of the group $G$ with coefficients in a $\F$-vector space of finite dimension.
\begin{lemme}[\emph{cf.} \protect{\cite[Lemma~3.1]{SmithTh}}]\label{HGNq}
\leavevmode
\begin{itemize}
\item[(i)]
For $q<p$, we have $H^i(G,N_q)=\F$ for $i\geq 0$.
\item[(ii)]
For $q=p$, $H^0(G,N_p)=\F$ and $H^i(G,N_p)=0$, for all $i>0$.
\end{itemize}
\end{lemme}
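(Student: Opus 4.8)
The plan is to reduce everything to the standard description of the cohomology of a cyclic group and then to a short linear-algebra computation with a single Jordan block. Recall that for $G=\langle\phi\rangle$ of order $p$, with norm element $N:=\sum_{j=0}^{p-1}\phi^{j}$, the cohomology is $2$-periodic from degree $1$ on: one has $H^{0}(G,M)=M^{G}$, $H^{2i}(G,M)=M^{G}/NM$ for $i\geq 1$, and $H^{2i-1}(G,M)=\Ker(N)/(\phi-1)M$ for $i\geq 1$ (this comes from the usual $2$-periodic free resolution of $\Z$ over $\Z[G]$). Thus it suffices to compute the four subquotients $M^{G}$, $NM$, $\Ker N$ and $(\phi-1)M$ for $M=N_{q}$.

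Write $E:=\phi-1$, so that on $N_{q}$ the operator $E$ is the nilpotent shift with $Ee_{j}=e_{j-1}$ and $Ee_{1}=0$; hence $\dim\Ker E^{s}=\min(s,q)$ and $\dim\Image E^{s}=\max(q-s,0)$. The key computation is to identify the norm operator. Over $\F$ I would expand $\phi^{j}=(1+E)^{j}=\sum_{k}\binom{j}{k}E^{k}$ and sum over $j$, using the hockey-stick identity $\sum_{j=0}^{p-1}\binom{j}{k}=\binom{p}{k+1}$ together with $\binom{p}{m}\equiv 0\ (\mathrm{mod}\ p)$ for $0<m<p$; only the term $k=p-1$ survives, giving $N=E^{p-1}$ as an operator on $N_{q}$. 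This is the step I expect to be the crux, since it is where the characteristic $p$ (rather than merely $|G|=p$) is genuinely used.

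Once $N=E^{p-1}$ is established the two cases are immediate dimension counts. If $q<p$ then $E^{p-1}=0$, so $N=0$; hence $M^{G}=\Ker E\cong\F$, $NM=0$, $\Ker N=M$, and $(\phi-1)M=\Image E$ has codimension $1$. Feeding these into the formulas above yields $H^{0}=\Ker E\cong\F$, $H^{2i}=\Ker E\cong\F$, and $H^{2i-1}=M/\Image E\cong\F$, i.e. $H^{i}(G,N_{q})=\F$ for all $i\geq 0$. If $q=p$ then $E^{p-1}$ has rank $1$ with $\Image E^{p-1}=\Ker E=\langle e_{1}\rangle$; therefore $M^{G}/NM=\Ker E/\Image E^{p-1}=0$ and $\Ker N/(\phi-1)M=\Ker E^{p-1}/\Image E=0$ (in the latter both numerator and denominator equal $\langle e_{1},\dots,e_{p-1}\rangle$), while $H^{0}=M^{G}=\Ker E\cong\F$. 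This gives $H^{0}(G,N_{p})=\F$ and $H^{i}(G,N_{p})=0$ for $i>0$.

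As a sanity check on the case $q=p$, one may instead observe that $N_{p}\cong\F[G]$ is the free (regular) module: being projective and injective over the Frobenius algebra $\F[G]$, it is cohomologically trivial in positive degrees, and $H^{0}(G,\F[G])=\F[G]^{G}=\F\cdot N\cong\F$. This makes the vanishing in positive degrees transparent and isolates the only delicate point to the computation $N=E^{p-1}$ in the range $q<p$.
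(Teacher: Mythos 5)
Your proof is correct: the reduction to the $2$-periodic resolution, the identification $N=(\phi-1)^{p-1}$ on $\F$-coefficients via the hockey-stick identity, and the ensuing dimension counts (together with the cleaner observation that $N_p\cong\F[G]$ is free, hence cohomologically trivial in positive degrees) all check out. The paper itself gives no proof, simply quoting the statement from the cited reference, and your argument is essentially the standard computation carried out there, so there is nothing to flag.
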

\begin{nota}
Let $X$ be a topological space endowed with the action of an automorphism group $G$ of prime order $p$. Assume that $H^k(X,\F)$ has finite dimension for all $k\geq0$. Then,
for all $k\in \mathbb{N}$ and all $1\leq q\leq p$, we denote:
%$\ell_q^k(X)=\ell_q(H^k(X,\F))$, $\ell_{q,f}^k(X)=\ell_q(H^k_f(X,\Z)\otimes\F)$ and $\ell_{q,t}^k(X)=\ell_q(\tors_p H^k(X,\Z))$. 
$$\ell_q^k(X):=\ell_q(H^k(X,\F)),\ \text{and}\ \ell_{q,t}^k(X):=\ell_q(\tors_p H^k(X,\Z)).$$ 
We also set $\ell_q^*(X):=\sum_{k\geq0}\ell_q^k(X)$.
\end{nota}
From Lemma \ref{HGNq}, we can express the cohomology of $G$ with coefficients in $H^*(X,\F)$ as follows.
\begin{prop}\label{Lemma3.1}
~
\begin{itemize}
\item[(i)]
For $i>0$, we have $\dim_{\F} H^i(G,H^k(X,\F))=\sum_{q<p}\ell_q^k(X)$, for all $k\geq0$.
\item[(ii)]
For $i=0$, we have $\dim_{\F} H^0(G,H^k(X,\F))=\sum_{q\leq p}\ell_q^k(X)$, for all $k\geq0$.
\end{itemize}
\end{prop}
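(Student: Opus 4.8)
The plan is to reduce the computation to the indecomposable $\F[G]$-modules $N_q$ and then invoke Lemma \ref{HGNq} term by term. First I would record that, by the very definition of the Boissi\`ere--Nieper-Wisskirchen--Sarti invariants, the finite-dimensional $\F[G]$-module $T:=H^k(X,\F)$ admits a Jordan decomposition
\[
T\simeq\bigoplus_{q=1}^{p}N_q^{\oplus\ell_q^k(X)},
\]
since $\ell_q^k(X)=\ell_q(H^k(X,\F))$ is exactly the number of size-$q$ blocks. The finiteness of $\dim_\F H^k(X,\F)$ guarantees that this is a finite direct sum, so no convergence or limit issues arise.

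The key step is the additivity of group cohomology in the coefficient module: for any two $\F[G]$-modules $M_1,M_2$ and every $i\geq 0$ one has a natural isomorphism $H^i(G,M_1\oplus M_2)\simeq H^i(G,M_1)\oplus H^i(G,M_2)$. This follows, for instance, by computing $H^i(G,-)$ from a fixed projective resolution of $\F$ over $\F[G]$ and noting that $\Hom_{\F[G]}(-,M_1\oplus M_2)$ splits as a direct sum of functors, an isomorphism compatible with the differentials and hence passing to cohomology. Applying this to the Jordan decomposition above yields, for each $i\geq 0$,
\[
H^i(G,T)\simeq\bigoplus_{q=1}^{p}H^i(G,N_q)^{\oplus\ell_q^k(X)},\qquad\text{so}\qquad \dim_\F H^i(G,T)=\sum_{q=1}^{p}\bigl(\dim_\F H^i(G,N_q)\bigr)\,\ell_q^k(X).
\]

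It then remains only to substitute the values provided by Lemma \ref{HGNq}. For $i>0$ that lemma gives $\dim_\F H^i(G,N_q)=1$ when $q<p$ and $\dim_\F H^i(G,N_p)=0$, so the sum collapses to $\sum_{q<p}\ell_q^k(X)$, proving (i). For $i=0$ one has $\dim_\F H^0(G,N_q)=1$ for every $1\leq q\leq p$, whence $\dim_\F H^0(G,T)=\sum_{q\leq p}\ell_q^k(X)$, proving (ii); here one may alternatively observe directly that $H^0(G,T)=T^G$ and that each $N_q$ contributes a one-dimensional invariant subspace. I do not expect a genuine obstacle in this argument: essentially all the homological content has already been isolated in Lemma \ref{HGNq}, and the only point requiring a word of justification is the additivity of $H^i(G,-)$ in its coefficient argument, which is entirely standard.
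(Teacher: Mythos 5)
Your argument is correct and is exactly the one the paper intends: the statement is presented there as a direct consequence of Lemma \ref{HGNq} applied termwise to the Jordan decomposition $H^k(X,\F)\simeq\bigoplus_{q=1}^{p}N_q^{\oplus\ell_q^k(X)}$, using additivity of $H^i(G,-)$. No gaps.
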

 \subsection{Boissi\`ere--Nieper-Wisskirchen--Sarti invariants of a free $\Z$-module}\label{CRinvariant}
In this section we are going to provide properties of the Boissi\`ere--Nieper-Wisskirchen--Sarti invariants when $T$ is free $\Z$-module (Theorem \ref{gene19}).
To do so, we introduce some additional invariants $\ell_+(T)$ and $\ell_-(T)$ which are intrinsically defined from the $\Z$-module structure of $T$ and we show that there are equal to the Boissi\`ere--Nieper-Wisskirchen--Sarti invariants $\ell_1(T)$ and $\ell_{p-1}(T)$ apart for $p=2$. The invariants $\ell_+(T)$ and $\ell_-(T)$ are needed in the case $p=2$; for this reason they will be more convenient to use as notation. 

 Let $\xi_{p}$ be a primitive $p^\mathrm{th}$ root of unity. We set $K:=\Q(\xi_p)$ and 
$\mathcal{O}:= \Z[\xi_{p}]$.  
Let $G=\left\langle \phi\right\rangle$ be a group of prime order $p$. 
Let $A$ be an $\mathcal{O}$-ideal in $K$, the $\Z[G]$-module structure of $A$ is defined by $\phi\cdot x = \xi_{p}x$ for $x\in A$. For any $a\in A$, we denote by $(A,a)$ the $\Z$-module $A\oplus\Z$ whose $\Z[G]$-module structure is defined by $\phi\cdot(x,k)=(\xi_{p}x+ka,k)$.

The following result is a slight adaptation of \cite[Theorem 74.3]{Reiner}.
\begin{thm}\label{CurtisReiner}
Let $T$ be a free $\Z$-module of finite rank and $G$ an automorphism group of prime order $p$. Then, we have an isomorphism of $\Z[G]$-modules:
\begin{equation}
T\simeq \bigoplus_{i=1}^{r} (A_i,a_{i})\bigoplus_{i=r+1}^{r+s}A_i\bigoplus \Z^{\oplus t},
\label{CR}
\end{equation}
where $r$, $s$, $t$ are integers, $A_i$ are ideals of $\mathcal{O}$ and $a_{i}\notin (\xi_{p}-1)A_i$, for all $1\leq i\leq r+s$.
Moreover, the integers $r$, $s$ and $t$ are uniquely determined by the $\Z[G]$-module structure of $T$. 
\end{thm}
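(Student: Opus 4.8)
The plan is to realize $T$ as an extension of a trivial $\Z[G]$-lattice by a lattice over the Dedekind ring $\mathcal{O}=\Z[\xi_p]$, and then to classify such extensions. The starting point is the factorization $X^p-1=(X-1)\Phi_p(X)$ in $\Z[X]$, which embeds $\Z[G]$ into the product $\Z\times\mathcal{O}$ of its two isotypic quotients. Concretely, writing $\Phi_p(\phi)=1+\phi+\dots+\phi^{p-1}\in\Z[G]$, I would set $K:=\Ker\Phi_p(\phi)\subseteq T$. Since $T/K\cong\Phi_p(\phi)T$ and $(\phi-1)\Phi_p(\phi)=\phi^p-1=0$, the quotient $T/K\cong\Phi_p(\phi)T\subseteq T^G$ is a torsion-free $\Z$-module with trivial $G$-action, say $\cong\Z^m$; while $K$ is annihilated by $\Phi_p(\phi)$, hence is a finitely generated torsion-free $\mathcal{O}$-module. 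Thus $T$ fits into a short exact sequence of $\Z[G]$-modules
$$0\to K\to T\to\Z^m\to 0,$$
with $K$ an $\mathcal{O}$-lattice and $\Z^m$ trivial.

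For existence, I would first apply the structure theorem for finitely generated torsion-free modules over the Dedekind domain $\mathcal{O}$ to write $K\cong\bigoplus_{j=1}^{n}A_j$ with $A_j$ ideals of $\mathcal{O}$, where $n=r+s$. The sequence above is then classified by its element of $\Ext^1_{\Z[G]}(\Z^m,K)\cong H^1(G,K)^{\oplus m}$. A direct computation gives $H^1(G,A_j)=A_j/(\xi_p-1)A_j\cong\F$ (here $N=\Phi_p(\phi)$ acts as $0$ on each $A_j$, so $\Ker N=A_j$ and $(\phi-1)A_j=(\xi_p-1)A_j$), so the extension class is recorded by a matrix $M\in\mathrm{Mat}_{n\times m}(\F)$. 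I would then reduce $M$ to normal form: column operations are realized by $\GL_m(\Z)\twoheadrightarrow\GL_m(\F)$ acting on the basis of $\Z^m$, while row operations are realized by automorphisms of $K=\bigoplus A_j$, using that $\Hom_{\mathcal{O}}(A_j,A_{j'})=A_{j'}A_j^{-1}\neq 0$ and that the cyclotomic units make $\mathcal{O}^\times$ surject onto $\F^\times=(\mathcal{O}/(\xi_p-1))^\times$, so that the induced action on $H^1(G,K)\cong\F^n$ fills out $\GL_n(\F)$. Hence $M$ can be brought to diagonal form with $r:=\rk_{\F}M$ entries equal to $1$ and the rest $0$.

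A vanishing entry means the corresponding block $0\to A_j\to E\to\Z\to 0$ has trivial class, hence splits and leaves $A_j$ and $\Z$ as separate summands; an entry $1$ gives the unique non-split extension, which is precisely the module $(A_j,a_j)$ with $a_j\notin(\xi_p-1)A_j$. Collecting blocks yields the decomposition with $r$ summands $(A_i,a_i)$, $s=n-r$ summands $A_i$, and $t=m-r$ summands $\Z$. I expect the main obstacle to be exactly this normalization step: one must verify that the non-trivial ideal classes (the $A_j$ may be non-principal and mutually non-isomorphic) do not obstruct the reduction, and that the residual "split" classes genuinely split off as $\Z[G]$-direct summands over $\Z$ rather than merely after $\otimes\Q$. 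Both points rest on the surjectivity statements above and on the fact that $\Ext^1_{\Z[G]}(\Z,A_j)=H^1(G,A_j)$ detects splitting on the nose.

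For uniqueness, I would reduce modulo $p$. Computing $\otimes_{\Z}\F$ summand by summand gives $\Z\otimes\F=N_1$, $A_j\otimes\F\cong\mathcal{O}/(\xi_p-1)^{p-1}=N_{p-1}$ (since $(p)=(\xi_p-1)^{p-1}$ is totally ramified and $\phi-1=\xi_p-1$ is nilpotent of index $p-1$), and $(A_i,a_i)\otimes\F=N_p$ (the condition $a_i\notin(\xi_p-1)A_i$ forces the generator of the $\Z$-factor to be cyclic of full length $p$). Hence $T\otimes\F\cong N_1^{\oplus t}\oplus N_{p-1}^{\oplus s}\oplus N_p^{\oplus r}$, and since the Jordan multiplicities of an $\F[G]$-module are well defined (Krull--Schmidt over the Artinian ring $\F[G]$), we read off $t=\ell_1(T)$, $s=\ell_{p-1}(T)$, $r=\ell_p(T)$. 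Thus $r,s,t$ depend only on the $\Z[G]$-module structure of $T$, which is the asserted uniqueness; as a byproduct this shows $\ell_q(T)=0$ for $1<q<p-1$.
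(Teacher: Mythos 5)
The paper offers no proof of this statement --- it is quoted directly from Curtis--Reiner (Theorem 74.3, i.e.\ the Diederichsen--Reiner classification of integral representations of cyclic groups of prime order) --- so there is nothing internal to compare against; your reconstruction is essentially the standard proof of that theorem. The existence half is sound: the splitting $0\rightarrow K\rightarrow T\rightarrow \Z^m\rightarrow 0$ with $K=\Ker\Phi_p(\phi)$ an $\mathcal{O}$-lattice, Steinitz' theorem, the identification of the extension class with a matrix over $\F\cong H^1(G,A_j)=A_j/(\xi_p-1)A_j$, and the normalization of that matrix are all correct. One intermediate claim is false as stated: $\GL_m(\Z)\rightarrow\GL_m(\mathbb{F}_p)$ is \emph{not} surjective for $p\geq 5$, since every matrix in the image has determinant $\pm1$ while $\det$ on $\GL_m(\mathbb{F}_p)$ takes all values in $\mathbb{F}_p^{\times}$. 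This does not break the argument: elementary column operations and permutations (which do lift to $\GL_m(\Z)$), combined with the full group $\GL_n(\F)$ of row operations that you correctly obtain from the cyclotomic units and from $\Hom_{\mathcal{O}}(A_j,A_{j'})=A_{j'}A_j^{-1}$, already suffice to reach the normal form $\begin{pmatrix}I_r&0\\0&0\end{pmatrix}$; but the surjectivity of the reduction map on $\GL_m(\Z)$ should not be invoked.

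The one genuine gap is in the uniqueness argument at $p=2$. There $\mathcal{O}=\Z$ and $N_{p-1}=N_1$, so your computation gives $T\otimes\mathbb{F}_2\cong N_1^{\oplus(t+s)}\oplus N_2^{\oplus r}$: reduction mod $p$ determines $r$ and the sum $t+s$, but cannot separate $t$ from $s$. (The paper itself records exactly this phenomenon in Theorem \ref{gene19} (iii), where $\ell_1(T)=\ell_+(T)+\ell_-(T)$ when $p=2$.) To complete uniqueness for $p=2$ you need one further invariant of the $\Z[G]$-module structure, for instance $\rk T^G=r+t$: each summand $(A_i,a_i)\cong\Z[G]$ and each trivial summand $\Z$ contributes $1$ to $\rk T^G$, while each sign module $A_i$ contributes $0$, so $r$, $t+s$ and $\rk T^G$ together pin down $r$, $s$, $t$. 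For odd $p$ your uniqueness argument is complete, and the summand-by-summand computations $(A_i,a_i)\otimes\F\cong N_p$ and $A_i\otimes\F\cong N_{p-1}$ are precisely the content of Lemmas \ref{r} and \ref{s} of the paper, which derives Theorem \ref{gene19} from them in the same way.
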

\begin{proof}
The previous result is \cite[Theorem 74.3]{Reiner} where the assumptions on the ideals $A_i$ have been modified. 
Indeed, in \cite[Theorem 74.3]{Reiner}, the ideals $A_i$ are assumed to be only $\mathcal{O}$-ideals in $K$. 
We show that the $A_i$ can be chosen as ideals of $\mathcal{O}$ without lost of generality.

Let $A$ be an $\mathcal{O}$-ideal in $K$.
Let $q$ be a common denominator of the elements of $A$ (this exists since $A$ is a finite $\Z$-module).
Then, the map $a\rightarrow qa$ is an isomorphism of $\Z\left[G\right]$-module from $A$ to $qA$, $qA$ being an ideal of $\mathcal{O}$.
Similarly, let $b\in A$ such that $b$ is not in $(\xi_p-1)A$. 
Then the map $(a,k)\rightarrow(qa,k)$ is an isomorphism of $\Z\left[G\right]$-module between $(A,b)$ and $(qA,qb)$.
\end{proof}
\begin{nota}\label{nota0}
We set $\ell_-(T):=s$ and $\ell_+(T):=t$.  

\end{nota}
\begin{lemme}\label{r}
Let $A$ be an ideal of $\mathcal{O}$ and $a\in A$ such that $a\notin (\xi_{p}-1)A$. Then:
$$(A,a)\otimes\F\simeq N_p.$$
\end{lemme}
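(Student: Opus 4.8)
The plan is to show that $M:=(A,a)\otimes\F$ is an indecomposable $\F[G]$-module of dimension $p$, which by the Jordan classification recalled in Section \ref{defiinvar} forces $M\simeq N_p$. First I would fix the dimension: since $\mathcal{O}=\Z[\xi_p]$ is free of rank $\varphi(p)=p-1$ over $\Z$ and any nonzero ideal $A$ has finite index in $\mathcal{O}$, the $\Z$-module $A$ is free of rank $p-1$; hence $(A,a)=A\oplus\Z$ has rank $p$ and $\dim_\F M=p$. Because $\phi$ is unipotent on every $\F[G]$-module (its minimal polynomial divides $(X-1)^p$), the operator $\psi:=\phi-\id$ is nilpotent with $\psi^p=0$, and $M$ decomposes as a direct sum of Jordan blocks $N_q$ whose sizes sum to $p$. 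The nilpotency index of $\psi$ equals the size of the largest block, so it suffices to prove $\psi^{p-1}\neq 0$ on $M$: that forces a block of size $\geq p$, and since $\dim_\F M=p$ this must be a single block $N_p$.

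The next step is a direct computation on the distinguished generator $\overline{(0,1)}\in M$. From the twisted action $\phi\cdot(x,k)=(\xi_p x+ka,k)$ one gets $\psi(x,k)=((\xi_p-1)x+ka,\,0)$, and by an immediate induction
\[
\psi^{m}(0,1)=\bigl((\xi_p-1)^{m-1}a,\,0\bigr)\qquad(m\geq 1).
\]
Thus $\psi^{p-1}(0,1)=\bigl((\xi_p-1)^{p-2}a,0\bigr)$, and the whole statement reduces to showing that $(\xi_p-1)^{p-2}a$ is nonzero in $A/pA$, i.e.\ that $(\xi_p-1)^{p-2}a\notin pA$.

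This is where the hypothesis $a\notin(\xi_p-1)A$ enters, and it is the part I expect to require the most care. I would use that $\mathcal{O}$ is a Dedekind domain in which $\mathfrak{p}:=(\xi_p-1)$ is the unique prime above $p$, with $(p)=\mathfrak{p}^{p-1}$, so that $\xi_p-1$ is a unit at every prime $\neq\mathfrak{p}$. Writing $v$ for the $\mathfrak{p}$-adic valuation and $m:=v(A)$, membership $a\in A$ gives $v(a)\geq m$, while $a\notin\mathfrak{p}A$ forbids $v(a)\geq m+1$ (the valuation conditions at the other primes being automatic from $a\in A$); hence $v(a)=m$ exactly. Consequently $v\bigl((\xi_p-1)^{p-2}a\bigr)=m+p-2$, whereas $v(pA)=m+p-1$, and at primes $\neq\mathfrak p$ the element $(\xi_p-1)^{p-2}a$ and $pA$ have the same valuation as $a$ and $A$; therefore $(\xi_p-1)^{p-2}a\notin pA$. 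This yields $\psi^{p-1}\neq0$ and completes the argument. The only genuine subtlety is the element-level (as opposed to ideal-level) bookkeeping showing that $a\notin\mathfrak pA$ is equivalent to $v(a)=v(A)$; localizing at $\mathfrak p$, where $\mathcal{O}_{\mathfrak p}$ is a DVR with uniformizer $\xi_p-1$, makes this transparent and could be used to streamline the valuation computation.
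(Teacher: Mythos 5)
Your proof is correct and follows essentially the same route as the paper's: since $\sigma=\phi^{p-1}+\dots+\phi+\id$ equals $(\phi-\id)^{p-1}$ in $\F[G]$, your computation of $\psi^{p-1}(0,1)$ is the mod-$p$ reduction of the paper's computation of $\sigma\cdot(0,1)$, and both arguments conclude by the Jordan classification from the non-vanishing of this class together with $\dim_{\F}(A,a)\otimes\F=p$. The only point of divergence is the justification of the final non-divisibility: the paper uses the elementary identity $\left(\sum_{j=0}^{p-2}(p-j-1)\xi_p^j\right)(\xi_p-1)=-p$ to contradict $a\notin(\xi_p-1)A$ directly, whereas you invoke the total ramification of $p$ in $\Z[\xi_p]$ and a valuation count at the prime $(\xi_p-1)$ --- both are valid, yours requiring slightly more algebraic number theory but making transparent that the hypothesis says exactly $v_{(\xi_p-1)}(a)=v_{(\xi_p-1)}(A)$.
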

\begin{proof}
Let $\sigma=\phi^{p-1}+\cdots+\phi+\id$.
If $p=2$, we have $\sigma\cdot(0,1)=(a,2)$. Since $a\notin 2\Z$, the image $\sigma\cdot\overline{(0,1)}$ in $(A,a)\otimes\F$ is non-zero.
Since $N_1\subset \Ker \sigma$, necessarily $(A,a)\otimes\F=N_2$.

Now we assume that $p\geq 3$.
We can compute that:
$$\sigma\cdot(0,1)=\left(\left(\sum_{j=0}^{p-2}(p-j-1)\xi_p^j\right)a,p\right).$$
If $\sigma\cdot(0,1)$ was divisible by $p$, we would have:
$$\left(\sum_{j=0}^{p-2}(p-j-1)\xi_p^j\right)a=py,$$
for $y\in A$. However, we have $(\sum_{j=0}^{p-2}(p-j-1)\xi_p^j)(\xi_{p}-1)=-p$.
Hence, we would have $a\in (\xi_{p}-1)A$ which is a contradiction. 
Therefore, $\sigma\cdot(0,1)$ is not divisible by $p$. So its image $\sigma\cdot\overline{(0,1)}$ in $(A,a)\otimes\F$ is non-zero.
Moreover, we can apply the Jordan decomposition to $(A,a)\otimes\F$. Since for all $q<p$, we have $N_q\subset \Ker \sigma$,
necessarily $(A,a)\otimes\F\supset N_p$. However since $\mathcal{O}/A$ is finite, $A$ is a free $\Z$-module of rank $p-1$ and $(A,a)$ is a free $\Z$-module of rank $p$. It follows that $\dim_{\F} (A,a)\otimes\F=\dim_{\F} N_p=p$; therefore $(A,a)\otimes\F= N_p$.
%Hence by a dimensional argument $(A,a)\otimes\F= N_p$.
\end{proof}
Let $A$ be a non trivial ideal of $\mathcal{O}$.
Then, the group $\mathcal{O}/A$ is a finite group and the \emph{norm} $N(A)$ of $A$ is defined by $N(A):= \#\mathcal{O}/A$.
\begin{lemme}\label{number}
Let $P$ be a prime ideal of $\mathcal{O}$ such that $p$ divides $N(P)$, then $P=(\xi_p-1)$.
\end{lemme}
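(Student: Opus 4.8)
The plan is to use the standard arithmetic of the cyclotomic ring $\mathcal{O}=\Z[\xi_p]$, which is the full ring of integers of $\Q(\xi_p)$ and in particular a Dedekind domain. First I would reduce the hypothesis to a statement about which rational prime $P$ lies over. Since $P$ is a nonzero prime, the contraction $P\cap\Z$ is a nonzero prime ideal of $\Z$, hence equal to $(\ell)$ for a rational prime $\ell$, and $\mathcal{O}/P$ is a finite field of characteristic $\ell$. Therefore $N(P)=\#(\mathcal{O}/P)$ is a power of $\ell$. The assumption $p\mid N(P)$ then forces $\ell=p$, i.e. $p\in P$, so that $P$ divides the ideal $(p)$.

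The second step is to pin down the prime $(\xi_p-1)$ and the factorization of $(p)$. Evaluating the cyclotomic polynomial $\Phi_p(X)=\prod_{i=1}^{p-1}(X-\xi_p^i)=1+X+\dots+X^{p-1}$ at $X=1$ gives $\prod_{i=1}^{p-1}(1-\xi_p^i)=p$. Each factor $1-\xi_p^i$ is an associate of $\xi_p-1$: indeed $\frac{1-\xi_p^i}{1-\xi_p}=1+\xi_p+\dots+\xi_p^{i-1}\in\mathcal{O}$, and choosing $j$ with $ij\equiv 1\pmod p$ shows that $\frac{1-\xi_p}{1-\xi_p^i}=1+\xi_p^i+\dots+(\xi_p^i)^{j-1}\in\mathcal{O}$ as well, so the ratio is a unit. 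Hence $p=u\,(\xi_p-1)^{p-1}$ for a unit $u$, i.e. $(p)=(\xi_p-1)^{p-1}$ as ideals. Moreover $(\xi_p-1)$ is maximal, because $\mathcal{O}/(\xi_p-1)\simeq \Z[X]/(\Phi_p(X),X-1)\simeq \Z/(\Phi_p(1))=\Z/(p)=\F$ is a field.

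The conclusion is then immediate: from $p\in P$ and $p=u\,(\xi_p-1)^{p-1}$ we get $(\xi_p-1)^{p-1}\subseteq P$, and since $P$ is prime this yields $\xi_p-1\in P$, hence $(\xi_p-1)\subseteq P$; as $(\xi_p-1)$ is maximal and $P\neq\mathcal{O}$, we obtain $P=(\xi_p-1)$. (Equivalently, one reads this off the factorization $(p)=(\xi_p-1)^{p-1}$: in a Dedekind domain the only prime dividing a power of a single prime is that prime.)

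I expect the only genuine content — the main obstacle — to be establishing the total ramification of $p$, that is, the identity $(p)=(\xi_p-1)^{p-1}$ together with the primality of $(\xi_p-1)$; everything else is formal. The cleanest way to dispatch this is the residue computation $\mathcal{O}/(\xi_p-1)\simeq\F$ above, which at once gives maximality of $(\xi_p-1)$, so that the delicate point can be handled without invoking any heavier ramification theory.
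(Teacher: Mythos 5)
Your proof is correct and follows exactly the same two-step route as the paper, which simply cites \cite[Theorems 7.12 and 7.18]{cours7} for the facts that $p\mid N(P)$ forces $p\in P$ and that the unique prime of $\mathcal{O}$ above $p$ is $(\xi_p-1)$; you have merely supplied self-contained proofs of those two cited facts (via the residue characteristic of $\mathcal{O}/P$ and the total ramification $(p)=(\xi_p-1)^{p-1}$), and both arguments are standard and sound.
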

\begin{proof}
If $p=2$, then $\mathcal{O}=\Z$ and the prime ideals of $\mathcal{O}$ are given by $p'\Z$ with $p'$ a prime number.
If 2 divides $N(P)$ then $P=2\Z$.

Now we assume that $p\geq3$. 
First note that $P\cap\Z$ is a prime ideal of $\Z$. Hence:
$$P\cap\Z=p'\Z,$$
with $p'$ a	prime number.
Moreover, 
since $(1,\xi_p,\xi_p^2,...,\xi_p^{p-2})$ is a basis of $\mathcal{O}$ as a $\Z$-module, we have:
\begin{equation}
\mathcal{O}/p'\mathcal{O}\simeq \Z/p'\Z[\xi_p]:=(\Z/p'\Z[X])/(\overline{\Phi_p}),
\label{lemmetrivial}
\end{equation}
where $\overline{\Phi_p}\in \Z/p'\Z[X]$ is the reduction modulo $p'$ of the $p^\mathrm{th}$ cyclotomic polynomial. 
In particular:
\begin{equation}
 N(p'\mathcal{O})=p'^{p-1}.
\label{lemmetrivial2}
\end{equation}
Since $p'\mathcal{O}\subset P$, we have that
$\left. N(P) \right| N(p'\mathcal{O}).$
Since $p$ divide $N(P)$, we have by (\ref{lemmetrivial2}) that $p'=p$.
Therefore $\overline{\Phi_p}=(X-1)^{p-1}$. In particular, $(\Z/p\Z[X])/(\overline{\Phi_p})$ has only one prime ideal which is 
$(\overline{X-1})$, with $\overline{X-1}$ the reduction modulo $p$ of $X-1$. By (\ref{lemmetrivial}), it means that $\mathcal{O}/p\mathcal{O}$ has only one prime ideal which is $(\xi_p-1)/p\mathcal{O}$. Since $P/p\mathcal{O}$ is a prime ideal of $\mathcal{O}/p\mathcal{O}$,
it follows that $(\xi_p-1)/p\mathcal{O}=P/p\mathcal{O}$. Since both $(\xi_p-1)$ and $P$ are ideals containing $p\mathcal{O}$, we obtain that:
$$P=(\xi_p-1).$$
\end{proof}
\begin{lemme}\label{s}
Let $A$ be an ideal of $\mathcal{O}$ then, we have an isomorphism of $\F[G]$-module:
$$A\otimes\F\simeq N_{p-1}.$$
\end{lemme}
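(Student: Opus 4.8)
The plan is to identify $A\otimes\F=A/pA$ explicitly as an $\F[G]$-module, on which $\phi$ acts by multiplication by $\xi_p$, so that $N:=\phi-\id$ acts by multiplication by $\xi_p-1$. Since $A$ is a nonzero ideal of $\mathcal{O}$, it is a free $\Z$-module of rank $[\Q(\xi_p):\Q]=p-1$, and hence $A\otimes\F$ is a $(p-1)$-dimensional $\F$-vector space on which $N$ is nilpotent (its minimal polynomial divides $X^{p}-1=(X-1)^{p}$ in $\F[X]$). The whole problem therefore reduces to computing the Jordan type of the nilpotent operator $N$: I want to show that $N$ has nilpotency index exactly $p-1$, because a nilpotent endomorphism of a $(p-1)$-dimensional space with $N^{p-2}\neq0$ necessarily consists of a single Jordan block of size $p-1$, which is precisely $N_{p-1}$.

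To control the powers of $N$ I would use the arithmetic of the prime $(\xi_p-1)$. By Lemma \ref{number} it is the unique prime of $\mathcal{O}$ above $p$, and cyclotomic ramification gives the factorization $(p)=(\xi_p-1)^{p-1}$, so that $pA=(\xi_p-1)^{p-1}A$. I then consider the filtration
\[
A\supset(\xi_p-1)A\supset(\xi_p-1)^{2}A\supset\cdots\supset(\xi_p-1)^{p-1}A=pA.
\]
Because $A$ is an invertible ideal in the Dedekind domain $\mathcal{O}$, multiplication by $(\xi_p-1)^{i}$ identifies each successive quotient with $A/(\xi_p-1)A\simeq\mathcal{O}/(\xi_p-1)\simeq\F$, so each of the $p-1$ graded pieces is one-dimensional over $\F$ (this also reconfirms $\dim_\F A/pA=p-1$). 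Passing to $A/pA$, the operator $N$ sends the image of $(\xi_p-1)^{i}A$ onto the image of $(\xi_p-1)^{i+1}A$; iterating, $N^{p-2}$ is nonzero while $N^{p-1}=0$, which is exactly the nilpotency index $p-1$ I was after. This yields $A\otimes\F\simeq N_{p-1}$.

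I expect the main obstacle to be the bookkeeping in the second step, namely justifying that every graded piece $(\xi_p-1)^{i}A/(\xi_p-1)^{i+1}A$ is one-dimensional over $\F$ and that $N$ maps each piece onto the next. A clean alternative, which I would use if the Dedekind-domain argument feels too heavy, is to reduce to the case $A=\mathcal{O}$: localizing at the single prime $(\xi_p-1)$ above $p$ shows that $A_{(\xi_p-1)}$ is free of rank one over $\mathcal{O}_{(\xi_p-1)}$, whence an isomorphism of $\mathcal{O}$-modules, a fortiori of $\F[G]$-modules, $A/pA\simeq\mathcal{O}/p\mathcal{O}$. For $A=\mathcal{O}$ the computation is then completely explicit: $\mathcal{O}/p\mathcal{O}\simeq\F[X]/\bigl(1+X+\cdots+X^{p-1}\bigr)=\F[X]/(X-1)^{p-1}$, and setting $Y=X-1$ the action of $\phi$, i.e.\ multiplication by $X=1+Y$, is visibly a single Jordan block on $\F[Y]/(Y^{p-1})$, namely $N_{p-1}$.
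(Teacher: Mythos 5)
Your argument is correct, but it takes a genuinely different route from the paper. The paper reduces everything to the known isomorphism $\mathcal{O}\otimes\F\simeq N_{p-1}$ (quoted from the proof of Proposition 5.1 of Boissi\`ere--Nieper-Wisskirchen--Sarti): when $p\nmid N(A)$ the inclusion $A\subset\mathcal{O}$ induces an injection $A\otimes\F\hookrightarrow\mathcal{O}\otimes\F$ which is an isomorphism by dimension count, and when $p\mid N(A)$ it factors $A$ into primes, strips off the principal factor $(\xi_p-1)$ (the only prime of norm divisible by $p$, by Lemma \ref{number}) without changing the $\Z[G]$-isomorphism class, and concludes by induction. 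You instead compute the Jordan type directly: $\phi-\id$ acts on $A/pA$ as multiplication by $\xi_p-1$, and the total ramification $(p)=(\xi_p-1)^{p-1}$ together with the filtration $A\supset(\xi_p-1)A\supset\cdots\supset(\xi_p-1)^{p-1}A=pA$ (whose graded pieces are all isomorphic to $\mathcal{O}/(\xi_p-1)\simeq\F$) forces the nilpotency index to be exactly $p-1$ on a $(p-1)$-dimensional space, hence a single Jordan block. Both of your variants are sound; the only standard facts you need to spell out are that multiplication by $(\xi_p-1)^{i}$ identifies $A/(\xi_p-1)A$ with $(\xi_p-1)^{i}A/(\xi_p-1)^{i+1}A$ and that $\#\left(A/(\xi_p-1)A\right)=N\bigl((\xi_p-1)\bigr)=p$, or, in the localization variant, that $A/pA$ is supported only at $(\xi_p-1)$ so that $A/pA\simeq\mathcal{O}/p\mathcal{O}\simeq\F[Y]/(Y^{p-1})$. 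Your approach buys a uniform, case-free and self-contained proof (it even reproves $\mathcal{O}\otimes\F\simeq N_{p-1}$ rather than citing it), at the cost of invoking slightly more Dedekind-domain machinery; the paper's proof is shorter given the external reference but needs the case split and the induction on the prime factorization. Do make explicit, as you did in passing, that $A$ is assumed nonzero, since the statement fails for $A=0$.
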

\begin{proof}
When $p=2$, we have  $\mathcal{O}=\Z$ and $A$ is principal, so the result follows immediately.

We assume that $p\geq3$.
We know from the proof of \cite[Proposition 5.1]{SmithTh} that: 
\begin{equation}
\mathcal{O}\otimes\F\simeq N_{p-1}.
\label{boissequa}
\end{equation}
If $p$ does not divide $N(A)$, the map $A\otimes\F\rightarrow \mathcal{O}\otimes\F\simeq N_{p-1}$ induced by the inclusion $A\subset \mathcal{O}$ is an injection. Hence by a dimension argument, it is an isomorphism. So by (\ref{boissequa}), we have our result in this case.

Now assume that $p|N(A)$.
By \cite[Theorem 18.10]{Reiner}, $A$ can be written as the product of prime ideals $P_1,...,P_i$.
Moreover by \cite[Theorem 20.2]{Reiner}, we have: 
$$N(A)=N(P_1)\cdots N(P_i).$$
Therefore, there exists $1\leq j\leq i$ such that $p$ divides $N(P_j)$. 
Then by Lemma \ref{number}, $P_j=(\xi_p-1)$. 
We can consider $A'=P_1\cdots P_{j-1}P_{j+1}\cdots P_i$. Since $P_j$ is principal, we have an isomorphism of $\Z[G]$-modules $A\simeq A'$.
By induction, we will obtain $A''$ an ideal of $\mathcal{O}$ isomorphic to $A$ as a $\Z[G]$-module and such that $p$ does not divide $N(A'')$.
Then, we are back to the first case.
\end{proof}
As a consequence, we obtain the following generalization of \cite[Proposition 5.1]{SmithTh} for $p>19$.
The main improvement is that Lemmas \ref{r} and \ref{s} are proven without assuming $\Z[\xi_p]$ being a PID. 
\begin{thm}\label{gene19}
Let $T$ be a free $\Z$-module endowed with the action of an automorphism group of prime order $p$. 
Let $r$ be the integer obtained by applying the decomposition of Theorem \ref{CurtisReiner} to $T$.
Then: 
\begin{itemize}
\item[(i)]
$\ell_{p}(T)=r$,
\item[(ii)]
if $p\geq 3$, $\ell_+(T)=\ell_{1}(T)$ and $\ell_-(T)=\ell_{p-1}(T)$,
\item[(iii)]
if $p=2$, $\ell_1(T)=\ell_+(T)+\ell_-(T)$,
\item[(iv)]
for all $2\leq q \leq p-2$, $\ell_q(T)=0$,
\item[(v)]
$\rk T=\ell_+(T)+(p-1)\ell_{-}(T)+p\ell_{p}(T)\ \text{and}\ \rk T^G=\ell_+(T)+\ell_{p}(T).$
\end{itemize}
\end{thm}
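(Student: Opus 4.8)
The plan is to reduce everything to the Curtis--Reiner decomposition of Theorem \ref{CurtisReiner} and then tensor it with $\F$, reading off the Jordan type from Lemmas \ref{r} and \ref{s}. Starting from
$$T\simeq \bigoplus_{i=1}^{r}(A_i,a_i)\oplus\bigoplus_{i=r+1}^{r+s}A_i\oplus\Z^{\oplus t},$$
I would apply $-\otimes\F$, which is additive. Lemma \ref{r} gives $(A_i,a_i)\otimes\F\simeq N_p$, Lemma \ref{s} gives $A_i\otimes\F\simeq N_{p-1}$, and $\Z\otimes\F\simeq N_1$ (the one-dimensional trivial module, since $G$ acts trivially on the $\Z$-summands), so that
$$T\otimes\F\simeq N_p^{\oplus r}\oplus N_{p-1}^{\oplus s}\oplus N_1^{\oplus t}.$$
By definition $\ell_q(T)=\ell_q(T\otimes\F)$ counts the number of Jordan blocks $N_q$, and this count is well defined by the uniqueness (Krull--Schmidt) of the decomposition into the indecomposables $N_1,\dots,N_p$.

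Next I would read off the invariants, distinguishing two cases according to whether the block sizes $p$, $p-1$, $1$ are pairwise distinct. If $p\geq 3$ they are, so uniqueness forces $\ell_p(T)=r$, $\ell_{p-1}(T)=s=\ell_-(T)$, $\ell_1(T)=t=\ell_+(T)$, and $\ell_q(T)=0$ for every other $q$; in particular $\ell_q(T)=0$ for $2\leq q\leq p-2$. This establishes (i), (ii) and (iv) simultaneously. If $p=2$ then $N_{p-1}=N_1$, so the displayed isomorphism collapses to $T\otimes\F\simeq N_2^{\oplus r}\oplus N_1^{\oplus(s+t)}$, whence $\ell_2(T)=r=\ell_p(T)$ (consistent with (i)) and $\ell_1(T)=s+t=\ell_-(T)+\ell_+(T)$, which is (iii); the range in (iv) is then empty.

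For (v), the first identity is immediate by comparing dimensions over $\F$:
$$\rk T=\dim_{\F}(T\otimes\F)=pr+(p-1)s+t=p\,\ell_p(T)+(p-1)\ell_-(T)+\ell_+(T).$$
For the rank of the invariants I cannot use the $\F$-invariants directly (that would count $r+s+t$, since $H^0(G,N_q)=\F$ for every $q$ by Lemma \ref{HGNq}); instead I would pass to $\Q$, using that taking invariants commutes with $-\otimes\Q$, so that $\rk T^G=\dim_{\Q}(T\otimes\Q)^G$. Here the point is that $A_i\otimes\Q\simeq\Q(\xi_p)$ carries a fixed-point-free action of $\phi$ (multiplication by $\xi_p\neq 1$) and hence contributes $0$, whereas on $(A_i,a_i)\otimes\Q$ the equation $\phi\cdot(x,k)=(x,k)$, i.e.\ $(\xi_p-1)x=-ka_i$, has a unique solution $x$ for each scalar $k$ (because $\xi_p-1$ is invertible in $\Q(\xi_p)$), giving a one-dimensional invariant subspace, and $\Z\otimes\Q$ is trivial and contributes $1$. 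Summing over the factors yields $\rk T^G=r+t=\ell_p(T)+\ell_+(T)$, the second identity of (v).

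The only genuine obstacles are bookkeeping ones: handling the collision $N_{p-1}=N_1$ at $p=2$, and resisting the temptation to compute $\rk T^G$ from the $\F[G]$-structure. The correct quantity is the multiplicity of the trivial rational representation, which must be extracted over $\Q$ rather than over $\F$.
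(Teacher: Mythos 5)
Your proof is correct and follows essentially the same route as the paper: tensor the Curtis--Reiner decomposition with $\F$ and read off the Jordan type from Lemmas \ref{r} and \ref{s}, which gives (i)--(iv) exactly as in the paper's one-line argument (including the collision $N_{p-1}=N_1$ at $p=2$, which is why the paper states (iii) separately). The only divergence is in (v), where the paper defers to the proof of \cite[Lemma 1.8]{Mong} while you supply a correct self-contained argument --- additivity of rank over the Curtis--Reiner factors for the first identity, and the computation of the rational invariants of each factor ($0$ for $A_i\otimes\Q$, $1$ for $(A_i,a_i)\otimes\Q$ and for $\Q$) for the second; your observation that $\rk T^G$ must be extracted over $\Q$ rather than from the $\F[G]$-structure is exactly the right point of care.
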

\begin{proof}
 Tensorizing (\ref{CR}) by $\F$, we obtain (i), (ii), (iii) and (iv) from Lemmas \ref{r} and \ref{s}. Statement (v) is proved in the proof of \cite[Lemma 1.8]{Mong}.
\end{proof}
\begin{rmk}
Let $T$ be a free $\Z$-module.
From the previous theorem, we see that the integers $\ell_+(T)$ and $\ell_-(T)$ are more relevant in the case $p=2$;
for this reason, it will be use preferentially in the rest of the paper. The integers $\ell_+(T)$ and $\ell_-(T)$ are also designated as Boissi\`ere--Nieper-Wisskirchen--Sarti invariants of $T$.
\end{rmk}

When $T$ is a lattice, there is an easy technique to compute the Boissi\`ere--Nieper-Wisskirchen--Sarti invariants in practice. Assume that we know
$\rk T$, $\rk T^G$, $\discr T^G$ and $\discr (T^G)^\bot$, then the Boissi\`ere--Nieper-Wisskirchen--Sarti invariants
can be computed using (\ref{BasicLatticeTheory}), Theorem  \ref{gene19} (v) and the proposition below.
\begin{prop}[\emph{cf.} \protect{\cite[Lemma~1.8]{Mong}}]\label{Mongprop}
Let $T$ be a lattice endowed the action of an automorphism group $G=\left< \phi\right>$ of prime order $p$. Then:
$$\frac{T}{T^G\oplus^{\bot}\Ker \sigma}\simeq(\Z/p\Z)^{\ell_p(T)},$$
where $\sigma= \phi^{p-1}+\cdots+\phi+\id$.
\end{prop}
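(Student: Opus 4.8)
The plan is to reduce everything to the Curtis--Reiner decomposition of Theorem \ref{CurtisReiner} and to compute the contribution of each indecomposable summand separately. First I would record two structural facts. Since every $\phi^i$ is an isometry of the lattice $T$, the adjoint of $\phi$ is $\phi^{-1}$, so $\sigma=\sum_{i=0}^{p-1}\phi^i$ is self-adjoint. As $\sigma$ restricts to multiplication by $p$ on $T^G$ and vanishes on $\Ker\sigma$, for $u\in T^G$ and $v\in\Ker\sigma$ one gets $p\langle u,v\rangle=\langle\sigma u,v\rangle=\langle u,\sigma v\rangle=0$, whence $\langle u,v\rangle=0$; this shows $T^G\perp\Ker\sigma$ and, together with $T^G\cap\Ker\sigma=0$ (an element there would satisfy $pu=\sigma u=0$), justifies the orthogonal direct sum, which as a subgroup is simply $T^G+\Ker\sigma$. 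The second fact is that both $T^G$ and $\Ker\sigma$ are computed summand-by-summand in any $\Z[G]$-module decomposition $T=\bigoplus_j M_j$, since they are the invariants and the kernel of the $\Z[G]$-endomorphism $\sigma$. Hence $T/(T^G\oplus^{\bot}\Ker\sigma)\simeq\bigoplus_j M_j/(M_j^G+\Ker(\sigma|_{M_j}))$, and it suffices to treat the three types of summand in (\ref{CR}).

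For a trivial summand $M=\Z$ one has $M^G=M$ and $\sigma=p\,\id$, so $\Ker\sigma=0$ and the quotient vanishes. For a summand $M=A_i$ (an ideal of $\mathcal{O}$ with $\phi$ acting by $\xi_p$), $\sigma$ acts as $1+\xi_p+\dots+\xi_p^{p-1}=0$, so $\Ker\sigma=M$, while $M^G=0$ because $\mathcal{O}$ is a domain and $\xi_p\neq1$; again the quotient vanishes. Thus only the $r$ summands of type $(A_i,a_i)$ can contribute, and recall from Theorem \ref{gene19}(i) that $r=\ell_p(T)$.

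The heart of the argument, and the only delicate point, is the summand $M=(A,a)$ with $a\notin(\xi_p-1)A$, realised as the $\Z$-module $A\oplus\Z$ with $\phi(x,k)=(\xi_p x+ka,k)$. A direct computation gives $\phi^i(x,k)=(\xi_p^i x+\tfrac{\xi_p^i-1}{\xi_p-1}ka,k)$, and summing over $i$, using $\sum_i\xi_p^i=0$, yields $\sigma(x,k)=(\tfrac{-p}{\xi_p-1}ka,\,pk)$. In particular the second coordinate of $\sigma(x,k)$ is $pk$, so $\Ker\sigma=\{(x,0):x\in A\}$ and projection onto the second factor identifies $M/\Ker\sigma$ with $\Z$. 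It then remains to identify the image of $M^G$ under this projection. An element $(x,k)$ is invariant exactly when $(\xi_p-1)x=-ka$, i.e.\ when $ka\in(\xi_p-1)A$; since $A$ is an invertible $\mathcal{O}$-module, $A/(\xi_p-1)A\simeq\mathcal{O}/(\xi_p-1)\simeq\F$ is one-dimensional, and the hypothesis $a\notin(\xi_p-1)A$ means $a$ generates it, so $ka\in(\xi_p-1)A$ if and only if $p\mid k$. Conversely, for every $k\in p\Z$ the equation $(\xi_p-1)x=-ka$ is solvable in $A$ (as $ka\in pA\subseteq(\xi_p-1)A$), giving an invariant element. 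Hence the image of $M^G$ in $M/\Ker\sigma\simeq\Z$ is exactly $p\Z$, so $M/(M^G+\Ker\sigma)\simeq\Z/p\Z$. Summing over the $r$ summands gives $T/(T^G\oplus^{\bot}\Ker\sigma)\simeq(\Z/p\Z)^{r}=(\Z/p\Z)^{\ell_p(T)}$. The main obstacle is precisely the bookkeeping in this last case---correctly pinning down $\Ker\sigma$ and the second coordinates of $M^G$---and the one input to be used carefully is the hypothesis $a\notin(\xi_p-1)A$ together with the fact that $A/(\xi_p-1)A\simeq\F$.
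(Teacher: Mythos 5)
Your argument is correct. Note that the paper does not prove this proposition at all: it is imported verbatim from \cite{Mong}, Lemma 1.8, so there is no in-paper proof to compare against. Your route — reducing to the Curtis--Reiner decomposition of Theorem \ref{CurtisReiner} and checking each indecomposable summand — is sound: the orthogonality and directness of $T^G\oplus^{\bot}\Ker\sigma$ are correctly derived from the self-adjointness of $\sigma$ and torsion-freeness; the quotient $T/(T^G+\Ker\sigma)$ is a $\Z[G]$-module invariant, so computing it summand-by-summand is legitimate even though the Curtis--Reiner decomposition is not an orthogonal one; the trivial and $A_i$ summands visibly contribute nothing; and your computation $\sigma(x,k)=\bigl(\tfrac{-p}{\xi_p-1}ka,\,pk\bigr)$ on a summand $(A,a)$ agrees with the one the paper carries out inside the proof of Lemma \ref{r}, with the hypothesis $a\notin(\xi_p-1)A$ used exactly where it must be (to see that the image of $M^G$ in $M/\Ker\sigma\simeq\Z$ is $p\Z$ and not all of $\Z$). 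The final identification of the number of contributing summands with $\ell_p(T)$ via Theorem \ref{gene19}(i) closes the argument.
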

As previously, these invariants can be used to compute the cohomology of $G$.
The following proposition is given in \cite[Proposition 4.1]{Lol3} for $p\leq 19$. 
\begin{lemme}\label{equiv+}
Let $T$ be a $p$-torsion-free $\Z$-module of finite rank endowed with the action of an automorphism group $G$ of prime order $p$. 
Then for all $i\in\mathbb{N}^{*}$:
\begin{itemize}
\item[(i)]
$H^{0}(G,T)=T^{G}$,
\item[(ii)]
$H^{2i-1}(G,T)=(\Z/p\Z)^{\ell_{-}(T_f)}$,
\item[(iii)]
$H^{2i}(G,T)=(\Z/p\Z)^{\ell_{+}(T_f)}$.
\end{itemize}
\end{lemme}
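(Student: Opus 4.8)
The plan is to compute $H^i(G,T)$ through the $2$-periodicity of the cohomology of the cyclic group $G=\langle\phi\rangle$ of order $p$, after first reducing to the free $\Z[G]$-module $T_f$ and then decomposing the latter via Theorem \ref{CurtisReiner}. Statement (i) requires no work: $H^0(G,T)=T^G$ holds by the definition of group cohomology in degree zero. So I only need to treat the degrees $i\geq 1$ appearing in (ii) and (iii).

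First I would reduce to the torsion-free part. Since $T$ is $p$-torsion free, its torsion subgroup $\tors T$ is a torsion group on which multiplication by $p=|G|$ is invertible (on each $\ell$-primary component with $\ell\neq p$, the coprime integer $p$ acts bijectively), and therefore $H^i(G,\tors T)=0$ for all $i>0$. Feeding the short exact sequence $0\to \tors T\to T\to T_f\to 0$ into the long exact sequence of group cohomology then yields $H^i(G,T)\simeq H^i(G,T_f)$ for every $i\geq 1$, so it suffices to establish (ii) and (iii) for $T_f$. As $T$ is finitely generated, $T_f$ is a $\Z[G]$-lattice and Theorem \ref{CurtisReiner} gives $T_f\simeq \bigoplus_{i=1}^r (A_i,a_i)\oplus\bigoplus_{i=r+1}^{r+s}A_i\oplus\Z^{\oplus t}$, where by definition $s=\ell_-(T_f)$ and $t=\ell_+(T_f)$. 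Since group cohomology commutes with finite direct sums, everything reduces to the three building blocks, for which I would invoke the standard formulas for $i\geq 1$,
$$H^{2i}(G,M)=M^G/\sigma M,\qquad H^{2i-1}(G,M)=\Ker(\sigma|_M)\big/\Image(\phi-1),$$
with $\sigma=\phi^{p-1}+\dots+\phi+\id$ the norm element. For $M=\Z$ with trivial action $\sigma$ is multiplication by $p$, giving $H^{2i}=\Z/p\Z$ and $H^{2i-1}=0$. For $M=A$ an ideal of $\mathcal{O}$, $\sigma$ acts as $\sum_j\xi_p^j=0$ and $A^G=0$ (since $\mathcal{O}$ is a domain and $\xi_p\neq 1$), so $H^{2i}=0$, while $H^{2i-1}=A/(\xi_p-1)A\simeq\mathcal{O}/(\xi_p-1)\simeq\Z/p\Z$, using that $(\xi_p-1)$ is the prime above $p$ with residue field $\F$ and that $A$ is an invertible $\mathcal{O}$-module.

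The hard part is the block $(A,a)$, where the hypothesis $a\notin(\xi_p-1)A$ is decisive. Reusing the computation $\sigma\cdot(0,1)=(\lambda a,p)$ from the proof of Lemma \ref{r} (with $\lambda(\xi_p-1)=-p$), I would show that both $(A,a)^G$ and $\sigma\cdot(A,a)$ coincide with the rank-one submodule $\Z\cdot(\lambda a,p)$, whence $H^{2i}(G,(A,a))=0$; and that $\Ker\sigma=A\times\{0\}$ while $\Image(\phi-1)=(\xi_p-1)A+\Z a=A$, the last equality holding precisely because the class of $a$ generates $A/(\xi_p-1)A\simeq\Z/p\Z$, whence $H^{2i-1}(G,(A,a))=0$. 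Thus the $(A,a)$-summands are cohomologically trivial, consistently with $(A,a)\otimes\F\simeq N_p$ and Lemma \ref{HGNq}(ii). Summing the three contributions, only the $t$ trivial summands survive in even degree and only the $s$ ideal summands in odd degree, giving $H^{2i}(G,T_f)=(\Z/p\Z)^{\ell_+(T_f)}$ and $H^{2i-1}(G,T_f)=(\Z/p\Z)^{\ell_-(T_f)}$ for $i\geq 1$; combined with the reduction of the second paragraph this proves (ii) and (iii).
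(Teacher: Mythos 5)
Your proof is correct and follows essentially the same route as the paper, which simply defers to the argument of \cite[Proposition 4.1]{Lol3} (Curtis--Reiner decomposition plus the $2$-periodic cohomology of a cyclic group, with Lemma \ref{r} handling the $(A,a)$-blocks); you have merely written out in full the block-by-block computation that the paper outsources to that reference. In particular your direct verification that the $(A,a)$-summands are cohomologically trivial, using $a\notin(\xi_p-1)A$ and the invertibility of $A$ as an $\mathcal{O}$-module, is exactly the point where the restriction $p\leq 19$ of \cite{Lol3} is removed.
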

\begin{proof}
The proof of (i) and (ii) is identical to the one given in \cite[Proposition 4.1]{Lol3}.
The proof of statement (iii) can be copy word by word from the proof of \cite[Proposition 4.1 (iii)]{Lol3} considering Lemma \ref{r} instead of \cite[Proposition 2.2 (ii)]{Lol3}.
\end{proof}
We adopt the specific following notation when $T$ is a cohomology group.
\begin{nota}\label{nota2}
Let $X$ be a topological space endowed with the action of an automorphism group $G$ of prime order $p$. We assume that $H^k(X,\Z)$ is finitely generated for all $k\geq0$.
Then, for all $k\in \mathbb{N}$, we denote
$\ell_-^k(X)=\ell_-(H^k_f(X,\Z))$, $\ell_+^k(X)=\ell_+(H^k_f(X,\Z))$ and $\ell_{p,f}^k(X):=\ell_p(H^k_f(X,\Z)\otimes\F)$. 
\end{nota}

\begin{nota}
We also set
$\ell_{\bullet}^*(X):=\sum_{k\geq0}\ell_{\bullet}^k(X)$, $\ell_{\bullet}^{2*}(X):=\sum_{k\geq0}\ell_{\bullet}^{2k}(X)$ and $\ell_{\bullet}^{2*+1}(X):=\sum_{k\geq0}\ell_{\bullet}^{2k+1}(X)$, where $\bullet=-$,$+$ or $p,f$.
\end{nota}
All the invariants $\ell_{\bullet}^{\bullet}(X)$ are designated as Boissi\`ere--Nieper-Wisskirchen--Sarti invariants of $X$ for all possible $\bullet$.
\begin{war}
The invariant $\ell_{p,f}^k(X)$ is denoted $\ell_{p}^k(X)$ in \cite[Notation 2.8]{Lol3}. It is relevant to distinguish both of them when considering the $p$-torsion of $H^*(X,\Z)$.
\end{war}
\begin{rmk}
Note that $\ell_q(H^k_f(X,\Z)\otimes\F)=0$ for all $1<q<p-1$ because of Theorem \ref{gene19} (iv). Hence there is no interest to define
$\ell_{q,f}^k(X)$ for $1<q<p-1$. Also the terms $\ell_1(H^k_f(X,\Z)\otimes\F)$ and $\ell_{p-1}(H^k_f(X,\Z)\otimes\F)$ are not relevant because they coincide when $p=2$ leading to a loss of information. This is why we use $\ell_-^k(X)$ and $\ell_+^k(X)$ instead (see the proposition below for the different relations between the invariants).
\end{rmk}
The universal coefficient theorem and Theorem \ref{gene19} provide the following proposition.
\begin{prop}\label{torsioninvar}
For all $k\in \mathbb{N}$, we have:
\begin{itemize}
\item[(i)]
when $p\geq3$, 
$\ell_1^k(X)=\ell_{+}^k(X)+\ell_{1,t}^k(X)+\ell_{1,t}^{k+1}(X)$
and for $p=2$,
$\ell_1^k(X)=\ell_{+}^k(X)+\ell_{-}^k(X)+\ell_{1,t}^k(X)+\ell_{1,t}^{k+1}(X),$
\item[(ii)]
when $p\geq3$, 
$\ell_{p-1}^k(X)=\ell_{-}^k(X)+\ell_{p-1,t}^k(X)+\ell_{p-1,t}^{k+1}(X)$,
\item[(iii)]
for all prime number $p$,
$\ell_{p}^k(X)=\ell_{p,f}^k(X)+\ell_{p,t}^k(X)+\ell_{p,t}^{k+1}(X)$,
\item[(iv)]
for all $2\leq q\leq p-2$, $\ell_q^k(X)=\ell_{q,t}^k(X)+\ell_{q,t}^{k+1}(X)$.
\end{itemize}
\end{prop}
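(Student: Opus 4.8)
The plan is to compute the $\F[G]$-module $H^k(X,\F)$ by exhibiting it as a direct sum of three pieces whose Jordan types are already under control, and then to read off $\ell_q^k(X)=\ell_q(H^k(X,\F))$ summand by summand, using additivity of $\ell_q(\cdot)$ on direct sums together with Theorem \ref{gene19}.

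First I would set up the equivariant universal coefficient theorem. Applying cohomology to the short exact sequence of trivial $\Z[G]$-modules $0\to\Z\xrightarrow{p}\Z\to\F\to0$ yields the Bockstein long exact sequence, all of whose maps are natural and hence $G$-equivariant; extracting the relevant portion gives a short exact sequence of $\F[G]$-modules
\[
0\longrightarrow H^k(X,\Z)\otimes\F\longrightarrow H^k(X,\F)\longrightarrow \tors_p H^{k+1}(X,\Z)\longrightarrow 0,
\]
where the cokernel is identified with $\ker\!\big(p\colon H^{k+1}(X,\Z)\to H^{k+1}(X,\Z)\big)=\tors_p H^{k+1}(X,\Z)$. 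Next, tensoring the canonical $G$-equivariant sequence $0\to \tors H^k(X,\Z)\to H^k(X,\Z)\to H^k_f(X,\Z)\to0$ with $\F$ and using that $H^k_f(X,\Z)$ is free (so the relevant $\operatorname{Tor}$ term vanishes), I would obtain a second short exact sequence of $\F[G]$-modules whose subobject is the $p$-primary torsion reduced mod $p$, to be identified with $\tors_p H^k(X,\Z)$, and whose quotient is $H^k_f(X,\Z)\otimes\F$.

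Granting that these two sequences split as $\F[G]$-modules, I would then have
\[
H^k(X,\F)\simeq \big(H^k_f(X,\Z)\otimes\F\big)\oplus \tors_p H^k(X,\Z)\oplus \tors_p H^{k+1}(X,\Z),
\]
and since $\ell_q(\cdot)$ is additive on direct sums, $\ell_q^k(X)$ is the sum of its values on the three summands. By definition the last two contribute $\ell_{q,t}^k(X)$ and $\ell_{q,t}^{k+1}(X)$ for every $q$. For the free summand $H^k_f(X,\Z)\otimes\F$ Theorem \ref{gene19} applies: by (iv) it has no block of size $2\le q\le p-2$, giving (iv); by the definition of $\ell_{p,f}^k$ its number of $N_p$-blocks is $\ell_{p,f}^k(X)$, giving (iii); and by (ii), for $p\ge3$ its numbers of $N_1$- and $N_{p-1}$-blocks are $\ell_+^k(X)$ and $\ell_-^k(X)$, giving (i) and (ii), while for $p=2$ statement (iii) of Theorem \ref{gene19} turns the single relevant count into $\ell_+^k(X)+\ell_-^k(X)$, giving the $p=2$ case of (i). Collecting these four computations reproduces (i)--(iv) verbatim.

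The delicate point, and the one I expect to be the main obstacle, is the $\F[G]$-equivariant splitting used above: over $\F[G]=\F[\nu]/(\nu^p)$ the invariant $\ell_q$ is not additive on arbitrary extensions (for instance $0\to N_{p-1}\to N_p\to N_1\to 0$ is non-split), so one must genuinely rule out that blocks of the free part merge with blocks of the torsion across the Bockstein sequence. The $N_p$-blocks cause no trouble, since $N_p=\F[G]$ is injective and projective and thus splits off both as a sub- and as a quotient-object; the real work is to exclude an $N_{p-1}\oplus N_1\rightsquigarrow N_p$ merger, i.e.\ to show the extension class in $\Ext^1_{\F[G]}\!\big(\tors_p H^{k+1}(X,\Z),\,H^k(X,\Z)\otimes\F\big)$ is trivial on the non-projective part. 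I would attack this by analysing the mod-$p$ Bockstein as a $G$-equivariant differential on $H^*(X,\F)$ (using that it squares to zero, and naturality) to produce the required equivariant section, and by separately identifying $\tors H^k(X,\Z)\otimes\F$ with $\tors_p H^k(X,\Z)$ as $\F[G]$-modules, which is automatic when the $p$-torsion is elementary and otherwise requires a short compatibility check.
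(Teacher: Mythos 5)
Your overall route is the same as the paper's (whose entire proof is the sentence that the universal coefficient theorem and Theorem \ref{gene19} ``provide'' the statement): decompose $H^k(X,\F)$ via the Bockstein and universal coefficient sequences into the three pieces $H^k_f(X,\Z)\otimes\F$, $\tors_p H^k(X,\Z)$ and $\tors_p H^{k+1}(X,\Z)$, then add up Jordan blocks using Theorem \ref{gene19}. You are also right that the only non-formal point is whether these extensions split $\F[G]$-equivariantly, since $\ell_q$ is not additive on non-split extensions of $\F[G]$-modules. But your write-up stops exactly there: producing the equivariant section is the entire content of the proposition, and ``analysing the mod-$p$ Bockstein as a $G$-equivariant differential'' is a plan, not an argument. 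As written, the proof is incomplete at its only essential step.

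Worse, the merger you propose to exclude can actually occur, so no Bockstein manipulation will supply the section in the stated generality. Take $p=2$ and $k\geq2$, and let $X$ be the $G$-CW complex obtained from $S^k_1\vee S^k_2$, with $G=\Z/2$ swapping the two spheres, by attaching two $(k+1)$-cells, interchanged by $G$, along maps of bidegrees $(2,-2)$ and $(-2,2)$. The cellular boundary $C_{k+1}=\Z[G]\rightarrow C_k=\Z[G]$ is multiplication by $2(1-g)$, whence $H^k(X,\Z)\cong\Z$ with trivial action, $\tors_2 H^k(X,\Z)=0$ and $\tors_2 H^{k+1}(X,\Z)\cong\Z/2\Z$, so the right-hand side of (i) equals $2$; but the boundary vanishes mod $2$, so $H^k(X,\mathbb{F}_2)$ is the two-dimensional swap representation $N_2=\mathbb{F}_2[G]$, giving $\ell_1^k(X)=0$ and $\ell_2^k(X)=1$ and contradicting both (i) and (iii). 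Here the universal coefficient sequence is the non-split extension $0\rightarrow N_1\rightarrow N_2\rightarrow N_1\rightarrow0$. So the statement requires an additional hypothesis guaranteeing the equivariant splitting (it holds trivially when $H^*(X,\Z)$ is $p$-torsion free, where all $\ell_{q,t}$ vanish and the identities reduce to Theorem \ref{gene19}); neither your argument nor the paper's one-line justification can succeed without isolating and using such a hypothesis.
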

\begin{proof}
By the universal coefficient theorem, we have, for all $k\in \N$, the following two exact sequences:
\begin{equation}
\xymatrix{ 0\ar[r]&\Ext^1(H_{k-1}(X),\Z) \ar[r] & H^{k}(X,\Z)\ar[r]& \Hom(H_k(X),\Z)\ar[r]&0,}
\label{crote1}
\end{equation}
\begin{equation}
\xymatrix{ 0\ar[r]&\Ext^1(H_{k-1}(X),\F) \ar[r] & H^{k}(X,\F)\ar[r]& \Hom(H_k(X),\F)\ar[r]&0.}
\label{crote2}
\end{equation}
Moreover, the maps of these exact sequences are morphisms of $G$-module (see for instance \cite[p.~196]{Hatcher}).
Furthermore, we have the canonical isomorphisms (which respect the $G$-module structure):
$$\Ext^1(H_{k-1}(X),\Z)\simeq \Ext^1(H_{k-1}(X),\F)\simeq \Hom(\tors_p H_{k-1}(X),\F),$$
$$\Hom(H_k(X),\F)\simeq \Hom(H_k(X)_f,\F)\oplus \Hom(\tors_p H_k(X),\F),$$
$$\Hom(H_k(X)_f,\F)\simeq \Hom(H_k(X)_f,\Z)\otimes\F\simeq \Hom(H_k(X),\Z)\otimes\F.$$
Moreover from (\ref{crote1}), we have the following isomorphisms of $G$-module:
$$\tors_p H^k(X,\Z)\simeq \Ext^1(H_{k-1}(X),\Z)\ \text{and}\ H^k_f(X,\Z)\simeq \Hom(H_k(X),\Z).$$
So, we obtain with (\ref{crote2}) the following isomorphism of $G$-module:
\begin{equation}
H^{k}(X,\F)\simeq \tors_p H^{k}(X,\Z)\oplus H^k_f(X,\Z)\otimes\F\oplus \tors_p H^{k+1}(X,\Z).
\label{rapporteur0}
\end{equation}
Then, the results follow from Proposition \ref{gene19}.
\end{proof}
We can also express the cohomology of $G$ with coefficients in $H^k(X,\Z)$.
\begin{prop}\label{equivarcoho}
For all $k\geq0$ and $i>0$, we have:
\begin{itemize}
\item[(i)]
$H^{0}(G,H^k(X,\Z))=H^k(X,\Z)^{G}$, \textit{i.e.} $\tors_p H^{0}(G,H^k(X,\Z))=(\Z/p\Z)^{\sum_{q\leq p}\ell_{q,t}^k(X)}$,
\item[(ii)]
$H^{2i-1}(G,H^k(X,\Z))=(\Z/p\Z)^{\ell_{-}^k(X)+\sum_{q< p}\ell_{q,t}^k(X)}$,
\item[(iii)]
$H^{2i}(G,H^k(X,\Z))=(\Z/p\Z)^{\ell_{+}^k(X)+\sum_{q< p}\ell_{q,t}^k(X)}$.
\end{itemize}
\end{prop}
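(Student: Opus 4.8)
The plan is to read both sides of the statement off the two structural computations already available: the evaluation of $H^\bullet(G,-)$ on a $p$-torsion-free module (established just above, giving the invariants in degree $0$ and $(\Z/p\Z)^{\ell_-(T_f)}$, $(\Z/p\Z)^{\ell_+(T_f)}$ in positive odd and even degree, by $2$-periodicity of the cohomology of the cyclic group $G$), together with Lemma \ref{HGNq}, which evaluates $H^\bullet(G,N_q)$. Write $T:=H^k(X,\Z)$ and work with the short exact sequence of $\Z[G]$-modules
\[
0\to \tors T\to T\to T_f\to 0 .
\]
Degree $0$ is immediate and handles (i): $H^0(G,T)=T^G$ by definition, and since $\tors_p(T^G)=(\tors_p T)^G=H^0(G,\tors_p T)$, the Jordan decomposition $\tors_p T\simeq\bigoplus_{q\le p}N_q^{\oplus \ell_{q,t}^k(X)}$ together with the fact that $H^0(G,N_q)=\F$ for every $q\le p$ (Lemma \ref{HGNq}) gives $\tors_p(T^G)=(\Z/p\Z)^{\sum_{q\le p}\ell_{q,t}^k(X)}$.

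For $i>0$ I would compute the two outer terms of the long exact sequence attached to the displayed sequence. On the torsion side, decompose $\tors T$ into its $p$-primary part and its prime-to-$p$ part, both $G$-stable; the latter has order prime to $p$, so its higher cohomology vanishes because $H^i(G,-)$ is annihilated by $|G|=p$. Thus $H^i(G,\tors T)=H^i(G,\tors_p T)$, and Lemma \ref{HGNq} applied to $\tors_p T\simeq\bigoplus_q N_q^{\oplus \ell_{q,t}^k(X)}$ gives $H^i(G,\tors T)=(\Z/p\Z)^{\sum_{q<p}\ell_{q,t}^k(X)}$ for every $i>0$, the blocks $N_p$ contributing nothing in positive degree. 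On the free side, $T_f$ is $p$-torsion-free, so the $p$-torsion-free computation yields $H^{2i-1}(G,T_f)=(\Z/p\Z)^{\ell_-^k(X)}$ and $H^{2i}(G,T_f)=(\Z/p\Z)^{\ell_+^k(X)}$. Adding these boundary contributions gives exactly the right-hand sides of (ii) and (iii).

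It then remains to assemble these through the long exact sequence
\[
\cdots\to H^i(G,\tors T)\to H^i(G,T)\to H^i(G,T_f)\xrightarrow{\ \delta_i\ } H^{i+1}(G,\tors T)\to\cdots
\]
and here lies the main obstacle: the connecting homomorphisms $\delta_i$. A short count with exactness shows that the additive dimension formula $\dim_\F H^i(G,T)=\dim_\F H^i(G,\tors T)+\dim_\F H^i(G,T_f)$ — which is precisely the content of (ii) and (iii) — holds if and only if every $\delta_i$ vanishes; equivalently, it suffices that the torsion sequence split $G$-equivariantly, i.e.\ that the extension class in $\Ext^1_{\Z[G]}(T_f,\tors T)$ be trivial. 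If instead $\delta_i\neq 0$, a dimension leaks between neighbouring degrees and the clean splitting of $\ell_-^k(X)$ into odd degrees and $\ell_+^k(X)$ into even degrees is spoiled. I therefore expect the heart of the proof to be showing that, for the modules at hand, no such interaction between the torsion and free parts occurs — so that the long exact sequence degenerates into the short exact sequences $0\to H^i(G,\tors T)\to H^i(G,T)\to H^i(G,T_f)\to 0$ — after which (ii) and (iii) follow by reading off the two boundary terms degree by degree.
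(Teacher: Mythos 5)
The paper gives no proof of this proposition, so there is nothing to compare your argument against line by line; but your reduction is certainly the intended one, and part (i) together with your computation of the two outer terms $H^i(G,\tors T)$ and $H^i(G,T_f)$ is correct. The problem is the step you explicitly defer: the vanishing of the connecting homomorphisms $\delta_i$, equivalently the $G$-equivariant splitting of $0\to\tors T\to T\to T_f\to 0$. This is not a routine verification that you have merely omitted --- it is false for a general $\Z[G]$-module, so no purely algebraic argument can close the gap. Take $T=\Z\oplus\Z/p\Z$ with $\phi(a,b)=(a,b+\bar a)$, where $\bar a$ denotes the class of $a$ modulo $p$. Then $T_f\simeq\Z$ and $\tors_p T\simeq\Z/p\Z$ both carry the trivial action, so the right-hand sides of (ii) and (iii) would be $(\Z/p\Z)^{0+1}$ and $(\Z/p\Z)^{1+1}$; but with $N=\sum_{j=0}^{p-1}\phi^j$ one computes $\Ker N=\Image(\phi-\id)=0\oplus\Z/p\Z$ and $T^G/NT\simeq\Z/p\Z$, so that $H^{2i-1}(G,T)=0$ and $H^{2i}(G,T)=\Z/p\Z$. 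Both formulas fail.

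Consequently the proposition, when applied to modules with nontrivial $p$-torsion, carries an implicit hypothesis that the torsion sequence splits equivariantly (just as Proposition \ref{torsioninvar} implicitly assumes the universal-coefficient splitting is equivariant); the ``heart of the proof'' you announce cannot be supplied without some such extra input. Two smaller points. First, before applying Lemma \ref{HGNq} to $\tors_p T$ you also need it to be an honest $\F$-vector space (no $\Z/p^2\Z$ summands), another assumption the paper makes only implicitly (compare its definition of $t_p^k$). Second, your diagnosis does not affect the paper's main use of the proposition: in Section \ref{mainresult} the space $X$ is assumed to have $p$-torsion-free integral cohomology, so there $\tors_p T=0$, the connecting maps vanish trivially, and the proposition reduces to the preceding lemma, where your argument is complete.
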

\begin{proof}
Since $H^k_f(X,\Z)$ and $\tors_p H^k(X,\Z)$ are stable under the action of $G$, we have:
$$H^{i}(G,H^k(X,\Z))=H^{i}(G,H^k_f(X,\Z))\oplus H^{i}(G,\tors_p H^k(X,\Z)),$$
for all $k$ and $i$ in $\N$.
Then, the result follows from Lemmas \ref{HGNq} and \ref{equiv+}.
\end{proof}
  \subsection{Boissi\`ere--Nieper-Wisskirchen--Sarti invariants and Lefschetz fixed point theorem}
 The following proposition is due to Simon Brandhorst. Among others, it has allowed to simplify the proof of Theorem \ref{degenemain} via Corollary \ref{equiv1}.
 \begin{prop}\label{mainlefsch}
 Let $X$ be a compact complex manifold and $G$ an automorphism group of prime order $p$. Then
 $$\chi(\Fix G)=\ell_+^{2*}(X)+\ell_-^{2*+1}(X)-\ell_+^{2*+1}(X)-\ell_-^{2*}(X),$$
 where $\chi(\Fix G)$ is the Euler characteristic of $\Fix G$.
 \end{prop}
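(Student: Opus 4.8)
The plan is to apply the topological Lefschetz fixed point theorem to a generator $\phi$ of $G$ and to compute the resulting Lefschetz number directly from the Curtis--Reiner decomposition (Theorem \ref{CurtisReiner}) of the groups $H^k_f(X,\Z)$.

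First I would set up the geometric side. Since $\phi$ is a holomorphic automorphism of finite order, the fixed locus $\Fix G=\Fix\phi$ is a complex submanifold of $X$, and along each component the differential $d\phi$ acts $\C$-linearly on the normal bundle with no eigenvalue equal to $1$. Hence every connected component $F_i$ of $\Fix G$ is a clean fixed submanifold whose local Lefschetz index is $+\chi(F_i)$: for a $\C$-linear $A$ without fixed vectors one has $\det_{\R}(\id-A)=|\det_{\C}(\id-A)|^2>0$, so all contributions are positive. The Lefschetz fixed point formula then gives
$$L(\phi)=\sum_{k}(-1)^k\tr\bigl(\phi\mid H^k(X,\C)\bigr)=\sum_i\chi(F_i)=\chi(\Fix G).$$
Note that only a complex structure near the fixed points is used here, in accordance with Remark \ref{hypopo}; moreover $\Fix\phi^j=\Fix G$ for every $1\leq j\leq p-1$ since $p$ is prime, so the same computation applies to any nontrivial element of $G$.

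Next I would compute the trace algebraically. As $\C[G]$-modules one has $H^k(X,\C)\simeq H^k_f(X,\Z)\otimes_{\Z}\C$, so Theorem \ref{CurtisReiner} decomposes it into three types of summands. Over $\C$, a trivial summand $\Z$ contributes $1$; an ideal $A_i\subset\mathcal{O}$ becomes the sum $\bigoplus_{m=1}^{p-1}\C_{\xi_p^m}$ of all nontrivial characters and contributes $\sum_{m=1}^{p-1}\xi_p^m=-1$; and a summand $(A_i,a_i)$ becomes the regular representation $\C[G]$, whose character vanishes on the nontrivial element $\phi$. With the notation $\ell_{\pm}^k(X)=\ell_{\pm}(H^k_f(X,\Z))$ this yields
$$\tr\bigl(\phi\mid H^k(X,\C)\bigr)=\ell_+^k(X)-\ell_-^k(X).$$

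Finally, substituting this into the Lefschetz formula and separating even and odd degrees gives
$$\chi(\Fix G)=\sum_k(-1)^k\bigl(\ell_+^k(X)-\ell_-^k(X)\bigr)=\ell_+^{2*}(X)-\ell_-^{2*}(X)-\ell_+^{2*+1}(X)+\ell_-^{2*+1}(X),$$
which is exactly the claimed identity. The main obstacle is the first step: justifying that the Lefschetz number of $\phi$ equals $\chi(\Fix G)$ with every local contribution equal to $+\chi$ of the corresponding component. This is precisely where holomorphicity (equivalently, a $G$-invariant almost complex structure near $\Fix G$) is indispensable, since it forces the normal action to be $\C$-linear without fixed vectors and thereby pins down both the nondegeneracy of the fixed locus and the positivity of each component's index.
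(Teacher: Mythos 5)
Your proposal is correct and follows essentially the same route as the paper: apply the Lefschetz fixed point theorem to a generator and evaluate the trace on each $H^k$ via the Curtis--Reiner decomposition, with the trivial, ideal, and $(A_i,a_i)$ summands contributing $1$, $-1$, and $0$ respectively. The only differences are cosmetic --- you compute the traces over $\C$ by characters where the paper uses minimal/characteristic polynomials over $\R$, and you spell out the justification that $L(\phi)=\chi(\Fix G)$ (positivity of the local indices via the $\C$-linear normal action), which the paper takes for granted.
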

 \begin{proof}
 Let $n=\dim_{\C}X$ and $g\in G$ be a generator, the Lefschetz fixed point theorem provides:
 \begin{equation}
 \chi(\Fix G)=\sum_{k=0}^{2n}(-1)^{k}\tr(g_{|H^k(X,\R)}).
 \label{plefsch}
 \end{equation}
By Theorem \ref{CurtisReiner}, we have an isomorphism of $\Z[G]$-modules:
$$H^k_f(X,\Z)\simeq \left(\bigoplus_{j=1}^{r} (A_j,a_{j})\right)\oplus\left(\bigoplus_{i=1}^{s}A_i\right)\oplus \Z^{\oplus t},$$
where $r$, $s$, $t$ are integers, $A_i$, $A_j$ are ideals of $\mathcal{O}$ and $a_{j}\notin (\xi_{p}-1)A_j$, for all $1\leq j\leq r$ and $1\leq i\leq s$.
By definition of the Boissi\`ere--Nieper-Wisskirchen--Sarti invariants (Notation \ref{nota0}, \ref{nota2} and Theorem \ref{gene19} (i)), it can be rewritten:
$$
H^k_f(X,\Z)\simeq\left(\bigoplus_{j=1}^{\ell_{p,f}^k(X)} (A_j,a_{j})\right)\oplus\left(\bigoplus_{i=1}^{\ell_-^k(X)}A_i\right)\oplus \Z^{\oplus \ell_+^k(X)}.$$

Hence, after tensorizing by $\R$, we obtain an isomorphisms of $\R[G]$-module:
$$H^k(X,\R)\simeq\left(\bigoplus_{j=1}^{\ell_{p,f}^k(X)} (A_j,a_{j})\otimes\R\right)\oplus\left(\bigoplus_{i=1}^{\ell_-^k(X)}A_i\otimes\R\right)\oplus \R^{\oplus \ell_+^k(X)}.$$
Since $A\otimes\R=\mathcal{O}\otimes\R$ for all ideal $A$, we have:
$$H^k(X,\R)\simeq\left(\bigoplus_{j=1}^{\ell_{p,f}^k(X)} (\mathcal{O},a_{j})\otimes\R\right)\oplus(\mathcal{O}\otimes\R)^{\ell_-^k(X)}\oplus \R^{\oplus \ell_+^k(X)}.$$
Furthermore, the minimal polynomial of $g_{|\mathcal{O}\otimes\R}$ is $X^{p-1}+X^{p-2}+...+1$ and the minimal polynomial of $g_{|(\mathcal{O},a_{j})\otimes\R}$ is $X^p-1$ for all $j$. 
Because of their degrees the previous polynomials are also the characteristic polynomials and we obtain:
$$\tr (g_{|\mathcal{O}\otimes\R})=-1\ \text{and}\ \tr(g_{|(\mathcal{O},a_{j})\otimes\R})=0.$$
Then (\ref{plefsch}) becomes:
$$\chi(\Fix G)=\sum_{k=0}^{2n}(-1)^{k}(\ell_+^k(X)-\ell_-^k(X)).$$
 \end{proof}
 \begin{rmk}
 Note that Proposition \ref{mainlefsch} remains true if we assume that $X$ is a $2n$-dimensional compact connected orientable $\C^{\infty}$-manifold and $G$ an automorphism group of prime order $p$ with only isolated fixed points such that $X$ behaves around the fixed points of $G=\left\langle g\right\rangle$ as a complex manifold and $G$ as a biholomorphic morphism group (i.e.: for each fixed point $x$ there exists an open set $x\in U_x$ and a diffeomorphism $f:U_x\rightarrow \mathcal{U}_x\subset \C^n$ such that $f\circ g\circ f^{-1}$ is biholomorphic). 
 \end{rmk}
\begin{rmk}\label{geneHurwitz}
Note that we can find again the generalized Hurwitz formula $\chi(X)=p\chi(X/G)+(1-p)\chi(\Fix G)$ by combining Proposition \ref{mainlefsch} and Theorem \ref{gene19} (v).
\end{rmk}
The next corollary is a direct consequence of Proposition \ref{mainlefsch}. It will be very useful in practice to deal with the spectral sequence of equivariant cohomology (see Section \ref{degespec}).
\begin{cor}\label{equiv1}
Let $X$ be a compact complex manifold and $G$ an automorphism group of prime order $p$ with only $\eta(G)$ isolated fixed points. Then:
$$\eta(G)=\ell_+^{2*}(X)+\ell_-^{2*+1}(X)\ \Leftrightarrow\ \ell_+^{2*+1}(X)=\ell_-^{2*}(X)=0.$$
\end{cor}

 \subsection{Application to the degeneration of the spectral sequence of equivariant cohomology}\label{equivarsection}
Let $X$ be a CW-complex and $G$ an automorphism group on $X$ ($G$ permutes the cells). 
Let $EG\rightarrow BG$ be an universal $G$-bundle in the category of CW-complexes. 
Denote by $X_{G}=EG\times_{G}X$ the orbit space for the diagonal action of $G$ on the product $EG\times X$ 
and $f: X_{G}\rightarrow BG$ the map induced by the projection onto the first factor. 
The map $f$ is a locally trivial fiber bundle with typical fiber $X$ and structure group $G$.
We define the $G$-equivariant cohomology of $X$ with coefficients in a ring $\Lambda$ (in this paper $\Lambda$ is $\Z$ or $\F$ with $p$ a prime number) by $H_{G}^{*}(X,\Lambda):=H^{*}(EG\times_{G}X,\Lambda)$. In particular, observe that if $G$ acts freely on $X$, then the canonical map $EG\times_{G} X\rightarrow X/G$ is a homotopy equivalence and so we obtain $H_{G}^{*}(X,\Lambda)=H^{*}(X/G,\Lambda).$
Moreover, the Leray--Serre spectral sequence associated to the map $f$ gives a spectral sequence converging to the equivariant cohomology (see \cite[Chapter~VII, Section~7]{cohogroup}):
$$E_{2}^{p,q}:=H^{p}(G;H^{q}(X,\Lambda))\Rightarrow H_{G}^{p+q}(X,\Lambda).$$ 
 
In particular, the degeneration of this spectral sequence at the second page has interesting consequences (see for instance \cite[Theorem 1.1]{Lol3}). We also recall the following result of Boissi\`ere, Nieper--Wisskirchen and Sarti which will be used several time in this paper.
\begin{prop}[\emph{cf.} \protect{\cite[Corollary~4.3]{SmithTh}}]\label{BNSFormula}
Let $X$ be a compact connected orientable $C^{\infty}$-manifold of dimension $n$ and $G$ an automorphism group of prime order $p$.
If the spectral sequence of equivariant cohomology with coefficients in $\F$ degenerates at the second page, then:
$$h^*(\Fix G,\F)=\sum_{1\leq q < p} \ell_{q}^*(X).$$
\end{prop}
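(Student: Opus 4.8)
The plan is to compute the total dimension $\dim_{\F} H_{G}^{k}(X,\F)$ of the equivariant cohomology in all sufficiently high degrees $k$ in two different ways: once through the (degenerate) spectral sequence, and once through the Borel--Quillen localization theorem, which identifies the high-degree equivariant cohomology of $X$ with that of $\Fix G$. Comparing the two stabilized values will give the stated equality. Throughout I use that $\dim_{\F}H^{s}(BG,\F)=1$ for every $s\geq0$, both for $p=2$ (where $H^{*}(BG,\F)=\F[t]$, $\deg t=1$) and for $p$ odd (where the Poincar\'e series is again $\tfrac{1}{1-z}$).

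First I would evaluate the spectral-sequence side. Since $X$ is a compact manifold, $H^{t}(X,\F)=0$ for $t>n$. As the spectral sequence $E_{2}^{s,t}=H^{s}(G;H^{t}(X,\F))$ is assumed to degenerate at the second page, over the field $\F$ we have $\dim_{\F}H_{G}^{k}(X,\F)=\sum_{s+t=k}\dim_{\F}E_{2}^{s,t}$. For $k>n$ every contributing term satisfies $s=k-t>0$ because $t\leq n<k$, so Proposition \ref{Lemma3.1}(i) gives $\dim_{\F}E_{2}^{s,t}=\sum_{q<p}\ell_{q}^{t}(X)$. Summing over $0\leq t\leq n$ yields, for every $k>n$, the constant value $\dim_{\F}H_{G}^{k}(X,\F)=\sum_{q<p}\ell_{q}^{*}(X)$.

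Next I would evaluate the fixed-locus side. Since $G$ acts trivially on the closed submanifold $\Fix G$, we have $EG\times_{G}\Fix G\simeq BG\times\Fix G$, hence by the K\"unneth formula $H_{G}^{*}(\Fix G,\F)\simeq H^{*}(BG,\F)\otimes H^{*}(\Fix G,\F)$. Using $\dim_{\F}H^{s}(BG,\F)=1$ for all $s\geq0$ once more, this gives $\dim_{\F}H_{G}^{k}(\Fix G,\F)=\sum_{0\leq j\leq k}\dim_{\F}H^{j}(\Fix G,\F)$, which for $k\geq\dim\Fix G$ stabilizes to $h^{*}(\Fix G,\F)$. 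To relate the two manifolds I would invoke the localization theorem for the $\Z/p\Z$-action: the restriction map $H_{G}^{*}(X,\F)\to H_{G}^{*}(\Fix G,\F)$ becomes an isomorphism after inverting the polynomial generator of $H^{*}(BG,\F)$. As $X$ is compact, both sides are finitely generated $H^{*}(BG,\F)$-modules, so the kernel and cokernel of the restriction map are finitely generated torsion modules, hence finite-dimensional over $\F$ and concentrated in bounded degree. Therefore $\dim_{\F}H_{G}^{k}(X,\F)=\dim_{\F}H_{G}^{k}(\Fix G,\F)$ for all $k$ large enough; choosing $k>\max(n,\dim\Fix G)$ and equating the two constant values gives $\sum_{q<p}\ell_{q}^{*}(X)=h^{*}(\Fix G,\F)$.

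The main obstacle is the localization step, i.e.\ guaranteeing that restriction is an isomorphism in high degrees. This rests on the Borel--Quillen localization theorem together with the finite generation of equivariant cohomology over $H^{*}(BG,\F)$, which is exactly where the compactness of $X$ (equivalently, the finiteness of its cohomology) is essential; the degeneration hypothesis, by contrast, is used only on the spectral-sequence side. Once these two ingredients are in place, the remainder is bookkeeping of Poincar\'e series, recognizing the two stabilized dimensions as $\sum_{q<p}\ell_{q}^{*}(X)$ and $h^{*}(\Fix G,\F)$ respectively.
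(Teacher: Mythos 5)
Your argument is correct: the paper does not prove this proposition but imports it from \cite{SmithTh} (Corollary 4.3), and your route --- computing the stable value of $\dim_{\F}H^{k}_{G}(X,\F)$ for $k>n$ from the degenerate $E_{2}$-page via Proposition \ref{Lemma3.1}, and matching it with $h^{*}(\Fix G,\F)$ through Borel--Quillen localization and finite generation over $H^{*}(BG,\F)$ --- is essentially the proof given in that reference. No gaps; the only hypotheses you use (compactness for finite generation, degeneration for the $E_{2}$ count) are exactly those in the statement.
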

It is interesting to compare the previous equation with the equation provided by the Lefschetz fixed point theorem (Proposition \ref{mainlefsch}).
\begin{cor}\label{lefschetz2}
 Let $X$ be a compact complex manifold and $G$ an automorphism group of prime order $p$. We assume that the spectral sequence of equivariant cohomology with coefficients in $\F$ degenerates at the second page. Then:
 $$h^{2*}(\Fix G,\F)\geq \ell_+^{2*}(X)+\ell_-^{2*+1}(X),\ \text{and}\ h^{2*+1}(\Fix G,\F)\geq \ell_+^{2*+1}(X)+\ell_-^{2*}(X).$$
 If moreover $H^*(X,\Z)$ is $p$-torsion-free:
 $$h^{2*}(\Fix G,\F)=\ell_+^{2*}(X)+\ell_-^{2*+1}(X),\ \text{and}\ h^{2*+1}(\Fix G,\F)=\ell_+^{2*+1}(X)+\ell_-^{2*}(X).$$
\end{cor}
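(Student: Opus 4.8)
The plan is to combine two relations satisfied by the cohomology of $\Fix G$ with a single inequality coming from the $p$-torsion, and then solve the resulting linear system. The first relation is exact and does not use the hypothesis: since the Euler characteristic is independent of the coefficient field, we have $\chi(\Fix G) = h^{2*}(\Fix G,\F) - h^{2*+1}(\Fix G,\F)$ (torsion contributes equally to two consecutive $\F$-Betti numbers and cancels in the alternating sum), so Proposition \ref{mainlefsch} gives
$$h^{2*}(\Fix G,\F) - h^{2*+1}(\Fix G,\F) = \ell_+^{2*}(X) + \ell_-^{2*+1}(X) - \ell_+^{2*+1}(X) - \ell_-^{2*}(X).$$
The second relation uses the degeneration hypothesis through Proposition \ref{BNSFormula}, which yields
$$h^{2*}(\Fix G,\F) + h^{2*+1}(\Fix G,\F) = h^*(\Fix G,\F) = \sum_{1\leq q<p}\ell_q^*(X).$$

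The next step is to bound $\sum_{1\leq q<p}\ell_q^*(X)$ from below by $\ell_+^*(X)+\ell_-^*(X)$. Summing the identities of Proposition \ref{torsioninvar} over all degrees $k$, the terms $\ell_1^*(X)$ and $\ell_{p-1}^*(X)$ contribute exactly $\ell_+^*(X)$ and $\ell_-^*(X)$, while every remaining summand is a sum of the non-negative torsion invariants $\ell_{q,t}^k(X)$. Hence
$$\sum_{1\leq q<p}\ell_q^*(X) = \ell_+^*(X) + \ell_-^*(X) + (\text{non-negative torsion terms}) \geq \ell_+^*(X) + \ell_-^*(X),$$
with equality precisely when $H^*(X,\Z)$ is $p$-torsion free. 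The case $p=2$ must be treated separately, since there $q=1=p-1$ and Proposition \ref{torsioninvar}(i) already packages both $\ell_+^*(X)$ and $\ell_-^*(X)$ into $\ell_1^*(X)$; the same conclusion holds.

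Finally I would solve the system. Writing $h^{2*} = \frac{1}{2}\big(h^*(\Fix G,\F) + \chi(\Fix G)\big)$ and $h^{2*+1} = \frac{1}{2}\big(h^*(\Fix G,\F) - \chi(\Fix G)\big)$, substituting the exact Lefschetz value of $\chi(\Fix G)$ and the lower bound $h^*(\Fix G,\F) \geq \ell_+^*(X)+\ell_-^*(X)$ (and recalling $\ell_\pm^*(X)=\ell_\pm^{2*}(X)+\ell_\pm^{2*+1}(X)$), the $\ell_+^{2*+1}$ and $\ell_-^{2*}$ terms cancel in the first combination and the $\ell_+^{2*}$, $\ell_-^{2*+1}$ terms cancel in the second, leaving
$$h^{2*}(\Fix G,\F) \geq \ell_+^{2*}(X)+\ell_-^{2*+1}(X), \qquad h^{2*+1}(\Fix G,\F)\geq \ell_+^{2*+1}(X)+\ell_-^{2*}(X).$$
When $H^*(X,\Z)$ is $p$-torsion free the lower bound on $h^*(\Fix G,\F)$ is an equality, so both inequalities become equalities. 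The only real obstacle is the bookkeeping: correctly collecting the torsion invariants across all degrees (and handling the $p=2$ degeneracy), together with the field-independence of the Euler characteristic, which is what lets the $\R$-cohomology statement of Proposition \ref{mainlefsch} combine cleanly with the $\F$-coefficient Betti numbers appearing in Proposition \ref{BNSFormula}.
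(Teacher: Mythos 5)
Your proposal is correct and follows essentially the same route as the paper: combine Proposition \ref{BNSFormula} with the bound $\sum_{1\leq q<p}\ell_q^*(X)\geq \ell_+^*(X)+\ell_-^*(X)$ from Proposition \ref{torsioninvar} (an equality in the $p$-torsion-free case), then add and subtract the Lefschetz identity of Proposition \ref{mainlefsch}. The only quibble is that equality in the torsion bound holds \emph{if} $H^*(X,\Z)$ is $p$-torsion free, not ``precisely when'' (the $q=p$ torsion blocks do not enter the sum over $q<p$), but this does not affect the argument.
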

\begin{proof}
By Propositions \ref{BNSFormula} and \ref{torsioninvar} (resp. if $H^*(X,\Z)$ is $p$-torsion-free), we get:
$$h^*(\Fix G,\F)=\sum_{1\leq q < p} \ell_{q}^*(X)\underset{(=)}{\geq} \ell_{+}^*(X)+\ell_{-}^*(X).$$
 Adding or subtracting the equation of Proposition \ref{mainlefsch}, we obtain our result.
\end{proof}
We have the following particular case when $G$ has only isolated fixed points (this result will be useful in Section \ref{degespec}).
\begin{cor}\label{degenerate1}
Let $X$ be a compact complex manifold and $G$ an automorphism group of prime order $p$ with only isolated fixed points. We assume that the spectral sequence of equivariant cohomology with coefficients in $\F$ degenerates at the second page. Then:
$$\ell_+^{2*+1}(X)=\ell_-^{2*}(X)=0.$$
\end{cor}
Using Propositions \ref{Lemma3.1}, \ref{equivarcoho} and \ref{torsioninvar}, we can compare the spectral sequences with coefficients in $\Z$ and $\F$. The end of this section is devoted to the proof of the following proposition.
\begin{prop}\label{degeequ}
 Let $X$ be a CW-complex endowed with the action of an automorphism group $G$ of prime order $p$. We assume that $H^k(X,\Z)$ is finitely generated for all $k\in\N$.The following statements are equivalent.
 \begin{itemize}
 \item[---]
 The spectral sequence of equivariant cohomology of $(X,G)$ with coefficients in $\F$ degenerates at the second page.
 \item[---]
 The spectral sequence of equivariant cohomology of $(X,G)$ with coefficients in $\Z$ degenerates at the second page.
 \end{itemize}
\end{prop}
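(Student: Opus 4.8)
The plan is to turn degeneration into a numerical identity and to compare the two sequences degree by degree. Recall that degeneration at the second page means $E_2=E_\infty$. Over $\Q$ the sequence always degenerates, since $H^i(G;H^q(X,\Q))=0$ for $i>0$; consequently the free ranks of the two $E_2$-pages already agree with those of the abutment, and in both cases the only obstruction to degeneration is carried by the $p$-torsion. For $\F$-coefficients everything is a vector space, the filtration of $H_G^n(X,\F)$ is finite in each total degree, and the sequence degenerates if and only if, for every $n$,
$$\delta_n^{\F}:=\Big(\sum_{i+q=n}\dim_{\F}E_2^{i,q}(\F)\Big)-\dim_{\F}H_G^n(X,\F)=0,$$
each $\delta_n^{\F}$ being a nonnegative integer because a differential can only lower dimensions.

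Next I would compute both second pages explicitly. Proposition \ref{Lemma3.1} gives $\dim_{\F}E_2^{0,q}(\F)=\sum_{s\le p}\ell_s^q(X)$ and $\dim_{\F}E_2^{i,q}(\F)=\sum_{s<p}\ell_s^q(X)$ for $i\ge1$, so the total $E_2$-dimension in degree $n$ is $D_n^{\F}=\sum_{s\le p}\ell_s^n(X)+\sum_{q=0}^{n-1}\sum_{s<p}\ell_s^q(X)$. On the integral side, Theorem \ref{gene19}(v) and Proposition \ref{equivarcoho} give the ranks (concentrated in column $0$, equal to $\ell_+^q+\ell_{p,f}^q$) together with the $p$-torsion dimension of every entry in terms of $\ell_\pm^q$ and the $\ell_{q',t}^q$. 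Feeding the universal-coefficient relations of Proposition \ref{torsioninvar} into $D_n^{\F}$, the parity-dependent contributions of the odd and even columns combine over two consecutive degrees into $\ell_+^q+\ell_-^q$, and (using that $H^0(X,\Z)$ is torsion free, which kills the only boundary discrepancy) one obtains the clean identity
$$D_n^{\F}=\rho_n+\theta_n+\theta_{n+1},$$
where $\rho_n=\sum_{i+q=n}\rk E_2^{i,q}(\Z)$ and $\theta_n=\sum_{i+q=n}\dim_{\F}\tors_p E_2^{i,q}(\Z)$. This is exactly the universal-coefficient shift, realized at the level of the second pages.

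Finally I would repeat the bookkeeping at the $E_\infty$-page. Writing $\mathrm{gr}^nH_G(X,\Lambda)$ for the associated graded of the abutment, the $\F$-sequence converges with $\dim_{\F}H_G^n(X,\F)=\sum_{i+q=n}\dim_{\F}E_\infty^{i,q}(\F)$, while the same universal-coefficient shift applied to the associated graded gives $\dim_{\F}\mathrm{gr}^nH_G(X,\F)=\rho_n+\theta_n^{\infty}+\theta_{n+1}^{\infty}$, with $\theta_n^{\infty}:=\sum_{i+q=n}\dim_{\F}\tors_p E_\infty^{i,q}(\Z)$ and the rank sum equal to $\rho_n$ by $\Q$-degeneration. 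Subtracting from the $E_2$-identity yields $\delta_n^{\F}=\Delta_n+\Delta_{n+1}$ with $\Delta_n:=\theta_n-\theta_n^{\infty}\ge0$, the drop of $p$-torsion from $E_2$ to $E_\infty$ in degree $n$. As all terms are nonnegative integers, the family $(\delta_n^{\F})$ vanishes identically if and only if $(\Delta_n)$ does; and $\Delta_n\equiv0$ is precisely the degeneration of the integral sequence, because at the second page all torsion (positive columns, and the $p$-torsion of column $0$) is elementary by Proposition \ref{equivarcoho}, so any nonzero differential strictly lowers $\theta_m$ in the degree $m$ of its source. This is the asserted equivalence.

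The main obstacle is this last comparison. For $\Z$-coefficients the reassembly of $E_\infty$ into $H_G^*(X,\Z)$ involves a genuine extension problem, so that $\dim_{\F}\tors_p H_G^n(X,\Z)$ (a number of cyclic summands) need not equal $\theta_n^{\infty}$ once summands of type $\Z/p^k$ with $k\ge2$ appear; one must therefore phrase the integral detector in terms of the $p$-torsion of the associated graded rather than of the abutment, and verify that the universal-coefficient theorem is compatible with the natural fibration filtration so that the shifted identity holds page-wise on $\mathrm{gr}$. A secondary technical point is that the universal-coefficient isomorphism between $H^q(X,\F)$ and $H^q(X,\Z)$ is not $G$-equivariant, which is exactly why the whole comparison must be routed through the invariants of Proposition \ref{torsioninvar} rather than by transporting differentials directly.
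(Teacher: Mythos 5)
Your argument is essentially the paper's own proof: your $\delta_n^{\F}$ and $\Delta_n$ coincide with the dimensions of degeneration $\overline{u}_n(X)$ and $u_n(X)$ introduced in Section \ref{equivarsection}, your identity $\delta_n^{\F}=\Delta_n+\Delta_{n+1}$ is exactly Lemma \ref{ZFdege} (established there by the same universal-coefficient bookkeeping through Propositions \ref{Lemma3.1}, \ref{equivarcoho} and \ref{torsioninvar}), and the conclusion by nonnegativity is Lemma \ref{degelemmabis}. The extension-problem subtlety you flag at the end is present, unaddressed, in the paper's proof as well, which silently identifies $\sum_{i+q=n}\dim_{\F}\tors_p E_\infty^{i,q}$ with $t_p^n(X_G)$ in equations (\ref{XG1})--(\ref{XG2}).
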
 
From now and until the end of this section, $X$ is a CW-complex endowed with the action of an automorphism group $G$ of prime order $p$, with $H^k(X,\Z)$ finitely generated for all $k\geq0$. We first introduce a tool to measure the degeneration of our spectral sequence.
We define the \emph{dimensions of degeneration} as follows. Let $E_2^{d,q}:=H^{d}(G,H^q(X,\Z))$ (resp. $\overline{E}_2^{d,q}:=H^{d}(G,H^q(X,\F))$) be the second page of the spectral sequence of equivariant cohomology with coefficients in $\Z$ (resp. in $\F$). Similarly, we denote by $E_r^{d,q}$ (resp. $\overline{E}_r^{d,q}$) the space of the $r^\mathrm{th}$ page of the sequence of equivariant cohomology with coefficients in $\Z$ (resp. in $\F$). By Proposition \ref{Lemma3.1}, $\overline{E}_2^{d,q}$ is a $\F$-vector space of finite dimension. By Proposition \ref{equivarcoho}, $E_2^{d,q}$ is also a $\F$-vector space when $d>0$. Moreover, we set $t_p E_2^{0,k}:=\dim_{\F}\tors_p E_2^{0,k}$ and $t_p E_\infty^{0,k}:=\dim_{\F}\tors_p E_\infty^{0,k}$.
\begin{defi}\label{defidegene}
We define the $k^\mathrm{th}$ \emph{dimension of degeneration for $\Z$-coefficients} (resp. \emph{for $\F$-coefficients}) by the non-negative integer $u_k(X)$:
$$u_k(X)=\dim_{\F}\left(\bigoplus_{d+q=k,\, d>0} E_2^{d,q}\right)-\dim_{\F}\left(\bigoplus_{d+q=k,\, d>0} E_{\infty}^{d,q}\right)+\left(t_p E_2^{0,k}-t_p E_\infty^{0,k}\right).$$
(resp. $\overline{u}_k(X)$ such that): 
$$\overline{u}_k(X)=\dim_{\F}\left(\bigoplus_{d+q=k} \overline{E}_2^{d,q}\right)-\dim_{\F}\left(\bigoplus_{d+q=k} \overline{E}_{\infty}^{d,q}\right).$$
\end{defi}
\begin{rmk}\label{explication}
As, we will see below,
the role of these integers will be to measure the distance of the spectral sequence from being degenerate at the second page.
Note that the terms 
with $d=0$ is slightly different between $u_k(X)$ and $\overline{u}_k(X)$. It is because, the $E_r^{0,k}$ are not $\F$-vector spaces.
We can write $E_r^{0,k}=(E_r^{0,k}/\tors_p E_r^{0,k})\oplus\tors_p E_r^{0,k}$. When $r\geq 2$, the terms $\rk(E_r^{0,k}/\tors_p E_r^{0,k})$ do not bring information on the degeneration of the spectral sequence. Indeed, we have $\rk(E_r^{0,k}/\tors_p E_r^{0,k})=\rk (E_{r+1}^{0,k}/\tors_p E_{r+1}^{0,k})$ for all $r\geq2$. 
Hence, it is more practical to avoid these terms in the definition of $u_k(X)$.
\end{rmk}
For a better understanding, we recall the shape of the second page of our spectral sequence.
\begin{equation}
   \xymatrix{\vdots&\vdots&\vdots&\vdots &\vdots &\vdots &\cdots\\
		E_2^{0,4}\ar[rrd]&E_2^{1,4}\ar[rrd]&E_2^{2,4}\ar[rrd]&E_2^{3,4}\ar[rrd] &E_2^{4,4}\ar[rrd] &E_2^{5,4}&\cdots\\
	E_2^{0,3}\ar[rrd]&E_2^{1,3}\ar[rrd]&E_2^{2,3}\ar[rrd]&E_2^{3,3}\ar[rrd] &E_2^{4,3}\ar[rrd] &E_2^{5,3}&\cdots\\
	E_2^{0,2}\ar[rrd]&E_2^{1,2}\ar[rrd]&E_2^{2,2}\ar[rrd]&E_2^{3,2}\ar[rrd] &E_2^{4,2}\ar[rrd] &E_2^{5,2} &\cdots\\
	E_2^{0,1}\ar[rrd]&E_2^{1,1}\ar[rrd]&E_2^{2,1}\ar[rrd]&E_2^{3,1}\ar[rrd] &E_2^{4,1}\ar[rrd] &E_2^{5,1} &\cdots\\
	E_2^{0,0}&E_2^{1,0}&E_2^{2,0}&E_2^{3,0} &E_2^{4,0} &E_2^{5,0} &\cdots }
	\label{spectral}
\end{equation}	
\begin{lemme}\label{degelemmabis}
The spectral sequence of equivariant cohomology with coefficients in $\Z$ $($resp. in $\F)$ degenerates at the second page if and only if 
$u_k(X)=0$ $($resp. $\overline{u}_k(X)=0)$ for all $k\in\mathbb{N}$.
\end{lemme}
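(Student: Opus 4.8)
The plan is to prove both equivalences (for $\Z$ and for $\F$) in parallel, since they rest on the same mechanism: each dimension of degeneration is a sum of manifestly non-negative contributions, and it vanishes for all $k$ exactly when every differential $d_r$ with $r\geq 2$ vanishes, which is the definition of degeneration at the second page. The easy implication I would dispatch first: if the sequence degenerates at the second page, then $E_2^{d,q}=E_\infty^{d,q}$ (resp. $\overline{E}_2^{d,q}=\overline{E}_\infty^{d,q}$) for all $d,q$, so in particular the $\F$-dimensions agree term by term and $\tors_p E_2^{0,k}=\tors_p E_\infty^{0,k}$, whence every $u_k$ (resp. $\overline{u}_k$) is zero.

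For the converse I would argue contrapositively, and the key observation is a positional one: for $r\geq 2$ every differential $d_r\colon E_r^{d,q}\to E_r^{d+r,q-r+1}$ has its target in a column $d+r\geq r\geq 2>0$. By Proposition \ref{equivarcoho} (resp. Proposition \ref{Lemma3.1}) this target is a finite-dimensional $\F$-vector space, and $E_\infty^{d+r,q-r+1}$ is a subquotient of $E_2^{d+r,q-r+1}$, so its $\F$-dimension cannot exceed that of $E_2^{d+r,q-r+1}$. Now suppose some $d_r$ is nonzero. Its nonzero image in $E_r^{d+r,q-r+1}$ is quotiented out on passing to $E_{r+1}$, and $E_\infty$ is a further subquotient, so $\dim_\F E_\infty^{d+r,q-r+1}<\dim_\F E_r^{d+r,q-r+1}\leq \dim_\F E_2^{d+r,q-r+1}$ at that spot. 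Since $d+r>0$, this strict drop is recorded inside the ``$d>0$'' part of $u_k$ with $k=(d+r)+(q-r+1)$ (resp. inside $\overline{u}_k$). As all the remaining contributions to $u_k$ are non-negative, I would conclude $u_k>0$ (resp. $\overline{u}_k>0$), contradicting the hypothesis that every dimension of degeneration vanishes. Hence all $d_r$ are zero and the sequence degenerates.

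The one point that requires genuine care, and the only place where the integral and $\F$-statements diverge, is the non-negativity of each summand of $u_k$ in the $\Z$-case, in particular of $t_p E_2^{0,k}-t_p E_\infty^{0,k}$, which I expect to be the main obstacle. The relevant fact here is that for $r\geq 2$ the column $d=0$ receives no differential, since the source of an incoming $d_r$ would sit in column $-r<0$; therefore $E_\infty^{0,k}=\bigcap_{r\geq 2}\ker d_r$ is an honest subgroup of $E_2^{0,k}$ rather than merely a subquotient, so $\tors_p E_\infty^{0,k}\subseteq \tors_p E_2^{0,k}$ and the difference is non-negative. This is precisely the reason the definition tracks only the $p$-torsion in the $d=0$ column: the free part of that column is unaffected by the differentials (their targets are $p$-torsion), so it carries no information about degeneration and must be excluded for $u_k$ to be a faithful, non-negative measure.

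The $\F$-case is formally identical but simpler, and I would treat it in one line: every $\overline{E}_2^{d,q}$ is already a finite-dimensional $\F$-vector space, so $\overline{u}_k$ is literally $\sum_{d+q=k}\bigl(\dim_\F \overline{E}_2^{d,q}-\dim_\F \overline{E}_\infty^{d,q}\bigr)$, a sum of non-negative terms; it vanishes for all $k$ iff each term vanishes, iff no differential touches any position, iff the sequence degenerates at the second page.
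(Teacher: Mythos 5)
Your proof is correct and follows essentially the same route as the paper, whose own argument is a one-line version of the same observation: every nonzero differential $d_r$ lands in a column $d+r\geq 2>0$ where the entries are finite $\F$-vector spaces, so it forces a strict drop from $E_2$ to $E_\infty$ there, and since all contributions to $u_k$ (resp. $\overline{u}_k$) are non-negative this makes some $u_k>0$. Your additional care with the $d=0$ column (where $E_\infty^{0,k}$ is a genuine subgroup of $E_2^{0,k}$, so the $p$-torsion difference is non-negative) is a detail the paper leaves implicit but is exactly the right justification.
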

\begin{proof}
We prove the result for the spectral sequence of equivariant cohomology with coefficient in $\Z$ (the proof is identical for the 
spectral sequence with coefficient in $\F$). One direction is trivial, if the spectral sequence degenerates at the second page then $u_k(X)=0$ for all $k\in \mathbb{N}$.

Now, we assume that the spectral sequence does not degenerate at the second page and we will show that there exist $k\in \N$ such that $u_k(X)>0$. First note that $(\dim_{\F} E_{r}^{d,q})_r$ is a decreasing sequence for all $d>0$ and $q\geq0$.
If the spectral sequence does not degenerate at the second page then there exists a differential at a page $E_r$, $r\geq2$, which is not trivial:
$$\delta_{(d-r,q+r-1)}^{(d,q)}: E_r^{d-r,q+r-1}\rightarrow E_r^{d,q}.$$

If $\delta^{(d-r,q+r-1)}_{(d,q)}$ is not trivial it means that $\Image  \delta^{(d-r,q+r-1)}_{(d,q)}\neq 0$. However 
$$E_{r+1}^{d,q}=\Ker\delta^{(d+r,q-r+1)}_{(d,q)}/\Image \delta^{(d,q)}_{(d-r,q+r-1)},$$
with 
the differential $\delta^{(d+r,q-r+1)}_{(d,q)}:E_r^{d,q}\rightarrow E_r^{d+r,q-r+1}$.
Since $\Image \delta^{(d,q)}_{(d-r,q+r-1)} \neq 0$, this means that
\[\dim_{\F} E_{r+1}^{d,q}<\dim_{\F}\Ker\delta^{(d+r,q-r+1)}_{(d,q)}\leq\dim_{\F} E_{r}^{d,q}\leq \dim_{\F} E_{2}^{d,q}.\]
Hence $\dim_{\F} E_{\infty}^{d,q}<\dim_{\F} E_2^{d,q}$. So $u_{d+q}(X)>0$.
\end{proof}
We also mention the following property which will be used in Section \ref{degespec}.
\begin{lemme}\label{lol12}
Let $k>1$. If $\overline{u}_{k-1}(X)=\overline{u}_{k+1}(X)=0$ then $\overline{u}_k(X)=0$.
\end{lemme}
\begin{proof}
We have to show that $\overline{E}_2^{d,q}=\overline{E}_{\infty}^{d,q}$ for all $d$, $q$ such that $d+q=k$.
To do so we prove recursively that $\overline{E}_2^{d,q}=\overline{E}_r^{d,q}$ for all $r\geq2$.
Of course it is true for $r=2$. Assume that it is true for $r$, we will prove it for $r+1$.
We have:
$$\xymatrix{\overline{E}_r^{d-r,q+r-1}\ar[r]^{\ \ \ \ \ \delta_a}&\overline{E}_r^{d,q}\ar[r]^{\delta_l\ \ \ \ \ }&\overline{E}_r^{d+r,q-r+1}}.$$
Since $\overline{u}_{k-1}(X)=0$, we have $\delta_a=0$. Indeed, if it was not the case,
$\Ker \delta_a$ would be a proper sub-vector space of $\overline{E}_r^{d-r,q+r-1}$.
Hence, we would have: $$\dim_{\F}\overline{E}_{\infty}^{d-r,q+r-1}\leq \dim_{\F}\overline{E}_{r+1}^{d-r,q+r-1}<\dim_{\F}\overline{E}_{r}^{d-r,q+r-1}\leq\dim_{\F}\overline{E}_{2}^{d-r,q+r-1} .$$
This contradicts $\overline{u}_{k-1}(X)=0$. With exactly the same argument, we have that $\delta_l=0$. 
It follows that $\overline{E}_r^{d,q}=\overline{E}_{r+1}^{d,q}$, which ends the proof.
\end{proof}
\begin{lemme}\label{degelemmater}
Assume that there exists $n\in \N$ such that $H^k(X,\Z)=0$ for all $k>n$. Let $k\geq n+2$, if $u_{k-1}(X)=u_{k+1}(X)=0$ then $u_k(X)=0$.
\end{lemme}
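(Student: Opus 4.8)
The plan is to turn the statement into elementary bookkeeping of the ranks of the higher differentials, using the hypothesis $H^q(X,\Z)=0$ for $q>n$ to push everything into the strictly positive rows $d\geq 1$ of the spectral sequence, where every term is a finite $\F$-vector space.

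First I would locate the support of the second page. Since $E_2^{d,q}=H^d(G,H^q(X,\Z))$, the vanishing $H^q(X,\Z)=0$ for $q>n$ gives $E_2^{d,q}=0$ for all $q>n$. Hence on any anti-diagonal $d+q=j$ with $j\geq n+1$ every nonzero term satisfies $d\geq j-n\geq 1$: the edge $d=0$ is absent in all total degrees $>n$. For such $j$ the two edge corrections in the definition of $u_j$ drop out, because $E_2^{0,j}=H^j(X,\Z)^G=0$ forces $t_pE_2^{0,j}=t_pE_\infty^{0,j}=0$, and by Proposition \ref{equivarcoho} every remaining term is a finite $\F$-vector space. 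Thus, for $j\geq n+1$,
$$u_j=\dim_{\F}\bigoplus_{d+q=j}E_2^{d,q}-\dim_{\F}\bigoplus_{d+q=j}E_\infty^{d,q}.$$

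Next I would rewrite this drop in terms of differential ranks. Each $d_r$ raises the total degree by one, so on a fixed anti-diagonal $j$ the loss of $\F$-dimension from $E_r$ to $E_{r+1}$ is exactly the rank of the differentials leaving degree $j$ (towards $j+1$) plus the rank of those reaching degree $j$ (from $j-1$). Setting $R(j):=\sum_{r\geq 2}\dim_{\F}\operatorname{im}\bigl(d_r\colon (\text{total degree }j)\to(\text{total degree }j+1)\bigr)$ and telescoping over $r$, I obtain
$$u_j=R(j)+R(j-1),\qquad j\geq n+1.$$
The one point that needs care is the smallest index $R(n)$, which feeds into $u_{n+1}$: the differentials counted there may emanate from the non-torsion edge $d=0$ in total degree $n$, so their rank is not a drop in any torsion group. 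But their targets lie in the rows $d\geq 1$ (the first index is raised by $r\geq 2$), hence in $\F$-vector spaces, so the relevant rank is the honest $\F$-dimension of the image; and these images lie entirely in the positive rows of total degree $n+1$, which are counted in full by $u_{n+1}$. Hence the displayed identity is valid down to $j=n+1$, the lowest index used below.

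Finally I would conclude. For $k\geq n+2$ the three indices $k-1,k,k+1$ are all $\geq n+1$, so the identity applies to each. As the $R(\cdot)$ are non-negative integers, $u_{k+1}=R(k+1)+R(k)=0$ forces $R(k)=0$ and $u_{k-1}=R(k-1)+R(k-2)=0$ forces $R(k-1)=0$; therefore $u_k=R(k)+R(k-1)=0$. The main obstacle is the middle step, namely justifying $u_j=R(j)+R(j-1)$ honestly across the boundary degree $n$, where the edge carries a free part invisible to the invariants $u_j$. Everything hinges on the observation that this edge only ever occurs as a \emph{source} of differentials pointing into the torsion region, so only the $\F$-dimension of its image matters and its free part never contaminates the count; away from this boundary the argument is the standard ``each nonzero higher differential strictly decreases some $u_j$'' bookkeeping.
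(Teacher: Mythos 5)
Your proof is correct and takes essentially the same route as the paper's (much terser) one-line argument: the identity $u_j=R(j)+R(j-1)$ for $j\geq n+1$ is exactly the formal content of the paper's observation that in this range any non-trivial differential touching the anti-diagonal of total degree $k$ would force $u_{k-1}>0$ or $u_{k+1}>0$. Your explicit handling of the boundary term $R(n)$, where the free part of the edge $E_2^{0,n}$ escapes the count, also makes clear why the hypothesis is $k\geq n+2$ rather than $k\geq n+1$, a point the paper leaves implicit.
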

\begin{proof}
Let $r\geq2$.
The proof is exactly the same as the one of Lemma \ref{lol12} with an additional complication which imposes $k\geq n+2$.
Indeed, as explain in Remark \ref{explication}, knowing that $u_{k-1}(X)=0$ does not imply anything on $E_r^{0,k-1}/\tors_p E_r^{0,k-1}$ since 
this group is not "encoded" in $u_{k-1}(X)$.
In particular, we can have $u_{k-1}(X)=0$ and the differential $\delta:E_r^{0,k-1}\rightarrow E_r^{r,k-r}$ which is not trivial (note that even if we would have added the term $\rk (E_2^{0,k-1}/\tors_p E_2^{0,k-1})$ in the definition of $u_{k-1}(X)$ it would not have solved this problem).
To avoid this problem, we need that $E_2^{0,k-1}=0$. This is the case if $k\geq n+2$ by Proposition \ref{equivarcoho} (i).
\end{proof}
We can express the difference between the two kinds of dimensions of degeneration using our invariants. 
\begin{lemme}\label{ZFdege}
We have: 
$$\overline{u}_k(X)=u_{k}(X)+u_{k+1}(X),$$
for all $k\in\mathbb{N}$.
\end{lemme}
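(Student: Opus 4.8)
The plan is to reduce the claimed relation to two parallel identities, one on the second page and one on the limit page, and then subtract them. Abbreviating
$$S_k:=\dim_{\F}\bigoplus_{d+q=k}\overline{E}_2^{d,q},\qquad T_k:=\dim_{\F}\bigoplus_{\substack{d+q=k\\ d>0}}E_2^{d,q}+t_p E_2^{0,k},$$
and writing $S_k^\infty,T_k^\infty$ for the analogous sums on the $E_\infty$ pages, the definitions give $\overline{u}_k=S_k-S_k^\infty$ and $u_k=T_k-T_k^\infty$. Hence it suffices to establish the two identities
$$S_k=T_k+T_{k+1}+\rho_k,\qquad S_k^\infty=T_k^\infty+T_{k+1}^\infty+\rho_k,$$
where $\rho_k:=\ell_+^k(X)+\ell_{p,f}^k(X)=\rk H^k(X,\Z)^G$, the last equality coming from Theorem \ref{gene19} (v). Subtracting the two identities cancels $\rho_k$ and yields exactly $\overline{u}_k=u_k+u_{k+1}$.

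For the first identity I would compute every term on the second page through the invariants. Proposition \ref{Lemma3.1} gives $\dim_{\F}\overline{E}_2^{0,q}=\sum_{a\leq p}\ell_a^q(X)$ and $\dim_{\F}\overline{E}_2^{d,q}=\sum_{a<p}\ell_a^q(X)$ for $d>0$, while Proposition \ref{equivarcoho} gives $\dim_{\F}E_2^{d,q}=\ell_-^q(X)+\sum_{a<p}\ell_{a,t}^q(X)$ for $d$ odd, the same with $\ell_+^q(X)$ for $d$ even positive, and $t_p E_2^{0,q}=\sum_{a\leq p}\ell_{a,t}^q(X)$. Feeding the four relations of Proposition \ref{torsioninvar} into these expressions and summing over the antidiagonals $d+q=k$ and $d+q=k+1$, the torsion contributions $\sum_{a<p}\ell_{a,t}^{q+1}(X)$ produced at internal degree $q$ telescope against those produced at degree $q+1$; the only surviving boundary torsion term is $\sum_{a<p}\ell_{a,t}^0(X)$, which vanishes because $H^0(X,\Z)$ is torsion free. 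What then remains on the left is precisely the free rank of the bottom row, $\rho_k=\ell_+^k(X)+\ell_{p,f}^k(X)$: it is counted by $S_k$ (it lives inside $\dim_{\F}\overline{E}_2^{0,k}$) but is deliberately omitted from $T_k$, which records only the $p$-torsion $t_p E_2^{0,k}$. This is exactly the claimed discrepancy, and it is concentrated in degree $k$ with no analogous term in degree $k+1$.

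For the second identity I would argue at the limit. First, no differential touches the free part of the bottom row: every $d_r$ with $r\geq 2$ lands in a column $d>0$, where $E_2^{d,q}=H^d(G,H^q(X,\Z))$ is annihilated by $p$, so its image is $p$-torsion and the free quotient of $E_\infty^{0,k}$ still has rank $\rho_k=\rk H_G^k(X,\Z)$. Since over a field $\overline{E}_\infty$ is the associated graded of $H_G^k(X,\F)$, we have $S_k^\infty=\dim_{\F}H_G^k(X,\F)$, and the universal coefficient theorem for $X_G$ expresses this as $\rk H_G^k(X,\Z)$ plus the mod-$p$ torsion contributions of $H_G^k(X,\Z)$ and $H_G^{k+1}(X,\Z)$. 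The main obstacle I expect is matching these torsion contributions against $T_k^\infty$ and $T_{k+1}^\infty$ through the filtration by the pieces $E_\infty^{d,q}$: one must verify that passing from the $E_\infty$ page to the abutment $H_G^*(X,\Z)$ introduces no hidden extensions raising the order of the $p$-torsion, so that the torsion dimension of the abutment is read off additively from the graded pieces and therefore coincides with $\dim_{\F}E_\infty^{d,q}$ ($d>0$) together with $t_p E_\infty^{0,k}$. Once this $\Z$-versus-$\F$ bookkeeping at the limit page is settled, the second identity has exactly the same shape as the first, and subtraction completes the proof.
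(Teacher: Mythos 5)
Your proposal is correct and follows essentially the same route as the paper: both compute the second-page contributions through Propositions \ref{Lemma3.1}, \ref{equivarcoho} and \ref{torsioninvar}, identify the limit page with the cohomology of $X_G$, and let the universal coefficient theorem together with $\rk H^k_f(X_G,\Z)=\ell_+^{k}(X)+\ell_{p,f}^{k}(X)$ (Theorem \ref{gene19} (v)) produce the telescoping; your reorganization into two parallel identities differing by $\rho_k$ is only a cosmetic repackaging of the paper's substitution into $\dim_{\F}H^k(X_G,\F)$. The step you single out as the main obstacle --- reading the $p$-torsion of the abutment additively off the pieces $E_\infty^{d,q}$ --- is exactly what the paper assumes without comment when it writes $u_k(X)$ as the second-page count minus $t_p^k(X_G)$ in (\ref{XG1})--(\ref{XG2}), so on this point you are, if anything, more explicit than the source.
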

\begin{proof}
The equation of the lemma is a consequence of the universal coefficient theorem and our computations of the cohomology of the group $G$.
By Proposition \ref{Lemma3.1}:
\begin{equation}
\overline{u}_k(X)=\sum_{i=0}^k\sum_{q<p}\ell_q^i(X)+\ell_p^k(X)-\dim_{\F} H^k(X_G,\F).
\label{ub1}
\end{equation}
By Proposition \ref{equivarcoho}:
\begin{equation}
u_{2k}(X)=\sum_{i=0}^{k-1}\ell_+^{2i}(X)+\sum_{i=0}^{k-1}\ell_-^{2i+1}(X)+\sum_{i=0}^{2k}\sum_{q<p}\ell_{q,t}^i(X)+\ell_{p,t}^{2k}(X)-t_p^{2k}(X_G),
\label{XG1}
\end{equation}
\begin{equation}
u_{2k+1}(X)=\sum_{i=0}^{k}\ell_-^{2i}(X)+\sum_{i=0}^{k-1}\ell_+^{2i+1}(X)+\sum_{i=0}^{2k+1}\sum_{q<p}\ell_{q,t}^i(X)+\ell_{p,t}^{2k+1}(X)-t_p^{2k+1}(X_G),
\label{XG2}
\end{equation}
where the notation $t_p^k$ is defined in Section \ref{nota1} (ix).
Moreover the universal coefficient theorem provides a relation between the two kinds of dimensions of degeneration.
Indeed, by (\ref{rapporteur0}), we have for $k\in\N$:
$$\dim_{\F} H^k(X_G,\F)=t_p^k(X_G)+t_p^{k+1}(X_G)+\rk H_f^k(X_G,\Z).$$
By Proposition \ref{equivarcoho} (i) and Theorem \ref{gene19} (v), we have:
$$\rk H_f^k(X_G,\Z)=\rk H_f^k(X,\Z)^G=\ell_+^{k}(X)+\ell_{p,f}^{k}(X).$$
It follows from (\ref{XG1}), (\ref{XG2}) and Proposition \ref{torsioninvar} that:
\begin{align*}
\dim_{\F} H^{2k}(X_G,\F)&=t_p^{2k}(X_G)+t_p^{2k+1}(X_G)+\rk H_f^{2k}(X_G,\Z).\\
&=\sum_{i=0}^{k-1}\ell_+^{2i}(X)+\sum_{i=0}^{k-1}\ell_-^{2i+1}(X)+\sum_{i=0}^{2k}\sum_{q<p}\ell_{q,t}^i(X)+\ell_{p,t}^{2k}(X)-u_{2k}(X)\\
&-u_{2k+1}(X)+\sum_{i=0}^{k}\ell_-^{2i}(X)+\sum_{i=0}^{k-1}\ell_+^{2i+1}(X)+\sum_{i=0}^{2k+1}\sum_{q<p}\ell_{q,t}^i(X)+\ell_{p,t}^{2k+1}(X)\\
&+\ell_+^{2k}(X)+\ell_{p,f}^{2k}(X)\\
&=\sum_{i=0}^{2k}\sum_{q<p}\ell_q^i(X)+\ell_{p}^{2k}(X)-u_{2k}(X)-u_{2k+1}(X).
\end{align*}
We obtain the same result for odd degrees, hence we have for all $k\geq0$:
$$\dim_{\F} H^{k}(X_G,\F)=\sum_{i=0}^k\sum_{q<p}\ell_q^i(X)+\ell_{p}^{k}(X)-u_{k}(X)-u_{k+1}(X).$$
Then, we obtain our result by (\ref{ub1}).
\end{proof}
Finally, Proposition \ref{degeequ} is a direct consequence of Lemmas \ref{ZFdege} and \ref{degelemmabis}.
 \subsection{Application to the cohomology of quotients when $p>19$}\label{bigger19}
 Because of Theorem \ref{gene19}, all the statements of \cite{Lol3} can be stated without any restriction on the prime number $p$.
In particular, we have the following propositions which will be used several times in the paper.
\begin{prop}\label{vraisans19}
Let $X$ be a compact connected orientable manifold and $G$ be an automorphism group of prime order $p$. We denote by $\pi:X\rightarrow X/G$ the quotient map. Let $T$ be a unimodular sublattice of $H^*(X,\Z)$ stable under the action of $G$; then:
$$\discr \pi_*(T)_f=p^{\ell_+(T)}.$$
\end{prop} 
\begin{proof}
It is a direct consequence of \cite[Corollary 3.8]{Lol3}, since we have shown in Theorem \ref{gene19} (v) that $\rk T^G-\ell_{p}(T)=\ell_+(T)$ for all prime number $p$.
\end{proof}
\begin{prop}\label{vraisans19ter}
Let $X$ be a compact connected orientable manifold of dimension $n$ and $G$ be an automorphism group of prime order $p$. For all $0\leq k\leq n$, we have:
$$\alpha_k(X)+\alpha_{n-k}(X)\leq\ell_+^k(X).$$
Moreover, if $X/G$ is smooth the previous inequality becomes an equality.
\end{prop}
\begin{proof}
As before, 
it is a direct consequence of \cite[Corollary 3.10]{Lol3}, since we have shown in Theorem \ref{gene19} (v) that 
$\rk H^{k}_f(X,Z)^G-\ell^k_{p,f}(X)=\ell_+^k(X)$
for all prime number $p$.
\end{proof}
  \begin{rmk}\label{vraisans19bis}
  More generally, in all the statements \cite[Propositions 2.9, 3.14 and 5.2]{Lol3},
	the assumption on the prime number $p$ can be removed.
  \end{rmk}
The main results of \cite{Lol3} can also be stated without any restriction on the prime number $p$. We recall the definition of \emph{simple fixed points} of an automorphism group of a complex manifold \cite[Definition 5.9]{Lol3}. A fixed point $x$ is said to be \emph{simple} if the local action of $G$ around $x$ corresponds to the action of one of the diagonal matrices $\diag(1,...,1,\xi_p,...,\xi_p)$ in $0\in \C^n$ with $\xi_p$ a $p^\mathrm{th}$ root of the unity. If $\Fix G$ has several connected components, we denote by $\codim \Fix G$ the codimension of the component of higher dimension.
 \begin{thm}\label{recall1}
Let $X$ be a compact complex manifold of dimension $n$ and $G$ an automorphism group of prime order $p$. Let $c:=\codim \Fix G$.
Assume that
\begin{itemize}
\item[(i)]
$H^*(X,\Z)$ and  $H^*(\Fix G,\Z)$ are $p$-torsion-free,
\item[(ii)]
$h^*(\Fix G,\Z)\geq \ell_{+}^*(X)+\ell_{-}^*(X),$
\item[(iii)]
all fixed points of $G$ are simple,
\item[(iv)]
$c\geq\frac{n}{2}+1$.
\end{itemize}
Then, for all $2n-2c+1\leq k \leq2c-1$,
the $k^\mathrm{th}$ coefficient of surjectivity $\alpha_k(X)$ of $X$ vanishes.
\end{thm}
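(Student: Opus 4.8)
The statement is, up to indexing, the main theorem of \cite{Lol3}, which was proved there only for $p\leq 19$; so the plan is to rerun the argument of \cite{Lol3} and verify that the sole point where the bound on $p$ entered is now supplied for arbitrary $p$ by Theorem \ref{gene19}. Let me first recall the geometric mechanism. Hypothesis (iii) guarantees that near $\Fix G$ the action is, in suitable coordinates, $\diag(1,\dots,1,\xi_p,\dots,\xi_p)$, with the scalar $\xi_p$ acting on the $c$ normal directions. Blowing up $X$ along the smooth locus $\Fix G$ produces $\hat X$ on which $G$ lifts, acts trivially on the exceptional divisor $E=\Pj(N_{\Fix G/X})$, and acts on the normal line bundle of $E$ by $\xi_p$. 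The local model of the lifted quotient in the normal coordinate is then $w\mapsto w^{p}$, so $\widetilde{X/G}:=\hat X/G$ is a smooth compact complex manifold resolving $X/G$, with exceptional divisor $\bar E\cong E$. Being smooth and compact, $H^{*}_f(\widetilde{X/G},\Z)$ is a unimodular lattice for the Poincar\'e pairing, and this is the one structural input that will pin down the $\alpha_k(X)$.

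The coefficients of surjectivity are then extracted by a discriminant count. I would use the blow-up decomposition to describe $H^{k}_f(\widetilde{X/G},\Z)$, in each degree $k$ of the range, as a sum (orthogonal up to an explicit finite index) of the pulled-back lattice $\bar b^{*}H^{k}_f(X/G,\Z)$ and of exceptional classes built from the lower cohomology $H^{k-2j}(\Fix G,\Z)$, $1\leq j\leq c-1$, and the relative hyperplane class of $\bar E$. On the other side, the exact sequence (\ref{MainExactSequence}) exhibits $H^{k}_f(X/G,\Z)$ as an overlattice of $\pi_{*}(H^{k}(X,\Z))_f$ of index $p^{\alpha_k(X)}$, while the projection formula $\pi_{*}(x)\cdot\pi_{*}(y)=p\,\pi_{*}(x\cdot y)$ recalled in the introduction shows that the pairing between degrees $k$ and $2n-k$ on $\Image\pi_{*}$ equals $p$ times the pairing of the invariant lattices $H^{k}_f(X,\Z)^{G}$ and $H^{2n-k}_f(X,\Z)^{G}$. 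Computing $\discr H^{k}_f(X/G,\Z)$ both from this overlattice description, via (\ref{BasicLatticeTheory}), and from the value forced by unimodularity of $\widetilde{X/G}$, via (\ref{BasicLatticeTheory2}), yields a relation controlling $\alpha_k(X)+\alpha_{2n-k}(X)$. Here the codimension bound (iv), $c\geq n/2+1$, together with the symmetric range $2n-2c+1\leq k\leq 2c-1$, is exactly what makes the exceptional contributions factor out of this comparison so that the pairing relevant to $\alpha_k(X)$ restricts to $\bar b^{*}H^{k}_f(X/G,\Z)$; hypotheses (i) and (ii), through the group-cohomology computations of Propositions \ref{equivarcoho} and \ref{torsioninvar}, then force each term to vanish separately in the stated range.

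It remains to see why no restriction on $p$ survives. Every time \cite{Lol3} invoked the $\F[G]$-module structure, namely that $\mathcal{O}\otimes\F\simeq N_{p-1}$, that every ideal of $\mathcal{O}$ tensored with $\F$ is again $N_{p-1}$, and that $\ell_q=0$ for $2\leq q\leq p-2$, it was precisely in order to evaluate $H^{*}(G,H^{k}(X,\Z))$ and the discriminants of the invariant and $\sigma$-image lattices appearing in the count above. All of these facts are now supplied for arbitrary $p$ by Lemmas \ref{r} and \ref{s} and by Theorem \ref{gene19}, and are repackaged in Proposition \ref{vraisans19}. Consequently the lattice and cohomology computations of \cite{Lol3} transcribe verbatim, and the conclusion $\alpha_k(X)=0$ holds for $2n-2c+1\leq k\leq2c-1$ without any hypothesis on the prime.

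The main obstacle I anticipate is bookkeeping rather than a new idea: one must check that the discriminant/unimodularity argument isolates $\alpha_k(X)$ from its Poincar\'e-dual partner $\alpha_{2n-k}(X)$ individually, and not merely their sum, and that the exceptional classes of $\bar E$ genuinely decouple from $\bar b^{*}H^{k}_f(X/G,\Z)$ throughout the whole range $2n-2c+1\leq k\leq2c-1$. This is where the codimension bound (iv) and the $p$-torsion-freeness (i) do the real work, and where one must be vigilant that the generalized structural input of Theorem \ref{gene19} is invoked in \emph{every} group-cohomology and lattice step inherited from \cite{Lol3}, since that is the only place the old bound $p\leq 19$ could have hidden.
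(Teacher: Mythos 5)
Your proposal matches the paper's proof in both substance and strategy: the paper simply observes that this is \cite[Theorem 6.1]{Lol3} with the restriction $p\leq 19$ removed, and that Proposition \ref{vraisans19} (resting on Theorem \ref{gene19} and Lemmas \ref{r} and \ref{s}) supplies every $\F[G]$-module, group-cohomology and discriminant input for arbitrary $p$, exactly as you argue. The one point you leave untouched is that \cite{Lol3} assumed $X$ K\"ahler while the present statement does not; the paper disposes of this by noting that the K\"ahler hypothesis in \cite[Theorem 7.31]{Voisin} (the blow-up decomposition you invoke) is only needed for the Hodge structure, not for the description of the integral cohomology, so your argument is complete once that remark is added.
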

The condition on the codimension of $\Fix G$ can be slightly improved (we can allow that $\Fix G$ contains a connected component of codimension $\left\lceil \frac{n}{2}\right\rceil$).
\begin{thm}\label{recall3}
Let $X$ be a compact complex manifold of dimension $n$ and $G$ an automorphism group of prime order $p$. 
Assume that:
\begin{itemize}
\item[(i)]
$H^*(X,\Z)$ and  $H^*(\Fix G,\Z)$ are $p$-torsion-free.
\item[(ii)]
$h^{2*}(\Fix G,\Z)\geq \ell_{+}^{2*}(X)+\ell_-^{2*+1}(X)$ if $n$ is even and
$h^{2*+1}(\Fix G,\Z)\geq \ell_{+}^{2*+1}(X)+\ell_-^{2*}(X)$ if $n$ is odd.
\item[(iii)]
All fixed points of $G$ are simple.
\item[(iv)]
$\codim \Fix G=\left\lceil \frac{n}{2}\right\rceil$.
\item[(v)]
%When $n$ is even, let $\Delta$ be the component of $\Fix G$ of dimension $\frac{n}{2}$. We assume that $\Delta$ is connected.
When $n$ is even, we assume that $\Fix G$ contains only one connected component of dimension $\frac{n}{2}$, denoted by $\Delta$.
Let $j:\Fix G\hookrightarrow X$ be the inclusion. Let $\left[\Delta\right]:=j_*(1)$, with $j_*:H^{0}(\Delta,\Z)\rightarrow H^{n}(X,\Z)$. We assume that $\left[\Delta\right]$ is not of $p$-torsion and is not divisible by $p$.
\end{itemize}
Then the $n^\mathrm{th}$ coefficient of surjectivity $\alpha_n(X)$ of $X$ vanishes.
\end{thm}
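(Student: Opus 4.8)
The plan is to adapt the duality argument used for Theorem \ref{recall1} in \cite{Lol3}, passing through a toric resolution and exploiting the unimodularity of its middle intersection lattice, and to treat the boundary value $c=\lceil n/2\rceil$ separately according to the parity of $n$. First I would resolve the isolated-type singularities of $X/G$ lying over $\Fix G$ by the toric blow-ups constructed in Section \ref{toritor}, obtaining a smooth compact complex manifold $Y:=\widetilde{X/G}$ with resolution map $\rho$. By Poincaré duality the middle lattice $H^n_f(Y,\Z)$ is unimodular (Section \ref{remindersL}). The whole problem reduces to a discriminant equality: applying (\ref{BasicLatticeTheory}) to the exact sequence (\ref{MainExactSequence}) in degree $n$, where $\pi_*(H^n(X,\Z))_f$ sits with index $p^{\alpha_n(X)}$ inside $H^n_f(X/G,\Z)$, gives
$$p^{2\alpha_n(X)}=\frac{\discr \pi_*(H^n(X,\Z))_f}{\discr H^n_f(X/G,\Z)},$$
so it suffices to prove that these two discriminants coincide.

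I would compute the numerator from the side of $X$. Since $\pi^*\pi_*=\sum_{g}g^*$ acts by $p$ on the saturated sublattice $H^n(X,\Z)^G$, the map $\pi_*$ is injective modulo torsion on invariants, and its image has rank $\rk H^n(X,\Z)^G=\ell_+^n(X)+\ell_{p,f}^n(X)$ by Theorem \ref{gene19} (v). The cup-product identity $\pi_*(x_1)\cdot\pi_*(x_2)=p\,\pi_*(x_1\cdot x_2)$ recalled in the introduction shows that the intersection form carried by $\pi_*(H^n(X,\Z))_f$ is $p$ times the form induced on $H^n(X,\Z)^G$; since $H^n(X,\Z)$ is unimodular (hypothesis (i) makes it $p$-torsion free and Poincaré duality applies) and $H^n(X,\Z)^G$ is primitive, formula (\ref{BasicLatticeTheory2}) expresses $\discr H^n(X,\Z)^G$ through its anti-invariant complement, which is controlled by the Boissière--Nieper-Wisskirchen--Sarti invariants. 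This yields a closed expression for $\discr \pi_*(H^n(X,\Z))_f$ in terms of a power of $p$ and the invariants $\ell_\pm^n(X),\ell_{p,f}^n(X)$.

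I would compute the denominator from the side of $Y$. By Proposition \ref{exactU} the exceptional cohomology is torsion free and concentrated in even degrees, so $\rho^*$ embeds $H^n_f(X/G,\Z)$ as a primitive sublattice of the unimodular lattice $H^n_f(Y,\Z)$, orthogonal (because exceptional classes push forward to zero) to the lattice $\mathcal{E}$ spanned by the degree-$n$ exceptional cycles. Then (\ref{BasicLatticeTheory2}) identifies $\discr H^n_f(X/G,\Z)$ with $\discr\mathcal{E}$, which is exactly the quantity supplied by Theorem \ref{MainCnG}, applied over the components of $\Fix G$ as in Corollaries \ref{CorCohomology1bis}--\ref{CorCohomology1ter}. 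Here hypothesis (iv), $c=\lceil n/2\rceil$, is precisely what places exceptional classes in the middle degree: when $n$ is odd they live only in even degrees, so $\mathcal{E}=0$, $H^n_f(X/G,\Z)=H^n_f(Y,\Z)$ is already unimodular, and hypothesis (ii) immediately forces the two discriminants to agree; when $n$ is even the sole contribution comes from the exceptional divisor over the top-dimensional component $\Sigma$ of complex dimension $n/2$.

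The hard part is the even case, where I must show that the middle-degree exceptional cycles over $\Sigma$ do not enlarge the saturation of $\pi_*(H^n(X,\Z))_f$, i.e. that $\discr\mathcal{E}$ matches the $p$-scaled discriminant of the previous paragraph. This is where hypothesis (v) is indispensable: the injectivity and primitivity of $j_*\colon H^0(\Sigma,\Z)\to H^n(X,\Z)$ guarantee that $[\Sigma]$ is a primitive invariant class, pinning down the single exceptional generator produced over $\Sigma$ and hence fixing $\discr\mathcal{E}$ at the value dictated by the $p$-rescaling, while hypotheses (iii) and (ii) ensure no further torsion or rank defect is hidden. The genuine obstacle throughout is the exact bookkeeping of the power of $p$ contributed by $\Sigma$ in $\discr\mathcal{E}$ against the power produced by the $p$-scaled intersection form on $\pi_*(H^n(X,\Z))_f$; making these cancel is what forces the two discriminants to be equal and the displayed identity to yield $\alpha_n(X)=0$.
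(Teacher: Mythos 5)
Your proposal attempts a full re-proof, whereas the paper's own proof of Theorem \ref{recall3} is a two-line citation: the statement is \cite[Theorem 6.12]{Lol3}, and the only new content is that the hypothesis $p\leq 19$ can be dropped (because of Proposition \ref{vraisans19}, i.e.\ Theorem \ref{gene19}) and that the K\"ahler hypothesis can be dropped (because it entered only through \cite[Theorem 7.31]{Voisin}, whose integral-cohomology conclusion does not need it). So you are reconstructing the underlying argument of \cite{Lol3} rather than the argument actually written here. That would be acceptable if the reconstruction were sound, but it has concrete gaps.

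First, the resolution you use is the wrong one. Under hypothesis (iv) the fixed locus has dimension $n-\lceil n/2\rceil=\lfloor n/2\rfloor\geq 1$ for $n\geq 2$, so $\Sing(X/G)$ is \emph{not} a finite set of points, and the toric blow-ups of Section \ref{toritor} --- which are defined only for isolated complex quotient points (Definition \ref{quotientPoint}) and whose cohomological description (Proposition \ref{exactU}, Theorem \ref{MainCnG}, Corollaries \ref{CorCohomology1bis}--\ref{CorCohomology1ter}) is established only in that setting --- simply do not apply. Hypothesis (iii) (all fixed points simple, i.e.\ local action $\diag(1,\dots,1,\xi_p,\dots,\xi_p)$) is there precisely so that the ordinary blow-up of $X$ along the submanifold $\Fix G$ is $G$-equivariant with smooth quotient; that blow-up is the resolution used in \cite{Lol3}, and it is why the paper's proof invokes Voisin's blow-up formula rather than anything from Section \ref{toritor}. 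Your appeal to Proposition \ref{exactU} and Theorem \ref{MainCnG} to compute $\discr\mathcal{E}$ is therefore unfounded in this situation.

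Second, the key lattice-theoretic step is asserted rather than proved. You claim that $\rho^*$ embeds $H^n_f(X/G,\Z)$ as a \emph{primitive} sublattice of $H^n_f(Y,\Z)$ equal to the orthogonal complement of the exceptional lattice $\mathcal{E}$, and then apply (\ref{BasicLatticeTheory2}). This primitivity is exactly what can fail --- the paper introduces the coefficient of resolution $\beta_{2k}(X)$ (Definition \ref{coeffreso}) to measure its failure --- and ruling it out is the real work. It is here, and not as an afterthought, that hypothesis (ii) enters: the inequality $h^{2*}(\Fix G,\Z)\geq\ell_+^{2*}(X)+\ell_-^{2*+1}(X)$ (resp.\ its odd analogue) is the numerical input that forces the rank and discriminant of the exceptional lattice to match the $p$-power $p^{\ell_+^n(X)}$ coming from $\discr\pi_*(H^n(X,\Z))_f$, leaving no room for either a nonzero $\alpha_n$ or a saturation defect. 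Your phrases ``hypothesis (ii) immediately forces the two discriminants to agree'' and ``hypotheses (iii) and (ii) ensure no further torsion or rank defect is hidden'' name the conclusion of that count without performing it. As written, the proposal does not establish the theorem.
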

\begin{proof}[Proof of Theorems \ref{recall1} and \ref{recall3}]
This is the version of \cite[Theorems 6.1 and 6.12]{Lol3} without the assumption $p\leq19$.
Because of Propositions \ref{vraisans19}, \ref{vraisans19ter} and Remark \ref{vraisans19bis}, the proof given in \cite{Lol3} remains valid without any assumption on $p$.

Moreover, the K\"ahler condition can also be removed. Indeed, this condition was needed to compute the integral cohomology of blow-ups of $X$ using \cite[Theorem 7.31]{Voisin} which is stated for a K\"ahler manifold. However \cite[Theorem 7.31]{Voisin} can be replaced by \cite[Theorem 4.1]{Li} which provides the integral cohomology of a blow-up of an almost complex manifold without any K\"ahler assumption.

Furthermore, the condition on $\left[\Delta\right]$ in \cite[Theorem 6.12]{Lol3} have been slightly weakened (in \cite{Lol3}, $\Delta$ is denoted by $\Sigma$). In \cite[Theorem 6.12]{Lol3}, it is assumed that $\left[\Delta\right]$ is primitive in $H^{n}(X,\Z)$; however assuming that $\left[\Delta\right]$ is not divisible by $p$ is enough without any change in the proof. Indeed, this condition on $\left[\Delta\right]$ is only used in \cite[Lemmas 6.13]{Lol3} to show that $H^n(X\smallsetminus \Fix G,\Z)$ is $p$-torsion-free.
\end{proof}
\begin{rmk}\label{conditionth19}
By Corollary \ref{lefschetz2}, the numerical condition (ii) of the previous theorems can be replaced by "the spectral sequence of equivariant cohomology with coefficients in $\F$ degenerates at the second page".
\end{rmk}
\section{Toric blow-up}\label{toritor}
\subsection{Reminders on toric geometry}
Our main references are \cite{Danilov}, \cite{Fulton} and \cite{Oda}.

Let $M$ be a lattice.
A set $\sigma$ in $M_{\mathbb{Q}}:=M\otimes\mathbb{Q}$ is called a \emph{cone}, if there exist finitely many vectors $v_{1},\ldots,v_{n}\in M$ such that $\sigma=\mathbb{Q}^{+}v_{1}+\cdots+\mathbb{Q}^{+}v_{n}$. 
The \emph{dimension} of $\sigma$ is defined to be the dimension of the subspace $\Vect(\sigma)$. 
If $H\subset M_{\mathbb{Q}}$ is a hyperplane which contains the origin $0\in  M_{\mathbb{Q}}$ such that $\sigma$ lies in one of the closed half-spaces of $M_{\mathbb{Q}}$ bounded by $H$, then the intersection $\sigma\cap H$ is again a cone which is called a \emph{face} of $\sigma$. If $\left\{0\right\}$ is a face of $\sigma$, we say that $\sigma$ has a \emph{vertex} at $0$. Let $\sigma$ be a cone, we denote by $\sigma^{\vee}:=\left\{\left.f\in \Hom(M_{\mathbb{Q}},\mathbb{Q})\ \right|\  f(\sigma)\geq0\right\}$ the \emph{dual cone} of $\sigma$.
\begin{defi}[\emph{Fan}]
Let $M$ be a lattice.
A \emph{fan} $\Sigma$ in $M_{\mathbb{Q}}$ is a finite set of cones which satisfies the following conditions:
\begin{itemize}
\item[---]
every cone $\sigma\in\Sigma$ has a vertex at 0;
\item[---]
if $\tau$ is a face of a cone $\sigma\in\Sigma$, then $\tau\in\Sigma$;
\item[---]
if $\sigma, \sigma'\in \Sigma$, then $\sigma\cap \sigma'$ is a face of both $\sigma$ and $\sigma'$.
\end{itemize}
\end{defi}

\begin{defi}[\emph{Vocabulary on Fan}]
Let $\Sigma$ be a fan in $M_{\mathbb{Q}}$. 
\begin{itemize}
\item[---]
We define the \emph{support} of $\Sigma$ by $\left|\Sigma\right|=\bigcup_{\sigma\in\Sigma}\sigma$ and we say that $\Sigma$ is \emph{complete} if $\left|\Sigma\right|=M_{\mathbb{Q}}$. 
\item[---]
Let $\Sigma'$ be another fan such that every cone of $\Sigma'$ is contain in some cone of $\Sigma$ and $\left|\Sigma'\right|=\left|\Sigma\right|$; we call $\Sigma'$ a \emph{subdivision} of $\Sigma$.
\item[---]
Let $\sigma\in \Sigma$ be a cone. We say that $\sigma$ is \emph{regular according to $M$} if it is generated by a subset of a basis of $M$.
We say that $\Sigma$ is \emph{regular according to $M$} if every $\sigma\in\Sigma$ is regular according to $M$.
\item[---]
Let $\Sigma(1)$ be the cones of $\Sigma$ of dimension 1.
The
\emph{Stanley-Reisner ring} $R_{\Sigma}$ of $\Sigma$ is the commutative ring generated by elements $x_{\sigma}$ with $\sigma\in\Sigma(1)$ and the relations $x_{\sigma_1}x_{\sigma_2}\cdot\cdot\cdot x_{\sigma_r}=0$ for all distinct $\sigma_1,\sigma_2,\ldots, \sigma_r\in\Sigma(1)$ that do not generate a cone of $\Sigma$.
\end{itemize}
\end{defi}

\begin{defi}[\emph{Affine toric variety}]
Let $M$ be a lattice and $\sigma\subset M_{\mathbb{Q}}$ a cone.
We denote by $\mathbb{C}[\sigma\cap M]$ the set of all the expressions $\sum_{m\in\sigma\cap M} a_{m}x^{m}$ with almost all $a_{m}=0$.
The affine scheme $\Spec \mathbb{C}[\sigma\cap M]$ is called an \emph{affine toric variety}; it is denoted by $X_{\sigma}$.
\end{defi}
\begin{defi}[\emph{Toric variety}]
Let $M$ and $N$ be lattices dual to one another, and let $\Sigma$ be a fan in $N_{\mathbb{Q}}$.
With each cone $\sigma\in\Sigma$ we associate an affine toric variety $X_{\sigma^{\vee}}=\Spec\mathbb{C}[\sigma^{\vee}\cap M]$. By \cite[Section 2.6.1]{Danilov}, if $\tau$ is a face of $\sigma$, then $X_{\tau^{\vee}}$ can be identified with an open subvariety of $X_{\sigma^{\vee}}$. These identifications allow us to glue together the $X_{\sigma^{\vee}}$ (as $\sigma$ ranges over $\Sigma$) to form a variety, which is denoted by $X_{\Sigma}$ and is called the \emph{toric variety associated to $\Sigma$ and $N$}.
\end{defi}

\begin{thm}[\emph{cf.} \protect{\cite[Theorems 1.10 and 1.11]{Oda}}]\label{toric}
Let $N$ be a lattice and let $X_{\Sigma}$ be a toric variety determined by a fan $\Sigma$ in $N_{\mathbb{Q}}$. Then:
\begin{itemize}
\item[(1)]
$X_{\Sigma}$ is complete if and only if $\Sigma$ is complete. 
\item[(2)]
$X_{\Sigma}$ is smooth if and only if $\Sigma$ is regular according to $N$.
\end{itemize}
\end{thm}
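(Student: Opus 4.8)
The plan is to treat the two statements separately, proving (2) first, since smoothness is a local and essentially semigroup-theoretic question, and then deducing (1) from the valuative criterion of properness. For (2), I would first use that smoothness is local on $X_{\Sigma}$, so that $X_{\Sigma}$ is smooth if and only if each affine chart $X_{\sigma^{\vee}}=\Spec\C[\sigma^{\vee}\cap M]$ is smooth, and that it suffices to treat each $\sigma$ individually. Splitting off a torus factor --- writing $N=N'\oplus N''$ with $\sigma$ full-dimensional in $N'_{\Q}$ --- reduces to the case where $\sigma$ has dimension $n=\rk N$, since the complementary $(\C^{*})^{\,n-\dim\sigma}$ factor is already smooth. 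In the full-dimensional case the ``if'' direction is a direct computation: if $\sigma$ is generated by a basis $e_{1},\dots,e_{n}$ of $N$, then $\sigma^{\vee}$ is generated by the dual basis, the semigroup $\sigma^{\vee}\cap M$ is the free semigroup $\N^{n}$, and $X_{\sigma^{\vee}}\cong\C^{n}$. The ``only if'' direction is the crux: I would examine the distinguished torus-fixed point $x_{\sigma}$ of $X_{\sigma^{\vee}}$, whose maximal ideal $\mathfrak{m}$ is spanned by the monomials $x^{m}$ for $0\neq m\in\sigma^{\vee}\cap M$. The Zariski cotangent space $\mathfrak{m}/\mathfrak{m}^{2}$ then has dimension equal to the number of minimal generators (the Hilbert basis) of the semigroup $\sigma^{\vee}\cap M$; smoothness forces this dimension to be $n$, and a Gordan-type argument shows that a full-dimensional semigroup with exactly $n$ minimal generators must be free, i.e. $\sigma^{\vee}$, and hence $\sigma$, is regular according to $N$.

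For (1), I would argue via the valuative criterion of properness over the discrete valuation ring $R=\C[[t]]$ with fraction field $K=\C((t))$, after noting that $X_{\Sigma}$ is separated and of finite type. The essential lemma is that for a one-parameter subgroup $\lambda_{v}\colon\C^{*}\to T$ attached to $v\in N$, the limit $\lim_{t\to0}\lambda_{v}(t)$ exists in $X_{\Sigma}$ if and only if $v$ lies in the support $\left|\Sigma\right|$, and when it exists it lands in the chart $X_{\sigma^{\vee}}$ for the smallest cone $\sigma$ containing $v$. Granting this, suppose $\Sigma$ is complete, i.e. $\left|\Sigma\right|=N_{\Q}$; any $K$-point of $X_{\Sigma}$ may, after acting by an element of $T(K)$ and using density of the torus, be reduced to such a direction $v$, completeness guarantees $v\in\left|\Sigma\right|$, the limit exists, and the map extends over $\Spec R$, so $X_{\Sigma}$ is proper. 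Conversely, if $\Sigma$ is not complete, I would choose an integral vector $v\in N\setminus\left|\Sigma\right|$ and observe that $\lambda_{v}$ has no limit as $t\to0$, producing a $K$-point that fails to extend over $R$ and hence violating the valuative criterion.

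The main obstacle I anticipate is the ``only if'' direction of (2): controlling the embedding dimension at the fixed point and upgrading ``$n$ minimal generators'' to ``the cone is spanned by a lattice basis'' is the genuinely arithmetic step, requiring the finite-generation (Gordan's lemma) structure of the semigroup together with a saturation and primitivity argument on the generators. In (1), the delicate point is reducing an arbitrary $K$-point to a one-parameter subgroup and making the limit-existence lemma precise, but this becomes routine once the orbit--cone structure of $X_{\Sigma}$ is available.
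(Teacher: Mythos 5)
The paper does not prove this statement; it is quoted directly from Oda (Theorems 1.10 and 1.11), and your outline reproduces the standard argument found there and in Fulton: locality of smoothness plus the Hilbert-basis count of $\dim\mathfrak{m}/\mathfrak{m}^{2}$ at the torus-fixed point for (2), and the valuative criterion combined with the limit-of-one-parameter-subgroups lemma for (1). Your sketch is correct in outline and consistent with the cited source's approach.
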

In this section we will be interested in the integral cohomology of toric varieties, for this reason we recall the following well known result.
\begin{thm}[\emph{cf.} \protect{\cite[Theorem 10.8]{Danilov}}]\label{smoothcompletecoho}
Let $X$ be a complete smooth toric variety. Then $H^*(X,\Z)$ is torsion-free and concentrated in even degrees.
\end{thm}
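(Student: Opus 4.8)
The plan is to exhibit on $X=X_\Sigma$ a \emph{filtrable affine paving}, that is, a decomposition into locally closed subvarieties each isomorphic to a complex affine space and arranged so that the successive unions are closed. Once this is in place the conclusion is formal: a complex affine cell $\C^{d}$ contributes a single cell of \emph{even} real dimension $2d$, so the cellular chain complex is concentrated in even degrees and carries no differentials. Hence $H_*(X,\Z)$ is free and supported in even degrees, and by the universal coefficient theorem the same holds for $H^*(X,\Z)$; in particular it is torsion free. Thus the entire content of the statement is the construction of the paving.

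To build it I would use the action of the torus $T=N\otimes\C^*$ together with a Bialynicki--Birula decomposition. Choose $v\in N$ generic, meaning that $v$ lies in the interior of some maximal cone and on no wall of $\Sigma$, and let $\lambda\colon\C^*\to T$ be the associated one-parameter subgroup. Since $\Sigma$ is regular (Theorem \ref{toric}), each maximal cone $\sigma$ gives an affine chart $U_\sigma\cong\C^n$ on which $\lambda$ acts diagonally with integral weights $\langle v,\cdot\rangle$ on the coordinates, with no zero weight by genericity; the unique $T$-fixed point $x_\sigma\in U_\sigma$ is the origin, and the $x_\sigma$ are exactly the fixed points of $\lambda$. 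For each maximal $\sigma$ set
$$W_\sigma=\left\{x\in X \mid \lim_{t\to0}\lambda(t)\cdot x=x_\sigma\right\}.$$
Completeness of $\Sigma$ (Theorem \ref{toric}) guarantees that these limits exist for every $x\in X$, so the $W_\sigma$ cover $X$. Because $U_\sigma$ is $\lambda$-invariant, open, and contains $x_\sigma$, any $x$ with $\lim_{t\to0}\lambda(t)\cdot x=x_\sigma$ already lies in $U_\sigma$; hence $W_\sigma$ is the coordinate subspace of $U_\sigma\cong\C^n$ cut out by the vanishing of the coordinates of negative weight, so $W_\sigma\cong\C^{d_\sigma}$ where $d_\sigma$ is the number of positive weights. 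A suitable ordering of the fixed points (for instance by the values of a generic linear functional, as in Morse theory) then makes the decomposition filtrable.

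The main obstacle is exactly this combinatorial-geometric verification: that the $W_\sigma$ are disjoint affine spaces exhausting $X$ and that the resulting decomposition is filtrable. Here \emph{completeness} is what forces the attracting limits to exist, so that the cells cover $X$, while \emph{smoothness} is what forces each attracting set to be an honest affine space rather than a singular or higher-dimensional locus. Granting the paving, the cohomological statement follows either by running the long exact sequences of the pairs $(X_i,X_{i-1})$ along the filtration, or directly from the even cellular structure described above. In the projective case one could instead replace the $\C^*$-action argument by a shelling of the associated polytope, but the generic one-parameter subgroup works uniformly for every complete smooth $X_\Sigma$.
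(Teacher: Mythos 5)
The paper offers no proof of this statement: it is quoted directly from Danilov (Theorem 10.8), and the standard proofs in the literature (Danilov \S 10, Fulton \S 5.2) run exactly along the lines you propose, namely a filtrable decomposition of $X_\Sigma$ into complex affine cells coming from the Bialynicki--Birula decomposition of a generic one-parameter subgroup $\lambda_v$, followed by the formal argument with the long exact sequences of the pairs of the filtration. Your construction of the cells is correct: completeness gives existence of the limits by properness; genericity of $v$ (to be precise you need $v$ to avoid the \emph{linear span} of every cone of dimension $<n$, which is slightly stronger than ``on no wall,'' but this is clearly what you intend by ``no zero weight'') makes the $\lambda$-fixed points exactly the points $x_\sigma$; and your observation that $W_\sigma\subset U_\sigma$ because $U_\sigma$ is open and $\lambda$-invariant correctly identifies $W_\sigma$ with the positive-weight coordinate subspace of $U_\sigma\cong\C^n$.

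The one step that remains genuinely open in your write-up is filtrability, and the fix you sketch for it does not work at the stated level of generality. Ordering the fixed points ``by the values of a generic linear functional, as in Morse theory'' is the argument for \emph{projective} $X$, where one orders by a moment map, equivalently by the weights of an ample linearization at the fixed points; a complete smooth toric variety need not be projective, and without an ample bundle the filtrability of a Bialynicki--Birula decomposition is not automatic (indeed you yourself identify this as ``the main obstacle'' and then assert rather than prove that the one-parameter subgroup ``works uniformly''). For toric varieties the correct substitute is combinatorial: each maximal cone $\sigma_i$ determines the face $\gamma_i$ spanned by its negative-weight rays, one checks that $W_{\sigma_i}=\bigcup\{O_\gamma : \gamma_i\subset\gamma\subset\sigma_i\}$, and one then proves, using completeness of the fan, that the maximal cones can be ordered so that $\overline{W_{\sigma_i}}\subset\bigcup_{j\geq i}W_{\sigma_j}$; this ordering lemma is the actual content of Fulton's proof. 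With that lemma supplied your argument is complete and agrees with the cited source.
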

As mentioned in \cite[Section 5.7]{Danilov}, toric varieties can also be characterized by a torus action.
For each $\sigma\in \Sigma$, we have an inclusion $\C[\sigma^{\vee}\cap M]\subset\C[M]$ which provides an action of $\mathcal{T}:=\Spec \C[M]\simeq (\C^*)^m$ on $X_{\sigma^{\vee}}$, with $m=\dim M_{\Q}$. These actions are compatible on each $X_{\sigma^{\vee}}$ and give an action of $\mathcal{T}$ on all $X_\Sigma$. It can be shown that this property characterizes toric varieties: if a normal variety $X$ contains a torus $\mathcal{T}$ as dense open subvariety, and the action of $\mathcal{T}$ on itself extends to an action on $X$ then $X$ is of the form $X_\Sigma$.
Considering this action, an important tool is the equivariant cohomology of $X_\Sigma$ under the action of $\mathcal{T}$ denoted by $H^*_{\mathcal{T}}(X_\Sigma,\Z)$ (see Section \ref{equivarsection} for the definition of equivariant cohomology).
This tool is used to prove the next proposition which will be needed in Section \ref{CohomologyToric}.
 \begin{prop}\label{genetoric}
 Let $\Sigma$ and $\Sigma'$ be two regular fans such that $\Sigma'\subset \Sigma$. Let $X_\Sigma$ and $X_{\Sigma'}$ be the associated toric varieties.
Let $j: X_{\Sigma'}\hookrightarrow X_\Sigma$ be the natural embedding. Assume that $H^*\left(X_\Sigma,\Z\right)$ and $H^*\left(X_{\Sigma'},\Z\right)$ are concentrated in even degrees, then $j^*:H^*\left(X_\Sigma,\Z\right)\rightarrow H^*\left(X_{\Sigma'},\Z\right)$ is surjective.
 \end{prop}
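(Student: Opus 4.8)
The plan is to use the equivariant cohomology of the torus action as a bridge, exploiting the fact that for smooth toric varieties whose ordinary cohomology is concentrated in even degrees, the $\mathcal{T}$-equivariant cohomology is a free module over $H^*_{\mathcal{T}}(\mathrm{pt},\Z)=H^*(B\mathcal{T},\Z)$ and the forgetful map $H^*_{\mathcal{T}}(X,\Z)\to H^*(X,\Z)$ is surjective. First I would record the combinatorial description of equivariant cohomology: for a regular (smooth) fan $\Sigma$ in $N_{\Q}$, the ring $H^*_{\mathcal{T}}(X_\Sigma,\Z)$ is the Stanley--Reisner type ring, namely the ring of integer-valued piecewise polynomial functions on $\Sigma$ (equivalently $\Z[x_\rho]/I$, one generator $x_\rho$ per ray $\rho$, modulo the Stanley--Reisner ideal). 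The key point is that this description is manifestly \emph{functorial for inclusions of fans}: since $\Sigma'\subset\Sigma$, every cone of $\Sigma'$ is a cone of $\Sigma$, and restricting a piecewise polynomial function on $\Sigma$ to the subfan $\Sigma'$ gives the map $j^*_{\mathcal{T}}:H^*_{\mathcal{T}}(X_\Sigma,\Z)\to H^*_{\mathcal{T}}(X_{\Sigma'},\Z)$. On piecewise polynomials this restriction is visibly surjective (one may extend any piecewise polynomial on $\Sigma'$ arbitrarily to the extra cones of $\Sigma$, for instance by zero on the cones not meeting $\Sigma'$ after a partition-of-unity-style bookkeeping, or more cleanly because the Stanley--Reisner presentation of $\Sigma'$ is a quotient of that of $\Sigma$).

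Next I would descend from the equivariant statement to the ordinary one. The hypothesis that $H^*(X_\Sigma,\Z)$ and $H^*(X_{\Sigma'},\Z)$ are concentrated in even degrees (and torsion free, which follows for the smooth complete case by Theorem \ref{smoothcompletecoho}, and more generally for smooth toric varieties by the same even-concentration argument) guarantees degeneration of the Leray--Serre spectral sequence for $X_G$-style fibration $X\hookrightarrow X_{\mathcal{T}}\to B\mathcal{T}$: there are no odd classes to support differentials, so $H^*_{\mathcal{T}}(X,\Z)$ is a free $H^*(B\mathcal{T},\Z)$-module and the restriction to a fiber
\[
H^*_{\mathcal{T}}(X_\Sigma,\Z)\longrightarrow H^*(X_\Sigma,\Z)
\]
is surjective, with kernel generated by the positive-degree part of $H^*(B\mathcal{T},\Z)$; the same holds for $X_{\Sigma'}$. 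Assembling these into the commutative square relating $j^*_{\mathcal{T}}$ and $j^*$, surjectivity of $j^*_{\mathcal{T}}$ together with surjectivity of the right-hand forgetful map $H^*_{\mathcal{T}}(X_{\Sigma'},\Z)\to H^*(X_{\Sigma'},\Z)$ forces $j^*:H^*(X_\Sigma,\Z)\to H^*(X_{\Sigma'},\Z)$ to be surjective by a diagram chase.

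The main obstacle I anticipate is establishing the surjectivity of the \emph{equivariant} restriction map $j^*_{\mathcal{T}}$ cleanly and the compatibility with the forgetful maps, since $X_{\Sigma'}$ need not be complete (indeed in the intended application $\Sigma'$ is a proper open subfan corresponding to an open toric subvariety), so one cannot simply invoke Poincar\'e duality or completeness-based arguments. The safest route is the piecewise-polynomial model, where functoriality in the fan is automatic and restriction of piecewise polynomials is surjective for purely combinatorial reasons; one must then verify that the module-freeness over $H^*(B\mathcal{T},\Z)$ and the identification of the fiber restriction as quotient by the augmentation ideal hold in the possibly non-complete, non-compact setting, which is where the even-concentration hypothesis does the essential work. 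An alternative, if one prefers to avoid equivariant cohomology entirely, would be to build $\Sigma$ from $\Sigma'$ by a finite sequence of star subdivisions (toric blow-ups along smooth centers), and argue surjectivity at each step directly from the blow-up formula for integral cohomology (as in \cite[Theorem 7.31]{Voisin}); the induction is bookkeeping, but each blow-up manifestly adds new exceptional classes while restricting isomorphically on the complementary classes, so restriction is surjective at every stage.
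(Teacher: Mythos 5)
Your proposal is correct and follows essentially the same route as the paper: identify $H^*_{\mathcal{T}}(X_\Sigma,\Z)$ with the Stanley--Reisner ring (the paper cites Brion for the isomorphism $c_\Sigma$), observe that $R(\Sigma)\to R(\Sigma')$ is surjective since every ray of $\Sigma'$ is a ray of $\Sigma$, use even-concentration to get surjectivity of the forgetful maps $H^*_{\mathcal{T}}\to H^*$ (the paper cites Franz, Lemma 5.1), and conclude by a diagram chase. The only caveat is that your parenthetical ``extend by zero'' argument for surjectivity of the restriction of piecewise polynomials is not quite right as stated (extensions must agree on shared faces), but your cleaner alternative via the quotient of Stanley--Reisner presentations is exactly the paper's argument.
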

 \begin{proof}
 The proof is based on a well known result that we will recall here. 
For each 1-dimensional cone $\sigma\in \Sigma$, we can construct a $\mathcal{T}$-invariant divisor $V(\sigma)$ in $X_\Sigma$ (see for instance \cite[Section 3]{Fulton}). Let $R(\Sigma)$ be the Stanley-Reisner ring of $\Sigma$. 
 Then there is a natural ring morphism:
 $$c_\Sigma: R(\Sigma)\rightarrow H^*_{\mathcal{T}}(X_\Sigma,\Z)$$
 defined on the 1-dimensional cones, by sending $x_\sigma$ to the equivariant cohomology class associated to the divisor $V(\sigma)$.
 When $X_\Sigma$ is smooth, $c_\Sigma$ is an isomorphism (this is due to \cite[Section 2.2]{Brion2} where the result is stated with rational coefficients, however the proof is still true considering integral coefficients; see \cite[Sections 2.3 and 3]{Franz2} for the same result in a more general setting).
 Then, we have the following commutative diagram:
   $$\xymatrix{ R(\Sigma)\eq[r]_{c_\Sigma\ \ \ \ \ \ }\ar@{->>}[d]& H_{\mathcal{T}}^*(X_\Sigma,\Z)\ar@{->>}[r]\ar[d]^{j^*}&H^*(X_\Sigma,\Z)\ar[d]^{j^*}\\
   R(\Sigma')\eq[r]_{c_{\Sigma'}\ \ \ \ \ }& H_{\mathcal{T}}^*(X_{\Sigma'},\Z)\ar@{->>}[r]&H^*(X_{\Sigma'},\Z).}$$
   The map $R(\Sigma)\rightarrow R(\Sigma')$ is surjective because $\Sigma'\subset \Sigma$. Moreover, the maps $H_\mathcal{T}^*(X_\Sigma,\Z)\rightarrow H^*(X_\Sigma,\Z)$ and $H_\mathcal{T}^*(X_{\Sigma'},\Z)\rightarrow H^*(X_{\Sigma'},\Z)$ are surjective because the cohomologies of $X_\Sigma$ and $X_{\Sigma'}$ are concentrated in even degrees (see for instance \cite[Lemma 5.1]{Franz}).
By commutativity of the diagram, it follows that $j^*:H^*(X_{\Sigma},\Z)\rightarrow H^*(X_{\Sigma'},\Z)$ is surjective.
 \end{proof}
\subsection{Definition of toric blow-ups}\label{toricblowblow}
\subsubsection*{For toric varieties}

Let $X_\Sigma$ be a toric variety. By Theorem \ref{toric} (2), to resolve the singularities of $X_\Sigma$, we only need to consider $\Sigma'$ a regular subdivision of $\Sigma$. In \cite[Section 8.2]{Danilov}, Danilov explains that we can always find a regular subdivision $\Sigma'$ such that $f:X_{\Sigma'}\rightarrow X_\Sigma$ verifies the following properties:
\begin{itemize}
\item[---]
$f$ is an isomorphism over the smooth locus of $X_\Sigma$;
\item[---]
$f$ is a projective morphism.
\end{itemize}
Such a transformation $f$ is called a \emph{toric blow-up} of $X_\Sigma$. 

\begin{ex}\label{exemple}
The variety $\mathbb{C}^{n}$ is an affine toric variety given by the lattice $M:=\Z^{n}$ and the cone $\sigma=(\mathbb{Q}^{+})^n$. 
Let $G\subset \GL(n,\C)$ be a finite abelian group with $n>1$. Since $G$ is finite abelian group, there exists a basis of $\C^n$ in which all the elements of $G$ can be expressed as diagonal matrices.  
Let $g\in G$, we have $g=\diag(\xi^{a_1},...,\xi^{a_1})$,
with $\xi$ an $m^\mathrm{th}$ root of unity and $0\leq a_i \leq m$.
Let $M^{g}:=\left\{\left.(x_1,...,x_n)\in M\ \right|\ \sum_{i=0}^nx_ia_i\equiv 0\mod m\right\}$ and $M^G:=\cap_{g\in G}M^{g}$.
The quotient $\mathbb{C}^{n}/G$ is also an affine toric variety given by the lattice $M^{G}$ and the cone $\sigma$.
In particular, $\mathbb{C}^{n}/G$ is the toric variety associated to the fan $\Sigma$ in $(M^{G})^{\vee}_{\mathbb{Q}}$ containing the cone $\sigma^{\vee}$ and all its faces. Hence, the singularities of $\mathbb{C}^{n}/G$ can be resolved by a toric blow-up.
\end{ex}

\subsubsection*{For isolated quotient singularities}
\begin{defi}\label{quotientPoint}
Let $X$ be a topological space. A point $x\in X$ is called an \emph{isolated complex quotient point} if there exists $W_x\subset X$ an open set containing $x$, $\mathcal{W}\subset \C^{n}$ with $n>1$ an open set containing $0$, $G$ an automorphism group of finite order on $\mathcal{W}$ which acts freely on $\mathcal{W}\smallsetminus \left\{0\right\}$
and $h:W_x\rightarrow \mathcal{W}/G$ an homeomorphism with $h(x)=\overline{0}$. The quadruple $(W_x,\mathcal{W},G,h)$ is called a \emph{local uniformizing system} of $x$.  
\end{defi}
Let $X$ be a topological space. Let $x\in X$ be an isolated complex quotient point and $(W_x,\mathcal{W},G,h)$ a local uniformizing system of $x$ such that $G$ is an abelian group.
 Let $f:\widetilde{\C^n/G}\rightarrow \C^n/G$ be a toric blow-up of $\C^n/G$.
We can glue $f^{-1}(\mathcal{W}/G)$ to $X$ in $\mathcal{W}\smallsetminus \left\{0\right\}/G\simeq W_x\smallsetminus \left\{x\right\}$. We obtain a map $\widetilde{X}\rightarrow X$ that we call a \emph{toric blow-up} of $X$ in $x$ according to $(W_x,\mathcal{W},G,h)$. 

\begin{rmk}
As defined here, a toric blow-up is not unique. In order to define the toric blow-up of an orbifold with any singularities, we would need to require some universal properties for our toric blow-up.
\end{rmk}

\subsection{Integral cohomology of toric blow-ups of $\C^n/G$, with $G$ a cyclic group}\label{CohomologyToric}
In this section, we use the notation of Example \ref{exemple}.
Let $n>1$ and $G\subset \GL(n,\C)$ be a finite group of prime order $p$ with only $0$ as fixed point. 
The action of $G$ extends to an action on $\mathbb{P}^n$. Let $\xi_p$ be a $p^\mathrm{th}$ root of the unity. Without loss of generality we can assume that $G=\left\langle \phi\right\rangle$ with $\phi=\diag (\xi_p^{\alpha_1},\ldots,\xi_p^{\alpha_n})$ and $1\leq \alpha_i\leq p-1$ for all $i\in \left\{1,\ldots,n\right\}$.
It provides:
$$\xymatrix@R0pt{ \Pj^n\ar[r]&\Pj^n\\
(a_0:a_1:\cdots :a_n)\ar@{|->}[r]&(a_0:\xi_p^{\alpha_1}a_1:\cdots :\xi_p^{\alpha_n}a_n).
}$$
Then $(1:0:\cdots:0)$ is an isolated fixed point of the action of $G$ on $\Pj^n$; we denote $0:=(1:0:\cdots:0)$.
If we identify $\C^n$ with the chart $a_0\neq 0$, this action on $\Pj^n$ is an extension of the action on $\C^n$. 
Hence, if we denote by $\overline{\Sigma}$ the fan of $\Pj^n/G$ the natural embedding $\C^n/G\hookrightarrow\Pj^n/G$ corresponds to the inclusion of the fans $\Sigma\subset\overline{\Sigma}$.

Let $f:\widetilde{\mathbb{C}^{n}/G}\rightarrow \mathbb{C}^{n}/G$ and $\overline{f}:\widetilde{\mathbb{P}^{n}/G}\rightarrow \mathbb{P}^{n}/G$ be toric blow-ups of $\mathbb{C}^{n}/G$ and $\mathbb{P}^{n}/G$ respectively such that they coincide in $0$; that is the cone $\sigma^\vee$ is subdivided in the same way in $\overline{\Sigma}$ and $\Sigma$. Let $\overline{\Sigma}'$ and $\Sigma'$ be the fans of $\widetilde{\mathbb{P}^{n}/G}$ and $\widetilde{\mathbb{C}^{n}/G}$, it follows an inclusion $\Sigma'\subset\overline{\Sigma}'$ and an open embedding $j:\widetilde{\mathbb{C}^{n}/G}\hookrightarrow \widetilde{\mathbb{P}^{n}/G}$.  

Let $\overline{\Sigma}_*=\overline{\Sigma}\smallsetminus\left\{\sigma^{\vee}\right\}$; it is the fan of $\mathbb{P}^{n}/G\smallsetminus \left\{0\right\}$. We consider $\overline{\Sigma}'_*$ the regular subdivision of $\overline{\Sigma}_*$ such that $\overline{\Sigma}'_*\subset \overline{\Sigma}'$; it is the fan of $\widetilde{\mathbb{P}^{n}/G}^*:=\widetilde{\mathbb{P}^{n}/G}\smallsetminus \overline{f}^{\ -1}(0)=\widetilde{\mathbb{P}^{n}/G}\smallsetminus j(f^{-1}(0))$. We denote $i: \widetilde{\mathbb{P}^{n}/G}^*\hookrightarrow \widetilde{\mathbb{P}^{n}/G}$ the inclusion.
We also denote $(\mathbb{C}^{n}/G)^*:=\widetilde{\mathbb{C}^{n}/G}\smallsetminus f^{-1}(0)=\mathbb{C}^{n}/G\smallsetminus\left\{0\right\}$. 
\begin{rmk}\label{CohoDani}
By Theorem \ref{smoothcompletecoho},
 $H^*\left(\widetilde{\mathbb{P}^{n}/G},\Z\right)$ is torsion-free and concentrated in even degrees.
\end{rmk}
\begin{prop}\label{equiU}
We have:
\begin{equation*}
 H^{k}\left((\mathbb{C}^{n}/G)^*,\Z\right)=
\begin{cases}
\Z & \text{for } k= 0,\\
0 & \text{for } k = 2m-1,\ 1\leq m\leq n-1,\\
\Z/p\Z & \text{for } k = 2m,\ 1\leq m\leq n-1,\\
\Z & \text{for } k = 2n-1,\\
0 & \text{for } k=2n.
\end{cases}
\end{equation*}
 \end{prop}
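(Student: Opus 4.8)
The plan is to use that $0$ is the only fixed point, so that $G$ acts \emph{freely} on $U:=\C^n\smallsetminus\{0\}$: for a group of prime order every nontrivial element generates $G$ and has the same fixed locus as $\phi$, namely $\{0\}$. Hence $(\C^n/G)^*=U/G$, and by the free-quotient fact recalled in Section~\ref{equivarsection} we have $H^*((\C^n/G)^*,\Z)=H^*_G(U,\Z)=H^*(EG\times_G U,\Z)$. Since $U$ deformation retracts $G$-equivariantly onto the sphere $S^{2n-1}$, the space $(\C^n/G)^*$ is homotopy equivalent to the generalized lens space $S^{2n-1}/G$, and the asserted groups are exactly its classical cohomology; the goal is to obtain them intrinsically.

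To compute directly, I would realize $U/G$ as the complement of the zero section of the rank-$n$ complex vector bundle $V:=EG\times_G\C^n$ over $BG$. Deformation-retracting onto the unit sphere bundle gives $(\C^n/G)^*\simeq S(V)$, which fits into the Gysin sequence
\begin{equation*}
\cdots\to H^{k-2n}(BG,\Z)\xrightarrow{\cup e}H^k(BG,\Z)\xrightarrow{\pi^*}H^k(S(V),\Z)\to H^{k-2n+1}(BG,\Z)\xrightarrow{\cup e}\cdots,
\end{equation*}
with Euler class $e=e(V)=c_n(V)\in H^{2n}(BG,\Z)$. Recall that $H^*(BG,\Z)=H^*(B\Z/p\Z,\Z)$ equals $\Z$ in degree $0$, $\Z/p\Z$ in every positive even degree (spanned by the powers $x^i$ of a generator $x\in H^2$), and $0$ in positive odd degrees.

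The crux is the Euler-class computation. The diagonal form $\phi=\diag(\xi_p^{\alpha_1},\dots,\xi_p^{\alpha_n})$ splits $V=\bigoplus_i L_i$ into line bundles $L_i=EG\times_G\C_{\alpha_i}$, so $e(V)=\prod_i c_1(L_i)=\big(\prod_i\alpha_i\big)x^n$. Because $0$ is the only fixed point, each $\alpha_i$ is nonzero modulo $p$, whence $\prod_i\alpha_i$ is a unit in $\F$ and $e(V)$ generates $H^{2n}(BG,\Z)\cong\Z/p\Z$. Consequently $\cup e\colon H^{2i}(BG,\Z)\to H^{2i+2n}(BG,\Z)$ is an isomorphism for $i\ge1$, while $\cup e\colon H^0(BG,\Z)=\Z\to H^{2n}(BG,\Z)=\Z/p\Z$ is the reduction map, surjective with kernel $p\Z$.

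Reading off the Gysin sequence then finishes the argument. For $0\le k\le 2n-2$ both neighbouring terms $H^{k-2n}$ and $H^{k-2n+1}$ vanish, so $\pi^*$ is an isomorphism $H^k(S(V),\Z)\cong H^k(BG,\Z)$, yielding $\Z$ in degree $0$, $\Z/p\Z$ in the even degrees $2\le 2m\le 2n-2$, and $0$ in the odd degrees $\le 2n-3$. In degree $2n-1$ one gets $H^{2n-1}(S(V),\Z)\cong\ker(\cup e\colon H^0\to H^{2n})=p\Z\cong\Z$, and in degree $2n$ exactness of $H^0\xrightarrow{\cup e}H^{2n}\xrightarrow{\pi^*}H^{2n}(S(V),\Z)\to H^1(BG,\Z)=0$ forces $H^{2n}(S(V),\Z)=0$. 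I expect the main obstacle to be the Euler-class identification together with the bookkeeping in the top two degrees; it is precisely the complex (hence oriented) structure on $V$ — equivalently the orientation-preservation of the $\GL(n,\C)$-action — that secures both the $\Z$ in top degree $2n-1$ and its vanishing above.
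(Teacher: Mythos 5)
Your argument is correct, and it reaches the stated groups by a route that is structurally the same as the paper's but differs in the one step that actually carries the content. Both proofs identify $(\C^n/G)^*$ with the homotopy quotient $EG\times_G(\C^n\smallsetminus\{0\})$, whose fibre over $BG$ is a homotopy $(2n-1)$-sphere; your Gysin sequence is exactly the two-row Serre spectral sequence that the paper runs via Proposition \ref{equivarcoho}, and in degrees $k\leq 2n-2$ the two computations coincide word for word. The difference lies in how the single nontrivial differential (the transgression $H^0\left(BG,H^{2n-1}(S^{2n-1},\Z)\right)\rightarrow H^{2n}(BG,\Z)$) is pinned down. The paper deduces its surjectivity indirectly from the geometric fact that $H^{2n}\left((\C^n/G)^*,\Z\right)=0$, since $(\C^n/G)^*$ is a non-compact open submanifold of a compact complex $n$-fold. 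You instead compute it outright as cup product with the Euler class $e(V)=\left(\prod_i\alpha_i\right)x^n$, which generates $H^{2n}(BG,\Z)\simeq\Z/p\Z$ precisely because no $\alpha_i$ vanishes modulo $p$, i.e.\ because $0$ is the only fixed point. Your version costs the standard ring description $H^{2*}(B\Z/p\Z,\Z)\simeq\Z[x]/(px)$ and the splitting of $V$ into character line bundles, but it buys an explicit identification of the boundary map and makes visible exactly where the isolated-fixed-point hypothesis enters; the paper's version is shorter but determines the differential only through the vanishing constraint. Both are complete proofs, and your bookkeeping in degrees $2n-1$ and $2n$ (kernel $p\Z\cong\Z$, cokernel $0$) is accurate.
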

 \begin{proof}
The space $(\C^{n}/G)^*$ is homotopy equivalent to $\mathbb{S}^{2n-1}/G$. The space $\mathbb{S}^{2n-1}/G$ is the lens space $L(n-1,p)$ and its cohomology is well known.
\end{proof}
 As a consequence, we also obtain the relative cohomology $H^*(\mathbb{C}^{n}/G,(\mathbb{C}^{n}/G)^*,\Z)$.
 \begin{prop}\label{exactU}
 We have:
\begin{equation*}
 H^{k}\left(\mathbb{C}^{n}/G,(\mathbb{C}^{n}/G)^*,\Z\right)=
\begin{cases}
0 & \text{for } k\in\left\{0,1\right\},\\
\Z/p\Z & \text{for } k = 2m-1,\ 2\leq m\leq n,\\
0 & \text{for } k = 2m,\ 1\leq m\leq n-1,\\
\Z & \text{for } k=2n.
\end{cases}
\end{equation*}
 \end{prop}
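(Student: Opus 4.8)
The plan is to identify the relative cohomology with the cohomology of the punctured quotient $(\C^n/G)^*$ already computed in Proposition \ref{equiU}, via the long exact sequence of the pair. The first observation I would make is that $\C^n/G$ is contractible: the radial homotopy $(x,t)\mapsto t\cdot x$ on $\C^n$ commutes with the linear $G$-action, hence descends to a deformation retraction of $\C^n/G$ onto the class of $0$. Consequently $H^0(\C^n/G,\Z)=\Z$ and $H^k(\C^n/G,\Z)=0$ for all $k\geq1$, so the absolute cohomology contributes nothing except in degree $0$.

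Next I would write down the long exact sequence of the pair $(\C^n/G,(\C^n/G)^*)$:
\[\cdots\to H^{k}(\C^n/G,(\C^n/G)^*,\Z)\to H^{k}(\C^n/G,\Z)\xrightarrow{i^*} H^{k}((\C^n/G)^*,\Z)\xrightarrow{\delta}H^{k+1}(\C^n/G,(\C^n/G)^*,\Z)\to\cdots\]
For every $k\geq2$ both neighbouring absolute groups $H^{k-1}(\C^n/G,\Z)$ and $H^{k}(\C^n/G,\Z)$ vanish, so the connecting map yields an isomorphism $H^{k}(\C^n/G,(\C^n/G)^*,\Z)\cong H^{k-1}((\C^n/G)^*,\Z)$. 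Substituting the values of Proposition \ref{equiU} — which are $0$ in odd degrees up to $2n-3$, $\Z/p\Z$ in even degrees between $2$ and $2n-2$, and $\Z$ in degree $2n-1$ — and shifting by one produces exactly the claimed pattern: $0$ in even relative degrees up to $2n-2$, $\Z/p\Z$ in odd relative degrees from $3$ to $2n-1$, and $\Z$ in degree $2n$.

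It remains to treat the bottom of the sequence, which I expect to be the only place needing care. In degrees $0$ and $1$ both $\C^n/G$ and $(\C^n/G)^*$ are connected, so the restriction $i^*:H^0(\C^n/G,\Z)\to H^0((\C^n/G)^*,\Z)$ is an isomorphism $\Z\to\Z$; exactness then forces $H^0(\C^n/G,(\C^n/G)^*,\Z)=\Ker i^*=0$, while $H^1(\C^n/G,(\C^n/G)^*,\Z)$ equals $\coker i^*$, which vanishes as well since $H^1(\C^n/G,\Z)=0$. The genuine subtlety is bookkeeping at the two ends: one must check that the $\Z$ sitting in $H^0((\C^n/G)^*,\Z)$ is fully absorbed by $i^*$ and therefore does not leak into the relative $H^1$, and dually that the top class $\Z=H^{2n-1}((\C^n/G)^*,\Z)$ survives under $\delta$ to give the relative $H^{2n}$ rather than being cancelled from $H^{2n-1}(\C^n/G,\Z)=0$. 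Once these endpoints are pinned down, the middle degree-shift isomorphisms close the argument.
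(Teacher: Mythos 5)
Your proof is correct and follows essentially the same route as the paper: contractibility of $\C^n/G$ (which the paper asserts and you justify via the radial retraction descending through the linear $G$-action) plus the long exact sequence of the pair, reducing everything to the degree-shifted values of Proposition \ref{equiU}. Your care at the endpoints in degrees $0$, $1$ and $2n$ is accurate and only makes explicit what the paper leaves implicit.
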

 \begin{proof}
The quotient $\C^n/G$ is contractible, so its cohomology is concentrated in degree 0:
\[H^*(\C^n/G,\Z)=H^0(\C^n/G,\Z)=\Z.\] 
Then, the result can be obtained from the exact sequence of relative cohomology of the pair $(\mathbb{C}^{n}/G,(\mathbb{C}^{n}/G)^*)$:
 $$\xymatrix{ H^k(\mathbb{C}^{n}/G,\Z)\ar[r]&H^k((\mathbb{C}^{n}/G)^*,\Z)\ar[r]&H^{k+1}(\mathbb{C}^{n}/G,(\mathbb{C}^{n}/G)^*,\Z)\ar[r]&H^{k+1}(\mathbb{C}^{n}/G,\Z),}$$
Hence for $k\geq 1$, we have:
$$H^k((\mathbb{C}^{n}/G)^*,\Z)\simeq H^{k+1}(\mathbb{C}^{n}/G,(\mathbb{C}^{n}/G)^*,\Z).$$
We obtain our result for $k>1$ using Proposition \ref{equiU}.
It only remains to compute $H^{0}(\mathbb{C}^{n}/G,(\mathbb{C}^{n}/G)^*,\Z)$ and $H^{1}(\mathbb{C}^{n}/G,(\mathbb{C}^{n}/G)^*,\Z)$.
The previous exact sequence provides:
 $$\xymatrix{H^0(\mathbb{C}^{n}/G,(\mathbb{C}^{n}/G)^*,\Z)\ar[r]& H^0(\mathbb{C}^{n}/G,\Z)\ar[r]&H^0((\mathbb{C}^{n}/G)^*,\Z)\ar[r]&H^{1}(\mathbb{C}^{n}/G,(\mathbb{C}^{n}/G)^*,\Z)\ar[r]&0.}$$
Since $H^0(\mathbb{C}^{n}/G,\Z)\rightarrow H^0((\mathbb{C}^{n}/G)^*,\Z)$ is not trivial, we have $H^0(\mathbb{C}^{n}/G,(\mathbb{C}^{n}/G)^*,\Z)=0$.
Furthermore, since $H^1(\mathbb{C}^{n}/G,(\mathbb{C}^{n}/G)^*,\Z)$ is torsion-free, we have $H^1(\mathbb{C}^{n}/G,(\mathbb{C}^{n}/G)^*,\Z)=0$.
 \end{proof}
 \begin{prop}\label{torsion}
 The cohomology groups $H^*\left(\widetilde{\mathbb{P}^{n}/G}^*,\Z\right)$ and $H^*\left(\widetilde{\mathbb{C}^{n}/G},\Z\right)$ are torsion-free and concentrated in even degree.
 \end{prop}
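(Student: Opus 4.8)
The plan is to take the smooth complete toric variety $\widetilde{\Pj^n/G}$ as a fixed reference --- its integral cohomology is torsion free and concentrated in even degrees by Remark~\ref{CohoDani} --- and to transfer this behaviour first to $\widetilde{\C^n/G}$ and then to $\widetilde{\Pj^n/G}^*$. The geometric object linking the two is the exceptional fibre $E:=f^{-1}(0)=\overline{f}^{-1}(0)$, which is one and the same compact set for both blow-ups since they coincide over $0$; the main tool will be Poincar\'e--Lefschetz duality on the ambient smooth manifolds.

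First I would treat $\widetilde{\C^n/G}$. The quotient $\C^n/G$ is an affine cone: the scalar action of $\C^*$ on $\C^n$ commutes with $G$, hence descends to a $\C^*$-action on $\C^n/G$ contracting it to the vertex $\overline{0}$. As $f$ is a $\C^*$-equivariant proper morphism, every orbit limit as $t\to0$ exists in $\widetilde{\C^n/G}$ and maps to $0$, so lies in $E$; the $\R_{>0}$-part of this flow therefore gives a deformation retraction of $\widetilde{\C^n/G}$ onto $E$, and $H^*(\widetilde{\C^n/G},\Z)\simeq H^*(E,\Z)$. It remains to show that $E$ has torsion free cohomology concentrated in even degrees, for which I would use the Bialynicki--Birula decomposition attached to a generic one-parameter subgroup of the torus: it paves $E$ by complex affine cells, so the cellular complex is concentrated in even degrees with vanishing differentials. (Alternatively one argues directly on $\widetilde{\C^n/G}$, whose fan $\Sigma'$ subdivides the single strongly convex cone $\sigma^\vee$ and so admits an affine paving.)

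For $\widetilde{\Pj^n/G}^*=\widetilde{\Pj^n/G}\smallsetminus E$ I would run the long exact sequence of the pair $(\widetilde{\Pj^n/G},\widetilde{\Pj^n/G}^*)$. Since $E$ is compact inside the smooth oriented $2n$-manifold $\widetilde{\Pj^n/G}$, Lefschetz duality identifies the relative groups $H^k(\widetilde{\Pj^n/G},\widetilde{\Pj^n/G}^*,\Z)$ with $H_{2n-k}(E,\Z)$, which by the previous step are torsion free and even; under this identification the map into $H^k(\widetilde{\Pj^n/G},\Z)$ becomes the map $H_{2n-k}(E,\Z)\rightarrow H_{2n-k}(\widetilde{\Pj^n/G},\Z)$ induced by $E\hookrightarrow\widetilde{\Pj^n/G}$. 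Both the vanishing of the odd cohomology and the torsion freeness of the even cohomology of $\widetilde{\Pj^n/G}^*$ then follow, by a diagram chase, from the single assertion that this map is a split injection with free cokernel.

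The hard part is exactly this last assertion: that the classes of the exceptional cycles contained in $E$ can be completed to a basis of $H_*(\widetilde{\Pj^n/G},\Z)$, equivalently that $E$ is a union of closed cells of the Bialynicki--Birula stratification of the ambient variety for a suitably chosen one-parameter subgroup. Granting this toric--combinatorial compatibility, the rest is routine. I note that the even concentration alone can be obtained more cheaply from a Mayer--Vietoris argument for the cover $\widetilde{\Pj^n/G}=j(\widetilde{\C^n/G})\cup\widetilde{\Pj^n/G}^*$, whose intersection is $(\C^n/G)^*$ with cohomology given by Proposition~\ref{equiU}, together with Proposition~\ref{genetoric} to control the restriction maps; but that route does not by itself rule out $p$-torsion, which is why I would route the full statement through duality and the cell basis.
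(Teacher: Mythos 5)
Your plan reduces the whole proposition to the single assertion that the pushforward $H_*(E,\Z)\rightarrow H_*(\widetilde{\Pj^{n}/G},\Z)$, $E:=\overline{f}^{-1}(0)$, is a split injection with free cokernel, and you then explicitly leave that assertion unproved (``granting this toric--combinatorial compatibility''). That is a genuine gap: this assertion carries essentially all the content of the statement, and the mechanism you propose for it does not work as written. If $\lambda_v$ is a one-parameter subgroup with $v$ in the interior of $\sigma^{\vee}$ --- the choice you need for the claimed contraction of $\widetilde{\C^{n}/G}$ onto $E$ --- then every point of the open torus orbit of $\widetilde{\Pj^{n}/G}$ flows as $t\to 0$ to the distinguished point of the maximal cone of $\Sigma'$ containing $v$, and that fixed point lies in $E$; hence the open dense Bialynicki--Birula cell is attached to a point of $E$ and its closure is all of $\widetilde{\Pj^{n}/G}$, so $E$ is \emph{not} a union of closed cells for this decomposition. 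One would have to use the opposite direction of the flow for the splitting statement, while keeping the original direction for the retraction of $\widetilde{\C^{n}/G}$ onto $E$; neither choice is analysed. Moreover, ``the $\R_{>0}$-part of this flow gives a deformation retraction onto $E$'' is not a proof: the limit map $x\mapsto\lim_{t\to0}t\cdot x$ of a $\C^{*}$-action is not continuous in general, and the homotopy equivalence $\widetilde{\C^{n}/G}\simeq E$ requires a separate argument (properness of $f$ over a conical neighbourhood of $0$, or the affine filtration of the fan subdividing $\sigma^{\vee}$ that you mention only in parentheses).

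For comparison, the paper's proof needs none of this machinery: by excision $H^{*}(\widetilde{\Pj^{n}/G},\widetilde{\Pj^{n}/G}^*,\Z)$ is identified with $H^{*}(\widetilde{\C^{n}/G},(\C^{n}/G)^*,\Z)$, and a chase in the two long exact sequences of pairs, using only $H^{2k-1}(\widetilde{\Pj^{n}/G},\Z)=0$ (Remark \ref{CohoDani}) and the computation of $H^{*}((\C^{n}/G)^*,\Z)$ (Proposition \ref{equiU}), kills the odd cohomology of both spaces; this is close in spirit to the Mayer--Vietoris argument you dismiss at the end. Your reason for dismissing it --- that it ``does not rule out $p$-torsion'' --- overlooks the ingredient the paper uses at exactly that point: for a \emph{smooth toric variety}, vanishing of the odd integral cohomology already forces the integral cohomology to be torsion free (\cite[Proposition 1.5]{Franz}). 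With that citation your ``cheap'' route becomes the paper's proof, and the duality and cell-basis apparatus is unnecessary.
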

 \begin{proof}
We first show that $H^{2k-1}\left(\widetilde{\mathbb{P}^{n}/G}^*,\Z\right)=H^{2k-1}\left(\widetilde{\mathbb{C}^{n}/G},\Z\right)=0$ for all $1\leq k\leq n$.
The main idea of the proof is to apply Proposition \ref{equiU} and Remark \ref{CohoDani} to the long exact sequence of relative cohomology of the couples $\left(\widetilde{\mathbb{P}^{n}/G},\widetilde{\mathbb{P}^{n}/G}^*\right)$ and $\left(\widetilde{\mathbb{C}^{n}/G},(\mathbb{C}^{n}/G)^*\right)$.
\subsubsection*{\normalfont\emph{Assume first that $k<n$}}
We consider the following commutative diagram of embeddings:
\begin{equation}
\xymatrix{ &(\mathbb{C}^{n}/G)^*\ar@{^{(}->}[rd]\ar@{_{(}->}[ld]&\\
\widetilde{\mathbb{P}^{n}/G}^*\ar@{^{(}->}[rd]& & \ar@{_{(}->}[ld]\widetilde{\mathbb{C}^{n}/G}\\
&\widetilde{\mathbb{P}^{n}/G}&}
\label{diagram}
\end{equation}
which induces the following commutative diagram on the cohomology:
$$\xymatrix@R10pt@C10pt{& 0\ar@{=}[d]& & & \\
H^{2k-1}\left(\widetilde{\mathbb{P}^{n}/G},\widetilde{\mathbb{P}^{n}/G}^*,\Z\right)\ar@{=}[d]\ar[r] &H^{2k-1}\left(\widetilde{\mathbb{P}^{n}/G},\Z\right)\ar[d]\ar[r]&H^{2k-1}\left(\widetilde{\mathbb{P}^{n}/G}^*,\Z\right)\ar[r]\ar[d] & H^{2k}\left(\widetilde{\mathbb{P}^{n}/G},\widetilde{\mathbb{P}^{n}/G}^*,\Z\right)\ar@{=}[d]\\
H^{2k-1}\left(\widetilde{\mathbb{C}^{n}/G},(\mathbb{C}^{n}/G)^*,\Z\right)\ar[r] &H^{2k-1}\left(\widetilde{\mathbb{C}^{n}/G},\Z\right)\ar[r]&H^{2k-1}((\mathbb{C}^{n}/G)^*,\Z)\ar[r] & H^{2k}\left(\widetilde{\mathbb{C}^{n}/G},(\mathbb{C}^{n}/G)^*,\Z\right).\\
& & 0\ar@{=}[u] &
}$$
By Remark \ref{CohoDani}, we have: $$H^{2k-1}\left(\widetilde{\mathbb{P}^{n}/G},\Z\right)=0$$ and by Proposition \ref{equiU}, we have: $$H^{2k-1}((\mathbb{C}^{n}/G)^*,\Z)=0.$$
By commutativity of the diagram, the map $H^{2k-1}\left(\widetilde{\mathbb{P}^{n}/G}^*,\Z\right)\rightarrow H^{2k}\left(\widetilde{\mathbb{P}^{n}/G},\widetilde{\mathbb{P}^{n}/G}^*,\Z\right)$ is necessarily 0. 
It follows that: $$H^{2k-1}\left(\widetilde{\mathbb{P}^{n}/G}^*,\Z\right)=0.$$
For the same reason, the map $H^{2k-1}\left(\widetilde{\mathbb{C}^{n}/G},(\mathbb{C}^{n}/G)^*,\Z\right)\rightarrow H^{2k-1}\left(\widetilde{\mathbb{C}^{n}/G},\Z\right)$ is also trivial.
So $H^{2k-1}\left(\widetilde{\mathbb{C}^{n}/G},\Z\right)=0$.
\subsubsection*{\normalfont \emph{When $k=n$}}
The exact sequence obtained from (\ref{diagram}), 
is slightly different. (The coefficient ring of the cohomology is $\Z$; we do not write it to avoid a too large diagram.)
\small
$$
\xymatrix@R10pt@C10pt{
&0\ar@{=}[d]& & &  \Z\ar@{=}[d]& \\
H^{2n-1}\left(\widetilde{\mathbb{P}^{n}/G},\widetilde{\mathbb{P}^{n}/G}^*\right)\ar@{=}[d]\ar[r] &H^{2n-1}\left(\widetilde{\mathbb{P}^{n}/G}\right)\ar[d]\ar[r]&H^{2n-1}\left(\widetilde{\mathbb{P}^{n}/G}^*\right)\ar[r]^{\delta}\ar[d] & H^{2n}\left(\widetilde{\mathbb{P}^{n}/G},\widetilde{\mathbb{P}^{n}/G}^*\right)\ar@{=}[d]\ar[r]^{\ \ \ \alpha}&H^{2n}\left(\widetilde{\mathbb{P}^{n}/G}\right)\ar[r]\ar[d] &0 \\
H^{2n-1}\left(\widetilde{\mathbb{C}^{n}/G},\left(\mathbb{C}^{n}/G\right)^*\right)\ar[r]^{\ \ \ \ \gamma} &H^{2n-1}\left(\widetilde{\mathbb{C}^{n}/G}\right)\ar[r]&H^{2n-1}\left(\left(\mathbb{C}^{n}/G\right)^*\right)\ar[r]^\beta & H^{2n}\left(\widetilde{\mathbb{C}^{n}/G},\left(\mathbb{C}^{n}/G\right)^*\right)\ar[r]&0. &\\
& & \Z\ar@{=}[u] & & &
}$$
 \normalsize
 We have $H^{2n}\left(\widetilde{\mathbb{P}^{n}/G},\Z\right)=\Z$ because $\widetilde{\mathbb{P}^{n}/G}$ is smooth and compact;  $\widetilde{\mathbb{C}^{n}/G}$ and $\widetilde{\mathbb{P}^{n}/G}^*$ being open sub-manifolds of a compact complex manifold of dimension $n$, we get $H^{2n}\left(\widetilde{\mathbb{P}^{n}/G}^*,\Z\right)=H^{2n}\left(\widetilde{\mathbb{C}^{n}/G},\Z\right)=0$.
Necessarily, we have:
\begin{equation}
 \Image \alpha=\Z.
 \label{alpha}
 \end{equation}
 Hence: 
\begin{equation}
 \Image \beta=\Z.
 \label{gammi}
 \end{equation}
This proves:
\begin{equation}
H^{2n}\left(\widetilde{\mathbb{P}^{n}/G},\widetilde{\mathbb{P}^{n}/G}^*,\Z\right)=\Z,
\label{2nP*}
\end{equation}
 and 
 \begin{equation}
 \delta=0. 
 \label{delta}
 \end{equation}
 So $H^{2n-1}\left(\widetilde{\mathbb{P}^{n}/G}^*,\Z\right)=0$. By commutativity of the diagram, we also have $\gamma=0$.
 So (\ref{gammi}) implies that $H^{2n-1}\left(\widetilde{\mathbb{C}^{n}/G},\Z\right)=0$.
\subsubsection*{\normalfont\emph{The cohomology is torsion-free}}
The varieties $\widetilde{\mathbb{P}^{n}/G}^*$ and $\widetilde{\mathbb{C}^{n}/G}$ are smooth toric varieties. Moreover we have seen that their integral cohomology of odd degrees is trivial. Therefore, \cite[Proposition 1.5]{Franz} shows that $H^*\left(\widetilde{\mathbb{P}^{n}/G}^*,\Z\right)$ and $H^*\left(\widetilde{\mathbb{C}^{n}/G},\Z\right)$ are torsion-free.
 \end{proof}
\begin{rmk}
In \cite{Barthel}, more general results related to the cohomology with rational coefficients of toric varieties can be found.
\end{rmk}
\begin{lemme}\label{adoc}
The natural map $H^{2n}(\C^n/G,(\C^n/G)^*,\Z)\rightarrow H^{2n}\left(\widetilde{\C^n/G},(\C^n/G)^*,\Z\right)$ is an isomorphism.
\end{lemme}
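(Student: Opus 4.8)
The plan is to identify the stated map with the homomorphism $f^{*}$ induced on relative cohomology by the map of pairs $f\colon (\widetilde{\C^n/G},(\C^n/G)^{*})\to (\C^n/G,(\C^n/G)^{*})$, and then to exploit the naturality of the long exact sequence of a pair. The crucial geometric input is that $f$ is an isomorphism over the smooth locus of $\C^n/G$, so that its restriction to $(\C^n/G)^{*}=\widetilde{\C^n/G}\smallsetminus f^{-1}(0)\simeq \C^n/G\smallsetminus\left\{0\right\}$ is the identity after the usual identification. This produces a commutative ladder connecting the two long exact sequences in which the vertical arrow over the subspace $(\C^n/G)^{*}$ is the identity.

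First I would write down the relevant portions of the two long exact sequences and compare their connecting homomorphisms. For the pair $(\C^n/G,(\C^n/G)^{*})$ I would use that $\C^n/G$ is contractible, hence $H^{2n-1}(\C^n/G,\Z)=H^{2n}(\C^n/G,\Z)=0$; the long exact sequence then shows that the connecting map
$$d\colon H^{2n-1}((\C^n/G)^{*},\Z)\longrightarrow H^{2n}(\C^n/G,(\C^n/G)^{*},\Z)$$
is an isomorphism (both groups being $\Z$ by Propositions \ref{equiU} and \ref{exactU}). For the pair $(\widetilde{\C^n/G},(\C^n/G)^{*})$ I would invoke Proposition \ref{torsion}, which gives $H^{2n-1}(\widetilde{\C^n/G},\Z)=0$, together with $H^{2n}(\widetilde{\C^n/G},\Z)=0$ since $\widetilde{\C^n/G}$ is an open sub-manifold of a compact complex manifold of dimension $n$; exactness then forces the connecting map $\beta$ appearing in the proof of Proposition \ref{torsion} to be an isomorphism, a fact already recorded there as $\Image\beta=\Z$ in (\ref{gammi}).

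Finally I would conclude from the commuting square
$$\xymatrix{ H^{2n-1}((\C^n/G)^{*},\Z)\ar[r]^-{d}\ar[d]_{\id}& H^{2n}(\C^n/G,(\C^n/G)^{*},\Z)\ar[d]^{f^{*}}\\
H^{2n-1}((\C^n/G)^{*},\Z)\ar[r]^-{\beta}& H^{2n}(\widetilde{\C^n/G},(\C^n/G)^{*},\Z)}$$
that $f^{*}\circ d=\beta$, so that $f^{*}=\beta\circ d^{-1}$ is a composite of isomorphisms, hence an isomorphism. The only point requiring genuine care, rather than being a formal diagram chase, is the identification of the restriction of $f$ to $(\C^n/G)^{*}$ with the identity, which is what makes the middle vertical arrow equal to $\id$ and the square commute; all the remaining ingredients are the vanishing statements already established earlier in this section.
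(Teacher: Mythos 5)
Your proposal is correct and follows essentially the same route as the paper: the author's proof is precisely to compare the long exact sequences of the pairs $(\C^n/G,(\C^n/G)^*)$ and $(\widetilde{\C^n/G},(\C^n/G)^*)$ via the map of pairs induced by the toric blow-up, using the contractibility of $\C^n/G$ together with Propositions \ref{equiU}, \ref{exactU} and \ref{torsion} to see that both connecting homomorphisms out of $H^{2n-1}((\C^n/G)^*,\Z)$ are isomorphisms. Your write-up merely makes explicit the diagram chase that the paper leaves to the reader.
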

\begin{proof}
Indeed, using Propositions \ref{equiU}, \ref{exactU} and \ref{torsion}, the relative cohomology exact sequences of the pairs $(\C^n/G,(\C^n/G)^*)$ and $\left(\widetilde{\C^n/G},(\C^n/G)^*\right)$ provide the following commutative diagram:
$$\xymatrix@R15pt{0\ar[r]&H^{2n-1}((\C^n/G)^*,\Z)\ar@{=}[d]\ar[r]& H^{2n}(\C^n/G,(\C^n/G)^*,\Z)\ar[d]\ar[r] & 0\\
0\ar[r]&H^{2n-1}((\C^n/G)^*,\Z)\ar[r]& H^{2n}\left(\widetilde{\C^n/G},(\C^n/G)^*,\Z\right)\ar[r] & 0.}$$
\end{proof}
The last result of this section makes precise how a toric blow-up modifies the cohomology. 
\begin{thm}\label{MainCnG}
 We consider the following exact sequence:
$$\xymatrix{ H^{2k}\left(\widetilde{\mathbb{P}^{n}/G},\widetilde{\mathbb{P}^{n}/G}^*,\Z\right)\ar[r]^{\ \ \ \ g'^{2k}} & H^{2k}\left(\widetilde{\mathbb{P}^{n}/G},\Z\right)\ar[r] & H^{2k}\left(\widetilde{\mathbb{P}^{n}/G}^*,\Z\right)\ar[r]& H^{2k+1}\left(\widetilde{\mathbb{P}^{n}/G},\widetilde{\mathbb{P}^{n}/G}^*,\Z\right),}$$
with $1\leq k\leq n-1$.
Then:
\begin{itemize}
\item[(i)]
$g'^{2k}$ is injective,
\item[(ii)]
$H^{2k+1}\left(\widetilde{\mathbb{P}^{n}/G},\widetilde{\mathbb{P}^{n}/G}^*,\Z\right)=0$, 
\item[(iii)]
$H^{2k}\left(\widetilde{\mathbb{P}^{n}/G},\widetilde{\mathbb{P}^{n}/G}^*,\Z\right)$ is torsion-free and $H^{2n}\left(\widetilde{\mathbb{P}^{n}/G},\widetilde{\mathbb{P}^{n}/G}^*,\Z\right)=\Z$,
\item[(iv)]
$\Image g'^{2k}\oplus \Image g'^{2n-2k}$ is a sublattice of $H^{2k}\left(\widetilde{\mathbb{P}^{n}/G},\Z\right)\oplus H^{2n-2k}\left(\widetilde{\mathbb{P}^{n}/G},\Z\right)$ of discriminant $p^2$.
\item[(v)]
If $n$ is even, then $\Image g'^{n}$ is a sublattice of $H^{n}\left(\widetilde{\mathbb{P}^{n}/G},\Z\right)$ of discriminant $p$.
\end{itemize} 
\end{thm}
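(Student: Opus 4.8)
The plan is to read everything off the long exact sequence of the pair $(P,P^*)$, where I abbreviate $P:=\widetilde{\Pj^n/G}$ and $P^*:=\widetilde{\Pj^n/G}^*$, and then to extract the two discriminants from Poincar\'e duality. Throughout I use that $H^*(P)$ and $H^*(P^*)$ are torsion free and concentrated in even degrees, by Remark \ref{CohoDani} and Proposition \ref{torsion}. Parts (i)--(iii) are immediate from the displayed sequence $H^{2k-1}(P^*)\to H^{2k}(P,P^*)\xrightarrow{g'^{2k}} H^{2k}(P)\to H^{2k}(P^*)\to H^{2k+1}(P,P^*)\to H^{2k+1}(P)$. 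Indeed $H^{2k-1}(P^*)=0$ (odd degree), which gives (i). The map $H^{2k}(P)\to H^{2k}(P^*)$ is $i^*$, hence surjective by Proposition \ref{genetoric} applied to $\overline{\Sigma}'_*\subset\overline{\Sigma}'$; since moreover $H^{2k+1}(P)=0$, exactness forces $H^{2k+1}(P,P^*)=0$, proving (ii). For (iii), injectivity of $g'^{2k}$ embeds $H^{2k}(P,P^*)$ into the torsion free group $H^{2k}(P)$, and $H^{2n}(P,P^*)=\Z$ is precisely (\ref{2nP*}) (equivalently, excision together with Lemma \ref{adoc} and Proposition \ref{exactU}).

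For (iv) I first reduce the discriminant to a single determinant. As $P$ is smooth, compact and connected of real dimension $2n$, Poincar\'e duality makes $H^{2k}(P)\times H^{2n-2k}(P)\to H^{2n}(P)\cong\Z$ a unimodular pairing, so the hyperbolic form on $T:=H^{2k}(P)\oplus H^{2n-2k}(P)$ has $\discr T=1$, and the Gram matrix of $\Image g'^{2k}\oplus\Image g'^{2n-2k}$ has the block shape $\left(\begin{smallmatrix}0&B\\ B^{t}&0\end{smallmatrix}\right)$ with $B$ the matrix of $\langle g'^{2k}(\omega),g'^{2n-2k}(\eta)\rangle_P$; hence $\discr\bigl(\Image g'^{2k}\oplus\Image g'^{2n-2k}\bigr)=|\det B|^2$, and it remains to show $|\det B|=p$. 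The key observation is that $g'^{2n}\colon H^{2n}(P,P^*)\to H^{2n}(P)$ is an isomorphism, since $H^{2n-1}(P^*)=0$ and $H^{2n}(P^*)=0$ (the latter because $P^*$ is a non-compact connected $2n$-manifold). As $g'$ is a morphism of $H^*(P)$-modules, $\langle g'^{2k}(\omega),g'^{2n-2k}(\eta)\rangle_P=g'^{2n}\bigl(\omega\cup g'^{2n-2k}(\eta)\bigr)$, so under this isomorphism $B$ is the matrix of the relative cup pairing of $\omega$ and $\eta$ on $(P,P^*)$.

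Now I localize this relative pairing. By excision (along the open embedding $j$ of diagram (\ref{diagram})), $H^{*}(P,P^*)\cong H^{*}(\widetilde{\C^n/G},(\C^n/G)^*)$, and Poincar\'e--Lefschetz duality for the oriented $2n$-manifold $\widetilde{\C^n/G}$ with end $(\C^n/G)^*$ gives a pairing $H^{2k}(\widetilde{\C^n/G},(\C^n/G)^*)\times H^{2n-2k}(\widetilde{\C^n/G})\to H^{2n}(\widetilde{\C^n/G},(\C^n/G)^*)\cong\Z$ which is \emph{unimodular}, the last isomorphism and the integral unimodularity being exactly what the torsion freeness of Proposition \ref{torsion} provides. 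Under these identifications $B$ factors as this unimodular pairing precomposed, in the second variable, with the natural map $c\colon H^{2n-2k}(\widetilde{\C^n/G},(\C^n/G)^*)\to H^{2n-2k}(\widetilde{\C^n/G})$; therefore $|\det B|=[\,H^{2n-2k}(\widetilde{\C^n/G}):\Image c\,]$. In the exact sequence $H^{2n-2k-1}((\C^n/G)^*)\to H^{2n-2k}(\widetilde{\C^n/G},(\C^n/G)^*)\xrightarrow{c} H^{2n-2k}(\widetilde{\C^n/G})\to H^{2n-2k}((\C^n/G)^*)\xrightarrow{\delta} H^{2n-2k+1}(\widetilde{\C^n/G},(\C^n/G)^*)$, Proposition \ref{equiU} gives (for $1\le k\le n-1$, so that $1\le n-k\le n-1$) that the left term vanishes, so $c$ is injective, and that $H^{2n-2k}((\C^n/G)^*)=\Z/p\Z$, while $H^{2n-2k+1}(\widetilde{\C^n/G},(\C^n/G)^*)\cong H^{2n-2k+1}(P,P^*)=0$ by part (ii). Hence $\delta=0$, $c$ is a rational isomorphism with $\coker c=\Z/p\Z$, so $|\det B|=p$ and $\discr=p^2$.

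Part (v) is the same computation in the middle degree $2k=n$: the identical chain of isomorphisms applies to the single group $\Image g'^{n}\subset H^n(P)$, where $B$ is now the symmetric self-pairing matrix and $|\det B|=[\,H^{n}(\widetilde{\C^n/G}):\Image c\,]$; the same exact sequence, with $H^{n}((\C^n/G)^*)=\Z/p\Z$ and $H^{n+1}(P,P^*)=0$ (part (ii), using $1\le n/2\le n-1$), yields $|\det B|=p$, i.e. $\discr\Image g'^{n}=p$. The only genuinely delicate step — and the one to set up carefully — is the compatibility package used in the reduction: that $g'$ is a module map identifying the intersection pairing of the exceptional classes on $P$ with the (localized, relative) Poincar\'e pairing on $\widetilde{\C^n/G}$, and that the latter is unimodular over $\Z$. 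This is precisely where the torsion freeness of Proposition \ref{torsion} is indispensable; everything else is bookkeeping in the long exact sequences together with Proposition \ref{equiU} and the already established part (ii).
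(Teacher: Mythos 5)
Your parts (i)--(iii) follow the paper's proof verbatim: injectivity of $g'^{2k}$ from the vanishing of $H^{2k-1}(\widetilde{\Pj^n/G}^*,\Z)$, vanishing of $H^{2k+1}(\widetilde{\Pj^n/G},\widetilde{\Pj^n/G}^*,\Z)$ from the surjectivity of $i^*$ (Proposition \ref{genetoric}), and (iii) from torsion freeness of $H^{2k}(\widetilde{\Pj^n/G},\Z)$ together with (\ref{2nP*}). For (iv) and (v), however, you take a genuinely different route. The paper introduces the second relative group $H^{2k}(\widetilde{\Pj^n/G},\widetilde{\C^n/G},\Z)$, splits $\Image g^{2k}=\Image g'^{2k}\oplus\Image\overline{g}^{2k}$, checks that the two summands are primitive and orthogonal (the cup product landing in $H^{2n}(\widetilde{\Pj^n/G},\widetilde{\Pj^n/G}^*\cup\widetilde{\C^n/G},\Z)=0$), and then extracts the discriminant from the index computation $H^{2k}((\C^n/G)^*,\Z)=\Z/p\Z$ via (\ref{BasicLatticeTheory}) and the symmetry $\discr L=\discr L^\perp$ in the unimodular lattice $H^{2k}\oplus H^{2n-2k}$. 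You instead compute the Gram determinant of the exceptional lattice directly: you use the hyperbolic block shape to reduce to $|\det B|$, identify $B$ via the $H^*(\widetilde{\Pj^n/G})$-module structure and excision with the Poincar\'e--Lefschetz pairing of the local model $(\widetilde{\C^n/G},(\C^n/G)^*)$ twisted by the map $c$, and read off $|\det B|=\#\coker c=p$ from Proposition \ref{equiU}. Both arguments ultimately trace the factor $p$ back to $H^{2m}((\C^n/G)^*,\Z)=\Z/p\Z$, but yours buys more — it determines the discriminant form of the exceptional lattice as the inverse of a unimodular local pairing composed with $c$, not merely its discriminant — at the cost of one extra input absent from the paper: Lefschetz duality over $\Z$ for the open piece $\widetilde{\C^n/G}$. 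That step is standard but should be set up explicitly (replace $\widetilde{\C^n/G}$ by the compact manifold with boundary $f^{-1}(\overline{B})$, with boundary $S^{2n-1}/G$, onto which it deformation retracts; the torsion freeness from Proposition \ref{torsion} and part (iii) then makes the pairing genuinely unimodular over $\Z$, as you note). With that spelled out, your argument is complete and correct.
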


\subsubsection*{Outline of the proof}

The statements (i), (ii) and (iii) will be easily obtained from Proposition \ref{torsion} and the long exact sequence of relative cohomology of $\left(\widetilde{\mathbb{P}^{n}/G},\widetilde{\mathbb{P}^{n}/G}^*\right)$. We provide an outline of the proof of (v); the proof of (iv) being similar. %The idea of the proof is the following. 
\begin{itemize}
\item[---]
Using the long exact sequence of relative cohomology of $\left(\widetilde{\mathbb{P}^{n}/G},\widetilde{\mathbb{P}^{n}/G}^*\right)$ and Proposition \ref{torsion}, we can show that:
$\Image g'^{n}$ is primitive in $H^{n}\left(\widetilde{\mathbb{P}^{n}/G},\Z\right)$.
\item[---]
However, using long exact sequence of relative cohomology of $\left(\widetilde{\mathbb{P}^{n}/G},(\C^n/G)^*\right)$ and Proposition~\ref{equiU}, we can show that
$\Image g'^{n}\oplus (\Image g'^{n})^{\bot}$ is not primitive in $H^{n}\left(\widetilde{\mathbb{P}^{n}/G},\Z\right)$.
We have:
$$H^{n}\left(\widetilde{\mathbb{P}^{n}/G},\Z\right)/\Image g'^{n}\oplus (\Image g'^{n})^{\bot}=\Z/p\Z.$$
\item[---]
Since $H^{n}\left(\widetilde{\mathbb{P}^{n}/G},\Z\right)$ is unimodular, we are able to deduce (v) from the lattice results of Section \ref{remindersL}.
\end{itemize}

\begin{proof}[Proof of (i), (ii) and (iii)]
Let $1\leq k\leq n-1$.
 The statement (i) is a direct consequence of Proposition \ref{torsion} looking at the following exact sequence:
 \begin{center}
\begin{tikzcd}[row sep=normal, column sep=normal]
H^{2k-1}\left(\widetilde{\mathbb{P}^{n}/G}^*,\Z\right)\arrow[r]
&  H^{2k}\left(\widetilde{\mathbb{P}^{n}/G},\widetilde{\mathbb{P}^{n}/G}^*,\Z\right) \ar[r,"g'^{2k}"] \arrow[d, phantom, ""{coordinate, name=Z}]
& H^{2k}\left(\widetilde{\mathbb{P}^{n}/G},\Z\right) \arrow[dl,
"i^*",
rounded corners,
to path={ -- ([xshift=2ex]\tikztostart.east)
|- (Z) [near start]\tikztonodes
-| ([xshift=-2ex]\tikztotarget.west)
-- (\tikztotarget)}] &\\
&H^{2k}\left(\widetilde{\mathbb{P}^{n}/G}^*,\Z\right) \ar[r] & H^{2k+1}\left(\widetilde{\mathbb{P}^{n}/G},\widetilde{\mathbb{P}^{n}/G}^*,\Z\right)\arrow[r]&0.
\end{tikzcd}
\end{center}
Moreover, from Proposition \ref{genetoric} and \ref{torsion}, we know that the map $i^*$ is surjective. It follows: 
 \begin{equation}
 H^{2k+1}\left(\widetilde{\mathbb{P}^{n}/G},\widetilde{\mathbb{P}^{n}/G}^*,\Z\right)=0.
 \label{H2k+1}
 \end{equation}
 The previous exact sequence also provides that $H^{2k}\left(\widetilde{\mathbb{P}^{n}/G},\widetilde{\mathbb{P}^{n}/G}^*,\Z\right)$ is torsion-free since $H^{2k}\left(\widetilde{\mathbb{P}^{n}/G},\Z\right)$ is torsion-free by Remark \ref{CohoDani}. 
 In addition, we have $H^{2n}\left(\widetilde{\mathbb{P}^{n}/G},\widetilde{\mathbb{P}^{n}/G}^*,\Z\right)=\Z$ by (\ref{2nP*}).
\end{proof}

\begin{proof}[Proof of (iv) and (v)]
Using the exact sequence of relative cohomology of the couple $\left(\widetilde{\mathbb{P}^{n}/G},\widetilde{\mathbb{C}^{n}/G}\right)$, we prove exactly as previously that:
\begin{equation}
 H^{2k+1}\left(\widetilde{\mathbb{P}^{n}/G},\widetilde{\mathbb{C}^{n}/G},\Z\right)=0.
 \label{H2k+1CG}
 \end{equation}
Moreover: 
\begin{equation}
H^{2k+1}\left(\widetilde{\mathbb{P}^{n}/G},(\C^n/G)^*,\Z\right)=H^{2k+1}\left(\widetilde{\mathbb{P}^{n}/G},\widetilde{\mathbb{P}^{n}/G}^*,\Z\right)\oplus H^{2k+1}\left(\widetilde{\mathbb{P}^{n}/G},\widetilde{\mathbb{C}^{n}/G},\Z\right)=0,
 \label{H2k+1CGB}
\end{equation}
and
\begin{equation}
H^{2k}\left(\widetilde{\mathbb{P}^{n}/G},(\C^n/G)^*,\Z\right)=H^{2k}\left(\widetilde{\mathbb{P}^{n}/G},\widetilde{\mathbb{P}^{n}/G}^*,\Z\right)\oplus H^{2k}\left(\widetilde{\mathbb{P}^{n}/G},\widetilde{\mathbb{C}^{n}/G},\Z\right).
 \label{H2kCGB}
\end{equation}
 Now, (iv) follows from the following commutative diagram:
\begin{center}
\begin{tikzcd}
0\ar[dr] & & 0\ar[d] & & & \\
& H^{2k}\left(\widetilde{\mathbb{P}^{n}/G},(\mathbb{C}^{n}/G)^*,\Z\right)\ar[dr,"g^{2k}"] & H^{2k}\left(\widetilde{\mathbb{P}^{n}/G},\widetilde{\mathbb{C}^{n}/G},\Z\right)\ar[l,"\overline{p}^{2k}"']\ar[d,"\overline{g}^{2k}"] &\ar[l] 0 & & \\
0\ar[r]& H^{2k}\left(\widetilde{\mathbb{P}^{n}/G},\widetilde{\mathbb{P}^{n}/G}^*,\Z\right)\ar[u,"p'^{2k}"]\ar[r,"g'^{2k}"']& H^{2k}\left(\widetilde{\mathbb{P}^{n}/G},\Z\right)\ar[dr]\ar[r]\ar[d] &H^{2k}\left(\widetilde{\mathbb{P}^{n}/G}^*,\Z\right)\ar[r] &0 \\
& 0\ar[u]& H^{2k}\left(\widetilde{\mathbb{C}^{n}/G},\Z\right)\ar[d] &H^{2k}(\mathbb{C}^{n}/G)^*,\Z)\ar[dr] &\\
  & & 0 & & 0.  
\end{tikzcd}
\end{center}
The zeros in the diagram come from (\ref{H2k+1}), (\ref{H2k+1CG}), (\ref{H2k+1CGB}), Propositions \ref{equiU} and \ref{torsion}, and Remark \ref{CohoDani}.
 Moreover the diagram shows that the exact sequence
 $$\xymatrix{
&H^{2k}\left(\widetilde{\mathbb{P}^{n}/G},\widetilde{\mathbb{P}^{n}/G}^*,\Z\right)\eq[d] & & & \\ 0&H^{2k}\left(\widetilde{\mathbb{C}^{n}/G},(\mathbb{C}^{n}/G)^*,\Z\right)\ar[l]&H^{2k}\left(\widetilde{\mathbb{P}^{n}/G},(\mathbb{C}^{n}/G)^*,\Z\right)\ar[l]&H^{2k}\left(\widetilde{\mathbb{P}^{n}/G},\widetilde{\mathbb{C}^{n}/G},\Z\right)\ar[l]&0\ar[l]
 }$$ splits. Hence the
 maps $\overline{p}^{2k}$ and $p'^{2k}$ are the natural embeddings: 
\begin{align*}
p'^{2k}:H^{2k}\left(\widetilde{\mathbb{P}^{n}/G},\widetilde{\mathbb{P}^{n}/G}^*,\Z\right)&\longrightarrow H^{2k}\left(\widetilde{\mathbb{P}^{n}/G},\widetilde{\mathbb{P}^{n}/G}^*,\Z\right)\oplus H^{2k}\left(\widetilde{\mathbb{P}^{n}/G},\widetilde{\mathbb{C}^{n}/G},\Z\right)\\
\overline{p}^{2k}:H^{2k}\left(\widetilde{\mathbb{P}^{n}/G},\widetilde{\mathbb{C}^{n}/G},\Z\right)&\longrightarrow H^{2k}\left(\widetilde{\mathbb{P}^{n}/G},\widetilde{\mathbb{P}^{n}/G}^*,\Z\right)\oplus H^{2k}\left(\widetilde{\mathbb{P}^{n}/G},\widetilde{\mathbb{C}^{n}/G},\Z\right).
\end{align*}
By commutativity of the diagram, it follows:
\begin{equation}
\Image g^{2k}=\Image g'^{2k}\oplus \Image \overline{g}^{2k}.
\label{imim}
\end{equation}
 By Proposition \ref{torsion}, $H^{2k}\left(\widetilde{\mathbb{P}^{n}/G}^*,\Z\right)$ and $H^{2k}\left(\widetilde{\mathbb{C}^{n}/G},\Z\right)$ are torsion-free. It follows from the diagram that $\Image g'^{2k}$ and $\Image \overline{g}^{2k}$ are primitive sub-groups in $H^{2k}\left(\widetilde{\mathbb{P}^{n}/G},\Z\right)$. However, $H^{2k}((\mathbb{C}^{n}/G)^*,\Z)=\Z/p\Z$ according to Lemma~\ref{equiU}. This means that:
 $$\frac{H^{2k}\left(\widetilde{\mathbb{P}^{n}/G},\Z\right)}{\Image g^{2k}}=\Z/p\Z.$$
 So by (\ref{imim}):
  $$\frac{H^{2k}\left(\widetilde{\mathbb{P}^{n}/G},\Z\right)}{\Image g'^{2k}\oplus \Image \overline{g}^{2k}}=\Z/p\Z.$$
 We are considering $1\leq k\leq n-1$, so in particular, the same result is true for $\Image g'^{2(n-k)}$ and $\Image \overline{g}^{2(n-k)}$ in $H^{2(n-k)}\left(\widetilde{\mathbb{P}^{n}/G},\Z\right)$. This means:
 $$\frac{H^{2k}\left(\widetilde{\mathbb{P}^{n}/G},\Z\right)\oplus H^{2(n-k)}\left(\widetilde{\mathbb{P}^{n}/G},\Z\right)}{\left(\Image g'^{2k}\oplus \Image g'^{2(n-k)}\right)\oplus^\bot \left(\Image \overline{g}^{2k} \oplus\Image \overline{g}^{2(n-k)}\right)}=\left(\Z/p\Z\right)^2,$$
 with the orthogonality which is due to $\widetilde{\mathbb{P}^{n}/G}=\widetilde{\mathbb{P}^{n}/G}^*\cup \widetilde{\mathbb{C}^{n}/G}$. Indeed the cup-product in relative cohomology is a map:
 $$H^{2k}\left(\widetilde{\mathbb{P}^{n}/G},\widetilde{\mathbb{P}^{n}/G}^*,\Z\right)\otimes H^{2(n-k)}\left(\widetilde{\mathbb{P}^{n}/G},\widetilde{\mathbb{C}^{n}/G},\Z\right)\longrightarrow H^{2n}\left(\widetilde{\mathbb{P}^{n}/G},\widetilde{\mathbb{P}^{n}/G}^*\cup\widetilde{\mathbb{C}^{n}/G},\Z\right)=0.$$
By Poincar\'e duality, $H^{2k}\left(\widetilde{\mathbb{P}^{n}/G},\Z\right)\oplus H^{2(n-k)}\left(\widetilde{\mathbb{P}^{n}/G},\Z\right)$ is unimodular, it follows from (\ref{BasicLatticeTheory}), (\ref{BasicLatticeTheory2}) and the primitivity of $\Image g'$ and $\Image \overline{g}$ that: $$\discr \left(\Image g'^{2k}\oplus \Image g'^{2(n-k)}\right)=\discr \left(\Image \overline{g}^{2k} \oplus\Image \overline{g}^{2(n-k)}\right)=p^2.$$
 
 The statement (v) is a particular case of statement (iv) when $n$ is even.
Assume $n$ is even, we also get that $\Image g'^{n}$ and $\Image\overline{g}^{n}$ are primitive in $H^{n}\left(\widetilde{\mathbb{P}^{n}/G},\Z\right)$ with $\Image g^{n}$ which admits a primitive element divisible by $p$. 
Hence:
$$\frac{H^{n}\left(\widetilde{\mathbb{P}^{n}/G},\Z\right)}{\Image g'^{n}\oplus^\bot \Image \overline{g}^{n} }=\Z/p\Z.$$
As before, the unimodularity of $H^{n}\left(\widetilde{\mathbb{P}^{n}/G},\Z\right)$ provides: 
$$\discr \Image g'^{n}=\discr \Image \overline{g}^{n} =p.$$
\end{proof}
 
\begin{rmk}\label{H1rmk}
 We can also mention that $H^0\left(\widetilde{\mathbb{P}^{n}/G},\widetilde{\mathbb{P}^{n}/G}^*,\Z\right)=H^1\left(\widetilde{\mathbb{P}^{n}/G},\widetilde{\mathbb{P}^{n}/G}^*,\Z\right)=0$ because the map  $i^*:H^0\left(\widetilde{\mathbb{P}^{n}/G},\Z\right)\rightarrow H^0\left(\widetilde{\mathbb{P}^{n}/G}^*,\Z\right)$ is an isomorphism.
\end{rmk}
 
\subsection{Application to the integral cohomology of the toric blow-up of isolated quotient singularities}\label{cortoriblowblow}
Now, we apply the previous result to understand better how a toric blow-up modifies the cohomology.
\begin{cor}\label{CorCohomology1bis}
Let $M$ be a topological space with an isolated complex quotient point $x\in M$. We assume that $x$ admits a local uniformizing system $(W_x,\mathcal{W},G,h)$ with $G$ of prime order.
Let $r:\widetilde{M}\rightarrow M$ be a toric blow-up of $M$ in $x$. 
We denote $U_x:=\widetilde{M}\smallsetminus r^{-1}(x)$ and $n:= \dim \mathcal{W}$. 
We consider the following exact sequence:
$$\xymatrix{ H^{2k}\left(\widetilde{M},U_x,\Z\right)\ar[r]^{g_x^{2k}} & H^{2k}\left(\widetilde{M},\Z\right)\ar[r] & H^{2k}(U_x,\Z)\ar[r]& H^{2k+1}\left(\widetilde{M},U_x,\Z\right),}$$
with $1\leq k\leq n-1$.
Then:
\begin{itemize}
\item[(i)]
$g_x^{2k}$ is injective,
\item[(ii)]
$H^{2k+1}\left(\widetilde{M},U_x,\Z\right)=0$, 
\item[(iii)]
$H^{2k}\left(\widetilde{M},U_x,\Z\right)$ is torsion-free and $H^{2n}\left(\widetilde{M},U_x,\Z\right)=\Z$.
\end{itemize} 
\end{cor}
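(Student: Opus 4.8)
The key point is that the entire statement is local around the exceptional fibre $A:=r^{-1}(x)$, so the plan is to reduce it, by excision, to the description of the toric blow-up of $\mathbb{C}^n/G$ obtained in Theorem \ref{MainCnG}. After possibly shrinking $\mathcal{W}$ and linearising the action (a finite group of biholomorphisms fixing $0$ is linearisable, so we may assume $G\subset\GL(n,\mathbb{C})$ and that $\mathcal{W}$ is a $G$-invariant ball, hence $G$-equivariantly contractible onto $0$), the local uniformizing system $(W_x,\mathcal{W},G,h)$ identifies a neighbourhood of $A$ in $\widetilde{M}$ with $V:=f^{-1}(\mathcal{W}/G)\subset\widetilde{\mathbb{C}^{n}/G}\subset\widetilde{\mathbb{P}^{n}/G}$, where $f$ is the toric blow-up used to construct $r$. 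Here $V^*:=V\smallsetminus A\cong(\mathcal{W}\smallsetminus\{0\})/G$, and $A$ is literally the same exceptional fibre as in $\widetilde{\mathbb{P}^{n}/G}$, so that $V$ is an open neighbourhood of $A$ both in $\widetilde{M}$ and in $\widetilde{\mathbb{P}^{n}/G}$.

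First I would record the excision isomorphisms. Since $U_x=\widetilde{M}\smallsetminus A$ and $\widetilde{\mathbb{P}^{n}/G}^*=\widetilde{\mathbb{P}^{n}/G}\smallsetminus A$, excising the complement of $V$ on each side yields natural isomorphisms
$$H^*(\widetilde{M},U_x,\mathbb{Z})\xrightarrow{\ \sim\ }H^*(V,V^*,\mathbb{Z})\xleftarrow{\ \sim\ }H^*(\widetilde{\mathbb{P}^{n}/G},\widetilde{\mathbb{P}^{n}/G}^*,\mathbb{Z}).$$
As these are isomorphisms of the relative groups, the assertions (ii) and (iii), which concern only the groups $H^*(\widetilde{M},U_x,\mathbb{Z})$ themselves, transfer immediately: Theorem \ref{MainCnG}(ii) gives $H^{2k+1}(\widetilde{M},U_x,\mathbb{Z})=0$ for $1\leq k\leq n-1$, and Theorem \ref{MainCnG}(iii) gives that $H^{2k}(\widetilde{M},U_x,\mathbb{Z})$ is torsion free together with $H^{2n}(\widetilde{M},U_x,\mathbb{Z})=\mathbb{Z}$.

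For (i) I cannot invoke any property of $H^*(\widetilde{M},\mathbb{Z})$, since $M$ is an arbitrary topological space; instead I would show that the connecting homomorphism $\delta\colon H^{2k-1}(U_x,\mathbb{Z})\to H^{2k}(\widetilde{M},U_x,\mathbb{Z})$ vanishes, which by exactness is equivalent to the injectivity of $g_x^{2k}$. By naturality of the long exact sequence of a pair under the inclusion $(V,V^*)\hookrightarrow(\widetilde{M},U_x)$, the excision isomorphism above intertwines $\delta$ with the local connecting map $\delta_V\colon H^{2k-1}(V^*,\mathbb{Z})\to H^{2k}(V,V^*,\mathbb{Z})$ precomposed with the restriction $H^{2k-1}(U_x,\mathbb{Z})\to H^{2k-1}(V^*,\mathbb{Z})$. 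Now $V^*\cong(\mathcal{W}\smallsetminus\{0\})/G$ and $(\mathbb{C}^{n}/G)^*$ both deformation retract radially onto $S^{2n-1}/G$, so Proposition \ref{equiU} gives $H^{2k-1}(V^*,\mathbb{Z})=0$ for $1\leq k\leq n-1$. Hence $\delta_V=0$, so $\delta=0$ after transport by the isomorphism, and $g_x^{2k}$ is injective.

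The main obstacle is precisely part (i): because $\widetilde{M}$ carries no special cohomological structure, injectivity of $g_x^{2k}$ must be detected purely locally, and the argument rests on the naturality of excision with respect to the connecting homomorphism rather than on Theorem \ref{MainCnG}(i) directly. The remaining technical care lies in setting up the identifications cleanly --- the linearisation of $G$, the choice of a $G$-invariant neighbourhood retracting onto $\{0\}$ so that the local model coincides with the blow-up of $\mathbb{C}^{n}/G$, and the computation of $H^*(V^*,\mathbb{Z})$ through $h$ and Proposition \ref{equiU} --- but once these are in place, (i)--(iii) follow formally from Theorem \ref{MainCnG}.
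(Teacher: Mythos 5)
Your proof is correct and follows essentially the same route as the paper: excision identifies $H^*(\widetilde{M},U_x,\Z)$ with the relative cohomology of the local toric model, giving (ii) and (iii) directly from Theorem \ref{MainCnG}, and (i) is obtained by showing the connecting homomorphism $H^{2k-1}(U_x,\Z)\rightarrow H^{2k}(\widetilde{M},U_x,\Z)$ factors through $H^{2k-1}((\C^n/G)^*,\Z)=0$ via naturality of the long exact sequence and Proposition \ref{equiU}. The only cosmetic difference is that you carry out the vanishing argument in a small neighbourhood $V$ of the exceptional fibre while the paper writes the same commutative diagram directly with the pair $(\widetilde{\C^n/G},(\C^n/G)^*)$.
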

\begin{proof}
The statements (ii) and (iii) are immediate consequence of the excision theorem:
$$H^{k}\left(\widetilde{M},U_x,\Z\right)=H^{k}\left(\widetilde{\mathbb{P}^{n}/G},\widetilde{\mathbb{P}^{n}/G}^*,\Z\right)$$ and Theorem \ref{MainCnG} (ii), (iii).
The statement (i) is a consequence of the following commutative diagram and Proposition \ref{equiU}.
 \begin{equation}
 \xymatrix@R10pt@C15pt{
 0\ar@{=}[r]&H^{2k-1}((\mathbb{C}^{n}/G)^*,\Z)\ar[r]&H^{2k}\left(\widetilde{\mathbb{C}^{n}/G},(\mathbb{C}^{n}/G)^*,\Z\right)\ar[r] & H^{2k}\left(\widetilde{\mathbb{C}^{n}/G},\Z\right)\\
 &H^{2k-1}(U_x,\Z)\ar[u]\ar[r]&\ar@{=}[u]H^{2k}\left(\widetilde{M},U_x,\Z\right)\ar[r]^{g_x} & H^{2k}\left(\widetilde{M},\Z\right)\ar[u]\ar[r]& H^{2k}(U_x,\Z).
}
\label{adokdiagram}
\end{equation}
\end{proof}
The previous corollary allows to describe the integral cohomology of a toric blow-up in several isolated points.
\begin{cor}\label{CorCohomology2}
Let $n\geq2$ and $p$ be a prime number. Let $M$ be a topological space and $F\subset M$ be a finite set of isolated complex quotient points. For all $x\in F$ we assume that there exists a local uniformizing system $(W_x,\mathcal{W},G,h)$ with $\dim \mathcal{W}=n$ and $\# G=p$.
Let $r:\widetilde{M}\rightarrow M$ be a toric blow-up of $M$ in $F$. 
We state $U:=M\smallsetminus F$
and $j:U\hookrightarrow M$. 
Then for all $1\leq k\leq n-1$, there exists an integer $0\leq d_p^k\leq \#F$ such that we have the following exact sequences:
\begin{itemize}
\item[(i)]
$\xymatrix@C30pt{
 0\ar[r] & H^{2k}(M,\Z)\ar[r]^{j^*} & H^{2k}(U,\Z)\ar[r]& (\Z/p\Z)^{d_p^k}\ar[r] & 0,}$
\item[(ii)]
$\xymatrix@C30pt{
 0\ar[r] & H^{2k}(M,\Z)\oplus H^{2k}\left(\widetilde{M},U,\Z\right)\ar[r]^{\ \ \ \ \ \ \ \ \ \ r^*+g^{2k}} & H^{2k}\left(\widetilde{M},\Z\right)\ar[r]& (\Z/p\Z)^{d_p^k}\ar[r] & 0,}$
 
 with $H^{2k}\left(\widetilde{M},U,\Z\right)$ which is torsion-free.
\item[(iii)]
$\xymatrix@C30pt{
 0\ar[r] &(\Z/p\Z)^{\#F-d_p^k}\ar[r] &H^{2k+1}(M,\Z) \ar[r]& H^{2k+1}\left(\widetilde{M},\Z\right)\ar[r] & 0.}$
 \item[(iv)]
 Moreover,
 $r^*:H^1(M,\Z)\rightarrow H^1\left(\widetilde{M},\Z\right)$ is an isomorphism,
 \item[(v)]
 $r^{*}:H^{2n}(M,\Z)\rightarrow H^{2n}\left(\widetilde{M},\Z\right)$ is an isomorphism.
\end{itemize}
\end{cor}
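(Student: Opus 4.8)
The plan is to localise every relative cohomology group appearing here to the model situation of $\C^n/G$ treated in Section \ref{CohomologyToric}, and then to compare the long exact sequences of the pairs $(M,U)$ and $(\widetilde M,U)$ inside one commutative ladder, the comparison map being induced by $r$ (which is the identity over $U$). Since the points of $F$ are isolated complex quotient points and $r$ is an isomorphism over $U$, the excision theorem gives, for every $k$,
$$H^k(M,U,\Z)\simeq\bigoplus_{x\in F}H^k\bigl(\C^n/G,(\C^n/G)^*,\Z\bigr),\qquad H^k(\widetilde M,U,\Z)\simeq\bigoplus_{x\in F}H^k\bigl(\widetilde{\C^n/G},(\C^n/G)^*,\Z\bigr).$$
Plugging in Proposition \ref{exactU}, Theorem \ref{MainCnG} and Remark \ref{H1rmk}, I would record that for $1\le k\le n-1$ one has $H^{2k}(M,U,\Z)=0$ and $H^{2k+1}(M,U,\Z)=(\Z/p\Z)^{\#F}$, while $H^{2k+1}(\widetilde M,U,\Z)=0$ and $H^{2k}(\widetilde M,U,\Z)$ is torsion free; moreover $H^0=H^1=0$ for both pairs and the top groups $H^{2n}(M,U,\Z)=H^{2n}(\widetilde M,U,\Z)=\Z^{\#F}$ are matched by Lemma \ref{adoc}.

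First I would establish (i) from the long exact sequence of $(M,U)$: the vanishing of $H^{2k}(M,U,\Z)$ makes $j^*$ injective, and its cokernel is the image of the connecting map $H^{2k}(U,\Z)\to H^{2k+1}(M,U,\Z)=(\Z/p\Z)^{\#F}$, a subgroup of an $\F$-vector space; this defines the integer $0\le d_p^k\le\#F$. For (ii) I would feed in the analogous sequence of $(\widetilde M,U)$, which (using $H^{2k+1}(\widetilde M,U,\Z)=0$ and the injectivity of $g^{2k}$ proved exactly as in Corollary \ref{CorCohomology1bis}, since the relevant connecting map factors through $\bigoplus_{x}H^{2k-1}((\C^n/G)^*,\Z)=0$ by Proposition \ref{equiU}) reads $0\to H^{2k}(\widetilde M,U,\Z)\xrightarrow{g^{2k}}H^{2k}(\widetilde M,\Z)\xrightarrow{\mathrm{res}}H^{2k}(U,\Z)\to0$. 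The two subgroups $\Image r^*$ and $\Image g^{2k}=\ker(\mathrm{res})$ of $H^{2k}(\widetilde M,\Z)$ meet trivially because $\mathrm{res}\circ r^*=j^*$ is injective, so $r^*+g^{2k}$ is injective; its cokernel is identified through $\mathrm{res}$ with $H^{2k}(U,\Z)/\Image j^*=(\Z/p\Z)^{d_p^k}$, giving (ii).

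Next I would obtain (iii) by running the same ladder in odd degree. Because $H^{2k+1}(\widetilde M,U,\Z)=0$, the restriction $\mathrm{res}\colon H^{2k+1}(\widetilde M,\Z)\to H^{2k+1}(U,\Z)$ is injective, and since $\mathrm{res}\circ r^*$ equals the map $H^{2k+1}(M,\Z)\to H^{2k+1}(U,\Z)$ of the top sequence, $\ker r^*$ equals the kernel of the latter, namely the image of $(\Z/p\Z)^{\#F}$ in $H^{2k+1}(M,\Z)$, which is $(\Z/p\Z)^{\#F-d_p^k}$ by the count from (i). For surjectivity of $r^*$ I would compare the images of $H^{2k+1}(M,\Z)$ and $H^{2k+1}(\widetilde M,\Z)$ in $H^{2k+1}(U,\Z)$: these are the kernels of the next connecting maps into $H^{2k+2}(M,U,\Z)$ and $H^{2k+2}(\widetilde M,U,\Z)$, and they coincide once $r^*$ between those relative groups is injective — which holds trivially when $k\le n-2$ (the source vanishes) and by Lemma \ref{adoc} when $k=n-1$ (the isomorphism $\Z^{\#F}\to\Z^{\#F}$). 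Statement (iv) is immediate from $H^1(M,U,\Z)=H^2(M,U,\Z)=0$, which forces $H^1(M,\Z)\xrightarrow{\sim}H^1(U,\Z)$, combined with the injection $H^1(\widetilde M,\Z)\hookrightarrow H^1(U,\Z)$ coming from $H^1(\widetilde M,U,\Z)=0$; these squeeze $r^*$ into an isomorphism. Finally (v) follows from Lemma \ref{adoc}, which makes $r^*\colon H^{2n}(M,U,\Z)\to H^{2n}(\widetilde M,U,\Z)$ an isomorphism: since $H^{2n+1}$ of both pairs vanishes for dimension reasons, the restrictions to $H^{2n}(U,\Z)$ are surjective with equal image, and the five lemma applied to the two relative sequences yields $r^*\colon H^{2n}(M,\Z)\xrightarrow{\sim}H^{2n}(\widetilde M,\Z)$.

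I expect the main obstacle to be the surjectivity of $r^*$ in (iii) at the boundary index $k=n-1$, where the relative group $H^{2n}(M,U,\Z)=\Z^{\#F}$ no longer vanishes: there one genuinely needs Lemma \ref{adoc} to know that $r^*$ identifies it with $H^{2n}(\widetilde M,U,\Z)$, so that the images of $H^{2n-1}(M,\Z)$ and $H^{2n-1}(\widetilde M,\Z)$ in $H^{2n-1}(U,\Z)$ agree. A secondary technical point requiring care is the naturality of the excision isomorphisms: one must check that the connecting maps of $(M,U)$ and $(\widetilde M,U)$ decompose summand-by-summand over $F$ and match those of the local pairs, so that the single-point computation of Corollary \ref{CorCohomology1bis} transports to the global sequences — this is what yields, in particular, the vanishing of the connecting map $H^{2k-1}(U,\Z)\to\bigoplus_{x}H^{2k}(\widetilde{\C^n/G},(\C^n/G)^*,\Z)$ used for the injectivity of $g^{2k}$.
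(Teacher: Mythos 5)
Your proposal is correct and follows essentially the same route as the paper: it compares the long exact sequences of the pairs $(M,U)$ and $(\widetilde M,U)$ through the map induced by $r$, reduces all relative groups by excision to the local computations of Proposition \ref{exactU}, Corollary \ref{CorCohomology1bis} and Remark \ref{H1rmk}, and invokes Lemma \ref{adoc} for the top-degree matching needed in (iii) at $k=n-1$ and in (v). The paper packages this as the single commutative ladder (\ref{mainequa}) and leaves the diagram chases implicit, whereas you spell them out, but the substance is identical.
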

\begin{proof}
For $1\leq k\leq n-1$, we consider the following exact sequence (the coefficient ring $\Z$ is omitted for clarity sake).
\begin{equation}\label{mainequa}
\begin{tikzcd}
&&&&(\Z/p\Z)^{\#F}\ar[d,equal]&\\
H^{2k-1}(U)\arrow[r]\ar[d,equal] & 0\ar[r]\ar[d]&H^{2k}(M)\ar[d,"r^*"]  \ar[r,"j_{2k}"]\ar[dd,phantom,very near end,""{coordinate, name=Y}] & H^{2k}(U)\ar[d,equal]\ar[r] & H^{2k+1}(M,U)\ar[d]\arrow[ddll,
rounded corners,
to path={ -- ([xshift=4ex]\tikztostart.east)
|- (Y) [near start]\tikztonodes
-| ([xshift=-2ex]\tikztotarget.west)
-- (\tikztotarget)}] & \\
H^{2k-1}(U)\arrow[r] & H^{2k}\left(\widetilde{M},U\right)\ar[r,"g^{2k}"] & H^{2k}\left(\widetilde{M}\right)\ar[r,"\widetilde{j}_{2k}"] & H^{2k}(U) \ar[r]\ar[d,phantom,very near start, ""{coordinate, name=X}] & 0\arrow[ddll,
rounded corners,
to path={ -- ([xshift=2ex]\tikztostart.east)
|- (X) [near start]\tikztonodes
-| ([xshift=-4ex]\tikztotarget.west)
-- (\tikztotarget)}]& \\
&& H^{2k+1}(M)\ar[r,"j_{2k+1}"']\ar[d] & H^{2k+1}(U)\ar[d,equal]\ar[r] &H^{2k+2}(M,U)\ar[d,"f^{2k+2}"]\ar[r,equal] &\Z^{\delta_{k,n-1}\#F}\ar[d,equal]\\
&& H^{2k+1}\left(\widetilde{M}\right)\ar[r,"\widetilde{j}_{2k+1}"'] & H^{2k+1}(U)\ar[r] & H^{2k+2}\left(\widetilde{M},U\right)\ar[r,equal] &\Z^{\delta_{k,n-1}\#F}
\end{tikzcd}
\end{equation}
where $\delta_{k,n-1}$ is the Kronecker delta. The relative cohomology groups in the sequence are given by Corollary~\ref{CorCohomology1bis} and Proposition \ref{exactU}. 
The following part of (\ref{mainequa}) provides (i):
$$
\xymatrix{0\ar[r]&H^{2k}(M)\ar[r]&H^{2k}(U)\ar[r]&(\Z/p\Z)^{\#F}.}$$
By commutativity of (\ref{mainequa}), the maps $r^*:H^{2k}(M)\rightarrow H^{2k}\left(\widetilde{M}\right)$ and $g^{2k}$ are injective.
Hence, we can extract from (\ref{mainequa}), the following exact sequence:
$$\xymatrix{0\ar[r]&H^{2k}\left(\widetilde{M},U\right)\ar[r]&H^{2k}\left(\widetilde{M}\right)\ar[r]&H^{2k}(U)\ar[r]&0.}$$
This means that the map $\widetilde{j}_{2k}$ induces the following isomorphism:
$$\widetilde{j}_{2k}:\frac{H^{2k}\left(\widetilde{M}\right)}{g^{2k}\left(H^{2k}\left(\widetilde{M},U\right)\right)}\longrightarrow H^{2k}(U).$$
By commutativity of (\ref{mainequa}), we obtain the following isomorphism:
$$\widetilde{j}_{2k}:\frac{H^{2k}\left(\widetilde{M}\right)}{g^{2k}\left(H^{2k}\left(\widetilde{M},U\right)\right)\oplus r^*\left(H^{2k}(M)\right)}\longrightarrow \frac{H^{2k}(U)}{j_{2k}\left(H^{2k}(M)\right)}.$$
Hence (ii) follows from (i). 

By Lemma \ref{adoc}, the map $f^{2k+2}$ is always an isomorphism. It follows from the commutativity of Diagram (\ref{mainequa}) that:
$$\Image j_{2k+1}\simeq \Image \widetilde{j}_{2k+1}\simeq H^{2k+1}\left(\widetilde{M}\right).$$
Therefore (iii) follows from (i) and the following part of (\ref{mainequa}): 
 $$\xymatrix{H^{2k}(U)\ar[r]&H^{2k+1}(M,U)\ar[r]&H^{2k+1}(M)\ar[r]^{j_{2k+1}}&H^{2k+1}(U).}$$

Moreover, since $g^2$ is injective by Corollary \ref{CorCohomology1bis}, by Proposition \ref{exactU} and Remark \ref{H1rmk}, we also obtain the following diagram: 
 $$\xymatrix@C15pt@R10pt{
 0\ar[r] & H^{1}(M)\ar[r]\ar[d] & H^{1}(U)\ar@{=}[d]\ar[r]&0\\
  0\ar[r] & H^{1}\left(\widetilde{M}\right)\ar[r]& H^{1}(U)\ar[r]&0
 }$$
which provides (iv). 

 Statement (v) can also be proved using the commutativity of (\ref{mainequa}) and the bijectivity of $f^{2n}$ looking at this part of the diagram:
 $$\xymatrix@C15pt@R10pt{
 H^{2n}(M,U)\ar[r]\ar[d]^{f^{2n}} & H^{2n}(M)\ar[r]\ar[d] & H^{2n}(U)\ar@{=}[d]\ar[r]&0\\
  H^{2n}\left(\widetilde{M},U\right)\ar[r] & H^{2n}\left(\widetilde{M}\right)\ar[r]& H^{2n}(U)\ar[r]&0.
 }$$

\end{proof}
The $d_p^k$ will be explicitly computed when $M$ is a quotient in the proof of Theorem \ref{main2}.
\begin{cor}\label{CorCohomology1ter}
Let $M$ be a topological space with an isolated complex quotient point $x\in M$. We assume that $x$ admits a local uniformizing system $(W_x,\mathcal{W},G,h)$ with $G$ of prime order $p$.
Let $r:\widetilde{M}\rightarrow M$ be a toric blow-up of $M$ in $x$. We assume that $\widetilde{M}$ is an $2n$-dimensional compact connected orientable 
manifold. 
We denote $U_x:=\widetilde{M}\smallsetminus r^{-1}(x)$ and $g_x:H^{*}\left(\widetilde{M},U_x,\Z\right)\rightarrow H^{*}\left(\widetilde{M},\Z\right)$.
Then for all $1\leq k\leq n-1$:
\begin{itemize}
\item[(i)]
 $\Image g_x^{2k}\oplus \Image g_x^{2n-2k}$ is a sublattice of $H^{2k}\left(\widetilde{M},\Z\right)\oplus H^{2n-2k}\left(\widetilde{M},\Z\right)$ of discriminant $p^2$.
\item[(ii)]
If $n$ is even, then $\Image g_x^{n}$ is a sublattice of $H^{n}\left(\widetilde{M},\Z\right)$ of discriminant $p$.
\end{itemize}
\end{cor}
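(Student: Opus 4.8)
The plan is to exploit that the discriminant of $\Image g_x^{2k}\oplus\Image g_x^{2n-2k}$ is a \emph{local} invariant: it is determined by the relative cohomology $H^*(\widetilde{M},U_x,\Z)$ equipped with its relative cup-product pairing, and by excision this coincides with the data of the model $(\widetilde{\mathbb{P}^{n}/G},\widetilde{\mathbb{P}^{n}/G}^*)$ already analysed in Theorem \ref{MainCnG}. Thus I would prove the corollary by transporting the computation of Theorem \ref{MainCnG} (iv) and (v) through excision, using the naturality of the cup product; no new discriminant computation is needed.

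First I would collect the ingredients. By Corollary \ref{CorCohomology1bis}, for $1\leq k\leq n-1$ the map $g_x^{2k}$ is injective, the groups $H^{2k}(\widetilde{M},U_x,\Z)$ are torsion free and $H^{2n}(\widetilde{M},U_x,\Z)=\Z$; hence $\Image g_x^{2k}\simeq H^{2k}(\widetilde{M},U_x,\Z)$. Since $\widetilde{M}$ is a compact connected orientable $2n$-manifold, Poincar\'e duality makes the cup-product pairing
$$H^{2k}_f(\widetilde{M},\Z)\times H^{2n-2k}_f(\widetilde{M},\Z)\longrightarrow H^{2n}(\widetilde{M},\Z)\simeq\Z$$
perfect, so $H^{2k}_f(\widetilde{M},\Z)\oplus H^{2n-2k}_f(\widetilde{M},\Z)$ is unimodular. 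Finally $U_x=\widetilde{M}\smallsetminus r^{-1}(x)$ is a non-compact $2n$-manifold, so $H^{2n}(U_x,\Z)=0$, and the long exact sequence of the pair forces $H^{2n}(\widetilde{M},U_x,\Z)\to H^{2n}(\widetilde{M},\Z)$ to be a surjection $\Z\twoheadrightarrow\Z$, hence an isomorphism.

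The core of the argument is a chain of three discriminant-preserving identifications. By naturality of the cup product with respect to the morphism of pairs, together with the isomorphism $H^{2n}(\widetilde{M},U_x,\Z)\xrightarrow{\sim}H^{2n}(\widetilde{M},\Z)$ just established, the restriction of the Poincar\'e pairing to $\Image g_x^{2k}\oplus\Image g_x^{2n-2k}$ agrees, via the injection $g_x$, with the relative cup-product pairing on $H^{2k}(\widetilde{M},U_x,\Z)\oplus H^{2n-2k}(\widetilde{M},U_x,\Z)$; since $g_x$ sends a basis to a basis and the top-degree identification is an isomorphism, the two have equal Gram determinants up to sign, hence equal discriminant. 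Next, excision identifies $(\widetilde{M},U_x)$ with the model $(\widetilde{\mathbb{P}^{n}/G},\widetilde{\mathbb{P}^{n}/G}^*)$ compatibly with relative cup products and with the chosen generator of $H^{2n}$, so the relative pairing on $H^*(\widetilde{M},U_x,\Z)$ has the same discriminant as the relative pairing on $H^*(\widetilde{\mathbb{P}^{n}/G},\widetilde{\mathbb{P}^{n}/G}^*,\Z)$. Finally, the same naturality argument in the model (using that $g'$ is injective and that $H^{2n}(\widetilde{\mathbb{P}^{n}/G},\widetilde{\mathbb{P}^{n}/G}^*,\Z)\to H^{2n}(\widetilde{\mathbb{P}^{n}/G},\Z)$ is an isomorphism, as in the proof of Theorem \ref{MainCnG}) identifies that relative pairing with the absolute pairing on $\Image g'^{2k}\oplus\Image g'^{2n-2k}$, whose discriminant is $p^2$ by Theorem \ref{MainCnG} (iv). Chaining the three equalities gives $\discr(\Image g_x^{2k}\oplus\Image g_x^{2n-2k})=p^2$, proving (i); statement (ii) is the case $2k=n$, where Theorem \ref{MainCnG} (v) yields discriminant $p$. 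In both cases the discriminant is nonzero, so the pairing is non-degenerate and $\Image g_x^{2k}\oplus\Image g_x^{2n-2k}$ (resp. $\Image g_x^{n}$) is indeed a sublattice.

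The main obstacle is bookkeeping rather than conceptual: I must verify that excision is compatible with the \emph{relative} cup-product structure, and that the three distinct identifications of the top groups $H^{2n}$ with $\Z$ (the orientation of $\widetilde{M}$, the generator coming from the model via excision, and the isomorphism appearing in Theorem \ref{MainCnG}) match up, so that no spurious integer factor enters any Gram determinant. Because the discriminant is taken in absolute value, signs are irrelevant, and the only possible discrepancy --- a nontrivial multiplicative factor --- is precisely excluded by the fact that all the relevant maps $H^{2n}(\cdot,\cdot)\to H^{2n}(\cdot)$ are isomorphisms of $\Z$, as verified above and in the proof of Theorem \ref{MainCnG}.
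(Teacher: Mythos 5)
Your proposal is correct and follows essentially the same route as the paper's proof: both reduce to the model $(\widetilde{\mathbb{P}^{n}/G},\widetilde{\mathbb{P}^{n}/G}^*)$ by excision, use naturality of the cup product together with the fact that the top-degree maps $H^{2n}(\cdot,\cdot)\rightarrow H^{2n}(\cdot)$ are isomorphisms to show that $g'$ and $g_x$ respect the pairings, and then quote Theorem \ref{MainCnG} (iv) and (v). Your derivation of the isomorphism $H^{2n}(\widetilde{M},U_x,\Z)\simeq H^{2n}(\widetilde{M},\Z)$ from $H^{2n}(U_x,\Z)=0$ is a harmless variant of the paper's appeal to its diagram (\ref{adokdiagram}).
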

\begin{proof}
This corollary is also a consequence of the excision theorem and Theorem \ref{MainCnG} (iii), (iv) and (v). 
By construction of the cup product, we have a commutative diagram (see for instance \cite[p.~209]{Hatcher}):
\begin{center}
\begin{tikzcd}[column sep=60pt]
H^{2k}\left(\widetilde{\mathbb{P}^{n}/G},\widetilde{\mathbb{P}^{n}/G}^*,\Z\right)\otimes H^{2l}\left(\widetilde{\mathbb{P}^{n}/G},\widetilde{\mathbb{P}^{n}/G}^*,\Z\right)\ar[d,"g'^{2k}\otimes g'^{2l}"]\ar[r,"\cup"] & H^{2(k+l)}\left(\widetilde{\mathbb{P}^{n}/G},\widetilde{\mathbb{P}^{n}/G}^*,\Z\right)\ar[d,"g'^{2(k+l)}"]\\
H^{2k}\left(\widetilde{\mathbb{P}^{n}/G},\Z\right)\otimes H^{2l}\left(\widetilde{\mathbb{P}^{n}/G},\Z\right)\ar[r,"\cup"]&H^{2(k+l)}\left(\widetilde{\mathbb{P}^{n}/G},\Z\right),
\end{tikzcd}
\end{center}
where "$\cup$" is the cup product.  
Since $H^{2n}\left(\widetilde{\mathbb{P}^{n}/G},\widetilde{\mathbb{P}^{n}/G}^*,\Z\right)=H^{2n}\left(\widetilde{M},U_x,\Z\right)=\Z$ by Theorem \ref{MainCnG} and Corollary \ref{CorCohomology1bis}, the cup product on $H^{2*}\left(\widetilde{\mathbb{P}^{n}/G},\widetilde{\mathbb{P}^{n}/G}^*,\Z\right)$ and on $H^{2*}\left(\widetilde{M},U_x,\Z\right)$ can be seen as a bilinear form. 

By (\ref{alpha}) and (\ref{delta}), $g'^{2n}:H^{2n}\left(\widetilde{\mathbb{P}^{n}/G},\widetilde{\mathbb{P}^{n}/G}^*,\Z\right)\rightarrow H^{2n}\left(\widetilde{\mathbb{P}^{n}/G},\Z\right)$ is an isomorphism. Therefore, the commutativity of the previous diagram shows that $g'^{2*}: H^{2*}\left(\widetilde{\mathbb{P}^{n}/G},\widetilde{\mathbb{P}^{n}/G}^*,\Z\right)\rightarrow H^{2*}\left(\widetilde{\mathbb{P}^{n}/G},\Z\right)$ is an isometry.  

We show that the same property also holds for $g_x^{2*}:H^{2*}\left(\widetilde{M},U_x,\Z\right)\rightarrow H^{2*}\left(\widetilde{M},\Z\right)$.
As before, we have a commutative diagram:
\begin{center}
\begin{tikzcd}[column sep=50pt]
H^{2k}\left(\widetilde{M},U_x,\Z\right)\otimes H^{2l}\left(\widetilde{M},U_x,\Z\right)\ar[d,"g^{2k}_x\otimes g^{2l}_x"]\ar[r,"\cup"] & H^{2(k+l)}\left(\widetilde{M},U_x,\Z\right)\ar[d,"g^{2(k+l)}_x"]\\
H^{2k}\left(\widetilde{M},\Z\right)\otimes H^{2l}\left(\widetilde{M},\Z\right)\ar[r,"\cup"] & H^{2(k+l)}\left(\widetilde{M},\Z\right).
\end{tikzcd}
\end{center}
Moreover, we have the following exact sequence:
\begin{center}
\begin{tikzcd}
H^{2n}\left(\widetilde{M},U_x,\Z\right)\ar[r,"g_x^{2n}"] & H^{2n}\left(\widetilde{M},\Z\right)\ar[r]& H^{2n}(U_x,\Z).
\end{tikzcd}
\end{center}
Since $\widetilde{M}$ is an $2n$-dimensional compact connected orientable
manifold, we have $H^{2n}(U_x,\Z)=0$.
Since 
$H^{2n}\left(\widetilde{M},U_x,\Z\right)=\Z$, we have that $g_x^{2n}$ is necessarily bijective.
Therefore, $g^{2*}_x: H^{2*}\left(\widetilde{M},U_x,\Z\right)\rightarrow H^{2*}\left(\widetilde{M},\Z\right)$ is an isometry.

Finally, the excision theorem provides an isometry  $H^{2*}\left(\widetilde{\mathbb{P}^{n}/G},\widetilde{\mathbb{P}^{n}/G}^*,\Z\right)\simeq H^{2*}\left(\widetilde{M},U_x,\Z\right)$. Hence Theorem \ref{MainCnG} (iv) and (v) conclude the proof.
\end{proof}

\section{Integral cohomology of quotients by cyclic groups with only isolated fixed points}\label{mainresult}
\subsection{Notation, hypothesis and definition}\label{coeffofresolution}
In this section $X$ is a compact complex manifold of dimension $n$ such that $H^*(X,\Z)$ is $p$-torsion-free 
and $G$ an automorphism group of prime order $p$ with only isolated fixed points. 
\begin{nota}
We set $M:=X/G$ and $\pi:X\rightarrow M$ the quotient map. We define the subsets $V:=X\smallsetminus \Fix G$, $U:=M\smallsetminus \Sing M$ and consider $j:U\hookrightarrow M$. We also denote by $\eta(G):=\#\Fix G$ the number of fixed points. We consider $r:\widetilde{M}\rightarrow M$ a toric blow-up of $M$ in $\Sing M$. 
 \end{nota}
  \begin{rmk}\label{hypopo}
 Actually, all the results of this section remain true if we choose for $X$ a $2n$-dimensional compact connected orientable $\C^{\infty}$-manifold, with $H^*(X,\Z)$ $p$-torsion-free, with $G$ an automorphism group of prime order $p$ which respects an orientation which has only isolated fixed points and such that all points of $\Sing M$ are isolated complex quotient points (see Definition \ref{quotientPoint}).
 \end{rmk}
We also use the notation of Section \ref{nota1}. The main idea is to obtain information on $H^*_f(M,\Z)$ from the Poincar\'e duality on $H^*\left(\widetilde{M},\Z\right)$ (see Section \ref{sketch}).
To do so, we need to understand the behaviour of $r^*(H^{2*}_f(M,\Z))$ inside $H^*\left(\widetilde{M},\Z\right)$. 
This information will be provided by the \emph{coefficient of resolution}.
We have seen from Corollary \ref{CorCohomology2} (ii) that $H^{2k}_f\left(\widetilde{M},\Z\right)/r^*(H^{2k}_f(M,\Z))$ can only have $p$-torsion, moreover its torsion is a $\F$-vector space of finite dimension.
\begin{defi}\label{coeffreso}
We define the \emph{coefficient of resolution}:
$$\beta_{2k}(X):=\dim_{\F}\tors\frac{H^{2k}_f\left(\widetilde{M},\Z\right)}{r^*(H^{2k}_f(M,\Z))}.$$
\end{defi}
A priori, the coefficient of resolution depends of $(X,G)$ and the choice of the toric blow-up $\widetilde{M}$. It corresponds to the number of primitive linearly independent elements in $H^k_f(M,\Z)$ which become divisible by $p$ inside $H^k_f\left(\widetilde{M},\Z\right)$.

 \subsection{The cohomology in odd degrees}\label{oddmainsc}
 \begin{thm}\label{oddmain}
Let $X$ be a compact complex manifold of dimension $n$ endowed with the action of an automorphism group of prime order $p$ with only isolated fixed points. We assume that $H^*(X,\Z)$ is $p$-torsion-free. Then:
$$\alpha_{2k+1}(X)+\alpha_{2n-2k-1}(X)=\ell_+^{2k+1}(X),\ \ \forall\ 0\leq k\leq n-1.$$
\end{thm}
\begin{proof}
We have by (\ref{MainExactSequence}):
$$\frac{H^{2k+1}_f(M,\Z)\oplus H^{2n-2k-1}_f(M,\Z)}{\pi_*\left(H^{2k+1}(X,\Z)\oplus H^{2n-2k-1}(X,\Z)\right)_f}=(\Z/p\Z)^{\alpha_{2k+1}(X)+\alpha_{2n-2k-1}(X)}.$$
By Corollary \ref{CorCohomology2} (iii), we have an isomorphism:
\begin{equation}
r^*:H^{2k+1}_f(M,\Z)\simeq H^{2k+1}_f\left(\widetilde{M},\Z\right).
\label{truc}
\end{equation}
Hence:
$$\frac{H^{2k+1}_f\left(\widetilde{M},\Z\right)\oplus H^{2n-2k-1}_f\left(\widetilde{M},\Z\right)}{r^*\pi_*\left(H^{2k+1}(X,\Z)\oplus H^{2n-2k-1}(X,\Z)\right)_f}=(\Z/p\Z)^{\alpha_{2k+1}(X)+\alpha_{2n-2k-1}(X)}.$$
By Corollary \ref{CorCohomology2} (v), $r^*\pi_*\left(H^{2k+1}(X,\Z)\oplus H^{2n-2k-1}(X,\Z)\right)_f $ and $\pi_*(H^{2k+1}(X,\Z)\oplus H^{2n-2k-1}(X,\Z))_f$
are isometric lattices. Moreover by Proposition \ref{vraisans19}
and \cite[Proposition 3.9]{Lol3}, we have:
$$\log_p\discr \pi_*\left(H^{2k+1}(X,\Z)\oplus H^{2n-2k-1}(X,\Z)\right)_f=\ell_+^{2k+1}(X)+\ell_+^{2n-2k-1}(X)=2\ell_+^{2k+1}(X).$$
Since $H^{2k+1}_f\left(\widetilde{M},\Z\right)\oplus H^{2n-2k-1}_f\left(\widetilde{M},\Z\right)$ is unimodular, by (\ref{BasicLatticeTheory}) we have:
$$\ell_+^{2k+1}(X)=\alpha_{2k+1}(X)+\alpha_{2n-2k-1}(X).$$
\end{proof}
\subsection{Expression for the dimensions of degeneration}\label{torM}
The $p$-torsion of $H^*(U,\Z)$ can be computed using the spectral sequence of equivariant cohomology since the action of $G$ on $V$ is free (see Section \ref{equivarsection}). For this purpose, we need the Boissi\`ere--Nieper-Wisskirchen--Sarti invariants of $V$ to be able to use Proposition \ref{equivarcoho}.
\begin{lemme}\label{ellV2}
We have:
\begin{itemize}
\item[(i)]
$\ell_*^k(V)=\ell_*^k(X)$ for all $0\leq k\leq 2n-2$, with $*=p, -$ or $+$;
\item[(ii)]
$H^{2n-1}(V,\Z)$ is $p$-torsion-free.
\end{itemize}
\end{lemme}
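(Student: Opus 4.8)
The plan is to relate $V=X\smallsetminus\Fix G$ to $X$ through the $G$-equivariant long exact sequence of the pair $(X,V)$, whose relative terms I first compute by excision. Since $\Fix G$ is a finite set of isolated points of the complex $n$-manifold $X$, each fixed point $x$ has a coordinate ball $B_x\cong\C^n$, and I may choose these balls $G$-invariant. Writing $N=\bigsqcup_{x\in\Fix G}B_x$, excision gives
$$H^k(X,V,\Z)\cong\bigoplus_{x\in\Fix G}H^k\left(B_x,B_x\smallsetminus\{x\},\Z\right)\cong\bigoplus_{x\in\Fix G}H^k\left(\R^{2n},\R^{2n}\smallsetminus\{0\},\Z\right).$$
As $\R^{2n}$ is contractible and $\R^{2n}\smallsetminus\{0\}\simeq S^{2n-1}$, each local term equals $\widetilde{H}^{k-1}(S^{2n-1},\Z)$, which vanishes unless $k=2n$, where it is $\Z$. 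Hence $H^k(X,V,\Z)=0$ for $k\neq 2n$ and $H^{2n}(X,V,\Z)=\Z^{\eta(G)}$.

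Next I would feed this into the long exact sequence
$$\cdots\rightarrow H^k(X,V,\Z)\rightarrow H^k(X,\Z)\rightarrow H^k(V,\Z)\rightarrow H^{k+1}(X,V,\Z)\rightarrow\cdots,$$
which is natural in the $G$-action. For $0\le k\le 2n-2$ both relative groups $H^k(X,V,\Z)$ and $H^{k+1}(X,V,\Z)$ vanish, so the restriction $H^k(X,\Z)\rightarrow H^k(V,\Z)$ is an isomorphism of $\Z[G]$-modules. Since the invariants $\ell_p^k$, $\ell_-^k$, $\ell_+^k$ depend only on the $\Z[G]$-module structure, this yields $\ell_*^k(V)=\ell_*^k(X)$ for $*=p,-,+$ and proves (i).

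For (ii) I would look at the same sequence in degree $2n-1$. Using $H^{2n-1}(X,V,\Z)=0$, it reads
$$0\rightarrow H^{2n-1}(X,\Z)\rightarrow H^{2n-1}(V,\Z)\rightarrow \Z^{\eta(G)},$$
so the image of the last map is a subgroup of a free abelian group, hence free; consequently $\tors H^{2n-1}(V,\Z)$ is carried isomorphically onto $\tors H^{2n-1}(X,\Z)$ by the injection. As $H^*(X,\Z)$ is $p$-torsion-free by hypothesis, so is $H^{2n-1}(V,\Z)$.

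The computation is essentially routine; the point needing care is the $G$-equivariance of the restriction isomorphisms, which follows purely from naturality of the pair sequence and does not require identifying the $G$-action on the relative groups. Under the weaker hypotheses of Remark \ref{hypopo} the argument is unchanged, since each fixed point still carries an $\R^{2n}$-chart and $G$ preserves orientation.
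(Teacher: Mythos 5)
Your proposal is correct and follows essentially the same route as the paper: the paper likewise reads off (i) from the identification $H^k(V,\Z)=H^k(X,\Z)$ for $k\leq 2n-2$ and proves (ii) from the exact sequence $0\rightarrow H^{2n-1}(X,\Z)\rightarrow H^{2n-1}(V,\Z)\rightarrow H^{2n}(X,V,\Z)=\Z^{\eta(G)}$, the only cosmetic difference being that you compute the relative groups by excision and local contractibility where the paper invokes the Thom isomorphism. Your explicit remarks on $G$-equivariance via naturality and on the fate of the torsion subgroup are fine and merely spell out what the paper leaves implicit.
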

\begin{proof}
Statement (i) follows immediately from the fact that $H^k(V,\Z)=H^k(X,\Z)$ for all $k\leq 2n-2$.
Moreover, we have the following exact sequence:
$$
\xymatrix{0\ar[r]&H^{2n-1}(X,\Z) \ar[r] & H^{2n-1}(V,\Z)\ar[r]&H^{2n}(X,V,\Z)\ar[r]&H^{2n}(X,\Z)\ar[r]&H^{2n}(V,\Z).}
$$
Since $H^{2n-1}(X,\Z)$ is $p$-torsion-free and by Thom's isomorphism $H^{2n}(X,V,\Z)=\Z^{\eta(G)}$, $H^{2n-1}(V,\Z)$ is also $p$-torsion-free. We obtain (ii).
\end{proof}
From Lemma \ref{ellV2} (i) and (ii), we see that the spectral sequences of equivariant cohomology of $(X,G)$ and $(V,G)$ coincide sufficiently to obtain the following expressions for the dimensions of degeneration. We recall that the notation $t_p^{2k+1}(Y)$ for a topological space $Y$ is defined in Section \ref{nota1}.
\begin{lemme}\label{coefdege}
We have for all $1\leq k\leq n-1$:
\begin{itemize}
\item[(i)]
$u_{2k}(X)=\sum_{i=0}^{k-1}\ell_+^{2i}(X)+\sum_{i=0}^{k-1}\ell_-^{2i+1}(X)-t_p^{2k}(U);$
\item[(ii)]
$u_{2k+1}(X)=\sum_{i=0}^{k}\ell_-^{2i}(X)+\sum_{i=0}^{k-1}\ell_+^{2i+1}(X)-t_p^{2k+1}(U).$
\end{itemize}
\end{lemme}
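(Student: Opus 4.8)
The plan is to reduce everything to the \emph{free} action of $G$ on $V=X\smallsetminus\Fix G$, for which $EG\times_G V\simeq V/G=U$, and then to borrow the general expressions (\ref{XG1}) and (\ref{XG2}) for the dimensions of degeneration that were obtained in the proof of Lemma \ref{ZFdege}. The decisive point is that these expressions, applied to $(V,G)$, involve $t_p^{k}(V_G)=t_p^{k}(U)$ rather than $t_p^{k}(X_G)$, and that the torsion Boissière--Nieper-Wisskirchen--Sarti invariants of $V$ all vanish in the relevant range. So the proof has two independent halves: showing $u_k(X)=u_k(V)$ in low degree, and evaluating $u_k(V)$ via the free-action formulas.

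First I would compare the two spectral sequences. The open $G$-equivariant inclusion $V\hookrightarrow X$ induces a morphism of equivariant cohomology spectral sequences, which on the second page is the restriction map $H^{d}(G,H^{q}(X,\Z))\rightarrow H^{d}(G,H^{q}(V,\Z))$. Since $H^{q}(V,\Z)=H^{q}(X,\Z)$ as $\Z[G]$-modules for all $q\leq 2n-2$ (the fact already used to prove Lemma \ref{ellV2}), this morphism is an isomorphism on $E_2^{d,q}$ for $q\leq 2n-2$. I would then check that, for $1\leq k\leq 2n-1$, every entry and every differential entering the definition of $u_k$ only involves fiber degrees $\leq 2n-2$ on the rows $d>0$ (a differential $d_r$ out of such a row strictly lowers the fiber degree, so the anomalous class $H^{2n}(X,\Z)=\Z\neq 0=H^{2n}(V,\Z)$, which sits at $(0,2n)$, is never reached), while the remaining contribution $t_pE_2^{0,k}-t_pE_\infty^{0,k}$ vanishes for both $X$ and $V$: indeed $H^{k}(X,\Z)^{G}$ is $p$-torsion free by hypothesis, and $H^{k}(V,\Z)^{G}$ is $p$-torsion free for $k\leq 2n-1$ by Lemma \ref{ellV2} (ii). By functoriality of the spectral sequence this forces $u_k(X)=u_k(V)$ for all $1\leq k\leq 2n-1$, which is the precise meaning of the two spectral sequences ``coinciding sufficiently''.

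Next I would apply (\ref{XG1}) and (\ref{XG2}) to the pair $(V,G)$. Because $G$ acts freely on $V$, the Borel construction $V_G$ is homotopy equivalent to $U$, so $t_p^{k}(V_G)=t_p^{k}(U)$. In the resulting expressions the sums $\sum_i\ell_+^{2i}(V)$, $\sum_i\ell_-^{2i+1}(V)$, $\sum_i\ell_-^{2i}(V)$, $\sum_i\ell_+^{2i+1}(V)$ can be rewritten with the invariants of $X$ via Lemma \ref{ellV2} (i), since all indices occurring are $\leq 2n-2$. It then remains to kill the torsion terms $\sum_{i}\sum_{q<p}\ell_{q,t}^{i}(V)$ and $\ell_{p,t}^{\bullet}(V)$: for $u_{2k}$ the indices run up to $2k\leq 2n-2$ and for $u_{2k+1}$ up to $2k+1\leq 2n-1$, and in both cases $H^{i}(V,\Z)$ is $p$-torsion free in that range (it equals the $p$-torsion-free $H^{i}(X,\Z)$ for $i\leq 2n-2$, and is $p$-torsion free for $i=2n-1$ by Lemma \ref{ellV2} (ii)). Hence every torsion invariant of $V$ vanishes and (\ref{XG1}), (\ref{XG2}) collapse exactly to the two asserted right-hand sides, but with $u_k(V)$ in place of $u_k(X)$.

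Combining the two halves, $u_k(X)=u_k(V)$ equals the stated expressions, which gives (i) and (ii). I expect the main obstacle to be the comparison step in the boundary case $k=2n-1$, where one must be careful that the classes living in fiber degrees $\geq 2n-1$ (where $X$ and $V$ genuinely differ) never contribute to $u_{2n-1}$; everything else is a bookkeeping substitution of Lemma \ref{ellV2} into the already-established formulas, together with the homotopy equivalence $V_G\simeq U$.
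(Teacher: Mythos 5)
Your proposal is correct and follows exactly the route the paper intends: the paper in fact omits the proof, merely asserting after Lemma \ref{ellV2} that the spectral sequences of $(X,G)$ and $(V,G)$ ``coincide sufficiently'' and then reading off the formulas, while you supply precisely the missing details (the comparison $u_k(X)=u_k(V)$ for $k\leq 2n-1$ via the restriction morphism of spectral sequences, followed by the specialization of (\ref{XG1})--(\ref{XG2}) to the free action on $V$ with $V_G\simeq U$ and vanishing torsion invariants).
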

\begin{proof}
We prove (i), the proof of (ii) is identical.
By Definition \ref{defidegene}, we have:
$$u_{2k}(X)=\dim_{\F} \left(\bigoplus_{d+q=2k,\, d>0} E_2^{d,q}\right)-\dim_{\F} \left(\bigoplus_{d+q=2k,\, d>0} E_{\infty}^{d,q}\right)+\left(t_p E_2^{0,2k}-t_p E_\infty^{0,2k}\right),$$
where $E_2^{d,q}$ is the second page of the spectral sequence of equivariant cohomology of $(X,G)$ with coefficient in $\Z$ (see Section \ref{equivarsection}).
First note that $t_p E_2^{0,2k}=0$ by Proposition \ref{equivarcoho} since $H^*(X,\Z)$ is $p$-torsion-free. Hence also $t_p E_\infty^{0,2k}=0$.
By Proposition \ref{equivarcoho},  
$$\dim_{\F} \left(\bigoplus_{d+q=2k,\, d>0} E_2^{d,q}\right)=\sum_{i=0}^{k-1}\ell_+^{2i}(X)+\sum_{i=0}^{k-1}\ell_-^{2i+1}(X).$$
Moreover, by convergence of the spectral sequence:
$$\dim_{\F}\left( \bigoplus_{d+q=2k,\, d>0} E_{\infty}^{d,q}\right)=t^{2k}_p(X_G).$$
Hence it only remains to show that:
$$t^{k}_p(X_G)=t^{k}_p(U),\ \forall\ k\leq 2n-1.$$
It can be seen as follows. We have the convergence of the following spectral sequence (see \cite[Chapter~VII, Section~7]{cohogroup}):
$$H^{d}(G,H^{q}(X,V,\Z))\Rightarrow H^{d+q}(X_G,V_G,\Z).$$
However by Thom's isomorphism, we have $H^{k}(X,V,\Z)=0$ for all $k\leq 2n-1$ and $H^{2n}(X,V,\Z)=\Z^{\eta(G)}$.
It follows that $H^{k}(X_G,V_G,\Z)=0$ for all $k\leq 2n-1$ and $H^{2n}(X_G,V_G,\Z)$ is torsion-free. 
Hence the long exact sequence of the relative cohomology of $(X_G,V_G)$ provides that 
$H^{k}(X_G,\Z)=H^k(V_G,\Z)$ for all $k\leq 2n-2$. 
Moreover, we have:
\begin{equation}
\xymatrix@C16pt{0\ar[r]&H^{2n-1}(X_G,\Z) \ar[r] & H^{2n-1}(V_G,\Z)\ar[r]&H^{2n}(X_G,V_G,\Z)\ar[r]&H^{2n}(X_G,\Z)\ar[r]&H^{2n}(V_G,\Z).}
\label{1fixed}
\end{equation}
Since $H^{2n}(X_G,V_G,\Z)$ is torsion-free $t^{2n-1}_p(X_G)=t^{2n-1}_p(V_G)$.
Finally the fact that $H^k(V_G,\Z)=H^k(U,\Z)$ for all $k$ concludes the proof.
\end{proof}

\subsection{General expression for the coefficients of surjectivity in even degrees}\label{coeffeven}
In this section we follow the method which has been sketched in Section \ref{sketch}. 
We recall the definition of the \emph{exceptional lattice} from \cite[Definition 5.1]{Lol3}. Let $1\leq k\leq n-1$. The $k^\mathrm{th}$ \emph{exceptional lattice} of $r$ is defined by: $$N_{k,r}:=r^*\left[\pi_*\left(H^k(X,\Z)\oplus H^{2n-k}(X,\Z)\right)\right]^\bot_f.$$
We remark that $N_{k,r}^\bot$ is the saturation of $r^*\left[\pi_*\left(H^k(X,\Z)\oplus H^{2n-k}(X,\Z)\right)\right]_f$.
By Poincar\'e duality, the lattice $H^{2k}_f\left(\widetilde{M},\Z\right)\oplus H^{2n-2k}_f\left(\widetilde{M},\Z\right)$ is unimodular.
Hence from (\ref{BasicLatticeTheory2}), we have:
\begin{equation}
\discr N_{2k,r}=\discr N_{2k,r}^\bot.
\label{discrN}
\end{equation}
We are going to compute $\log_p\discr N_{2k,r}$ and $\log_p\discr N_{2k,r}^\bot$ to obtain our general equation.

We recall the exact sequence obtained from Corollary \ref{CorCohomology1bis}:
\begin{equation}
\xymatrix{0\ar[r]&H^{2k}\left(\widetilde{M},U,\Z\right) \ar[r]^{g^{2k}} & H^{2k}\left(\widetilde{M},\Z\right)\ar[r]&H^{2k}(U,\Z)\ar[r]&0.}
\label{inter}
\end{equation}
We can prove using the projection formula for orbifolds (see \cite[Remark 2.8]{Lol}) that:
$$\Image g^{2k}\perp r^*(H^{2n-2k}(M,\Z)),$$
according to the pairing $H^{2k}\left(\widetilde{M},\Z\right)\times H^{2n-2k}\left(\widetilde{M},\Z\right)\rightarrow H^{2n}\left(\widetilde{M},\Z\right)$.
Then, the lattice $N_{2k,r}$ is the saturation of $\Image g^{2k}\oplus \Image g^{2n-2k}$.

By Corollary \ref{CorCohomology1ter} (i), we have $$\log_p\discr \left(\Image g^{2k}\oplus \Image g^{2n-2k}\right)=2 \eta(G).$$
Then (\ref{inter}) and (\ref{BasicLatticeTheory}) provide:
\begin{equation}
\log_p\discr N_{2k,r}=2\left(\eta(G)-t^{2k}_p(U)-t^{2n-2k}_p(U)+t_p^{2k}\left(\widetilde{M}\right)+t_p^{2n-2k}\left(\widetilde{M}\right)\right).
\label{Nr}
\end{equation}
By Proposition \ref{vraisans19} and \cite[Proposition 3.9]{Lol3}, we have:
\begin{equation}
\log_p\discr \pi_*\left(H^{2k}(X,\Z)\oplus H^{2n-2k}(X,\Z)\right)_f=\ell_+^{2k}(X)+\ell_+^{2n-2k}(X)=2\ell_+^{2k}(X).
\label{oddtor0}
\end{equation}
Then by (\ref{MainExactSequence}), Definition \ref{coeffreso} and (\ref{BasicLatticeTheory}), we obtain:
\begin{equation}
\log_p\discr N_{2k,r}^\bot=2\left(\ell_+^{2k}(X)-\alpha_{2k}(X)-\alpha_{2n-2k}(X)-\beta_{2k}(X)-\beta_{2n-2k}(X)\right).
\label{Nrbot}
\end{equation}
Then (\ref{discrN}) and (\ref{Nr}) give the following relation:
\begin{align*}
\ell_+^{2k}(X)-\left(\alpha_{2k}(X)+\alpha_{2n-2k}(X)\right)-&\left(\beta_{2k}(X)+\beta_{2n-2k}(X)\right)=\\
&\eta(G)-\left(t_p^{2k}(U)+t_p^{2n-2k}(U)\right)+\left(t_p^{2k}\left(\widetilde{M}\right)+t_p^{2n-2k}\left(\widetilde{M}\right)\right).
\end{align*}
We rewrite this equation adding $\ell_+^{2*}(X)+\ell_-^{2*+1}(X)-\ell_+^{2k}(X)$ on each sides of the equality:
\begin{align}
\ell_+^{2*}(X)+&\ell_-^{2*+1}(X)-\left(\alpha_{2k}(X)+\alpha_{2n-2k}(X)\right)-\left(\beta_{2k}(X)+\beta_{2n-2k}(X)\right)=\nonumber \\
&\eta(G)+\left(\ell_+^{2*}(X)+\ell_-^{2*+1}(X)-\ell_+^{2k}(X)-t_p^{2k}(U)-t_p^{2n-2k}(U)\right)+\left(t_p^{2k}\left(\widetilde{M}\right)+t_p^{2n-2k}\left(\widetilde{M}\right)\right).
\label{poincareequalmain}
\end{align}
However, from Lemma \ref{coefdege} and \cite[Proposition 3.9]{Lol3}, we have:
$$\ell_+^{2*}(X)+\ell_-^{2*+1}(X)-\ell_+^{2k}(X)-t_p^{2k}(U)-t_p^{2n-2k}(U)=u_{2k}(X)+u_{2n-2k}(X).$$
Hence from (\ref{poincareequalmain}), we have: 
\begin{align*}
\ell_+^{2*}(X)+\ell_-^{2*+1}(X)-\eta(G)= &\left(u_{2k}(X)+u_{2n-2k}(X)\right)+\left(t_p^{2k}\left(\widetilde{M}\right)+t_p^{2n-2k}\left(\widetilde{M}\right)\right)+\\
&\left(\alpha_{2k}(X)+\alpha_{2n-2k}(X)\right)+\left(\beta_{2k}(X)+\beta_{2n-2k}(X)\right).
\end{align*}
So by Proposition \ref{mainlefsch}, we obtain the following lemma.
\begin{lemme}\label{poincareequalmain2}
We have for all $1\leq k\leq n-1$:
\begin{align*}
\ell_+^{2*+1}(X)+\ell_-^{2*}(X)&=\left(u_{2k}(X)+u_{2n-2k}(X)\right)+\left(t_p^{2k}\left(\widetilde{M}\right)+t_p^{2n-2k}\left(\widetilde{M}\right)\right)+\\
&\left(\alpha_{2k}(X)+\alpha_{2n-2k}(X)\right)+\left(\beta_{2k}(X)+\beta_{2n-2k}(X)\right).
\end{align*}
\end{lemme}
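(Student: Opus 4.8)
The plan is to extract the identity from the unimodularity of a Poincar\'e-dual pair and then convert it with the Lefschetz fixed point formula. The key object is the exceptional lattice $N_{2k,r}$: since $H^{2k}_f(\widetilde{M},\Z)\oplus H^{2n-2k}_f(\widetilde{M},\Z)$ is unimodular by Poincar\'e duality, relation (\ref{BasicLatticeTheory2}) gives $\discr N_{2k,r}=\discr N_{2k,r}^\bot$ (this is (\ref{discrN})). The whole argument consists in computing each of these two discriminants independently and reading off the resulting equality of $p$-adic valuations.

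First I would compute $\log_p\discr N_{2k,r}$ from the blow-up side. Using the exact sequence (\ref{inter}), the orthogonality $\Image g^{2k}\perp r^*(H^{2n-2k}(M,\Z))$ coming from the projection formula, the discriminant count of Corollary \ref{CorCohomology1ter}~(i), and the index formula (\ref{BasicLatticeTheory}), this gives (\ref{Nr}). On the other side I would compute $\log_p\discr N_{2k,r}^\bot$ from Smith theory, via the exact sequence (\ref{MainExactSequence}), the coefficient of resolution of Definition \ref{coeffreso}, the discriminant of $\pi_*(\cdots)_f$ recorded in (\ref{oddtor0}), and again (\ref{BasicLatticeTheory}); this gives (\ref{Nrbot}). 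Equating (\ref{Nr}) and (\ref{Nrbot}) through (\ref{discrN}) and dividing by $2$ yields a single identity linking $\ell_+^{2k}(X)$, the surjectivity coefficients, the resolution coefficients, and the torsion dimensions $t_p^\bullet(U)$, $t_p^\bullet(\widetilde{M})$.

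Next I would symmetrize: add $\ell_+^{2*}(X)+\ell_-^{2*+1}(X)-\ell_+^{2k}(X)$ to both sides of this identity and recognize, using Lemma \ref{coefdege} together with the Poincar\'e symmetry of the invariants from \cite[Proposition 3.9]{Lol3}, that $\ell_+^{2*}(X)+\ell_-^{2*+1}(X)-\ell_+^{2k}(X)-t_p^{2k}(U)-t_p^{2n-2k}(U)$ collapses to $u_{2k}(X)+u_{2n-2k}(X)$. Rearranging produces the (unlabelled) identity displayed just before the lemma, namely that $\ell_+^{2*}(X)+\ell_-^{2*+1}(X)-\eta(G)$ equals the right-hand side of the statement.

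Finally I would apply Proposition \ref{mainlefsch}. Because $G$ has only isolated fixed points, $\Fix G$ is a set of $\eta(G)$ points and hence $\chi(\Fix G)=\eta(G)$; the proposition then reads $\eta(G)=\ell_+^{2*}(X)+\ell_-^{2*+1}(X)-\ell_+^{2*+1}(X)-\ell_-^{2*}(X)$, so that $\ell_+^{2*}(X)+\ell_-^{2*+1}(X)-\eta(G)=\ell_+^{2*+1}(X)+\ell_-^{2*}(X)$. Substituting this into the identity from the previous step gives the lemma at once. I expect the genuine obstacle to lie entirely in the middle step: keeping the several $p$-torsion contributions of $U$ and $\widetilde{M}$ correctly aligned across the two discriminant computations, and making sure the partial sums of Lemma \ref{coefdege} fold into the global invariants $\ell_+^{2*}$ and $\ell_-^{2*+1}$ with the right duality relations. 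Once that bookkeeping is correct, the final Lefschetz substitution is immediate.
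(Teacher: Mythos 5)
Your proposal follows the paper's own argument step for step: the same discriminant computation of $N_{2k,r}$ and $N_{2k,r}^\bot$ via unimodularity, the same symmetrization using Lemma \ref{coefdege} and the duality of the invariants, and the same final substitution of the Lefschetz identity of Proposition \ref{mainlefsch} with $\chi(\Fix G)=\eta(G)$. The approach and all intermediate identities coincide with those in Section \ref{coeffeven}, so the proposal is correct.
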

In particular, we can state the following proposition.
\begin{prop}\label{ineforte2}
For all $1\leq k\leq n-1$, we have:
$$\alpha_{2k}(X)+\alpha_{2n-2k}(X)\leq \ell_+^{2*+1}(X)+\ell_-^{2*}(X).$$
\end{prop}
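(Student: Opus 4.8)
The plan is to read off the inequality directly from the exact identity already established in Lemma \ref{poincareequalmain2}. That lemma asserts, for each $1\leq k\leq n-1$, that
$$\ell_+^{2*+1}(X)+\ell_-^{2*}(X)=\left(u_{2k}(X)+u_{2n-2k}(X)\right)+\left(t_p^{2k}(\widetilde{M})+t_p^{2n-2k}(\widetilde{M})\right)+\left(\alpha_{2k}(X)+\alpha_{2n-2k}(X)\right)+\left(\beta_{2k}(X)+\beta_{2n-2k}(X)\right),$$
so the quantity $\alpha_{2k}(X)+\alpha_{2n-2k}(X)$ occurs as one of the four grouped summands on the right-hand side.

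First I would observe that every \emph{other} summand on the right-hand side is a non-negative integer. Indeed, the dimensions of degeneration $u_{2k}(X)$ and $u_{2n-2k}(X)$ are non-negative by their very definition, since they are introduced as positive integers measuring how far the spectral sequence is from degenerating at the second page; the quantities $t_p^{2k}(\widetilde{M})$ and $t_p^{2n-2k}(\widetilde{M})$ are dimensions of $\F$-vector spaces and hence non-negative; and the coefficients of resolution $\beta_{2k}(X)$ and $\beta_{2n-2k}(X)$ are, by Definition \ref{coeffreso}, dimensions of torsion modules, again non-negative.

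Discarding these non-negative terms from the identity then yields
$$\alpha_{2k}(X)+\alpha_{2n-2k}(X)\leq \ell_+^{2*+1}(X)+\ell_-^{2*}(X),$$
which is precisely the claim. There is essentially no obstacle remaining at this stage: all the genuine work — the Poincar\'e-duality discriminant computation for the exceptional lattice $N_{2k,r}$ and its orthogonal complement, combined with the Lefschetz fixed point formula of Proposition \ref{mainlefsch} — has already been carried out in establishing Lemma \ref{poincareequalmain2}. The only thing to check is the sign of each discarded term, which is immediate from the corresponding definition.
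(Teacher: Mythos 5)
Your proof is correct and is exactly the paper's argument: the proposition is stated as an immediate consequence of Lemma \ref{poincareequalmain2}, obtained by discarding the non-negative summands $u_{2k}(X)+u_{2n-2k}(X)$, $t_p^{2k}(\widetilde{M})+t_p^{2n-2k}(\widetilde{M})$ and $\beta_{2k}(X)+\beta_{2n-2k}(X)$ from the right-hand side of that identity. Your justification of the non-negativity of each discarded term (dimensions of degeneration, dimensions of $p$-torsion, coefficients of resolution) is the only point that needed checking, and it is handled correctly.
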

\subsection{Degeneration of the spectral sequence of equivariant cohomology}\label{degespec}
\begin{thm}\label{degenemain}
Let $X$ be a compact complex manifold of dimension $n$ and $G$ an automorphism group of prime order $p$ with only isolated fixed points. We assume that $H^*(X,\Z)$ is $p$-torsion-free and $\ell_p^1(X)=0$.  
Then, the following statements are equivalent:
\begin{itemize}
\item[(1)]
the spectral sequence of equivariant cohomology of $(X,G)$ with coefficients in $\F$ degenerates at the second page,
\item[(2)]
$\eta(G)=\ell_+^{2*}(X)+\ell_-^{2*+1}(X)$,
\item[(3)]
$\ell_+^{2*+1}(X)=\ell_-^{2*}(X)=0$,
\item[(4)]
the spectral sequence of equivariant cohomology of $(X,G)$ with coefficients in $\Z$ degenerates at the second page.
\end{itemize}
\end{thm}
\begin{proof}
By Corollary \ref{equiv1}, (2) $\Leftrightarrow$ (3), by Corollary \ref{degenerate1}, (1) $\Rightarrow$ (3) and by Proposition \ref{degeequ}, (1) $\Leftrightarrow$ (4). Hence, we only have to prove that (3) $\Rightarrow$ (4).

By Lemma \ref{poincareequalmain2}, $u_{2k}(X)=0$ for all $0\leq k\leq n-1$.
Since $\ell_+^{2*+1}(X)=0$ and $\ell_-^{2*}(X)=0$, $u_{2k+1}(X)=0$ for all $k\in\N$ by Lemma \ref{coefdege} (ii).
Hence by Lemma \ref{degelemmater}, $u_{2k}(X)=0$ for all $k\geq n+1$.
Because of Lemma \ref{degelemmabis}, it only remains to show that $u_{2n}=0$. The problem is similar to the one encounter in the proof of Lemma \ref{degelemmater}.
The only differentials that can still be non trivial are $d_t:H^0(G,H^{2n-1}(X,\Z))\rightarrow H^{t}(G,H^{2n-t}(X,\Z))$ at the page $t\geq 2$.
However, by \cite[Proposition 3.9]{Lol3}, the additional hypothesis $\ell_p^1(X)=0$ implies that $\ell_p^{2n-1}(X)=0$. Then applying Proposition \ref{equivarcoho} (i) and Theorem \ref{gene19} (v), we obtain that $H^0(G,H^{2n-1}(X,\Z))=0$.
\end{proof}
\begin{rmk}
I conjecture that the condition $\ell_p^1(X)=0$ can be removed.
\end{rmk}
\begin{rmk}\label{fixedpoints}
We can notice that statement (2) is necessarily wrong if $G$ does not have at least 2 fixed points. Indeed $\ell_+^0(X)=\ell_+^{2n}(X)=1$. Therefore, when $\eta(G)<2$, the spectral sequence of equivariant cohomology of $(X,G)$ with coefficients in $\Z$ or $\F$ cannot degenerate at the second page. Actually, it can be seen with a direct computation. We use the notation from (\ref{1fixed}). If the spectral sequence of equivariant cohomology of $(X,G)$ with coefficients in $\Z$ degenerates at the second page, we have $H^{2n}(X_G,\Z)=\Z\oplus(\Z/p\Z)^{\ell_-^{2*+1}(X)+\ell_+^{2*}(X)-1}$. Note that $\ell_-^{2*+1}(X)+\ell_+^{2*}(X)-1\geq1$. Moreover $H^{2n}(X_G,V_G,\Z)=\Z^{\eta(G)}$. 
Since $H^{2n}(V_G,\Z)=H^{2n}(U,\Z)=0$. The exact sequence (\ref{1fixed}) provides that:
$$\ell_-^{2*+1}(X)+\ell_+^{2*}(X)\leq \eta(G).$$
Hence: 
$$2\leq \eta(G).$$
The proof is similar if we consider the spectral sequence of equivariant cohomology with coefficients in $\F$.
\end{rmk}
\subsection{Main theorem}\label{MainThSec}
\begin{thm}\label{main2}
Let $X$ be a compact complex manifold of dimension $n$ such that $H^*(X,\Z)$ is $p$-torsion-free. Let $G$ be an automorphism group of prime order $p$ on $X$ with only isolated fixed points.  

Assume, in addition, one of the following statements:
\begin{itemize}
\item[(1)]
the spectral sequence of equivariant cohomology of $(X,G)$ with coefficients in $\F$ degenerates at the second page, or
\item[(2)]
$\eta(G)=\ell_+^{2*}(X)+\ell_-^{2*+1}(X)$ or,
\item[(3)]
$\ell_+^{2*+1}(X)=\ell_-^{2*}(X)=0$ or,
\item[(4)]
the spectral sequence of equivariant cohomology of $(X,G)$ with coefficients in $\Z$ degenerates at the second page.
\end{itemize}
Then:
\begin{itemize}
\item[(i)]
$\alpha_k(X)=0$ for all $1\leq k \leq 2n$,
\item[(ii)]
$H^{2k}(M,\Z)$ is $p$-torsion-free for all $0\leq k \leq n$,
\item[(iii)]
$t_p^{2k+1}(M)+t_p^{2n-2k+1}(M)=\eta(G)-\ell_+^{2k}(X)$, for all $1\leq k \leq n-1$.
 \end{itemize}
\end{thm}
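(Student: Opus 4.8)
The plan is to reduce all four hypotheses to condition (3) and then read off the three conclusions from the machinery of Sections \ref{torM}--\ref{coeffeven}, the only genuinely new input being a Poincaré duality comparison between $M$ and its smooth locus $U$. By Theorem \ref{degenemain} we have (1)$\Rightarrow$(3), (2)$\Leftrightarrow$(3), and (4)$\Leftrightarrow$(1)$\Rightarrow$(3) (the last equivalence via Proposition \ref{degeequ}), so it suffices to work under $\ell_+^{2*+1}(X)=\ell_-^{2*}(X)=0$. Under this hypothesis the left-hand side of Lemma \ref{poincareequalmain2} vanishes; since every summand on the right is a non-negative integer, I get for all $1\le k\le n-1$ the simultaneous vanishing
\[ u_{2k}(X)=t_p^{2k}(\widetilde M)=\alpha_{2k}(X)=\beta_{2k}(X)=0 \]
(and likewise in degree $2n-2k$). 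Feeding (3) into Lemma \ref{coefdege}(ii) rewrites it as $u_{2k+1}(X)+t_p^{2k+1}(U)=0$, whence $u_{2k+1}(X)=t_p^{2k+1}(U)=0$: the odd cohomology of $U$ is $p$-torsion free.

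Conclusion (i) is then immediate away from the top degree. For odd indices, Theorem \ref{oddmain} together with $\ell_+^{2k+1}(X)=0$ forces $\alpha_{2k+1}(X)+\alpha_{2n-2k-1}(X)=0$, hence each $\alpha_{2k+1}(X)=0$; the even indices $2\le 2k\le 2n-2$ were handled above. For $\alpha_{2n}(X)$ I would argue directly in top degree: by Corollary \ref{CorCohomology2}(v) $r^*\colon H^{2n}(M,\Z)\to H^{2n}(\widetilde M,\Z)=\Z$ is an isomorphism, so $H^{2n}_f(M,\Z)=\Z$, and $\pi_*$ is an isomorphism in top degree (the computation dual to the one giving $\alpha_0(X)=1$), so $\alpha_{2n}(X)=0$. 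Conclusion (ii) follows from the same torsion-free input on $\widetilde M$: Corollary \ref{CorCohomology2}(ii) embeds $H^{2k}(M,\Z)$ into $H^{2k}(\widetilde M,\Z)$ via $r^*$, and $t_p^{2k}(\widetilde M)=0$ for $1\le k\le n-1$ by the vanishing above, so $\tors_p H^{2k}(M,\Z)=0$; the cases $k=0,n$ are trivial (using Corollary \ref{CorCohomology2}(v) for $k=n$).

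Conclusion (iii) is the crux, and here the main obstacle is a duality step. I would first equate $\discr N_{2k,r}=\discr N_{2k,r}^\bot$ in (\ref{discrN}): under the vanishings above, (\ref{Nr}) and (\ref{Nrbot}) collapse to
\[ t_p^{2k}(U)+t_p^{2n-2k}(U)=\eta(G)-\ell_+^{2k}(X). \]
It then remains to convert the even torsion of $U$ into the odd torsion of $M$, for which I would invoke Poincaré--Lefschetz duality on the smooth oriented open $2n$-manifold $U=M\smallsetminus\Sing M$. Combining $\tors H^{j}(U,\Z)\cong\tors H_{j-1}(U,\Z)$ (universal coefficients), $H_{j-1}(U,\Z)\cong H^{2n-j+1}_c(U,\Z)$ (duality), and $H^{2n-j+1}_c(U,\Z)\cong H^{2n-j+1}(M,\Sing M,\Z)\cong H^{2n-j+1}(M,\Z)$ for $2n-j+1\ge2$ (compactness of $M$, finiteness of $\Sing M$), I obtain $t_p^{2n-2k}(U)=t_p^{2k+1}(M)$ and, replacing $k$ by $n-k$, $t_p^{2k}(U)=t_p^{2n-2k+1}(M)$; substituting into the displayed identity gives exactly (iii), and equivalently computes $d_p^k=\eta(G)-t_p^{2n-2k}(U)$ in the sense of Corollary \ref{CorCohomology2}. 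The obstacle is precisely this transport: $M$ is only a rational homology manifold, so integral duality is unavailable on $M$ directly, and one must run it on the genuine manifold $U$ and carry the answer back across the singular points. The naive alternative of computing $d_p^k$ as the rank of the restriction $H^{2k}(U,\Z)\to\bigoplus_{x\in\Sing M}H^{2k}(L_x,\Z)$ to the links $L_x$ is delicate, since this map receives independent contributions from the higher rows of the equivariant spectral sequence of $(V,G)$ and is not governed by the base class alone; the duality argument avoids this difficulty.
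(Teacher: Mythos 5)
Your proposal is correct. Parts (i) and (ii) follow the paper's own proof essentially step for step: the reduction of all four hypotheses to (3), the simultaneous vanishing $u_{2k}(X)=t_p^{2k}(\widetilde M)=\alpha_{2k}(X)=\beta_{2k}(X)=0$ read off from Lemma \ref{poincareequalmain2}, Theorem \ref{oddmain} for the odd coefficients, and the injection $H^{2k}(M,\Z)\hookrightarrow H^{2k}(\widetilde M,\Z)$ for (ii); you even make explicit the case $\alpha_{2n}(X)=0$, which the paper leaves tacit. Where you genuinely diverge is (iii). The paper stays inside its lattice machinery: it extracts $t_p^{2k+1}(M)=\eta(G)-d_p^k$ from the long exact sequence of the pair $(M,U)$ and then computes the sum $d_p^k+d_p^{n-k}=t_p^{2k}(U)+t_p^{2n-2k}(U)+2\ell_+^{2k}(X)$ by comparing the index of $N_{2k,r}\oplus N_{2k,r}^\bot$ with that of $r^*(H^{2k}(M,\Z))\oplus\Image g^{2k}$ and its mirror, before invoking Lemma \ref{coefdege}(i). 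You instead transport the even torsion of $U$ to the odd torsion of $M$ via Poincar\'e--Lefschetz duality on the oriented open manifold $U=V/G$ combined with $H^*_c(U,\Z)\cong H^*(M,\Sing M,\Z)\cong H^*(M,\Z)$ in degrees $\geq 2$; all the needed hypotheses (finite generation of $H_*(U)$, orientability, excision around the finite set $\Sing M$) hold here, so the argument is sound and correctly identifies the obstacle, namely that integral duality is unavailable on $M$ itself. Your route buys strictly more than the paper's: it produces the isomorphism $\tors H^{2k+1}(M,\Z)\cong\tors H^{2n-2k}(U,\Z)$, hence via Lemma \ref{coefdege}(i) the individual values $t_p^{2k+1}(M)=\sum_{i=0}^{n-k-1}\bigl(\ell_+^{2i}(X)+\ell_-^{2i+1}(X)\bigr)$ rather than only the paired sums of (iii); this is the mirror of the formula conjectured at the end of Section \ref{MainThSec}, and a direct check on a complex $3$-torus with $G=\{\pm 1\}$ (where $\pi_1(U)\cong\Z^6\rtimes\Z/2$ gives $\tors H^5(M,\Z)\cong\tors H_1(U,\Z)=(\Z/2)^7$) confirms your indexing. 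The only thing the duality argument leaves implicit is the quantity $d_p^k$ of Corollary \ref{CorCohomology2}, which, as you note, is recovered as $d_p^k=\eta(G)-t_p^{2n-2k}(U)$.
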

\subsubsection*{Outline of the proof}
We assume hypothesis (3), because of Theorem \ref{degenemain}, Proposition \ref{degeequ}, Corollaries~\ref{equiv1} and \ref{degenerate1}, it is enough to prove our result with this assumption (in particular, hypothesis (2) is also verified in this case). 

The statements (i) and (ii) will be easily obtained from Theorem \ref{oddmain} and Lemma \ref{poincareequalmain2}. The statement (iii) will be deduced from Corollary \ref{CorCohomology2} via the integers $d_p^k$. Corollary \ref{CorCohomology2} (iii) allows to relate $t_p^{2k+1}(M)$ to $d_p^k$. Moreover, the integer $d_p^k$ can computed using Corollary \ref{CorCohomology2} (iii) and our lattice knowledge on $H^*\left(\widetilde{M},\Z\right)$.

\begin{proof}[Proof of (i) and (ii)]
Because of hypothesis (3) and Theorem \ref{oddmain}, we have:
\begin{equation}
 \alpha_{2k+1}(X)=0,
 \label{oddalpha}
 \end{equation}
 for all $0\leq k \leq n-1$.

Moreover, Lemma \ref{poincareequalmain2} and hypothesis (3) provide:
\begin{equation}
u_{2k}(X)=t_{2k}\left(\widetilde{M}\right)=\alpha_{2k}(X)=\beta_{2k}(X)=0,
\label{lemmaequa}
\end{equation}
for all $1\leq k \leq n-1$. 
This provides (i) with (\ref{oddalpha}) and Remark \ref{alpha0}.

By Corollary \ref{CorCohomology2} (ii), we know that $t_{2k}(M)\leq t_{2k}\left(\widetilde{M}\right)$.
Hence we obtain (ii) from (\ref{lemmaequa}), ($H^{2n}(M,\Z)=\Z$ by Corollary \ref{CorCohomology2} (v)).
\end{proof}

\begin{proof}[Proof of (iii)]
By hypothesis (3) and Lemma \ref{coefdege} (ii), we know that $t_{p}^{2k+1}(U)=0$.
Moreover by (\ref{mainequa}) and Corollary \ref{CorCohomology2} (i), we have:
$$\xymatrix{
0\ar[r]&(\Z/p\Z)^{\eta(G)-d_p^k}\ar[r] & H^{2k+1}(M,\Z)\ar[r]&H^{2k+1}(U,\Z),}$$
where $0\leq d_p^k\leq \eta(G)$.
Hence:
\begin{equation}
t_p^{2k+1}(M)=\eta(G)-d_p^k. 
\label{dd}
\end{equation}
 However, $d_p^k+d_p^{n-k}$ can be computed.
Indeed by Corollary \ref{CorCohomology2} (ii), we have:
$$
\xymatrix@C30pt{
 0\ar[r] & H^{2k}(M,\Z)\oplus H^{2k}\left(\widetilde{M},U,\Z\right)\ar[r]^{\ \ \ \ \ \ \ \ \ \ r^*+g^{2k}} & H^{2k}\left(\widetilde{M},\Z\right)\ar[r]& (\Z/p\Z)^{d_p^k}\ar[r] & 0.}
$$
We obtain:
 \begin{equation}
 \frac{H^{2k}\left(\widetilde{M},\Z\right)\oplus H^{2n-2k}\left(\widetilde{M},\Z\right)}{r^*(H^{2k}(M,\Z))\oplus \Image g^{2k}\oplus r^*(H^{2n-2k}(M,\Z))\oplus \Image g^{2n-2k}}=(\Z/p\Z)^{d_p^k+d_p^{n-k}}.
 \label{res1}
 \end{equation}
 Moreover, we know from (\ref{Nrbot}), (\ref{discrN}), (\ref{lemmaequa}) and (\ref{BasicLatticeTheory}) that:
 \begin{equation}
 \frac{H^{2k}\left(\widetilde{M},\Z\right)\oplus H^{2n-2k}\left(\widetilde{M},\Z\right)}{N_{2k,r}\oplus N_{2k,r}^\bot}=(\Z/p\Z)^{2\ell_+^{2k}(X)}.
 \label{res2}
 \end{equation}
 By (\ref{Nrbot}), (\ref{oddtor0}) and (\ref{lemmaequa}), we have: 
 \begin{equation}
 N_{2k,r}^\bot=r^*(H^{2k}(M,\Z))\oplus r^*(H^{2n-2k}(M,\Z)).
 \label{res3}
 \end{equation}
 However, we have by (\ref{inter}) and (\ref{lemmaequa}):
 \begin{equation}
 \frac{N_{2k,r}}{\Image g^{2k}\oplus\Image g^{2n-2k}}=(\Z/p\Z)^{t_p^{2k}(U)+t_p^{2n-2k}(U)}.
 \label{res4}
 \end{equation}
 It follows from (\ref{res1}), (\ref{res2}), (\ref{res3}), (\ref{res4}), that:
 $$d_p^k+d_p^{n-k}=t_p^{2k}(U)+t_p^{2n-2k}(U)+2\ell_+^{2k}(X).$$
 Hence by (\ref{dd}):
 $$t_p^{2k+1}(M)+t_p^{2n-2k+1}(M)=2\eta(G)-(t_p^{2k}(U)+t_p^{2n-2k}(U)+2\ell_+^{2k}(X)).$$
 Applying Lemma \ref{coefdege} (i) with (\ref{lemmaequa}) and hypothesis (2), we obtain:
 $$t_p^{2k+1}(M)+t_p^{2n-2k+1}(M)=\eta(G)-\ell_+^{2k}(X).$$
\end{proof}

\begin{rmk}\label{alpha0}
Let $X$ be a connected topological space endowed with an automorphism group of prime order. By definition of $\pi_*$ in \cite{Transfers}, we always have $\alpha_0(X)=1$. 
Hence, if in addition $X$ is a compact connected orientable manifold of dimension $n$, we always have $\alpha_n(X)=0$ by Proposition \ref{vraisans19ter}.
\end{rmk}
\begin{rmk}
We could also have provided an expression for $t_p^{2k+1}(M)+t_p^{2n-2k+1}(M)$ in full generality without assumption on the spectral sequence using the coefficients of surjectivity, the coefficients of resolution and the dimensions of degeneration. However, it was very technical and the author has preferred not to bother the reader with such a computation.
\end{rmk}
It would be interesting to know how the torsion is shared between $H^{2k+1}(M,\Z)$ and $H^{2n-2k+1}(M,\Z)$.
\begin{conj}
We have $t_p^{2k+1}(M)=\sum_{j=0}^{k-1}\ell_+^{2j}(X)+\sum_{j=0}^{k-1}\ell_-^{2j+1}(X)$.
\end{conj}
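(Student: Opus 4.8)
The plan is to reduce the conjecture to an \emph{unsymmetrized} torsion count and then attack that count by localizing at the fixed points. Under the hypotheses of Theorem~\ref{main2} one has $u_{2k}(X)=0$ for all $1\le k\le n-1$ (this is part of~(\ref{lemmaequa})), so Lemma~\ref{coefdege}(i) gives
\begin{equation*}
t_p^{2k}(U)=\sum_{j=0}^{k-1}\ell_+^{2j}(X)+\sum_{j=0}^{k-1}\ell_-^{2j+1}(X).
\end{equation*}
Thus the conjecture is \emph{equivalent} to the clean identity $t_p^{2k+1}(M)=t_p^{2k}(U)$: the $p$-torsion of the odd cohomology of the quotient should coincide with the $p$-torsion of the even cohomology of its smooth locus.

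Next, recall from the proof of Theorem~\ref{main2} the relation $t_p^{2k+1}(M)=\eta(G)-d_p^k$ (equation~(\ref{dd})), where $d_p^k=\dim_{\F}\coker\big(j^*\colon H^{2k}(M,\Z)\to H^{2k}(U,\Z)\big)$ is the integer of Corollary~\ref{CorCohomology2}. Hence the conjecture is in turn equivalent to the exact evaluation $d_p^k=\eta(G)-t_p^{2k}(U)$. Since $H^{2k}(M,\Z)$ is $p$-torsion free (Theorem~\ref{main2}(ii)) while $j^*$ is injective, the subgroup $\tors_p H^{2k}(U,\Z)$ injects into $\coker(j^*)$, so the inequality $d_p^k\ge t_p^{2k}(U)$ holds \emph{for free}; the whole difficulty is to pin $d_p^k$ down on the nose.

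To compute $d_p^k$ I would localize the connecting homomorphism $\delta$. By excision $H^{2k+1}(M,U,\Z)=\bigoplus_{x\in\Sing M}H^{2k+1}(\C^n/G,(\C^n/G)^*,\Z)=(\Z/p\Z)^{\eta(G)}$, and by Propositions~\ref{equiU} and~\ref{exactU} the local connecting map $H^{2k}((\C^n/G)^*,\Z)\to H^{2k+1}(\C^n/G,(\C^n/G)^*,\Z)$ is an isomorphism $\Z/p\Z\xrightarrow{\sim}\Z/p\Z$. Therefore $d_p^k=\dim_{\F}\Image\delta$ equals the rank of the simultaneous restriction of $H^{2k}(U,\Z)$ to the punctured neighbourhoods of the $\eta(G)$ fixed points. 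Since $G$ acts freely on $V=X\setminus\Fix G$ with $U=V/G$, I would evaluate these restrictions through the Cartan--Leray spectral sequence $H^d(G,H^q(V,\Z))\Rightarrow H^{d+q}(U,\Z)$, feeding in Lemma~\ref{ellV2} (the invariants of $V$ agree with those of $X$ up to degree $2n-2$) together with Propositions~\ref{equivarcoho} and~\ref{torsioninvar}, and reading off the local links from the restriction to the equivariant spheres $S^{2n-1}/G$ around the fixed points, governed by the local $G$-representation.

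The hard part is the symmetry breaking. Theorem~\ref{main2} determines only the sum $d_p^k+d_p^{n-k}$, because the Poincar\'e pairing on $H^{2*}_f(\widetilde M,\Z)$ matches degree $2k$ with $2n-2k$ and the argument there passes entirely through discriminants of lattices, which are insensitive to the splitting. Isolating $d_p^k$ from $d_p^{n-k}$ forces one to control the actual connecting maps rather than the lattice data alone, equivalently to track how the top-dimensional link classes of $H^{2n-1}(V,\Z)$ propagate through the (non-degenerate!) differentials of the Cartan--Leray sequence of $V\to U$. I expect this to be the genuine obstacle: beyond the automatic torsion contribution $t_p^{2k}(U)$, the conjecture demands that exactly $\eta(G)-2\,t_p^{2k}(U)$ further \emph{free} classes of $H^{2k}(U,\Z)$ become $p$-divisible modulo the image of $H^{2k}(M,\Z)$, and keeping this count non-negative and compatible with the free lower bound $d_p^k\ge t_p^{2k}(U)$ in the high degrees $k$ near $n-1$ (where $t_p^{2k}(U)$ approaches $\eta(G)$ and the $\ell_-^{2j+1}$ terms dominate) is precisely the delicate point that has so far resisted proof.
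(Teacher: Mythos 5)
This statement is a \emph{conjecture} in the paper: no proof is given, so there is nothing of the author's to compare your attempt against. Your proposal is, by your own admission, not a proof either; what it does contain are two reductions, and both are correct. Under the hypotheses of Theorem~\ref{main2} one has $u_{2k}(X)=0$ by~(\ref{lemmaequa}), so Lemma~\ref{coefdege}(i) identifies the conjectured right-hand side with $t_p^{2k}(U)$, and~(\ref{dd}) converts the conjecture into $d_p^k=\eta(G)-t_p^{2k}(U)$. Your observation that $\tors_p H^{2k}(U,\Z)$ injects into $\coker(j^*)=(\Z/p\Z)^{d_p^k}$ (because $j^*$ is injective by Corollary~\ref{CorCohomology2}(i) and $H^{2k}(M,\Z)$ is $p$-torsion free by Theorem~\ref{main2}(ii)) is also sound and yields the unconditional bound $d_p^k\ge t_p^{2k}(U)$.

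You should push that bound one step further, because it does more than locate ``the delicate point'': combined with~(\ref{dd}) it gives $t_p^{2k+1}(M)\le \eta(G)-t_p^{2k}(U)$, so the conjectured equality $t_p^{2k+1}(M)=t_p^{2k}(U)$ would force $2\,t_p^{2k}(U)\le \eta(G)$, and this fails for $k>n/2$ in the paper's own examples. For $X=S^{[2]}$ with a symplectic automorphism of order $5$ one has $n=4$, $\eta(G)=14$, and $t_p^{6}(U)=\ell_+^0+\ell_+^2+\ell_+^4=1+3+6=10>7$ (Remark~\ref{torsion2}); hence $t_p^{7}(M)\le 4$, whereas the conjecture predicts $t_p^7(M)=10$. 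Since Theorem~\ref{main2}(iii) makes the conjectural statements at $k$ and at $n-k$ equivalent, the predicted value $t_p^3(M)=1$ at $k=1$ fails as well: one gets $t_p^3(M)\ge 7$. The data are instead consistent with the reversed indexing $t_p^{2k+1}(M)=\sum_{j=0}^{n-k-1}\bigl(\ell_+^{2j}(X)+\ell_-^{2j+1}(X)\bigr)=t_p^{2n-2k}(U)$, i.e.\ the torsion should accumulate in the \emph{low} odd degrees; this matches your own heuristic that for large $k$ the group $H^{2k}(U,\Z)$ carries too much $p$-torsion for a large part of $H^{2k+1}(M,U,\Z)=(\Z/p\Z)^{\eta(G)}$ to survive into $H^{2k+1}(M,\Z)$. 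So, assuming Lemma~\ref{coefdege} and~(\ref{dd}), the correct target of any proof attempt is the reversed formula, not the one stated: as written the conjecture is not merely hard but false.
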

\section{Examples of applications}\label{ExSe}
\subsection{Quotients of surfaces by automorphism groups of prime order}\label{K3Sec}
\begin{prop}\label{CorK3}
Let $X$ be a surface endowed with the action of an automorphism group $G$ of prime order $p$. 
We assume that $H^*(X,\F)$ is concentrated in even degrees and that $\Fix G$ is finite and non-empty. We denote $\eta(G):=\#\Fix G$.
Then:
\begin{itemize}
\item[(i)]
$\alpha_2(X)=0$, 
\item[(ii)]
$H^2(X/G,\Z)$ and $H^4(X/G,\Z)$  are torsion-free and $H^3(X/G,\Z)=\Z/p\Z$.
\item[(iii)]
$\discr H^2(X/G,\Z)=p^{\eta(G)-2}$.
\end{itemize}
\end{prop}
\begin{proof}
By \cite[Proposition 4.5]{SmithTh}, we know that the spectral sequence of equivariant cohomology of $(X,G)$ degenerates at the second page.
Then, Theorem \ref{main2} provides our result with $H^3(X/G,\Z)=(\Z/p\Z)^{\frac{\eta(G)-\ell_+^2(X)}{2}}$. However, by Theorem \ref{degenemain}, we have: 
\begin{equation}
\eta(G)=\ell_+^2(X)+2\ \text{and}\ \ell_-^2(X)=0.
\label{etasurface}
\end{equation}
Hence $H^3(X/G,\Z)=\Z/p\Z$.
Moreover, we obtain (iii) from (i), (\ref{etasurface}) and Proposition \ref{vraisans19}.
\end{proof}
\begin{rmk}\label{b2surface}
Note that by the generalized Hurwitz formula (Remark \ref{geneHurwitz}), we have
\[b_2(X/G)=\eta(G)-2+\frac{b_2(X)+2-\eta(G)}{p}.\]
\end{rmk}
In particular the previous proposition can be applied to a K3 surface. Let $L_{17}$ be the lattice $$L_{17}=\begin{pmatrix} -2 & 1 & 0 & 1 \\ 1 & -2 & 0 & 0 \\ 0 & 0 & -2 & 1 \\ 1 & 0 & 1 & -4\end{pmatrix}.$$
\begin{cor}
Let $X$ be a K3 surface endowed with the action of an automorphism group $G$ of prime order $p$. 
Assume that $\Fix G$ is finite and non-empty. Then for each $p$, the lattice $H^2(X/G,\Z)$ is given by:
\begin{center}
\begin{tabular}{|c|c|c|}
\hline
$p$ & $(H^{2}(X/G,\Z),\cdot)$& $\#\Sing X/G$\\
\hline
 2 & $E_{8}(-1)\oplus U(2)^3$&8\\
\hline
 3 & $U(3)\oplus U^{2}\oplus A_{2}^{2}$&6\\
\hline
 5 & $U(5)\oplus U^{2} $&4\\
\hline
 7 & $U\oplus\begin{pmatrix} 4 & -3 \\ -3 & 4\end{pmatrix}$&3\\
\hline
\multicolumn{3}{c}{Symplectic quotients}
\end{tabular}
\qquad
\begin{tabular}{|c|c|c|}
\hline
$p$ & $(H^{2}(X/G,\Z),\cdot)$&$\#\Sing X/G$\\
\hline
 3 & $U\oplus E_{6}$&3\\
\hline
 5 & $\begin{pmatrix} 2 & 5 \\ 5 & 10\end{pmatrix}\oplus A_4$&4\\
\hline
 7 & $U\oplus \begin{pmatrix} -4 & 3 \\ 3 & -4\end{pmatrix}$&3\\
\hline
 11 & $U$&2\\
\hline
 17 & $U(17)\oplus L_{17}^{\vee}(17)$&7\\
\hline
 19 & $U(19)\oplus\begin{pmatrix} -10 & 9 \\ 9 & -10\end{pmatrix}$&5\\
\hline
\multicolumn{3}{c}{Non-symplectic quotients}
\end{tabular}
\end{center}
\end{cor}
\begin{proof}
The automorphisms of prime order on a K3 surface were classified in \cite{Sarti} and \cite{Sarti2}. For all possible automorphism group $G$, the invariant lattice $H^2(X,\Z)^G$ is computed. Applying Proposition~\ref{CorK3} (i) and \cite[Proposition 3.5]{Lol3}, we obtain the previous tables (the case of the symplectic involution have already been studied in \cite[Proposition 1.1]{Lol2}). 
\end{proof}
For instance, Proposition \ref{CorK3} can also be applied to hypersurfaces in $\Pj^3$ (see Proposition \ref{cohcomplete}). We propose an example.
\begin{ex}
Let $X$ be the hypersurface of dimension $2$ and degree $p$ with equation:
$$x_0^p+x_1^p+x_2^p+x_3^p=0.$$
Let $g$ be the automorphism on $X$ induced by the following automorphism on $\mathbb{P}^{3}$:
$$\xymatrix@R0pt{ \Pj^{2n+1}\ar[r]&\Pj^{2n+1}\\
(a_0:a_1:a_2:a_3)\ar@{|->}[r]&(a_0:\xi_pa_1:\xi_p^la_2:a_3),
}$$
with $\xi_p$ a $p^\mathrm{th}$ root of the unity and $l\not\equiv 1$ or $0$ mod $p$. We denote $G=\left\langle g\right\rangle$.
Then:
\begin{itemize}
\item[(i)]
$\alpha_2(X)=0$, 
\item[(ii)]
$H^2(X/G,\Z)$ and $H^4(X/G,\Z)$  are torsion-free and $H^3(X/G,\Z)=\Z/p\Z$.
\item[(iii)]
$b_2(X/G)=p^2-3p+3$.
\item[(iv)]
$\discr H^2(X/G,\Z)=p^{p-2}$.
\end{itemize}
\end{ex}
\begin{proof}
Statements (i) and (ii) are direct consequence of Proposition \ref{CorK3}.
The fixed points of $G$ are given by $X\cap V(x_1,x_2)$ which consists in $p$ isolated points. The point (iv) follows from Proposition \ref{CorK3}.
Finally, by Remark~\ref{b2surface}, we have $b_2(X/G)=\eta(G)-2+\frac{b_2(X)+2-\eta(G)}{p}$ and by Riemann--Roch theorem (see Remark~\ref{rr}):
$$b_2(X)=p^3-4p^2+6p-2.$$ We obtain statement (iii). 
\end{proof}

\subsection{An example with a group of order higher than $19$}
As an application of Theorem \ref{gene19}, we investigate an example when the order of the group is higher than~19. 
A hyperk\"ahler manifold equivalent by deformation to a Hilbert scheme of $m$ points on a K3 surface will be called a hyperk\"ahler manifold of \emph{$K3^{[m]}$-type}.
\begin{prop}
Let $X$ be a hyperk\"ahler manifold of $K3^{[2]}$-type endowed with an automorphism group of order 23 $($it exists by \cite[Theorem 1.1]{23}$)$. Then:
\begin{itemize}
\item[(i)]
the spectral sequences of equivariant cohomology of $(X,G)$ with coefficients in $\Z$ and in $\mathbb{F}_{23}$ degenerate at the second page;
\item[(ii)]
$\Fix G=\left\{pt_1,pt_2\right\}$ or $\Fix G=R$, 
where $pt_1$ and $pt_2$ are two isolated points and $R$ is a rational curve.
\item[(iii)]
$\alpha_{k}(X)=0$ for all $1\leq k \leq 8$;
\item[(iv)]
$\rk H^2(X/G,\Z)=1$, $\rk H^4(X/G,\Z)=12$ and $\chi(X/G)=16$;
\item[(v)]
$H^{*}_f(X/G,\Z)$ is unimodular.
\end{itemize}
\end{prop}
\begin{proof}
The second cohomology group $H^2(X,\Z)$ endowed with the Beauville--Bogomolov form $B_X$ is a lattice of discriminant 2. 
By \cite[Corollary 5.4]{23}, we have $H^2(X,\Z)^G=\Z D$ with $B_X(D,D)=46$.
Hence by Theorem \ref{gene19} (v), $\ell_{23}^2(X)+\ell_+^2(X)=1$. 
It follows by Proposition \ref{Mongprop} and (\ref{BasicLatticeTheory}) that: 
\begin{equation}
\ell_{23}^2(X)=1\ \text{and}\ \ell_+^2(X)=0.
\label{ell2}
\end{equation}
Therefore, by Theorem \ref{gene19} (v): 
\begin{equation}
\ell_-^2(X)=0.
\label{ell-23}
\end{equation}

Since $\Sym^2 N_p=N_p^{\frac{p+1}{2}}$ for all prime number $p$, we obtain by \cite[Proposition 6.6]{SmithTh} that:
\begin{equation}
\ell_+^4(X)=\ell_-^4(X)=0\ \text{and}\ \ell_{23}^4(X)=12.
\label{ell4}
\end{equation}
Moreover by \cite[Proposition 3.9]{Lol3}:
\begin{equation}
\ell_+^6(X)=\ell_-^6(X)=0\ \text{and}\ \ell_{23}^6(X)=1.
\label{ell6}
\end{equation}
We recall that the cohomology of $X$ is torsion-free by \cite[Theorem 2.2]{Totaro}.
It follows from Propositions~\ref{Lemma3.1} and~\ref{equivarcoho} that the second page of the spectral sequence of equivariant cohomology of $(X,G)$ (with coefficients in $\Z$ or $\mathbb{F}_{23}$) is "almost trivial": \emph{i.e.} only the terms $E^{0,2k}_2$ and $E^{2k,0}_2$ are not trivial for all $k\in\mathbb{N}$. Hence, this spectral sequence necessarily degenerates at the second page. This prove (i).

Then, it follows from Proposition \ref{BNSFormula}, (\ref{ell2}), (\ref{ell-23}), (\ref{ell4}) and (\ref{ell6}) that:
$$h^*(\Fix G,\mathbb{F}_{23})=2.$$
By the universal coefficient theorem (see (\ref{rapporteur0})), we have:
$$
h^*(\Fix G,\mathbb{F}_{23})=2t_{23}^*(\Fix G)+\rk H^*_f(\Fix G,\Z),
$$
where $t_{23}^*(\Fix G)=\dim_{\mathbb{F}_{23}}\tors_{23} H^*(\Fix G,\Z)$.
So:
\begin{equation}
2t_{23}^*(\Fix G)+\rk H^*_f(\Fix G,\Z)=2.
\label{fix23}
\end{equation}
Moreover, by Proposition \ref{mainlefsch} and (\ref{ell2}), (\ref{ell-23}), (\ref{ell4}), (\ref{ell6}), we have:
\begin{equation}
\chi(\Fix G)=2.
\label{fix23bis}
\end{equation}
If we add (\ref{fix23}) and (\ref{fix23bis}), we obtain:
\begin{equation}
\dim_{\mathbb{F}_{23}}\tors_{23} H^*(\Fix G,\Z)+\rk H^{2*}_f(\Fix G,\Z)=2.
\label{fix23ter}
\end{equation}
In particular $\Fix G$ cannot contain any surface because if $S$ is a K\"ahler surface then:
$$\rk H^{2*}_f(S,\Z)\geq 3.$$
If $\Fix G$ contains a curve then it is a rational curve. 
Indeed let $C$ be a curve, then $\rk H^{2*}_f(C,\Z)=2$.
Hence, we obtain by (\ref{fix23bis}) that $H^1(C,\Z)=0$.
Moreover in this case, by (\ref{fix23ter}),
$\Fix G$ cannot have any other connected component.
If $\Fix G$ contains only isolated points, then (\ref{fix23bis}) provides:
$$\#\Fix G=2;$$
that is $\Fix G$ contains only two isolated points. It proves (ii).

Since the odd cohomology of $X$ is trivial, we only need to show the statement (iii) for $k$ even. 
By (\ref{ell2}), (\ref{ell4}), (\ref{ell6}) and Proposition \ref{vraisans19ter}, we obtain that $\alpha_2(X)=\alpha_4(X)=\alpha_6(X)=0$.
Moreover, by Remark~\ref{alpha0}, $\alpha_0(X)=1$ and $\alpha_8(X)=0$. 

The statement (iv) is a direct consequence of (\ref{ell2}), (\ref{ell4}), (\ref{ell6}) and Theorem \ref{gene19} (v) because we have 
$\rk H^k(X/G,\Z)=\rk H^k(X,\Z)^G$, for all $0\leq k\leq 8$. 

We have found with statement (iii) that: $$\frac{1}{23}\pi_*\left(H^0(X,\Z)\right)\bigoplus_{k=1}^{8}\pi_*\left(H^k(X,\Z)\right)_f=H^*_f(X/G,\Z).$$
Hence statement (v) follows from (\ref{ell2}), (\ref{ell4}) and Proposition \ref{vraisans19}.
\end{proof}
\subsection{Examples of degenerations at the second page of equivariant cohomology spectral sequences}\label{K32dege}
Let $S$ be a K3 surface. We denote by $S^{[m]}$ the Hilbert scheme of $m$ points on $S$. We recall that an automorphism group $G$ on $S^{[m]}$ is said to be \emph{natural} if it is induced by an automorphism group on the K3 surface $S$. Let $X$ be a hyperk\"ahler manifold of \emph{$K3^{[m]}$-type} (equivalent by deformation to a Hilbert scheme of $m$ points on a K3 surface). A pair $(X,G)$ is said to be \emph{standard} if it is deformation equivalent to a pair $(S^{[m]},G)$ with $G$ a natural group.
\begin{cor}\label{degecoro}
Let $X$ be a hyperk\"ahler manifold of $K3^{[m]}$-type and $G$ an automorphism group of prime order $3\leq p$ such that $(X,G)$ is a standard pair. Assume that $G$ has only isolated fixed points, then the spectral sequence of equivariant cohomology with coefficients in $\F$ degenerates at the second page.
\end{cor}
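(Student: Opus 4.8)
The plan is to reduce, via deformation invariance, to a model pair and then invoke the equivalences of Theorem \ref{degenemain}. Since $(X,G)$ is standard it is deformation equivalent to $(S^{[m]},G)$ for a K3 surface $S$ and a natural group $G$; as the $\Z[G]$-module structure of $H^*(-,\Z)$ is a deformation invariant, all the Boissière--Nieper-Wisskirchen--Sarti invariants of $X$ agree with those of $S^{[m]}$, so I may assume $X=S^{[m]}$. For $K3^{[m]}$-type manifolds $H^*(X,\Z)$ is torsion free and concentrated in even degrees; in particular $H^1(X,\Z)=0$, whence $\ell_p^1(X)=0$, and the vanishing of the odd cohomology gives $\ell_+^{2*+1}(X)=0$. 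Hence, by Theorem \ref{degenemain}(iii), it suffices to establish condition (3), and for this only the even part $\ell_-^{2*}(X)=0$ remains to be shown.

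Next I would reformulate $\ell_-^{2*}(X)=0$ at the level of $\F[G]$-modules. Because $H^{2k}(X,\Z)$ is a lattice, Theorem \ref{gene19}(iv) gives $\ell_q^{2k}(X)=0$ for $2\le q\le p-2$, so $H^{2*}(X,\F)$ is a direct sum of copies of $N_1$, $N_{p-1}$ and $N_p$; the condition $\ell_-^{2*}(X)=\ell_{p-1}^{2*}(X)=0$ is exactly the statement that $H^{2*}(X,\F)$ contains no $N_{p-1}$, i.e. that its non-trivial part is free (projective). For the base K3 surface this already holds: the spectral sequence of $(S,g)$ degenerates at the second page by \cite[Proposition 4.5]{SmithTh}, so Theorem \ref{degenemain}(ii) yields $\ell_-^{2*}(S)=0$, and together with torsion-freeness and the vanishing of odd cohomology this says $H^*(S,\F)\simeq N_1^{\oplus a}\oplus N_p^{\oplus b}$.

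I would then propagate this property from $S$ to $S^{[m]}$ using the $G$-equivariant Göttsche--Soergel/Nakajima description: the Nakajima correspondences are geometric, hence equivariant for natural automorphisms, so $H^*(S^{[m]},\F)$ is built from $H^*(S,\F)$ by tensor products and symmetric powers of degree-shifted copies, and this holds with $\F$-coefficients because all the cohomology groups involved are torsion free. The required closure statement is representation-theoretic: the class of $\F[G]$-modules of the form (free) $\oplus$ (trivial) is stable under $\otimes$ (tensoring with a free module yields a free module, via $N_p\otimes M\simeq N_p^{\oplus\dim M}$) and under $\Sym^k$ (a symmetric power of a free $\F[\Z/p\Z]$-module is again a sum of free and trivial modules). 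Granting this, $H^{2*}(S^{[m]},\F)$ has no $N_{p-1}$ summand, which is condition (3); combined with $\ell_p^1(X)=0$, Theorem \ref{degenemain}(iii) gives the degeneration with $\F$-coefficients.

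The main obstacle is the input of the previous paragraph, on two counts. First, one must promote the Göttsche--Soergel decomposition to an honest isomorphism of $\F[G]$-modules, rather than merely of graded vector spaces or of $\Q[G]$-modules; this requires the integral/mod $p$ equivariant refinement and uses the torsion-freeness of $H^*(S^{[m]},\Z)$. Second, the closure of (free) $\oplus$ (trivial) modules under symmetric powers in characteristic $p$ is not formal — indeed Tate cohomology cannot even distinguish $N_1$ from $N_{p-1}$, since $H^i(G,N_1)=H^i(G,N_{p-1})=\F$ for all $i$ by Lemma \ref{HGNq} — and rests on the explicit decomposition of $\Sym^k$ of the regular representation of $\Z/p\Z$ (of Almkvist--Fossum type); a direct verification is available since only $3\le p\le 19$ and bounded $m$ occur. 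Once these two points are secured the rest is formal.
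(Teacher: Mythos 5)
Your plan coincides with the paper's proof: reduce by deformation invariance to $(S^{[m]},G)$ with $G$ natural, show $\ell_-^{2*}(S^{[m]})=0$ by realizing $H^*(S^{[m]},\F)$ as an $\F[G]$-module direct sum of symmetric powers of $H^2(S,\F)=N_1^{\oplus a}\oplus N_p^{\oplus b}$, and conclude via Theorem \ref{degenemain}~(iii) using $\ell_p^1(X)=\ell_+^{2*+1}(X)=0$. The first obstacle you flag (promoting the decomposition to an honest $\F[G]$-module isomorphism) is exactly what the paper supplies, via the Qin--Wang integral basis of $H^*(S^{[m]},\Z)$, the equivariance of the operators $\mathfrak{a}_{-q}(\alpha)$ (Lemma \ref{actionbasis}) and Totaro's torsion-freeness, while the closure of $N_1\oplus N_p$-modules under $\Sym^j$ is asserted there at essentially the same level of detail as in your proposal (citing \cite[Lemma 6.14]{SmithTh} for $j=2$).
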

\begin{rmk}
When $m=2$, this theorem is a particular case of \cite[Theorem 1.1]{SmithTh}.
\end{rmk}
\begin{rmk}\label{corroHilbert}
Especially, when $G$ verifies the hypothesis of Corollary \ref{degecoro}, we can apply Theorem \ref{main2}. So $\alpha_k(X)=0$, for all $1\leq k\leq 4m$ and $H^{2*}(X/G,\Z)$ is torsion-free.
\end{rmk}
To prove this corollary we need to recall the integral basis of $H^*(S^{[m]},\Z)$ constructed in \cite{Wang}.
Let 
$$Q^{[l+q,l]}=\left\{\left.(\xi,x,\eta)\in S^{[l+q]}\times S\times S^{[l]}\right|\xi\supset \eta,\ \Supp(I_{\eta}/I_{\xi})=\left\{x\right\}\right\},$$
with $l\geq 0$ and $q>0$. We set 
$$\mathfrak{a}_{-q}(\alpha)(A)=\widetilde{p}_{1*}\left(\left[Q^{[l+q,l]}\right]\cdot\widetilde{\rho}^{\ *}\alpha\cdot \widetilde{p}_{2}^{\ *}A\right),$$
for $A\in H^{*}(S^{[l]})$ and $\alpha\in H^{*}(S)$ , where $\widetilde{p}_{1}$, $\widetilde{\rho}$, $\widetilde{p}_{2}$ are the projections from $S^{[l+q]}\times S\times S^{[l]}$ to $S^{[l+q]}$, $S$, $S^{[l]}$ respectively. We also set $\left|0\right\rangle\in H^{*}(S^{[0]},\Z))$ the unit and $1\in H^{0}(S,\Z)$, $x\in H^{4}(S,\Z)$ the generators. For a partition $\lambda=(1^{\lambda_1},2^{\lambda_2},...)$, with $\lambda_r$ the number of parts equal to $r$, we consider $\left|\lambda\right|=\sum_{r\geq1}r\lambda_r$, $\mathfrak{z}_{\lambda}=\prod_{r\geq1}r^{\lambda_r}\lambda_r!$ and
$$\mathfrak{a}_{-\lambda}(\alpha)=\prod_{r\geq1}\mathfrak{a}_{-r}(\alpha)^{\lambda_r}.$$
Let $(\alpha_t)_{t\in\left\{1,...,22\right\}}$ be an integral basis of $H^2(S,\Z)$.
We have by \cite[Theorem 1.1 and Remark 5.6]{Wang} that the classes 
\begin{equation}
\frac{1}{\mathfrak{z}_{\lambda}}\mathfrak{a}_{-\lambda}(1)\mathfrak{a}_{-\mu}(x)\mathfrak{m}_{\nu^1,\alpha_1}\cdots\mathfrak{m}_{\nu^{22},\alpha_{22}}\left|0\right\rangle,\ \left|\lambda\right|+\left|\mu\right|+\sum_{i=1}^{22}\left|\nu^i\right|=m
\label{basis}
\end{equation}
form an integral basis of $H^*(S^{[m]},\Z)$, where $\mathfrak{m}_{\nu^i,\alpha_i}$ is a polynomial of the $\mathfrak{a}_{-j}(\alpha_i)$, $j>0$ with rational coefficients (for instance $\mathfrak{m}_{1,1,\alpha}=\frac{1}{2}(\mathfrak{a}_{-1}(\alpha)^2-\mathfrak{a}_{-2}(\alpha))$; see \cite[Section 4]{Wang} or \cite[Section 1]{Kapfer2} for the precise construction).

Let $\phi$ be an automorphism on $S$. For simplicity, we also denote by $\phi$ the induced automorphism on $S^{[d]}$ for all $d>0$ and on $S^{[l+q]}\times S\times S^{[l]}$.
\begin{lemme}\label{actionbasis}
We have $\phi^*(\mathfrak{a}_{-q}(\alpha)(A))=\mathfrak{a}_{-q}(\phi^*(\alpha))(\phi^*(A))$.
\end{lemme}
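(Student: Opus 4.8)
The plan is to reduce everything to the $\phi$-invariance of the incidence variety $Q^{[l+q,l]}$ together with the naturality of the three projections under $\phi$. Write $\Phi$ for the product automorphism (the map the statement already denotes by $\phi$) on $P:=S^{[l+q]}\times S\times S^{[l]}$. Since $\phi$ is an automorphism of $S$, it induces automorphisms on every $S^{[d]}$ and hence on $P$, and by construction the three projections intertwine these actions: $\widetilde{p}_1\circ\Phi=\phi\circ\widetilde{p}_1$, $\widetilde{\rho}\circ\Phi=\phi\circ\widetilde{\rho}$ and $\widetilde{p}_2\circ\Phi=\phi\circ\widetilde{p}_2$. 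The key geometric input is that $\Phi$ preserves $Q^{[l+q,l]}$: the defining conditions $\xi\supset\eta$ and $\Supp(I_\eta/I_\xi)=\{x\}$ are intrinsic and are respected by $\phi$ (an automorphism sends the ideal sheaf of a subscheme to the ideal sheaf of its image, and supports to supports), so $\Phi(Q^{[l+q,l]})=Q^{[l+q,l]}$. As $\Phi$ is a biholomorphism preserving this subvariety, it fixes its class: $\Phi^*[Q^{[l+q,l]}]=[Q^{[l+q,l]}]$.

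Next I would move $\phi^*$ through the pushforward $\widetilde{p}_{1*}$. Because $\phi$ and $\Phi$ are isomorphisms, functoriality of the proper pushforward applied to $\widetilde{p}_1\circ\Phi=\phi\circ\widetilde{p}_1$ gives $\widetilde{p}_{1*}\circ\Phi_*=\phi_*\circ\widetilde{p}_{1*}$, and since $\phi^*=(\phi_*)^{-1}$ and $\Phi^*=(\Phi_*)^{-1}$ for isomorphisms, this rearranges to $\phi^*\circ\widetilde{p}_{1*}=\widetilde{p}_{1*}\circ\Phi^*$. (Equivalently, this is base change for the Cartesian square formed by $\widetilde{p}_1$ and the isomorphisms $\phi$ and $\Phi$.) Applying this to the definition of $\mathfrak{a}_{-q}(\alpha)(A)$ yields
\begin{equation*}
\phi^*\bigl(\mathfrak{a}_{-q}(\alpha)(A)\bigr)=\widetilde{p}_{1*}\,\Phi^*\!\left(\left[Q^{[l+q,l]}\right]\cdot\widetilde{\rho}^*\alpha\cdot\widetilde{p}_2^*A\right).
\end{equation*}

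Finally, since $\Phi^*$ is a ring homomorphism it distributes over the cup product, and each factor is identified by the naturality relations above: $\Phi^*[Q^{[l+q,l]}]=[Q^{[l+q,l]}]$, $\Phi^*\widetilde{\rho}^*\alpha=\widetilde{\rho}^*\phi^*\alpha$, and $\Phi^*\widetilde{p}_2^*A=\widetilde{p}_2^*\phi^*A$. Substituting these gives $\widetilde{p}_{1*}\bigl([Q^{[l+q,l]}]\cdot\widetilde{\rho}^*\phi^*\alpha\cdot\widetilde{p}_2^*\phi^*A\bigr)=\mathfrak{a}_{-q}(\phi^*\alpha)(\phi^*A)$, which is exactly the claim. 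The only genuinely nontrivial step is the $\Phi$-invariance of $Q^{[l+q,l]}$; everything else is formal naturality of pullback, proper pushforward, and the cup product under the automorphism $\phi$.
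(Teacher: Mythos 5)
Your argument is correct and follows essentially the same route as the paper's proof: the paper likewise uses $\phi^*=(\phi^{-1})_*$ to commute $\phi^*$ past $\widetilde{p}_{1*}$, the invariance $\phi^*[Q^{[l+q,l]}]=[Q^{[l+q,l]}]$, and the naturality of the pullbacks along the projections. You simply spell out in more detail the intertwining relations and the geometric reason for the invariance of the incidence class, which the paper takes as read.
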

\begin{proof}
Since $\phi$ is an automorphism on a compact complex manifold, we have $\phi^*=(\phi^{-1})_*$. Moreover, $\phi^*\left[Q^{[l+q,l]}\right]=\left[Q^{[l+q,l]}\right]$.
Hence:
\begin{align*}
\phi^*\widetilde{p}_{1*}\left(\left[Q^{[l+q,l]}\right]\cdot\widetilde{\rho}^{\ *}\alpha\cdot \widetilde{p}_{2}^{\ *}A\right)&=
\widetilde{p}_{1*}\left(\left[Q^{[l+q,l]}\right]\cdot \phi^*\widetilde{\rho}^{\ *}\alpha\cdot \phi^*\widetilde{p}_{2}^{\ *}A\right)\\
&=\widetilde{p}_{1*}\left(\left[Q^{[l+q,l]}\right]\cdot \widetilde{\rho}^{\ *}\phi^*\alpha\cdot \widetilde{p}_{2}^{\ *}\phi^*A\right).
\end{align*}
\end{proof}
Now, we are ready to prove Corollary \ref{degecoro}.
\begin{proof}[Proof of Corollary \ref{degecoro}]
Let $(X,G)$ be a standard pair. We are going to prove that $\ell_+^{2*+1}(X)=0$ and apply Theorem \ref{degenemain} (iii). The $\Z[G]$-module structure of $H^*(X,\Z)$ is invariant under deformation. Hence the $\ell_+^k$ are also independent under deformation for all integer $k$. Hence, without loss of generality, we can assume that $X=S^{[m]}$ and $G$ is natural.

Since $H^{*}(S^{[m]},\Z)$ is torsion-free by \cite[Theorem 2.2]{Totaro}, we have $H^{*}(S^{[m]},\F)=H^{*}(S^{[m]},\Z)\otimes\F$. Then by (\ref{basis}) the classes
\begin{equation}
\frac{1}{\mathfrak{z}_{\lambda}}\mathfrak{a}_{-\lambda}(1)\mathfrak{a}_{-\mu}(x)\mathfrak{m}_{\nu^1,\alpha_1}\cdots\mathfrak{m}_{\nu^{22},\alpha_{22}}\left|0\right\rangle\otimes\overline{1},\ \left|\lambda\right|+\left|\mu\right|+\sum_{i=1}^{22}\left|\nu^i\right|=m
\label{basisF}
\end{equation}
form a basis of the $\F$-vector space $H^*(S^{[m]},\F)$, with $\overline{1}$ the unit in $\F$.
Moreover by Lemma \ref{actionbasis},
$$\phi^*\frac{1}{\mathfrak{z}_{\lambda}}\mathfrak{a}_{-\lambda}(1)\mathfrak{a}_{-\mu}(x)\mathfrak{m}_{\nu^1,\alpha_1}\cdots\mathfrak{m}_{\nu^{22},\alpha_{22}}\left|0\right\rangle\otimes\overline{1}=\frac{1}{\mathfrak{z}_{\lambda}}\mathfrak{a}_{-\lambda}(1)\mathfrak{a}_{-\mu}(x)\mathfrak{m}_{\nu^1,\phi^*\alpha_1}\cdots\mathfrak{m}_{\nu^{22},\phi^*\alpha_{22}}\left|0\right\rangle\otimes\overline{1},$$
where we use the notation $\phi$ for the morphism on $S$ and on $S^{[m]}$.

Let $\mu=(1^{\mu_1},2^{\mu_2},...)$ and $\lambda=(1^{\lambda_1},2^{\lambda_2},...)$ be two partitions; their sum is defined by the formula $\mu+\lambda:=
(1^{\mu_1+\lambda_1},2^{\mu_2+\lambda_2},...)$.
Let $\lambda$, $\mu$ and $\upsilon$ be three partitions such that $\left|\lambda\right|+\left|\mu\right|+\left|\upsilon\right|=m$. We consider the $\F$-vector space $V_{\lambda,\mu,\upsilon}$ generated by all the elements $\frac{1}{\mathfrak{z}_{\lambda}}\mathfrak{a}_{-\lambda}(1)\mathfrak{a}_{-\mu}(x)\mathfrak{m}_{\nu^1,\alpha_1}\cdots\mathfrak{m}_{\nu^{22},\alpha_{22}}\left|0\right\rangle\otimes\overline{1}$ such that $\sum_{i=1}^{22}\nu^i=\upsilon$. Note that $V_{\lambda,\mu,\upsilon}$ is stable under the action of $\phi$ and we have an isomorphism of $\F[G]$-modules: 
\begin{equation}
V_{\lambda,\mu,\upsilon}\simeq \bigotimes_{i}\Sym^{\upsilon_i} H^2(S,\F),
\label{symy}
\end{equation}
where $\upsilon=(1^{\upsilon_1},2^{\upsilon_2},...)$. Moreover by (\ref{basisF}), we have an isomorphism of $\F[G]$-modules:
\begin{equation}
H^*(S^{[m]},\F)\simeq \bigoplus_{\left|\lambda\right|+\left|\mu\right|+\left|\upsilon\right|=m}V_{\lambda,\mu,\upsilon}.
\label{symy2}
\end{equation}
To end the proof our objective is to prove that the Jordan decomposition of $H^*(S^{[m]},\F)$ does not contain any $N_{p-1}$ term (where the $N_q$ where introduced in Section \ref{defiinvar}); then we will have shown that $\ell_-^{2*}(S^{[m]})=0$.
We can compute that:
\begin{equation}
N_1\otimes N_1=N_1,\ N_1\otimes N_p=N_p\ \text{and}\ N_p\otimes N_p=N_p^{\frac{p+1}{2}}.
\label{tensor}
\end{equation}
Hence it is enough by (\ref{symy}) and (\ref{symy2}) to show that the Jordan decomposition of $\Sym^{j} H^2(S,\F)$ does not contain any $N_{p-1}$ term for all $j\in\N$. 
We know by \cite[Proposition 4.5]{SmithTh} and Theorem \ref{degenemain} that $\ell_-^2(S)=0$. Hence,
$H^2(S,\F)=N_1^{\ell_+(S)}\oplus N_p^{\ell_p(S)}$. 
We have $\Sym^jN_1=N_1$. Moreover by \cite[Proposition 3.2 and 3.3]{Almkvist}:
\begin{equation}
\Sym^dN_p=N_p^{\frac{1}{p}\binom{d+p+1}{d}},
\label{sym}
\end{equation}
if $p$ does not divide $d$ and
\begin{equation}
\Sym^{pl}N_p=N_1\oplus N_p^{\frac{1}{p}\left[\binom{p(l+1)-1}{pl}\right]-1},
\label{sym2}
\end{equation}
Thus, we can prove recursively on $\ell_+(S)+\ell_p(S)$ that the Jordan decomposition of $\Sym^j H^2(S,\F)$ does not contain any $N_{p-1}$ term. If $\ell_+(S)+\ell_p(S)=1$, this is a direct consequence of (\ref{sym}) and (\ref{sym2}). We assume the result for $\ell_+(S)+\ell_p(S)= n$ and we prove it for $\ell_+(S)+\ell_p(S)=n+1$. We can write
\[H^2(S,\F)=\left(N_1^{\ell_+(S)}\oplus N_p^{\ell_p(S)-1}\right)\oplus N_p\text{ or } H^2(S,\F)=\left(N_1^{\ell_+(S)-1}\oplus N_p^{\ell_p(S)-1}\right)\oplus N_1.\]
Hence, we can write:
$$\Sym^j H^2(S,\F)=\bigoplus_{k=0}^j\left[\Sym^k\left(N_1^{\ell_+(S)}\oplus N_p^{\ell_p(S)-1}\right)\otimes \Sym^{j-k}N_p\right],\ \ \ \ \text{or:}$$
$$\Sym^j H^2(S,\F)=\bigoplus_{k=0}^j\left[\Sym^k\left(N_1^{\ell_+(S)-1}\oplus N_p^{\ell_p(S)}\right)\otimes \Sym^{j-k}N_1\right].$$
In both cases the recursive hypothesis, (\ref{tensor}), (\ref{sym}) and (\ref{sym2}) show that the Jordan decomposition of $\Sym^j H^2(S,\F)$ does not contain any $N_{p-1}$ term for $\ell_+(S)+\ell_p(S)=n+1$. This proves the result for any $n\in \N$.
 
Finally, we have shown that $\ell_-^{2*}(S^{[m]})=0$.
Then, the result is a consequence of Theorem \ref{degenemain}.
\end{proof}
\begin{rmk}
Let $G$ be an automorphism group of prime order $p$ on $X$ a hyperk\"ahler manifold of $K3^{[m]}$-type. Note that $p\leq23$, with 23 possible only for non-standard automorphisms. Indeed, we can write from Theorem \ref{gene19} (v):
\begin{equation}
23=\ell_+^2(X)+(p-1)\ell_-^2(X)+p\ell_p^2(X).
\label{23Hilb}
\end{equation}
By \cite[Proposition 10]{Beauville1982}, $\ell_-^2(X)$ or $\ell_p^2(X)$ is non-trivial. Hence $23\geq p-1$, so for $p$ prime $p\leq 23$. If $G$ is standard, then by Theorem \ref{gene19} (v):
$\ell_+^2(X)+\ell_p^2(X)\geq 2$, hence (\ref{23Hilb}) implies that $p\leq 19$.
\end{rmk}
We can also provide another example of application of Theorem \ref{degenemain} when the odd cohomology of the manifold is not trivial. The following proposition is a direct consequence of Theorem \ref{degenemain} (iii) and \cite[Proposition 1]{Erra}.
\begin{prop}
Let $A$ be a 2-dimensional complex torus. 
We consider the following embedding:
\[j:A\times A\hookrightarrow A\times A \times A,\ (x,y)\mapsto (x,y,-x-y).\]
The action of the alternating group $\mathfrak{A}_3$ on $A\times A \times A$ provides an action on $A\times A$ via the embedding $j$.
Then, the spectral sequence of equivariant cohomology with coefficients in $\mathbb{F}_3$ of $(A \times A,\mathfrak{A}_3)$ degenerates at the second page.
\end{prop}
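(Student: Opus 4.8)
The plan is to realize this as an instance of Theorem \ref{degenemain}(iii). First I would confirm that $(A\times A,\mathfrak{A}_3)$ meets the standing hypotheses of this section: $A\times A$ is a compact complex manifold, the alternating group $\mathfrak{A}_3\simeq\Z/3\Z$ has prime order $p=3$, and $H^*(A\times A,\Z)=\bigwedge^*H^1(A\times A,\Z)$ is torsion free, hence $3$-torsion free. The induced generator acts on $A\times A$ by $(x,y)\mapsto(-x-y,x)$, whose fixed locus $\{(x,x)\mid 3x=0\}$ consists of the $|A[3]|=81$ isolated points, so $G$ acts with only isolated fixed points. Because the cohomology is torsion free, Proposition \ref{torsioninvar} and Theorem \ref{gene19} let me replace every invariant by its mod-$3$ value: $\ell_+^k=\ell_1^k$, $\ell_-^k=\ell_2^k$ and $\ell_3^k=\ell_{3,f}^k$, each computed from the $\mathbb{F}_3[\mathfrak{A}_3]$-module $H^k(A\times A,\mathbb{F}_3)$.

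To apply part (iii) I must verify the extra hypothesis $\ell_3^1(X)=0$ together with condition (3), namely $\ell_+^{2*+1}(X)=\ell_-^{2*}(X)=0$. The input is the $\Z[\mathfrak{A}_3]$-module structure of $H^*(A\times A,\Z)$ supplied by \cite[Proposition 1]{Erra}. On $H^1$ the generator has characteristic polynomial $\Phi_3=X^2+X+1$ on each of four blocks, so $H^1(A\times A,\Z)\simeq\mathcal{O}^{\oplus4}$ and, by Lemma \ref{s}, $H^1(A\times A,\mathbb{F}_3)\simeq N_2^{\oplus4}$. In particular there is no $N_3$-block in degree $1$, which gives $\ell_3^1(X)=0$.

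For condition (3) I would decompose $H^*(A\times A,\mathbb{F}_3)=\bigwedge^*\!\bigl(N_2^{\oplus4}\bigr)$ into the indecomposables $N_1,N_2,N_3$. The computation rests on three identities over $\mathbb{F}_3$: $\bigwedge^2 N_2\simeq N_1$; the tensor rule $N_2\otimes N_2\simeq N_3\oplus N_1$, valid since $p=3>2$; and the projectivity of $N_3\simeq\mathbb{F}_3[\mathfrak{A}_3]$, which forces $N_3\otimes N_2\simeq N_3^{\oplus2}$. Feeding these into the exterior power of a direct sum shows that $N_1$-summands occur only in even degrees and $N_2$-summands only in odd degrees, with $N_3$-summands distributed in all degrees; for instance $\bigwedge^2(N_2^{\oplus4})\simeq N_1^{\oplus10}\oplus N_3^{\oplus6}$ and $\bigwedge^3(N_2^{\oplus4})\simeq N_2^{\oplus16}\oplus N_3^{\oplus8}$. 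Hence $\ell_2^{2k}(X)=0$ and $\ell_1^{2k+1}(X)=0$ for every $k$, that is $\ell_-^{2*}(X)=\ell_+^{2*+1}(X)=0$. This degreewise parity of the Jordan blocks is exactly what \cite[Proposition 1]{Erra} records.

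With $\ell_3^1(X)=0$ and condition (3) in hand, Theorem \ref{degenemain}(iii) yields the implication $(3)\Rightarrow(1)$, so the spectral sequence of equivariant cohomology of $(A\times A,\mathfrak{A}_3)$ with coefficients in $\mathbb{F}_3$ degenerates at the second page. The main obstacle is the plethysm computation establishing the parity of the $N_1$- and $N_2$-blocks in the exterior algebra; once that is secured the statement is a formal application of the cited theorem.
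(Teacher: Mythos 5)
Your proposal is correct and follows essentially the same route as the paper: the paper simply declares the proposition to be a direct consequence of Theorem \ref{degenemain} (iii) together with \cite[Proposition 1]{Erra}, which supplies the $\F[\mathfrak{A}_3]$-module structure of $H^*(A\times A,\mathbb{F}_3)$. The only difference is that you carry out by hand the decomposition $\bigwedge^*\bigl(N_2^{\oplus 4}\bigr)$ (correctly, including the identities $\bigwedge^2 N_2\simeq N_1$, $N_2\otimes N_2\simeq N_1\oplus N_3$ and $N_3\otimes N_2\simeq N_3^{\oplus 2}$, and the verification $\ell_3^1(X)=0$) where the paper cites the Erratum for this input.
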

\subsection{Quotients of $K3^{[m]}$-type hyperk\"ahler manifolds by symplectic automorphisms of order 5 and 7}\label{ProofBB}
If we consider $\phi$ a symplectic automorphism of order $5$ (resp. $7$) on $S$, then the fixed locus of the induced automorphism $\phi^{[m]}$ on $S^{[m]}$ has only isolated fixed points when $m\leq 4$ (resp. $m\leq 6$). Moreover, it is shown in \cite[Theorem 7.2.7, Section 7.3]{MongT} and \cite[Theorem 2.5]{Mong2} that all the symplectic automorphisms of order 5 and 7 on a hyperk\"ahler manifold of $K3^{[m]}$-type with $m\leq 6$ are standard.
So by Remark \ref{corroHilbert}, we obtain the following result.
\begin{cor}\label{p3}
Let $X$ be a hyperk\"ahler manifold of $K3^{[m]}$-type with $m\leq 4$ $($resp. $m\leq 6)$ and $G$ a symplectic automorphism group of order $5$ $($resp. $7)$. Then 
$\alpha_{k}(X)=0$ for all $1\leq k \leq 4m$ and $H^{2*}(X/G,\Z)$ is torsion-free.
\end{cor}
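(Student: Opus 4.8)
The plan is to reduce the statement to a direct application of Corollary \ref{degecoro} followed by Theorem \ref{main2}, so that the only real work is checking that the hypotheses are met in the stated ranges of $m$. First I would record that a hyperk\"ahler manifold of $K3^{[m]}$-type has complex dimension $2m$, so that with $n=2m$ the desired conclusion ``$\alpha_k(X)=0$ for $1\leq k\leq 4m$'' is precisely statement (i) of Theorem \ref{main2}.

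Next I would verify the two structural hypotheses of Corollary \ref{degecoro}. On one hand, the classification results cited above (\cite[Theorem 7.2.7, Section 7.3]{MongT} and \cite[Theorem 2.5]{Mong2}) guarantee that every symplectic automorphisms group of order $5$ or $7$ on a $K3^{[m]}$-type manifold with $m\leq 6$ yields a standard pair $(X,G)$. On the other hand, the fixed-locus analysis recalled above shows that $G$ has only isolated fixed points exactly when $m\leq 4$ in the order-$5$ case and when $m\leq 6$ in the order-$7$ case; both conditions hold throughout the ranges of the statement. Since $5$ and $7$ both satisfy $3\leq p\leq 19$, Corollary \ref{degecoro} then applies and the spectral sequence of equivariant cohomology of $(X,G)$ with coefficients in $\F$ degenerates at the second page.

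With degeneration established, I would invoke Theorem \ref{main2}: its hypothesis (1) now holds, so part (i) gives $\alpha_k(X)=0$ for all $1\leq k\leq 4m$, and part (ii) gives that $H^{2k}(X/G,\Z)$ is $p$-torsion free for every $0\leq k\leq 2m$. To upgrade this to full torsion-freeness of $H^{2*}(X/G,\Z)$, I would use that $H^*(S^{[m]},\Z)$, hence $H^*(X,\Z)$, is torsion free by \cite[Theorem 2.2]{Totaro}, together with the isomorphism $H^*_{o-t}(X,\Z)^G\simeq H^*_{o-t}(X/G,\Z)$ recalled in the introduction. Since $H^*_{o-t}(X,\Z)=0$, this forces $H^*_{o-t}(X/G,\Z)=0$, so every torsion class of $H^*(X/G,\Z)$ is $p$-torsion; combined with part (ii) this eliminates all torsion in even degree.

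I do not anticipate a genuine obstacle, as the substantive content lives entirely in Corollary \ref{degecoro} and Theorem \ref{main2}. The only points requiring care are bookkeeping: matching the ranges of $m$ for which standardness and isolated fixed points hold simultaneously, and being explicit that the passage from $p$-torsion-freeness to genuine torsion-freeness needs the extra torsion-free input from \cite{Totaro}.
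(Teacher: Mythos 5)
Your proposal is correct and follows essentially the same route as the paper: verify standardness (via the cited classification results) and isolated fixed points in the stated ranges of $m$, apply Corollary \ref{degecoro} to get degeneration of the equivariant spectral sequence with $\F$-coefficients, and then invoke Theorem \ref{main2}. The one place you go beyond the paper is in making explicit the upgrade from the $p$-torsion-freeness of $H^{2*}(X/G,\Z)$ given by Theorem \ref{main2}~(ii) to full torsion-freeness, using Totaro's torsion-freeness of $H^*(S^{[m]},\Z)$ together with $H^*_{o-t}(X,\Z)^G\simeq H^*_{o-t}(X/G,\Z)$; the paper leaves this step implicit, so your added care is welcome rather than a divergence.
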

We can also compute the torsion of $H^{2*+1}(S^{[m]}/\phi^{[m]},\Z)$
(hence also the torsion of $H^{2*+1}(X/G,\Z)$) 
if we are patient enough using (\ref{basisF}), \cite[Proposition 3.9]{Lol3}, and Theorem \ref{main2} (iii).  We also find the number of fixed points $\eta(\phi^{[m]})$ by Theorem \ref{degenemain}. For instance, we give the computation for $m=2$ and $3$:
\begin{rmk}\label{torsion2}
\leavevmode
\begin{description}
\item[\mathversion{bold}$m=2$ and $p=5$\mathversion{normal}] we have $\ell_+^2(S^{[2]})=2+1=3$ and $\ell_+^4(S^{[2]})=3+2+1=6$. Hence, $\eta(\phi^{[2]})=14$ and:
$$H^3(S^{[2]}/\phi^{[2]},\Z)\oplus H^7(S^{[2]}/\phi^{[2]},\Z)=(\Z/5\Z)^{11},\ H^5(S^{[2]}/\phi^{[2]},\Z)=(\Z/5\Z)^{4}.$$
\item[\mathversion{bold}$m=2$ and $p=7$\mathversion{normal}]
we have $\ell_+^2(S^{[2]})=1+1=2$ and $\ell_+^4(S^{[2]})=1+1+1=3$. Hence, $\eta(\phi^{[2]})=9$ and:
$$H^3(S^{[2]}/\phi^{[2]},\Z)\oplus H^7(S^{[2]}/\phi^{[2]},\Z)=(\Z/7\Z)^{7},\ H^5(S^{[2]}/\phi^{[2]},\Z)=(\Z/7\Z)^{3}.$$
\item[\mathversion{bold}$m=3$ and $p=5$\mathversion{normal}]
we have $\ell_+^2(S^{[3]})=2+1=3$, $\ell_+^4(S^{[3]})=3+2+2+1+1=9$ and $\ell_+^6(S^{[3]})=4+4+2+2+1+1=14$. Hence, $\eta(\phi^{[3]})=40$ and:
$$H^3(S^{[3]}/\phi^{[3]},\Z)\oplus H^{11}(S^{[3]}/\phi^{[3]},\Z)=(\Z/5\Z)^{37},\ H^5(S^{[3]}/\phi^{[3]},\Z)\oplus H^9(S^{[3]}/\phi^{[3]},\Z)=(\Z/5\Z)^{31},$$ $$H^7(S^{[3]}/\phi^{[3]},\Z)=(\Z/5\Z)^{13}.$$
\item[\mathversion{bold}$m=3$ and $p=7$\mathversion{normal}]
we have $\ell_+^2(S^{[3]})=1+1=2$, $\ell_+^4(S^{[3]})=1+1+1+1+1=5$ and $\ell_+^6(S^{[3]})=1+1+1+1+1+1=6$. Hence, $\eta(\phi^{[3]})=22$ and:
$$H^3(S^{[3]}/\phi^{[3]},\Z)\oplus H^{11}(S^{[3]}/\phi^{[3]},\Z)=(\Z/7\Z)^{20},\ H^5(S^{[3]}/\phi^{[3]},\Z)\oplus H^9(S^{[3]}/\phi^{[3]},\Z)=(\Z/7\Z)^{17},$$ $$H^7(S^{[3]}/\phi^{[3]},\Z)=(\Z/7\Z)^{8}.$$
\end{description}
\end{rmk}
Now, we are going to prove Theorem \ref{M5} and \ref{M7}.
To prove these two theorems, we first need an analogous of \cite[Lemma 7.11]{Lol3}. We recall the the discriminant group of a lattice $L$ is denoted by $A_L$ (see Section \ref{remindersL}).
\begin{lemme}\label{lemfin}
Let $G=\left\langle \phi\right\rangle$ be a natural automorphism group of prime order $p$ on $S^{[m]}$. We endow $H^2(S^{[m]},\Z)$ with the Beauville--Bogomolov form. We have:
\begin{itemize}
\item[(i)]
$A_{H^2(S^{[m]},\Z)^G}=(\Z/2(m-1)\Z)\oplus (\Z/p\Z)^{\ell_p^2(S^{[m]})},$
\item[(ii)]
$\discr H^2(S^{[m]},\Z)^G=2(m-1)p^{\ell_p^2(S^{[m]})}$.
\item[(iii)]
We denote $A_{H^{2}(S^{[m]},\Z)^{G},p}:=(\Z/p\Z)^{\ell_{p}^2(S^{[m]})}$.
Then, the projection $$\frac{H^{2}(S^{[m]},\Z)}{H^{2}(S^{[m]},\Z)^{G}\oplus \ker \rho}\rightarrow A_{H^{2}(S^{[m]},\Z)^{G},p}$$ is an isomorphism, 
where $\rho=\id+\phi^*+\cdots+(\phi^{*})^{p-1}$.
\item[(iv)]
Moreover, let $x\in H^{2}(S^{[m]},\Z)^{G}$. We have $\frac{x}{p}\in (H^{2}(S^{[m]},\Z)^{G})^{\vee}$ if and only if there is $z\in H^{2}(S^{[m]},\Z)$ such that $x=z+\phi^*(z)+\cdots+(\phi^{*})^{p-1}(z)$. 
\item[(v)]
Also:
$$\frac{\pi_*(H^{2}(S^{[m]},\Z))}{\pi_*(H^{2}(S^{[m]},\Z)^G)}=(\Z/p\Z)^{\ell_{p}^2(S^{[m]})}.$$
\end{itemize}
\end{lemme}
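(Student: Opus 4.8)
The plan is to reduce everything to the orthogonal splitting of the Beauville--Bogomolov lattice together with the $\Z[G]$-module structure of the K3 part. Write $T:=H^2(S^{[m]},\Z)$ and $\Lambda:=H^2(S,\Z)$. The Beauville--Bogomolov form realises $T=\Lambda\oplus^{\bot}\Z\delta$, where $\Lambda$ is the unimodular K3 lattice, $\delta^2=-2(m-1)$, and $2\delta$ is the class of the exceptional divisor of the Hilbert--Chow morphism. Since $G$ is natural it fixes $\delta$ and acts on $\Lambda$ by $\phi^*$; hence the splitting is $G$-equivariant, $T^G=\Lambda^G\oplus^{\bot}\Z\delta$, and $\ell_p^2(S^{[m]})=\ell_p(\Lambda)$ because $\Z\delta$ is a trivial $\Z[G]$-module. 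Throughout I set $\sigma:=\id+\phi^*+\dots+(\phi^*)^{p-1}=\rho$.

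For (i)--(iii) I would argue purely lattice-theoretically. As $\Lambda$ is unimodular and $\Lambda^G$ is primitive with orthogonal complement $\ker\sigma$, the glue group $\Lambda/(\Lambda^G\oplus^{\bot}\ker\sigma)$ is isomorphic to the full discriminant group $A_{\Lambda^G}$; the cited Proposition (\cite{Mong}, Lemma 1.8) evaluates this glue group as $(\Z/p\Z)^{\ell_p(\Lambda)}$, so $A_{\Lambda^G}=(\Z/p\Z)^{\ell_p^2(S^{[m]})}$. Taking the orthogonal direct sum with $\Z\delta$ gives $A_{T^G}=(\Z/2(m-1)\Z)\oplus(\Z/p\Z)^{\ell_p^2(S^{[m]})}$, which is (i), and (ii) follows since $\discr T^G=\#A_{T^G}$. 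For (iii) I would apply the same Proposition to $T$ itself, obtaining $T/(T^G\oplus^{\bot}\ker\sigma)=(\Z/p\Z)^{\ell_p^2(S^{[m]})}$. As $T^G$ is primitive this glue group injects into $A_{T^G}$; its image is killed by $p$, hence lands in the $p$-primary part $A_{T^G,p}$, and a cardinality count (using $p\nmid 2(m-1)$) shows the injection is the asserted isomorphism.

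Parts (iv) and (v) need the explicit action of $\sigma$ on the Curtis--Reiner blocks. Using Theorem \ref{CurtisReiner} and Theorem \ref{gene19} I decompose $T\simeq\bigoplus_{i=1}^{r}(A_i,a_i)\oplus\bigoplus_i A_i\oplus\Z^{t}$. On the trivial summand $\sigma$ acts as multiplication by $p$, on the ideals $A_i\subset\mathcal O$ it acts by $0$ (since $1+\xi_p+\dots+\xi_p^{p-1}=0$), and on $(A_i,a_i)$ I would reuse the computation in the proof of Lemma \ref{r}, which gives $\sigma\cdot(0,1)=(u a_i,p)$ with $u=\sum_{j=0}^{p-2}(p-1-j)\xi_p^{j}$; the same computation identifies $(u a_i,p)$ as a generator of $(A_i,a_i)^G$. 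Hence $\sigma(T)=\bigoplus_i (A_i,a_i)^G\oplus p\Z^{t}$, so $[T^G:\sigma(T)]=p^{\ell_+}$ and $\sigma(T)/\sigma(T^G)=\sigma(T)/pT^G\cong(\Z/p\Z)^{\ell_p^2(S^{[m]})}$. For (iv) note $\sigma$ is self-adjoint and $\sigma|_{T^G}=p\,\id$, so $\sigma(T)\subseteq D:=\{x\in T^G:\frac{x}{p}\in(T^G)^\vee\}$; the quotient $D/pT^G$ is the radical of the reduced form on $T^G\otimes\F$, of dimension $\ell_p^2(S^{[m]})$ (again using $p\nmid 2(m-1)$), so $D$ and $\sigma(T)$ have the same index over $pT^G$ and must coincide, giving the characterization. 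For (v) I would transport the previous identity to $M$ via $\pi^*$: the intro relation $\pi^*\pi_*=\sigma$ yields $\pi^*\pi_*(T)=\sigma(T)$ and $\pi^*\pi_*(T^G)=pT^G$, and since $H^2(M,\Z)$ is torsion free (Corollary \ref{CorK3}, Theorem \ref{main2}(ii)) the map $\pi^*$ is injective, whence $\pi_*(T)/\pi_*(T^G)\cong\sigma(T)/pT^G\cong(\Z/p\Z)^{\ell_p^2(S^{[m]})}$.

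The hard part will be the interplay of the $p$-part with the $2(m-1)$-part of $A_{T^G}$: the clean statements of (i), (iii) and (iv) genuinely require $p\nmid 2(m-1)$, which holds on the relevant ranges ($p=5$, $m\le 4$; $p=7$, $m\le 6$) but must be recorded as the hypothesis that makes $A_{T^G,p}$ well defined. The second delicate point is the block computation: one must verify that $\sigma$ lands on a \emph{generator} of $(A_i,a_i)^G$ rather than on a proper multiple, since this is exactly what pins the index $p^{\ell_+}$ and hence the exponent $\ell_p^2(S^{[m]})$ in (iv) and (v). Finally, (v) relies on importing the torsion-freeness of $H^2(M,\Z)$ from the earlier theorems in order to guarantee the injectivity of $\pi^*$.
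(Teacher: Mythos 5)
Your proof is correct, and for parts (i)--(iii) it is essentially the paper's own argument: the $G$-equivariant orthogonal splitting $H^2(S^{[m]},\Z)\simeq H^2(S,\Z)\oplus\Z\delta$, the unimodularity of the K3 lattice, the evaluation of the glue group $\Lambda/(\Lambda^G\oplus^{\bot}\ker\sigma)\simeq(\Z/p\Z)^{\ell_p}$ via the cited proposition from \cite{Mong}, and the basic formulas (\ref{BasicLatticeTheory})--(\ref{BasicLatticeTheory2}). The only divergence is in (iv) and (v): the paper disposes of these in one line by referring to the proof of \cite[Lemma 7.11]{Lol3}, whereas you write out a self-contained argument through the Curtis--Reiner decomposition (Theorem \ref{CurtisReiner}), the computation $\sigma\cdot(0,1)=(ua,p)$ from the proof of Lemma \ref{r}, and the Smith relations $\pi^*\pi_*=\sigma$, $\pi_*\pi^*=p\,\id$. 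Your block computation is sound -- in particular the verification that $(ua_i,p)$ generates $(A_i,a_i)^G$ (if it were $p$ times an invariant element one would get $a_i\in(\xi_p-1)A_i$, contradicting the normalization in Theorem \ref{CurtisReiner}) is exactly the point that pins down the index, and you rightly single it out. You also correctly note two hypotheses the paper leaves implicit: the injectivity of $\pi^*$ in (v) rests on the $p$-torsion-freeness of $H^2(X/G,\Z)$ imported from Theorem \ref{main2} (via Remark \ref{corroHilbert} rather than Corollary \ref{CorK3}, which concerns K3 surfaces), and statement (iv) genuinely needs $p\nmid 2(m-1)$ (otherwise $\delta$ itself satisfies $\delta/p\in(T^G)^{\vee}$ without lying in $\sigma(T)$); both conditions hold in the ranges $p=5$, $m\le 4$ and $p=7$, $m\le 6$ where the lemma is applied, but the second is a genuine, if harmless, omission in the statement as printed.
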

\begin{proof}
We denote by $\delta$ half the class of the diagonal in $H^2(S^{[m]},\Z)$. We endow $H^2(S,\Z)$ with the cup-product. Since our automorphism is natural, we have an isometry and an isomorphism of $\Z[G]$-modules:
\begin{equation}
H^{2}(S^{[m]},\Z)\simeq H^2(S,\Z)\oplus \Z\delta,
\label{Beauville}
\end{equation}
with $G$ which acts trivially on $\delta$ and $\delta^2=-2(m-1)$ (see \cite[Proposition~6 and Remark~1 (\S9)]{Beauville1983}).
Hence:
\begin{equation} 
H^2(S^{[m]},\Z)^G\simeq H^2(S,\Z)^G\oplus \Z\delta,
\label{rapporteur00}
\end{equation}
and $\ell_p^2(S^{[m]})=\ell_p^2(S)$.
Therefore, we obtain (ii) by \cite[Proposition 2.15]{Lol3}. 

From (\ref{rapporteur00}), we also have:
\begin{equation}
A_{H^2(S^{[m]},\Z)^G}=(\Z/2(m-1)\Z)\oplus A_{H^2(S,\Z)^G}.
\label{rapporteur000}
\end{equation}
However by Proposition \ref{Mongprop}:
$$\frac{H^2(S^{[m]},\Z)}{H^2(S^{[m]},\Z)^G\oplus \ker \rho}=\frac{H^2(S,\Z)}{H^2(S,\Z)^G\oplus \ker \rho}=(\Z/p\Z)^{\ell_p^2(S)}.$$
Since $H^2(S,\Z)$ is unimodular, we obtain by (\ref{DiscrUni}) that
$A_{H^2(S,\Z)^G}=(\Z/p\Z)^{\ell_p^2(S)}$. Hence (i) and (iii) follow from (\ref{rapporteur000}).

Then (iv) and (v) are proved exactly as (iv) and (v) of \cite[Lemma 7.11]{Lol3}. 
\end{proof}
Now we are ready to prove Theorem \ref{M5} and \ref{M7}. The proof is very similar to the proof of \cite[Theorems~1.2 and~1.3]{Lol3}. We only prove Theorem \ref{M7}, the proof of Theorem \ref{M5} is identical.

\begin{proof}[Proof of Theorem \ref{M7}]
The Beauville--Bogomolov form is invariant by deformation, hence we can assume without loss of generality that $X=S^{[m]}$ and $G$ is natural.

From \cite[Theorem 4.1]{Sarti} and (\ref{Beauville}), there is an isometry of lattices 
\[H^{2}(S^{[m]},\Z)^{G}\simeq U(7)\oplus \begin{pmatrix} 4 & 1 \\ 1 & 2\end{pmatrix}\oplus (-2(m-1)).\]
In the rest of the proof, we identify $H^{2}(S^{[m]},\Z)^{G}$ with the lattice $U(7)\oplus \begin{pmatrix} 4 & 1 \\ 1 & 2\end{pmatrix}\oplus (-2(m-1))$.

By Lemma \ref{lemfin} (iv), we have:
\begin{equation}
\frac{1}{7}\pi_{*}(U(7))\subset H^{2}(M^m_{7},\Z).
\label{LastEquations3}
\end{equation}
Let $(a, b)$ be an integral basis of the lattice $\begin{pmatrix} 4 & 1 \\ 1 & 2\end{pmatrix}$, with $B_{S^{[m]}}(a,a)=4$,  $B_{S^{[m]}}(b,b)=2$ and $B_{S^{[m]}}(a,b)=1$.
Idem, by Lemma \ref{lemfin} (iv), we have: 
$$\frac{\pi_*(a)+3\pi_*(b)}{7}\in H^{2}(M^m_{7},\Z).$$
The lattice generated by $a+3b$ and $a-4b$ is isomorphic to $\Lambda(7):=\begin{pmatrix} 4\times7 & -3\times7 \\ -3\times7 & 4\times7\end{pmatrix}$, with $\Lambda:= \begin{pmatrix} 4 & -3 \\ -3 & 4\end{pmatrix}$.
We have: 
\begin{equation}
\frac{1}{7}\pi_*(\Lambda(7))\in H^{2}(M^m_{7},\Z).
\label{LastEquations4}
\end{equation}
Then, by (\ref{LastEquations3}) and (\ref{LastEquations4}):
$$\pi_*\left(H^{2}(S^{[m]},\Z)\right)\supset\pi_{*}\left(\frac{1}{7}U(7)\oplus\frac{1}{7}\Lambda(7)\oplus(-2(m-1))\right).$$
Therefore, by (ii) and (v) of Lemma \ref{lemfin}, we obtain:
$$\pi_*\left(H^{2}(S^{[m]},\Z)\right)=\pi_{*}\left(\frac{1}{7}U(7)\oplus\frac{1}{7}\Lambda(7)\oplus(-2(m-1))\right).$$
So by Corollary \ref{p3}, 
$$H^2(M^m_{7},\Z)=\pi_{*}\left(\frac{1}{7}U(7)\oplus\frac{1}{7}\Lambda(7)\oplus(-2(m-1))\right).$$
Then, by \cite[Proposition 7.10]{Lol3}, the Beauville--Bogomolov form of $H^{2}(M^m_{7},\Z)$ gives the lattice
\begin{align*}
&\frac{1}{7}U\left(7\sqrt[m]{\frac{(2m)!7^{2m-1}}{m!2^mC_{M^m_{7}}}}\right)\oplus \frac{1}{7}\Lambda\left(7\sqrt[m]{\frac{(2m)!7^{2m-1}}{m!2^mC_{M^m_{7}}}}\right)\oplus\left(-2(m-1)\sqrt[m]{\frac{(2m)!7^{2m-1}}{m!2^mC_{M^m_{7}}}}\right)\\
&=U\left(\sqrt[m]{\frac{(2m)!7^{m-1}}{m!2^mC_{M^m_{7}}}}\right)\oplus \Lambda\left(\sqrt[m]{\frac{(2m)!7^{m-1}}{m!2^mC_{M^m_{7}}}}\right)\oplus\left(-14(m-1)\sqrt[m]{\frac{(2m)!7^{m-1}}{m!2^mC_{M^m_{7}}}}\right),
\end{align*}
where $C_{M^m_{7}}$ is the Fujiki constant of $M^m_{7}$.
Then, knowing that the Beauville--Bogomolov form is integral and indivisible, we have $C_{M^m_{7}}=\frac{7^{m-1}(2m)!}{m!2^m}$ and we get the lattice:
$$U\oplus \Lambda\oplus (-14(m-1)).$$
\end{proof}
We can also compute the Betti numbers. We provide the computation for $m=2$ or $3$. 
\begin{prop}
We have:
$$\begin{tabular}{|c|c|c|c|c|}
\hline
$X/G$ & $b_{2}$ & $b_{4}$ & $b_6$ & $\#\Sing X/G$ \\
\hline
$M_5^2$ & 7 & 60 & 7 & 14 \\
\hline
$M_7^2$ & 5 & 42 & 5 & 9 \\
\hline
$M_5^3$ & 7 & 67 & 522 & 40 \\
\hline
$M_7^3$ & 5 & 47 & 370 & 22 \\
\hline
\end{tabular}$$
\end{prop}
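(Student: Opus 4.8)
The plan is to obtain each Betti number $b_{2k}(X/G)$ as the dimension of the invariant cohomology and to read off $\#\Sing X/G$ from the number of fixed points. Since the $\Z[G]$-module structure of $H^*(X,\Z)$, the Betti numbers of $X$, and those of the quotient are all invariant under deformation, I would first reduce, exactly as in the proofs of Theorems \ref{M5} and \ref{M7}, to the case $X=S^{[m]}$ with $G=\left\langle\phi\right\rangle$ natural. The starting identity is $b_{2k}(X/G)=\dim_{\Q}H^{2k}(X,\Q)^G$, which follows from $H^*(X/G,\Q)\simeq H^*(X,\Q)^G$, so that the problem is reduced to computing the multiplicity of the trivial representation in each even cohomology group.

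I would evaluate this multiplicity by the averaging formula
$$\dim_{\Q}H^{2k}(X,\Q)^G=\frac{1}{p}\sum_{g\in G}\tr\left(g_{|H^{2k}(X,\R)}\right).$$
For $g\neq\id$, the trace computation carried out in the proof of Proposition \ref{mainlefsch} gives $\tr\left(g_{|H^{2k}(X,\R)}\right)=\ell_+^{2k}(X)-\ell_-^{2k}(X)$, and the proof of Corollary \ref{degecoro} shows that $\ell_-^{2k}(X)=0$ for all $k$. Hence the formula collapses to
$$b_{2k}(X/G)=\frac{b_{2k}(X)+(p-1)\,\ell_+^{2k}(X)}{p}.$$
It then remains to feed in known data. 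The Betti numbers of $S^{[m]}$ are classically known (Göttsche's formula): for $m=2$ one has $(b_2,b_4,b_6)=(23,276,23)$ and for $m=3$ one has $(b_2,b_4,b_6)=(23,299,2554)$. The invariants $\ell_+^{2k}$ are precisely those already computed in Remark \ref{torsion2}; for the one missing value, note that $S^{[2]}$ has complex dimension $4$, so Poincaré duality together with $\ell_-^{\bullet}=0$ yields $\ell_+^6(S^{[2]})=\ell_+^2(S^{[2]})$. Substituting each pair $(b_{2k}(X),\ell_+^{2k}(X))$ reproduces the columns $b_2,b_4,b_6$ of the table, for instance $b_4(M_5^3)=(299+4\cdot9)/5=67$ and $b_6(M_5^3)=(2554+4\cdot14)/5=522$, and likewise $b_4(M_7^3)=(299+6\cdot5)/7=47$.

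Finally, since $G$ acts with only isolated fixed points and $X/G$ is smooth away from their images, one has $\#\Sing X/G=\eta(G)$; these values $\eta(\phi^{[2]})=14,9$ and $\eta(\phi^{[3]})=40,22$ (for $p=5,7$) are supplied by Remark \ref{torsion2}, themselves obtained through Theorem \ref{degenemain}. I do not expect a genuine obstacle here: the proposition is assembled entirely from results established earlier in the paper, the only points demanding care being the deformation reduction to the natural case and the use of Poincaré duality to deduce $\ell_+^6(S^{[2]})$ from $\ell_+^2(S^{[2]})$.
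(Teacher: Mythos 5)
Your proposal is correct and follows essentially the same route as the paper: both arguments reduce to the identity $b_{2k}(X/G)=\rk H^{2k}(X,\Z)^{G}=\bigl(b_{2k}(X)+(p-1)\ell_+^{2k}(X)\bigr)/p$ (you obtain it by character averaging together with the trace computation of Proposition \ref{mainlefsch}, the paper by solving for $\ell_p^{2k}$ in Theorem \ref{gene19} (v) and then using $\rk H^{2k}(X,\Z)^G=\ell_+^{2k}+\ell_p^{2k}$, which is the same arithmetic once $\ell_-^{2k}=0$), and both then feed in G\"ottsche's Betti numbers, the values of $\ell_+^{2k}$ from Remark \ref{torsion2}, and $\#\Sing X/G=\eta(G)$. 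Your use of Poincar\'e duality to recover $\ell_+^6(S^{[2]})=\ell_+^2(S^{[2]})$ is a legitimate way to supply the one value not listed in Remark \ref{torsion2}, and all the numerical substitutions check out.
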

\begin{proof}
In Remark \ref{torsion2}, we have computed the $\ell_+^*(S^{[m]})$ for $p=5,7$ and $m=2,3$. Moreover in the proof of Corollary~\ref{degecoro}, we have seen that $\ell_-^*(S^{[m]})=0$ for $p=5$ (resp. $7$) and $m\leq 4$ (resp. $m\leq 6$). The Betti numbers of $S^{[m]}$ are well known and were determined by G\"ottsche \cite{Got} (see, for instance, \cite[Remark 2.1]{Kapfer2} for the explicit values). Hence, we can deduce $\ell_p^*(S^{[m]})$ from the following equation of Theorem \ref{gene19} (v):
$$\rk H^*(S^{[m]},\Z)=\ell_+^*(S^{[m]})+(p-1)\ell_-^*(S^{[m]})+p\ell_p^*(S^{[m]}).$$
Then, we deduce the Betti number from Theorem \ref{gene19} (v):
$$\rk H^*(S^{[m]}/\phi^{[m]},\Z)=\rk H^*(S^{[m]},\Z)^{\phi^{[m]}} =\ell_+^*(S^{[m]})+\ell_p^*(S^{[m]}).$$
\end{proof}
With the same method, we can also provide the Betti numbers of  $M_5^m$ and $M_7^m$ when $m\geq 4$. With enough patience or a computer, we compute the $\ell_+^*(S^{[m]})$ directly from (\ref{symy}) (see \cite{Kapfer2} for similar techniques).
\subsection{Quotients of projective manifolds}\label{QPM}
\begin{defi}
Let $X$ be a smooth projective manifold of dimension $n$. Let $p$ be a prime number. We denote $u:=c_1(\mathcal{O}_{X}(1))\in H^{2}(X,\F)$.
 Let $L^k:H^{n-k}(X,\F)\rightarrow H^{n+k}(X,\F)$ be the Lefschetz maps given by $L^k(x)=x\cdot u^k$, for all $1\leq k\leq n$. 

We say that $X$ is \emph{Lefschetz $p$-torsion-free} if $L^k$ is an isomorphism for all $1\leq k\leq n$.
\end{defi}
\begin{defi}
Let $\rho:X\hookrightarrow \mathbb{P}^{N}$ be a projective manifold. An automorphism group on $X$ is said \emph{linear according to $\rho$} if it extends to an automorphism group of $\mathbb{P}^{N}$.
\end{defi}
\begin{prop}\label{degehyper}
Let $X$ be a smooth projective manifold
endowed with a finite automorphism group $G$ which is linear according to some embedding. 
Let $p$ be a prime number such that $X$ is Lefschetz $p$-torsion-free.
Then the spectral sequence of equivariant cohomology of $(X,G)$ with coefficients in $\F$ degenerates at the second page.
\end{prop}
\begin{proof}
The proof of this proposition is very similar to the one of \cite[Proposition 4.6]{SmithTh}. We refer to \cite[Section~4.3]{SmithTh} for the notion of \emph{$G$-linearisation}.
Since $G$ is a linear automorphism group, the line bundle $\mathcal{L}:=\mathcal{O}_X(1)$ is $G$-linearisable.
Hence, we can consider $\mathcal{L}_G:=\mathcal{L}\times_G EG$, with $EG\rightarrow
BG$ an universal $G$-bundle. Let $f:X_G\rightarrow BG$ be the projection.
We set $u:=c_1(\mathcal{L}_G)\in H^2(X_G,\F)$ and $q:=\RR f_*u:\RR f_*\F\rightarrow \RR f_*\F[2]$. Let $n$ be the dimension of $X$.
By \cite[Proposition 2.1]{Deligne}, if $q^k:R^{n-k}f_*\F\rightarrow R^{n+k}f_*\F$ is an isomorphism for all $k$, then the spectral sequence of equivariant cohomology with coefficients in $\F$ degenerates at the second page. 
Since we are considering direct images of constant sheaves along locally trivial fibrations, we can check that the $q^k$ are isomorphisms 
fibrewise. That is, we need to prove that the Lefschetz maps $L^k:H^{n-k}(X,\F)\rightarrow H^{n+k}(X,\F)$ are isomorphisms. This is given by hypothesis.
\end{proof}
The condition on Lefschetz maps can easily be verified for smooth complete intersections.  
Let $X$ be a smooth complete intersection of multidegree $(a_1,...,a_q)$. We call $\prod_i^qa_i$ the \emph{degree} of $X$.
We recall the following well known result.
\begin{prop}\label{cohcomplete}
Let $X$ be a smooth complete intersection of dimension $n$ and degree $d$. 
Let $D:=c_1(\mathcal{O}_{X}(1))$.
Then:
\begin{equation*}
 H^{k}\left(X,\Z\right)=
\begin{cases}
\Z D^{k/2} & \text{if }\ k\ \text{is even and}\ k<n,\\
0 & \text{if}\ k\ \text{is odd and}\ k\neq n,\\
\Z \frac{D^{k/2}}{d}& \text{if}\ k\ \text{is even and}\ k>n.\\
\end{cases}
\end{equation*}
Moreover $H^n(X,\Z)$ is torsion-free.
\end{prop}
\begin{rmk}\label{rr}
Let $X$ be a smooth complete intersection of dimension $n$ and degree $d$.
Note that $b_n(X)$ can be computed via Riemann--Roch theorem. For instance if $X$ is a hypersurface $$b_n(X)=\sum_{k=0}^n(-1)^k\binom{n+2}{k}d^{n+1-k}+(-1)^{n+1}2\left\lceil \frac{n}{2}\right\rceil.$$
\end{rmk}
It follows from the following corollary.
\begin{cor}\label{degecorcomplete}
Let $X$ be a smooth complete intersection of degree $d$
endowed with a finite automorphism group $G$ which is linear according to some embedding. 
Let $p$ be a prime number which does not divide $d$.
Then the spectral sequence of equivariant cohomology of $(X,G)$ with coefficients in $\F$ degenerates at the second page.
\end{cor}

Then, we are going to see that Theorem \ref{recall1} and \ref{recall3} can easily be applied to complete intersections.
\begin{lemme}\label{torsionhyper}
Let $X$ be a smooth complete intersection endowed with a finite abelian group $G$ which is linear according to some embedding.
Then $H^*(\Fix G,\Z)$ is torsion-free.
\end{lemme}
\begin{proof}
Let $X\hookrightarrow \mathbb{P}^N$ be an embedding such that the automorphism group $G$ extends to an automorphism group $\mathcal{G}$ on $\Pj^N$.
Then we have:
\begin{equation}
\Fix G=\Fix \mathcal{G}\cap X. 
\label{fixGP}
\end{equation}
Let $Y$ be a connected component of $\Fix G$. By (\ref{fixGP}), there exists a connected component $H$ of $\Fix \mathcal{G}$ such that
$Y\subset H\cap X$. Since $H$ have degree 1, we can write $H$ as an intersection of hyperplanes $H=H_1\cap\cdots\cap H_l$. 
Then we can prove recursively that $X\cap H_1\cap\cdots\cap H_l$ has its cohomology torsion-free; moreover $X\cap H_1\cap\cdots\cap H_l$ is connected or of dimension 0.
It is true for $X$, we assume that it is true for $X\cap H_1\cap\cdots\cap H_i$ and we prove it for $X\cap H_1\cap\cdots\cap H_{i+1}$.
There are two possibilities or $X\cap H_1\cap\cdots\cap H_{i+1}$ is a hyperplane section of $X\cap H_1\cap\cdots\cap H_i$ or $X\cap H_1\cap\cdots\cap H_i\subset H_{i+1}$. In the second case $X\cap H_1\cap\cdots\cap H_i=X\cap H_1\cap\cdots\cap H_i\cap H_{i+1}$ and there is nothing to prove. In the first case, this is a consequence of the hyperplane Lefschetz theorem and the universal coefficient theorem. 

Hence $Y$ is a point or $Y=H\cap X$. In the both cases, $H^*(Y,\Z)$ is torsion-free.
\end{proof}
\begin{lemme}\label{torsionhyper2}
Let $\rho:X\hookrightarrow \Pj^N$ be a smooth complete intersection of dimension $2n$ and degree $d$ in $\Pj^N$, endowed with a finite abelian automorphism group $G$ which is linear according to $\rho$. 
We assume that $\Fix G$ contains $\Delta$ a connected component of dimension $n$. Let $j:\Delta\hookrightarrow X$ be the embedding and $\left[\Delta\right]:=j_*(1)$. Let $p$ be a prime number. If $p$ does not divide $d$ then $\left[\Delta\right]$ is not divisible by $p$ in $H^{2n}(X,\Z)$.
\end{lemme}
\begin{proof} 
We have $D=\rho^*(c_1(\mathcal{O}_{\Pj^N}(1)))$. 
Let $P:=(D^n)^{\bot}$ be the lattice orthogonal to $D^n$ in $H^{2n}(X,\Z)$. Let $y\in P$. By projection formula, we have:
$0=\rho_*(y\cdot D^n)=\rho_*(y)\cdot c_1(\mathcal{O}_{\Pj^N}(1))^n$.
It follows that $\rho_*(y)=0$ for all $y\in P$.
Moreover $H^{2n}(X,\Z)$ is a unimodular lattice, hence by (\ref{DiscrUni}), we have $\left[\Delta\right]=\frac{aD^n+y}{d}$, with $a\in \Z$.
It follows by projection formula that: $$\rho_*(\left[\Delta\right])=\frac{a}{d}c_1(\mathcal{O}_{\Pj^N}(1))^{n}\cdot\left[X\right]=ac_1(\mathcal{O}_{\Pj^N}(1))^{N-n}.$$
However as in the previous proof, $\Delta=X\cap H$ with $H$ of degree 1 in $\mathbb{P}^N$. 
Then $a$ divides $d$. That is $d=qa$ with $q\in\Z$.
Hence $\left[\Delta\right]$ cannot be divisible by $p$ since $p$ does not divide $d$.
\end{proof}
As a consequence of Corollary \ref{degecorcomplete} and Lemmas \ref{torsionhyper}, \ref{torsionhyper2}, we can state the following conclusion.
\begin{cor}\label{maincorcomplete}
Let $\rho:X\hookrightarrow\Pj^N$ be a smooth complete intersection of dimension $n$ and degree $d$ in $\Pj^N$, endowed with an automorphism group $G$ of order $p$ which is linear according to $\rho$. We assume that $p$ does not divide $d$. Then:
\begin{itemize}
\item[(i)]
If $G$ has only isolated fixed points then the consequences of Theorem \ref{main2} are verified for $H^*(X/G,\Z)$.
\item[(ii)]
If all fixed points of $G$ are simple then the consequences of Theorem \ref{recall1} and \ref{recall3} are verified for $H^*(X/G,\Z)$.
\end{itemize} 
\end{cor}
\begin{rmk}
Note that in the framework of complete intersections of dimension 2, Proposition \ref{CorK3} can directly be applied.
\end{rmk}
When $X$ is a hypersurface the linear condition on $G$ is verified in most cases.
\begin{rmk}
Let $\rho:X\hookrightarrow\Pj^{n+1}$ be a smooth hypersurface of dimension $n$ and degree $d$. If $n\geq2$, $d\neq2$ and $(n,d)\neq(2,4)$, then an automorphism group of $X$ is always linear according to $\rho$ by \cite[Theorem 2]{Matsumura}. The case $(n,d)\neq(2,4)$ is the case of K3 surfaces and have been treated in Section \ref{K3Sec}.
\end{rmk}
We propose an example of use of Theorems \ref{recall3} and \ref{main2}.
\begin{ex}
Let $X$ be the hypersurface of dimension $2n\geq4$ and degree $p+1$ with equation:
$$\sum_{i=0}^{n}x_i^{p+1}+\sum_{i=0}^nx_ix_{n+1+i}^p=0.$$
Let $g$ be the automorphism on $X$ induced by the following automorphism on $\mathbb{P}^{2n+1}$:
$$\xymatrix@R0pt{ \Pj^{2n+1}\ar[r]&\Pj^{2n+1}\\
(a_0:a_1:\cdots :a_n)\ar@{|->}[r]&(a_0:a_1:\cdots :a_n:\xi_pa_{n+1}:\xi_pa_{n+2}:\cdots:\xi_pa_{2n+1}),
}$$
with $\xi_p$ a $p^\mathrm{th}$ root of the unity. We denote $G=\left\langle g\right\rangle$. Then:
\begin{itemize}
\item[(i)]
$\alpha_{2n}(X)=0$,
\item[(ii)]
if $n$ is even:
$$\log_p\discr H^{2n}(X/G,\Z)=1,$$
if $n$ is odd:
$$\log_p\discr H^{2n}(X/G,\Z)=\left(\sum_{k=0}^{n-1}(-1)^k\binom{n+1}{k}(p+1)^{n-k}\right)-n+1.$$
\end{itemize}

\end{ex}
\begin{proof}
Statement (i) is a direct consequence of Corollary \ref{maincorcomplete} (ii). To obtain (ii), we need to determine $\Fix G$. 
Let $H_1:=V(x_{0},x_{1},...,x_{n})$ and $H_2:=V(x_{n+1},x_{n+2},...,x_{2n+1})$. The space $H_1$ can be identified to the projective space $\Pj^n$. Then $\Fix G$ has two connected components $H_2\simeq \Pj^n$ and a hypersurface in $\Pj^n$ of equation:
$$\sum_{i=0}^{n}x_i^{p+1}=0.$$
Furthermore, by Proposition \ref{lefschetz2}, we have:
$$h^{2*}(\Fix G,\Z)=\ell_+^{2*}(X)+\ell_-^{2*+1}(X).$$
If $n$ is even, it implies:
$$(n+1)+n=\ell_+^{2n}(X)+2n.$$
If $n$ is odd by Remark \ref{rr}, it implies:
$$(n+1)+\sum_{k=0}^{n-1}(-1)^k\binom{n+1}{k}(p+1)^{n-k}=\ell_+^{2n}(X)+2n.$$
However by Proposition \ref{vraisans19}:
$$\discr H^{2n}(X/G)=p^{\ell_+^{2n}(X)}.$$
\end{proof}
\begin{rmk}
Note that $b_{2n}(X/G)$ 
can also be computed in the previous example via the generalized Hurwitz formula.
For instance if $n=2$ and $p=3$, $\Fix G$ has two connected components a $\Pj^2$ and a plane curve of degree 4. It follows that:
$$\chi(\Fix G)=3+2-6=-1.$$
Moreover by Remark \ref{rr}:
$$\chi(X)=188.$$
So by Remark \ref{geneHurwitz}, we obtain:
$$\chi(X/G)=62.$$
So:
$$b_4(X/G)=58.$$
\end{rmk}
\begin{ex}
Let $X$ be the hypersurface of dimension $5$ and degree $3$ given by the equation:
$$x_0^2x_1+x_1^2x_2+x_2^2x_3+x_3^2x_4+x_4^2x_5+x_5^2x_6+x_6^2x_0=0.$$
Let $g$ be the automorphism of order $43$ on $X$ induced by the following automorphism on $\mathbb{P}^{6}$:
$$\xymatrix@R0pt{ \Pj^{6}\ar[r]&\Pj^{6}\\
(a_0:a_1:a_2:a_3:a_4:a_5:a_6)\ar@{|->}[r]&(\xi_{43}a_0:\xi_{43}^{41}a_1:\xi_{43}^{4}a_2:\xi_{43}^{35}a_3:\xi_{43}^{16}a_4:\xi_{43}^{11}a_5:\xi_{43}^{21}a_6),
}$$
with $\xi_{43}$ a $43^\mathrm{th}$ root of the unity. We denote $G=\left\langle g\right\rangle$ and $D$ a generator of $H^2(X,\Z)$. Let $\pi:X\rightarrow X/G$ be the quotient map.
Then:
\begin{itemize}
\item[(i)]
$H^{2k}(X/G,\Z)=\Z\pi_*(D^k)$ for all $1\leq k \leq 5$.
\item[(ii)]
$H^3(X/G,\Z)\oplus H^9(X/G,\Z)=(\Z/43\Z)^6$ and $H^5(X/G,\Z)\oplus H^7(X/G,\Z)=(\Z/43\Z)^6$.
%$t_{43}^5(X/G)+t_{43}^7(X/G)=t_{43}^3(X/G)+t_{43}^9(X/G)=6$ (see notation (ix) of Section \ref{}).
\end{itemize}
\end{ex}
\begin{proof}
Indeed, $G$ has only 7 isolated fixed points. Hence by Corollary \ref{maincorcomplete} (i), the coefficients of surjectivity $\alpha_k(X)=0$ for all $k\geq 1$. Moreover the cohomology of even degree is torsion-free. By Remark \ref{rr}, we have $b_5(X)=42$. By Proposition \ref{degecorcomplete}, the spectral sequence of equivariant cohomology with coefficients in $\F$ of $(X,G)$ degenerates at the second page. Hence by 
Corollary \ref{lefschetz2}: $$\ell_-^5(X)=1,\ \text{and}\ \ell_+^5(X)=0.$$ So by Theorem \ref{gene19} (v), we have $b_5(X/G)=0$ (we could also have used the generalized Hurwitz formula).
Finally, as stated in Theorem \ref{main2}, we have that $H^3(X/G,\Z)\oplus H^9(X/G,\Z)=(\Z/43\Z)^{7-\ell_+^2(X)}=(\Z/43\Z)^{6}$ and $H^5(X/G,\Z)\oplus H^7(X/G,\Z)=(\Z/43\Z)^{7-\ell_+^4(X)}=(\Z/43\Z)^{6}$.
\end{proof}

%%%%%%%%%%%%%%%%%%%%%
% References
%%%%%%%%%%%%%%%%%%%%%

\end{document}